\documentclass[a4paper,11pt]{article}

\usepackage{amsfonts,amsthm,amssymb,amsmath}
\usepackage[title,titletoc,toc]{appendix}
\usepackage[utf8]{inputenc}
\usepackage[affil-it]{authblk}
\usepackage{xcolor}
\usepackage{tikz}
\oddsidemargin 0cm \evensidemargin 0cm \topmargin -1cm \textwidth
17cm \textheight 23cm
\parindent 0cm

\usepackage{enumerate}

\usepackage{authblk}
%\usepackage{dsfont}
%%\usepackage{showkeys}

% \usepackage{mathptmx}      % use Times fonts if available on your TeX system
%
%\smartqed  

%\usepackage[utf8]{inputenc}
%\usepackage[english]{babel}

%\usepackage{biblatex}
 
%\usepackage{natbib}

%\usepackage[square,numbers]{natbib}

\newcommand\COMP{\hbox{C\kern -.58em {\raise .54ex \hbox{$\scriptscriptstyle |$}}
\kern-.55em {\raise .53ex \hbox{$\scriptscriptstyle |$}} }}
%FIN DE DEMONSTRATION
%\newcommand {\hfill$\sqcap\kern-8.0pt\hbox{$\sqcup$}$}
%ENSEMBLE DES EN{\theorembodyfront{\rmfamily}\newtheorem{rem}[th]{Remark}}TIERS NATURELS
\newcommand\NN{\hbox{I\kern-.2em\hbox{N}}}
%CORPS DES REELS
\newcommand\RR{\hbox{I\kern-.2em\hbox{R}}}
\newcommand\sRR{{\it \hbox{I\kern-.2em\hbox{R}}}}
%CORPS DES RATIONNELS
\newcommand\QQ{\hbox{I\kern-.53em\hbox{Q}}}
\newcommand\PP{\hbox{I\kern-.53em\hbox{P}}}
\newcommand\EE{\hbox{I\kern-.53em\hbox{E}}}
%ANNEAU DES ENTIERS
\newcommand\ZZ{{{\rm Z}\kern-.28em{\rm Z}}}
\newcommand\be{\begin{equation}}
\newcommand\ee{\end{equation}}
%\newtheorem{theo}{Theorem}[section]
%\newtheorem{prop}[theo]{Proposition}
%\newtheorem{lem}[theo]{Lemma}
%\newtheorem{cl}[th]{Claim}\
%{\theorembodyfront{\rmfamily}\newtheorem{rem}[th]{Remark}}
%{\theorembodyfront{\rmfamily}\newtheorem{rem}[th]{Remark}}
%\newtheorem{exam}[theo]{Example}
%
\newtheorem{theorem}{Theorem}[section]

\newtheorem{proposition}[theorem]{Proposition}
\newtheorem{remark}[theorem]{Remark}

\newtheorem{lemma}[theorem]{Lemma}

\newtheorem{definition}[theorem]{Definition}

\newcommand{\E}{\mathbb E}

\makeatletter
\newcommand*\bigcdot{\mathpalette\bigcdot@{.5}}
\newcommand*\bigcdot@[2]{\mathbin{\vcenter{\hbox{\scalebox{#2}{$\m@th#1\bullet$}}}}}
\makeatother
\newcommand{\is}{\bigcdot }

\def \Lbrack {[\![}
\def \Rbrack {]\!]}

\numberwithin{equation}{section}

%\usepackage{natbib} % use natbib-style for BibTeX
%\bibpunct{[}{]}{,}{}{}{,} % use round brackets instead of square brackets in labelling citations

%\begin{document}

\begin{document}

\title{Reflected backward stochastic differential equations under stopping with an arbitrary random time\thanks{Tahir Choulli is grateful to Polytechnique for the hospitality, where this work started in March 2018, and he is grateful to Nizar Touzi for introducing him to the RBSDEs and proposing him this project and its main ideas.}}

%\title{RBSDEs under stopping with an arbitrary random time\thanks{The second author (T. Choulli) is grateful to Polytechnique for the hospitality, where this work started in 2018, and he is grateful to Nizar Touzi for introducing him to the RBSDEs and proposing him this project and its main ideas.}}

\author[1]{Safa' Alsheyab}
\author[1]{Tahir Choulli}
%\author[2]{Nizar Touzi \thanks{nizar.touzi@polytechnique.edu}}
\affil[1]{\small{Mathematical and Statistical Sciences Dept., University of  Alberta, Edmonton, AB, Canada, tchoulli@ualberta.ca, alsheyab@ualberta.ca}}
%\affil[2]{Ecole Polytechnique, CMAP, Department of Applied Mathematics, France}

\renewcommand\Authands{ and }

\maketitle

\begin{abstract}
This paper addresses reflected backward stochastic differential equations (RBSDE hereafter) that take the form of 
\begin{eqnarray*}
\begin{cases}
dY_t=f(t,Y_t, Z_t)d(t\wedge\tau)+Z_tdW_t^{\tau}+dM_t-dK_t,\quad Y_{\tau}=\xi,\\
 Y\geq S\quad\mbox{on}\quad \Lbrack0,\tau\Lbrack,\quad \displaystyle\int_0^{\tau}(Y_{s-}-S_{s-})dK_s=0\quad P\mbox{-a.s..}\end{cases}
\end{eqnarray*}
Here $\tau$ is an arbitrary random time that might not be a stopping time for the filtration $\mathbb F$ generated by the Brownian motion $W$. We consider the filtration $\mathbb G$ resulting from the progressive enlargement of $\mathbb F$ with $\tau$ where this becomes a stopping time, and study the RBSDE under $\mathbb G$. Precisely, we focus on answering the following problems: a) What are the sufficient minimal conditions on  the data $(f, \xi, S, \tau)$ that guarantee the existence of the solution of the $\mathbb G$-RBSDE in $L^p$ ($p>1$)? b) How can we estimate the solution in norm using the triplet-data $(f, \xi, S)$? c) Is there an RBSDE under $\mathbb F$ that is intimately related to the current one and how their solutions are related to each other? This paper answers all these questions deeply and beyond. Importantly, we prove that for any random time, having a positive Az\'ema supermartingale, there exists a positive discount factor ${\widetilde{\cal E}}$ --a positive and non-increasing $\mathbb F$-adapted and RCLL process-- that is vital in answering our questions without assuming any further assumption on $\tau$, and determining the space for the triplet-data $(f,\xi, S)$ and the space for the solution of the RBSDE as well. Furthermore, we found that the conditions for the $\mathbb G$-RBSDE are weaker that the conditions for its $\mathbb F$-RBSDE counterpart when the horizon is unbounded. Our approach sounds novel and very robust, as it relies on sharp martingale inequalities that hold no matter what is the filtration, and it treats both the linear and general case of RBSDEs for bounded and unbounded horizon. \end{abstract}

\section{Introduction}
In this paper, we consider a complete probability space $\left(\Omega, {\cal F}, P\right)$, on which we suppose given a  standard Brownian motion $W$ (could be $d$-dimensional). Througout the paper, $\mathbb F:=({\cal F}_t)_{t\geq0}$ denotes the complete and right-continuous filtration generated by $W$. Besides this stochastic basis $(\Omega, {\cal F}, \mathbb F, P)$, we consider an arbitrary random time $\tau$ that might not be an $\mathbb F$-stopping time with values in $[0,+\infty)$, and the data-triplet $(f, S, \xi)$. Here $f(t,y,z)$ is a functional  that is random (the driver rate), $\xi$ is an ${\cal F}_{\tau}$-random variable\footnote{Here ${\cal{F}}_{\tau}$ is the $\sigma$-algebra that is generated by $\{X_{\tau}:\ X\quad\mbox{is}\quad {\mathbb F}\mbox{-optional}\}$}, and $S$ is a RCLL process $\mathbb F$-adapted with values in $[-\infty, +\infty)$. Thus, our main goal is to study the following RBSDE:
\begin{eqnarray}\label{RBSDE1}
\begin{cases}
dY_{t}=-f(t,Y_{t},Z_{t})d(t\wedge\tau)-d(K_{t\wedge\tau}+M_{t\wedge\tau})+Z_{t}dW_{t}^{\tau},\quad {Y}_{\tau}=\xi,\\
 Y\geq S\quad\mbox{on}\quad\Lbrack 0,\tau\Lbrack,\quad\mbox{and}\quad E\left[\displaystyle\int_{0}^{\tau}(Y_{t-}-S_{t-})dK_{t}\right]=0.
\end{cases}
\end{eqnarray}
This RBSDE generalizes the works of \cite{Touzi,Popier} to the case where $\tau$ is not a stopping time. \\
This study (for now) concentrate more on addressing the following points:
\begin{enumerate}
\item What are the conditions (the weakest possible)of the data-triplet $(f, S, \xi)$, without further assumption on $\tau$, that guarantee the existence and uniqueness of the solution to this RBSDE?
\item How (\ref{RBSDE1}) can be {\bf explicitly} connected to an RBSDE in $\mathbb F$? We want to explicitly determine the relationship between the two data triplets and between the solutions of the two RBSDEs.
\item How can we estimate --in norm-- the solution $(Y^{\mathbb G},Z^{\mathbb G}, M^{\mathbb{G}},K^{\mathbb{G}})$ in terms of the data-triplet? What are the adequate norms and adequate spaces for both the solution and the data-triplet?
\end{enumerate}  
\subsection{What the literature says about this RBSDE?} 
%%%%%%%%%%%%%%%%%%%%%%%%%%%%%%%%%
It is well known, see \cite{Touzi} for similar discussion, that a BSDE (Backward Stochastic Differential Equation) is an RBSDE with $S\equiv -\infty$ and $K\equiv 0$. Up to our knowledge, the BSDEs were introduced  in \cite{Bismut} with $f$ being linear in the variables $(y,z)$  and $\tau=T$ being a positive fixed constant. However, only after the seminal paper of Pardoux and Peng \cite{Pardoux4} that this class of BSDEs, with $\tau$ being positive fixed constant, got tremendous attention and has been investigated deeply and intensively in many directions. These studies were highly motivated by applications in probabilistic numerical methods and/or the probabilistic representation for  semilinear PDEs, stochastic control, stochastic game theory, theoretical economics and mathematical finance. The huge part of this literature focuses on weakening the Lipschitz property of the coefficient  $f$  with-respect-to the $y$-variable, allowing $\mathbb F$ be more general, and/or weakening the assumption on the barrier process $S$. Only very recently that the novel notion of second order BSDE was introduced in \cite{Cheridito1}, and extended in \cite{SonerRTouziZhang} afterwards, due to its vital role in treating the fully nonlinear PDEs. \\

The first BSDE (or RBSDE) with a random horizon appeared in \cite{Peng91}, where $\tau$ is an $\mathbb F$-stopping time. The author describes how the solution to the class of BSDEs with an unbounded random terminal time $\tau$, that is an $\mathbb F$-stopping time, is related to semilinear elliptic PDE. It is important to mention that  in the case of constant horizon $T$, the solution to the BSDEs are connected to viscosity solutions to a system of semilinear parabolic PDEs, see \cite{PardouxPradeillesRao}  and the references therein for details. Afterwards, this family of RBSDEs with  have been extended in various directions in \cite{BriandConfortola, BriandHu,Darling,Popier,Royer}, and the references therein to cite a few. For the case second order BSDE under random terminal time, that is an $\mathbb F$-stopping time, we refer the reader to the very recent work  \cite{Touzi}. \\

Herein, we address (\ref{RBSDE1}) by letting $\tau$ to be an arbitrary random time and address the main problems aforementioned.  This case is a natural extension of the exiting literature on RDBSEs with random terminal time, and is highly motivated by the two areas of credit risk theory  and life insurance (life market). For the credit risk framework, $\tau$ represent the default time of a firm, while in life insurance it models the death time of an insured, where the mortality  and longevity risks are real challenges for both academia and insurance industry.  Up to our knowledge, all the existing literature treating this class of RBDEs assumes very strong assumption(s) on $\tau$. The most frequent among these,  see \cite{Kharroubi} and the references therein, we cite the case where $W^{\tau}$ should remain a martingale under the enlarged filtration (this case is also known in the literature as the immersion assumption).  \\

\subsection{ Main challenges and  our achievements}
On one hand, the RBSDE (\ref{RBSDE1}) is a {\it natural} extension of  the existing literature on RBSDEs involved with random horizon, see \cite{Touzi} and the references therein to cite a few, to the case where $\tau$ is an arbitrary random time. On the other hand, our hedging and pricing studies in \cite{Choulli5} for some class of informational markets yield to these form of RBSDEs and BSDEs, where the main source of uncertainty is $W^{\tau}$ and the driver $f$ is Lipschitz . \\

%%%%%%%%%%%%%%%%%%%%%%%%%%%%%%%%%%%%%%%%%%%%%%%%%%%%%%%%%%%%%%%%%
The difficulties for addressing (\ref{RBSDE1}) are numerous and challenging. Among these, on the one hand, we mention that $W^{\tau}$ is {\it not} a $\mathbb G$-martingale when $\tau$ is general. This explains why all the literature about BSDE under random horizon, up to our knowledge, assume the immersion assumption on $\tau$, which says that any $\mathbb F$-martingale stopped at $\tau$ remains a $\mathbb G$-martingale. On the other hand, the Burkholder-Davis-Gundy inequalities for martingales, that are really vital in BSDEs and RBSDEs, fail for martingales stopped at $\tau$ that might not be a pseudo-stooping time with respect to $\mathbb F$. In fact, we refer the reader to \cite{Nik2008, Nik2005} for this fact and for the  notion of pseudo-stopping times that is very close to that of immersion. In virtue of the Doob-Meyer decomposition for $W^{\tau}$ under $\mathbb G$, one can think of using the transformation ${\cal T}(W)=W^{\tau}-G_{-}^{-1}I_{\Rbrack0,\tau\Rbrack}\is \langle W, m\rangle $ which is a $\mathbb G$-local martingale. However, this will definitely alters the driver $f(t,y,z)$ of the RBSDE. Precisely,  the process $ZG_{-}^{-1}\is \langle W, m\rangle=:\int_0^{\tau\wedge\cdot}\beta^{(m)}_sZ_sG_{s-}^{-1} ds$ will be transferred to the driver, and this will perturb the Lipschitz conditions and other  features, as the process $\beta^{(m)}$ might not be ``regular" nor ``smooth" enough. Hence, this view does not solve the problem, it makes it very complicated and will lead to assumption on $\tau$. Inspired by \cite{Touzi}  and \cite{Bouchard}, we address (\ref{RBSDE1}) in  two steps. In the first step, we consider the case of bounded horizon and we stop at $T\wedge \tau$ for some $T\in (0,+\infty)$ instead fo $\tau$. For this bounded random horizon case,  thanks to some results of \cite{Choulli1,Choulli2}, we answer fully and in details the main problems aforementioned and beyond. The second step consists of relaxing the boundedness condition on the random horizon  by letting somehow $T$ to go infinity. This rises additional serious challenges.\\

%%%%%%%%%%%%%%%%%%%%%%%%%%%%%%%%%%%%%%%%%%%%%%%%%%%%%%%%%%%%%%%%%%%%%%%%%%%%%%%%%%
Our achievements are numerous at both methodical and conceptual aspects. In fact, besides answering all the aforementioned problems, we prove the following general fact: For any random time $\tau$ having positive Az\'ema supermartingale, there exists a positive and bounded decreasing process --that we call hereafter by discount factor and we denote by ${\widetilde{\cal E}}$-- that is crucial in defining the spaces and norms for both the solution of the RBSDE and the data-triplet. This discount factor is also vital in bridging the RBSDE (\ref{RBSDE1}) with its counter part RBSDE under $\mathbb F$, and cements their solutions as well in a very explicit manner. At the methodical aspect, we elaborate our prior estimates using different method than the existing ones in the literature. Indeed, we directly establish inequalities without distinguishing the cases on $p$, and this is due to some stronger and deeper martingales inequalities of \cite{Choulli4} that we slightly  generalize. Furthermore, our method is robust towards the nature of the filtration $\mathbb F$, and hence our analysis can be extended to setting with jumps without serious difficulties. Some  of these extensions can be found in \cite{Alsheyab}, while herein we restrict to the Brownian filtration $\mathbb F$ for the sake of keeping the setting accessible to a broad audience, and to avoid overshadowing the main ideas with technicalities related to the general setting.\\
 %%%%%%%%%%%%%%%%%%%%%%%%%%%%%%%%%%%%%%%%%%%%%%%%%%%%%%%%%%%%%%%%%%%%%%%%%%%%%%%%%
 
 This paper has seven sections including the current one. The second section defines the mathematical model and its preliminaries such as the norms used for the RBSDEs and some vital results on enlargement of filtration $\mathbb F$ with $\tau$ and on martingales for the enlarged filtration. The third section addresses the optimal stopping problem and the Snell  envelop under stopping with $\tau$. This is vital as we know the Snell envelop is intimately related to linear RBSDE. The fourth and fifth sections are devoted to the linear RBSDEs depending whether we stop the RBSDE at $\tau\wedge{T}$ for some fixed planning horizon $T\in (0,+\infty)$, or we stop at $\tau$. The sixth and seventh sections deal with the general RBSDE  (\ref{RBSDE1}), and here again we distinguish the cases when we stop at   $\tau\wedge{T}$ or $\tau$. The paper has Appendixes where we recall some crucial results and/or prove our technical lemmas.
 
 %%%%%%%%%%%%%%%%%%%%%%%%%%%%%%%%%%%%%%%%%%%%%%%%%%%%%%%%%%%%%%%%%%%%%%%%%%%% 
\section{The mathematical setting and notation}

This section defines the notations, the financial and the mathematical concepts that the paper addresses or uses, the mathematical model that we focus on,  and some useful existing results. 
Throughout the paper, we consider the complete probability space  $\left(\Omega, {\cal F}, P\right)$. By  ${\mathbb H}$ we denote an arbitrary  filtration that satisfies the usual conditions of completeness and right continuity.  For any process $X$, the $\mathbb H$-optional projection and  the $\mathbb H$-predictable projection, when they exist, will be denoted by $^{o,\mathbb H}X$  and $^{p,\mathbb H}X$ respectively. The set ${\cal M}(\mathbb H, Q)$ (respectively  ${\cal M}^{p}(\mathbb H, Q)$ for $p\in (1,+\infty)$) denotes the set of all $\mathbb H$-martingales (respectively $p$-integrable martingales) under $Q$, while ${\cal A}(\mathbb H, Q)$ denotes the set of all $\mathbb H$-optional processes that are right-continuous with left-limits (RCLL for short) with integrable variation under $Q$. When $Q=P$, we simply omit the probability for the sake of simple notations.  For an $\mathbb H$-semimartingale $X$, by $L(X,\mathbb H)$ we denote the set of $\mathbb H$-predictable processes that are $X$-integrable in the semimartingale sense.  For $\varphi\in L(X,\mathbb H)$, the resulting integral of $\varphi$ with respect to $X$ is denoted by $\varphi\is X$. For $\mathbb H$-local martingale $M$, we denote by $L^1_{loc}(M,\mathbb H)$ the set of $\mathbb H$-predictable processes $\varphi$ that are $X$-integrable and the resulting integral $\varphi\is M$ is an $\mathbb H$-local martingale. If ${\cal C}(\mathbb H)$ is a set of processes that are adapted to $\mathbb H$, then ${\cal C}_{loc}(\mathbb H)$ is the set of processes, $X$, for which there exists a sequence of $\mathbb H$-stopping times, $(T_n)_{n\geq 1}$, that increases to infinity and $X^{T_n}$ belongs to ${\cal C}(\mathbb H)$, for each $n\geq 1$.  The $\mathbb H$-dual optional projection and the $\mathbb H$-dual predictable projection of a process $V$ with finite variation, when they exist, will be denoted by  $V^{o,\mathbb H}$  and $V^{p,\mathbb H}$ respectively. For any real-valued $\mathbb H$-semimartingale, $L$, we denote by ${\cal E}(L)$ the Dol\'eans-Dade (stochastic) exponential. It is the unique solution to the stochastic differential equation $dX=X_{-}dL,\ X_0=1,$  and is given by
\begin{eqnarray}\label{DDequation}
 {\cal E}_t(L)=\exp\left(L_t-L_0-{1\over{2}}\langle L^c\rangle_t\right)\prod_{0<s\leq t}(1+\Delta L_s)e^{-\Delta L_s}.\end{eqnarray}
 
Throughout the paper, on $\left(\Omega, {\cal F}, P\right)$, we consider a standard Brownian motion $W=(W_t)_{t\geq 0}$, and its natural filtration $\mathbb F:=({\cal F}_t)_{t\geq 0}$ that satisfies the usual conditions of right continuity and completeness. On $\Omega\times [0,+\infty)$, we consider the $\mathbb F$-optional  $\sigma$-field  denoted by ${\cal O}(\mathbb F)$ and  the $\mathbb F$-progressive  $\sigma$-field denoted by $\mbox{Prog}(\mathbb F)$ (i.e., a process $X$ is said to be $\mathbb F$-progresssive if $X$, as a map on $\Omega\times [0,t]$, is ${\cal F}_t\otimes{\cal B}(\mathbb R)$-measurable, for any $t\in (0,+\infty)$, where ${\cal B}(\mathbb R)$ is the Borel $\sigma$-field on $\mathbb R$). 
%%%%%%%%%%%%%%%%%%%%%%%%%%%%%%%%%%%%%%%%%%%%%%%%%%%%%%%%%%%%%%%%%%%%%%%%%%
\subsection{RBSDEs: Definition, spaces and norms}
Throughout this subsection we suppose given a complete filtered probability space $\left(\Omega, {\cal F}, \mathbb H=({\cal H}_t)_{t\geq 0}, Q\right)$, where $\mathbb H\supseteq{\mathbb F}$ and $Q$ is any probability measure absolutely continuous with respect to $P$. The following definition of RBSDEs is borrowed from \cite[Definition 2.1]{Briandetal}. 
\begin{definition}\label{Definition-RBSDE} Let $\sigma$ be an $\mathbb H$-stopping time, and $(f^{\mathbb H},S^{\mathbb H},\xi^{\mathbb H})$ be  a triplet such that $f^{\mathbb H}$ is $\mbox{Prog}(\mathbb H)\otimes{\cal B}(\mathbb R)\otimes{\cal B}(\mathbb R)$-measurable functional, $S^{\mathbb H}$ is a RCLL and $\mathbb H$-adapted process, and $\xi^{\mathbb H}$ is an ${\cal H}_{\sigma}$-measurable random variable. Then an $(\mathbb H, Q)$-solution to the following RBSDE
\begin{eqnarray}\label{RBSDE4definition}
\begin{cases}
dY_t=-f^{\mathbb H}(t,Y_t,Z_t)I_{\{t\leq\sigma\}}dt+Z_t dW_{t\wedge\sigma}-dM_t-dK_t,\quad Y_{\sigma}=\xi^{\mathbb H},\\
 \displaystyle{Y}\geq S^{\mathbb H}\ \mbox{on}\ \Lbrack0,\sigma\Lbrack,\quad \int_0^{\sigma}(Y_{u-}-S_{u-}^{\mathbb H})dK_u=0\quad P\mbox{-a.s.}.\end{cases}
\end{eqnarray}
is any quadruplet $(Y^{\mathbb H}, Z^{\mathbb H},M^{\mathbb H},K^{\mathbb H})$ satisfying (\ref{RBSDE4definition}) such that $M^{\mathbb H}\in {\cal M}_{0,loc}(Q,\mathbb H)$, $K^{\mathbb H}$ is a RCLL nondecreasing and $\mathbb H$-predictable, and
\begin{eqnarray}\label{Condition1}
\int_0^{\sigma}\left( (Z_t^{\mathbb H})^2+\vert{f^{\mathbb H}}(t,Y_t^{\mathbb H},Z_t^{\mathbb H})\vert\right) dt<+\infty\quad Q\mbox{-a.s.}
\end{eqnarray}
When $Q=P$ we will simply call the quadruplet an $\mathbb H$-solution, while the filtration is also omitted when there no risk of confusion. 
\end{definition}
In this paper, we are interested in solutions that are integrable somehow. To this end, we recall the following spaces and norms that will be used throughout the paper. We denote by $\mathbb{L}^{p}(Q)$ is the space of $\mathcal{F}$-measurable random variables $\xi'$, such that
\begin{equation*}
\parallel \xi' \parallel_{\mathbb{L}^{p}(Q)}^{p}:=E^{Q}\left[|\xi' | ^{p}\right ]<\infty .
\end{equation*}
$\mathbb{D}_{\sigma}(Q,p)$ is the space of  RCLL  and ${\cal F}\otimes{\cal B}(\mathbb R^+)$-measurable processes, $Y$, such that $Y=Y^{\sigma}$ and
\begin{equation*}
\Vert{Y }\Vert_{\mathbb{D}_{\sigma}(Q,p)}^{p}:=E^{Q}\left[\sup_{0\leq {t}\leq\sigma}\vert{Y_t}\vert^p\right ]<\infty.
\end{equation*}
Here ${\cal B}(\mathbb R^+)$ is the Borel $\sigma$-field of $\mathbb R^+$. $\mathbb{S}_{\sigma}(Q,p)$ is the space of $\mbox{Prog}(\mathbb H)$-measurable processes $Z$ such that $Z=Z^{\sigma}$ and 
\begin{equation*}
\Vert{Z}\Vert_{\mathbb{S}_{\sigma}(Q,p)}^{p}:=E^{Q}\left[\left (\int_{0}^{\sigma}\vert{ Z_t}\vert ^{2}dt\right )^{{p}/{2}}\right ]<\infty .
\end{equation*}
 For any $M\in {\cal M}_{loc}(Q,\mathbb H)$, we define its $p$-norm by 
 \begin{equation*}
\Vert {M} \Vert_{{\cal M}^p(Q)}^{p}:=E^{Q}\left[[M, M]_{\infty} ^{p/2}\right ]<\infty,\end{equation*}
and the $p$-norm of any $K\in {\cal A}_{loc}(Q,\mathbb H)$ is given by 
\begin{eqnarray*}
 \Vert{K}\Vert_{{\cal{A}}(Q,p)}^p:=E^Q\left[\left(\mbox{Var}_{\infty}(K)\right)^p\right].\end{eqnarray*}
 Herein and throughout the paper, Var$(K)$ denotes the total variation process of $K$, and ${\cal A}^p(Q,\mathbb H)$ is the set of $K\in {\cal A}_{loc}(Q,\mathbb H)$ such that $ \Vert{K}\Vert_{{\cal{A}}(Q,p)}<+\infty$.
\begin{definition}\label{RBSDE4Lp} Let $p\in (1,+\infty)$. An $L^p(Q, \mathbb H)$-solution for (\ref{RBSDE4definition}) is a $(Q,\mathbb H)$-solution  $(Y, Z,M,K)$  that belongs to 
$
\mathbb{D}_{\sigma}(Q,p)\otimes \mathbb{S}_{\sigma}(Q,p)\otimes{\cal M}^p(Q,\mathbb H)\otimes{\cal A}^p(Q,\mathbb H).$\end{definition}

\subsection{The random horizon and the progressive enlargement of $\mathbb F$}
In addition to this initial model $\left(\Omega, {\cal F}, \mathbb F,P\right)$, we consider an arbitrary random time, $\tau$, that might not be an $\mathbb F$-stopping time. This random time is parametrized though $\mathbb F$ by the pair $(G, \widetilde{G})$, called survival probabilities or Az\'ema supermartingales, and is given by
\begin{eqnarray}\label{GGtilde}
G_t :=^{o,\mathbb F}(I_{\Rbrack0,\tau\Rbrack})_t= P(\tau > t | {\cal F}_t) \ \mbox{ and } \ \widetilde{G}_t :=^{o,\mathbb F}(I_{\Rbrack0,\tau\Lbrack})_t= P(\tau \ge t | {\cal F}_t).\end{eqnarray}
Furthermore, the following process
\begin{equation} \label{processm}
m := G + D^{o,\mathbb F},
\end{equation}
is a BMO $\mathbb F$-martingale and play important role in the analysis of enlargement of filtration. The flow of information that incorporate both $\tau$ and $\mathbb{F}$ defined using the pair   $(D,\mathbb G)$ given by
\begin{equation}\label{processD}
D:=I_{\Rbrack\tau,+\infty\Rbrack},\ \mathbb G:=({\cal G}_t)_{t\geq 0},\ {\cal G}_t:={\cal G}^0_{t+}\ \mbox{with} \ {\cal G}_t^0:={\cal F}_t\vee\sigma\left(D_s,\ s\leq t\right).
\end{equation}
Thanks to  \cite[Theorem 3]{ACJ} and \cite[Theorem 2.3 and Theorem 2.11]{ChoulliDavelooseVanmaele}, we recall 
\begin{theorem}\label{Toperator} The following assertions hold.\\
{\rm{(a)}} For any $M\in{\cal M}_{loc}(\mathbb F)$,  the process
\begin{equation} \label{processMhat}
{\cal T}(M) := M^\tau -{\widetilde{G}}^{-1} I_{\Lbrack 0,\tau\Lbrack} \is [M,m] +  I_{\Lbrack 0,\tau\Lbrack} \is\Big(\sum \Delta M I_{\{\widetilde G=0<G_{-}\}}\Big)^{p,\mathbb F}\end{equation}
 is a $\mathbb G$-local martingale.\\
 {\rm{(b)}}  The process 
\begin{equation} \label{processNG}
N^{\mathbb G}:=D - \widetilde{G}^{-1} I_{\Lbrack 0,\tau\Lbrack} \is D^{o,\mathbb  F}
\end{equation}
is a $\mathbb G$-martingale with integrable variation. Moreover, $H\is N^{\mathbb G}$ is a $\mathbb G$-local martingale with locally integrable variation for any $H$ belonging to
\begin{equation} \label{SpaceLNG}
{\mathcal{I}}^o_{loc}(N^{\mathbb G},\mathbb G) := \Big\{K\in \mathcal{O}(\mathbb F)\ \ \big|\quad \vert{K}\vert G{\widetilde G}^{-1} I_{\{\widetilde{G}>0\}}\is D\in{\cal A}_{loc}(\mathbb G)\Big\}.
%(i.e.\mbox{ it belongs to }\ {\mathcal A}^+({\mathbb G})
\end{equation}
\end{theorem}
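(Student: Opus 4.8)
The plan is to read both assertions as statements about the progressively enlarged filtration $\mathbb{G}=\mathbb{F}\vee\sigma(D)$ and to reduce each claimed $\mathbb{G}$-(local) martingale property to an identity between $\mathbb{F}$-projections, so that beyond localization nothing more than the defining properties of the optional projection and of the dual optional projection is required. Before anything else I would record the elementary facts used throughout: the projections ${}^{o,\mathbb{F}}(I_{\Rbrack0,\tau\Rbrack})=G$ and ${}^{o,\mathbb{F}}(I_{\Rbrack0,\tau\Lbrack})=\widetilde G$ with $G\le\widetilde G$; the defining identity $E[\int_0^{\infty}H_s\,dD^{o,\mathbb{F}}_s]=E[H_\tau]$ for nonnegative $\mathbb{F}$-optional $H$; and the jump relations $\widetilde G-G=\Delta D^{o,\mathbb{F}}$ and $\widetilde G=G_{-}+\Delta m$, both immediate from $m=G+D^{o,\mathbb{F}}$. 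The remaining structural input is the standard enlargement principle that on $\Lbrack0,\tau\Lbrack$ the $\mathbb{G}$- and $\mathbb{F}$-predictable $\sigma$-fields coincide, so that a $\mathbb{G}$-adapted RCLL process whose finite-variation part is carried by $\Lbrack0,\tau\Rbrack$ is a $\mathbb{G}$-local martingale once its single jump across $\tau$ and its conditional increments on $\{t<\tau\}$ are under control.

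I would prove assertion (b) first, as it is the simpler building block and is re-used in (a). For the integrable variation, note $\mathrm{Var}(N^{\mathbb{G}})\le 1+\widetilde G^{-1}I_{\Lbrack0,\tau\Lbrack}\is D^{o,\mathbb{F}}$; taking expectations, replacing the non-adapted indicator by its $\mathbb{F}$-optional projection $G$ inside the $dD^{o,\mathbb{F}}$-integral, and using $G\le\widetilde G$, bounds the total variation by $E[D^{o,\mathbb{F}}_\infty]=P(\tau<\infty)\le1$ --- incidentally this is exactly why the factor $G\widetilde G^{-1}$ (rather than $\widetilde G^{-1}$) appears in the definition of $\mathcal{I}^o_{loc}(N^{\mathbb{G}},\mathbb{G})$. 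For the martingale property I would compute $E[N^{\mathbb{G}}_T-N^{\mathbb{G}}_t\mid\mathcal{G}_t]$ by splitting on $\{\tau\le t\}$ and $\{\tau>t\}$: on the former both $D$ and the integral are already frozen, while on the latter the conditional distribution of $\tau$ is governed by $G$ and $D^{o,\mathbb{F}}$, and the factor $\widetilde G^{-1}$ is precisely what makes the compensating integral cancel the conditional expectation of the unit jump $\Delta D_\tau$.

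For assertion (a) I would first localize so that $M$ is a bounded (or $\mathcal{H}^1$) $\mathbb{F}$-martingale, which turns every bracket and dual projection below into a genuine integrable process; the general case then follows along a sequence of $\mathbb{F}$-stopping times. The heart of the matter is to show $\mathcal{T}(M)$ is a $\mathbb{G}$-martingale. I would start from the $\mathbb{G}$-semimartingale decomposition of the stopped process $M^{\tau}$ and identify its $\mathbb{G}$-finite-variation part; integration by parts for $M$ against $I_{\Lbrack0,\tau\Rbrack}$ together with the conditioning formulas of the previous step produces a finite-variation correction, and the claim is that this correction equals $\widetilde G^{-1}I_{\Lbrack0,\tau\Lbrack}\is[M,m]-I_{\Lbrack0,\tau\Lbrack}\is(\sum\Delta M\,I_{\{\widetilde G=0<G_{-}\}})^{p,\mathbb{F}}$. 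Matching the continuous and the jump contributions separately, and using $\widetilde G=G_{-}+\Delta m$ to turn increments of $M$ across $\tau$ into increments of $[M,m]$, would close the computation. The final ``moreover'' part then follows from (b) by stability of stochastic integration: for $H\in\mathcal{I}^o_{loc}(N^{\mathbb{G}},\mathbb{G})$ the process $H\is N^{\mathbb{G}}=H\is D-H\widetilde G^{-1}I_{\Lbrack0,\tau\Lbrack}\is D^{o,\mathbb{F}}$ has locally integrable variation --- which is exactly what the defining condition $|H|G\widetilde G^{-1}I_{\{\widetilde G>0\}}\is D\in\mathcal{A}_{loc}(\mathbb{G})$ encodes --- and the second term is identified as its $\mathbb{G}$-dual predictable projection, so the difference is a $\mathbb{G}$-local martingale.

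I expect the genuine obstacle to lie in the general-$\tau$ analysis of (a). In the classical immersion or Jacod setting one simply recovers the Jeulin--Yor drift $G_{-}^{-1}I_{\Lbrack0,\tau\Rbrack}\is\langle M,m\rangle$ built from the \emph{predictable} bracket, whereas for an arbitrary random time one is forced to work with the \emph{optional} bracket $[M,m]$ and to account carefully for the exceptional set $\{\widetilde G=0<G_{-}\}$, on which $\tau$ may occur while $\widetilde G$ has already vanished and the naive $\widetilde G^{-1}[M,m]$ drift breaks down. Controlling the jumps of $M$ over this set and verifying that their $\mathbb{F}$-dual predictable projection is exactly the correction restoring the $\mathbb{G}$-martingale property is the delicate point, and it is where the finer decomposition results underlying \cite{ACJ} and \cite{ChoulliDavelooseVanmaele} would be invoked.
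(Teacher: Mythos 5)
A preliminary but important observation: the paper does not prove Theorem \ref{Toperator} at all. It is recalled verbatim from \cite[Theorem 3]{ACJ} and \cite[Theorems 2.3 and 2.11]{ChoulliDavelooseVanmaele}, so there is no in-paper argument to measure your attempt against; it has to stand on its own. On that basis, your treatment of the first half of assertion (b) is essentially sound: the variation bound via the dual optional projection identity $E[\int_0^{\infty}H_s\,dD^{o,\mathbb F}_s]=E[H_{\tau}I_{\{\tau<\infty\}}]$ and the Jeulin-type conditional-expectation computation for the martingale property of $N^{\mathbb G}$ are the standard route and would go through if written out.

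The genuine gap is in assertion (a). Everything is reduced to the claim that the $\mathbb G$-finite-variation part of $M^{\tau}$ equals ${\widetilde G}^{-1}I_{\Lbrack0,\tau\Lbrack}\is[M,m]-I_{\Lbrack0,\tau\Lbrack}\is\bigl(\sum\Delta M\,I_{\{\widetilde G=0<G_{-}\}}\bigr)^{p,\mathbb F}$, but that identity is never derived: the sentence ``integration by parts \dots produces a finite-variation correction, and the claim is that this correction equals \dots'' restates the theorem rather than proving it, and you then explicitly defer the decisive point (the behaviour on $\{\widetilde G=0<G_{-}\}$ and the appearance of the optional bracket $[M,m]$ with weight ${\widetilde G}^{-1}$ in place of the classical Jeulin--Yor drift $G_{-}^{-1}I_{\Rbrack0,\tau\Rbrack}\is\langle M,m\rangle$) to the very references the theorem is quoted from. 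The entire content of (a) lives in that duality computation --- testing $M^{\tau}$ against bounded $\mathbb F$-predictable integrands, passing $I_{\Rbrack0,\tau\Rbrack}$ through the martingale integral via dual projections, and checking that the compensating term $\bigl(\sum\Delta M I_{\{\widetilde G=0<G_{-}\}}\bigr)^{p,\mathbb F}$ exactly absorbs the mass where ${\widetilde G}^{-1}$ degenerates --- and none of it is carried out. A secondary but real issue is the last claim of (b): since $H$ is only $\mathbb F$-optional (hence not $\mathbb G$-predictable), ``stability of stochastic integration'' does not apply; one must separately establish that ${\widetilde G}^{-1}HI_{\Lbrack0,\tau\Lbrack}\is D^{o,\mathbb F}$ compensates $H\is D$ in $\mathbb G$ for optional integrands in ${\mathcal I}^o_{loc}(N^{\mathbb G},\mathbb G)$, which is again precisely the content of the cited \cite[Theorem 2.11]{ChoulliDavelooseVanmaele} and not a consequence of (b)'s first statement.
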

%% $G$ is a positive process if and only if $ G_{-}$ is a positive process, or equivalently  if and only if  $\widetilde G$ is a positive process.\\
 %%%Some useful and immediate consequences of Lemma \ref{Gdecomposition}-(a), about the ${\cal T}$ransformation operator ${\cal T}$ 
  %\footnote{This operator can be defined, more generally, on the vector space of $\mathbb F$-semimartingales ${\cal S}(\mathbb F)$, and it satisfies ${\cal T}({\cal M}_{loc}(\mathbb F))\subset{\cal M}_{loc}(\mathbb G)$ as Theorem \ref{Toperator}-(a) claims. However, in general, ${\cal T}(M)$ is not the local martingale part in the $\mathbb G$-canonical decomposition of $M^{\tau}$.} 
 %%% defined in (\ref{processMhat}), are given in the following remark. 
%%%%%%%%%%%%%%%%%%%%%%%%%%%%%%%%%%%%%%%%%%%%%%%%%%%%%%%%%%%
%%%\begin{remark}\label{Remarks4T(M)}Suppose that $G>0$. Then the following assertions hold.\\
%%{\rm{(a)}} For any $\mathbb F$-semimartingale $X$, we get 
%%\begin{eqnarray}\label{T(X)}
%%{\cal T}(X)=X^{\tau}-{\widetilde G}^{-1}I_{\Lbrack0,\tau\Lbrack}\is [X, m].\end{eqnarray}
%%{\rm{(b)}} For a pair of $\mathbb F$-semimartingales $(X, Y)$, due to $\Delta m=\widetilde G-G_{-}$, we have 
%%%\begin{eqnarray}\label{[X, T(Y)]}
%%[{\cal T}(X), Y]=[X, {\cal T}(Y)]={{G_{-}}\over{\widetilde G}}\is [X, Y]^{\tau},\quad \Delta {\cal T}(X)={{G_{-}}\over{\widetilde G}}\Delta X I_{\Lbrack0,\tau\Lbrack}.\end{eqnarray}
%%%\end{remark}
%%%%%%For these properties in general (i.e., without the assumption $G>0$) and other related results and their applications as well, we refer the reader to \cite{ChoulliDeng2020}.
 For any $q\in [1,+\infty)$ and a $\sigma$-algebra ${\cal H}$ on $\Omega\times [0,+\infty)$, we define
\begin{equation}\label{L1(PandD)Local}
L^q\left({\cal H}, P\otimes dD\right):=\left\{ X\ {\cal H}\mbox{-measurable}:\quad \E[\vert X_{\tau}\vert^q I_{\{\tau<+\infty\}}]<+\infty\right\}.\end{equation}
%%%%%%%%%%%%%%%%%%%%%%%%%%%%%%%%%%%%%%%%%%%%%%%%%%%%%%
%%%%%%%%%%%%%%%%%%%%%%%%%%%%%%%%%%%%%%%%%%%%%%%%%%%%%%
%%Now, we recall an important representation theorem of Choulli et al. (2017) to the case of $\mathbb G$-local martingales when the process $G$ never vanishes.
%%\begin{theorem}\label{theo4MartingaleDecomposGeneral} Suppose that $G>0$.  Then for any $\mathbb G$-local martingale  $M^{\mathbb G}$,  there exists a unique triplet $(M^{\mathbb F},\varphi^{(o)}, \varphi^{(pr)}) $ that belongs to $ {\cal M}_{0,loc}(\mathbb F)\times {\cal I}^o_{loc}\left(N^{\mathbb G},\mathbb G\right)\times  L^1_{loc}\left(\widetilde\Omega, {\rm{Prog}}(\mathbb F), P\otimes D\right)$  and satisfies
%%% \begin{eqnarray}\label{Condition1}
%%\E\left[\varphi^{(pr)}_{\tau}\ \big|\ {\cal F}_{\tau}\right]I_{\{\tau<+\infty\}}=0,\ \ P-a.s.,
%%\end{eqnarray}
%%%and
%%%
%%%\begin{equation}\label{MartingaleDecomposGeneral}
%%\left(M^{\mathbb G}\right)^{\tau}=M^{\mathbb G}_0+G_{-}^{-2}I_{\Rbrack 0,\tau\Rbrack}\is{\cal T}( M^{\mathbb F})
%%%+\varphi^{(o)}\is N^{\mathbb G}+\varphi^{(pr)}\is D.\end{equation}
%%%\end{theorem}

\begin{lemma} \label{G-projection}For any nonnegative or integrable process $X$, we always have 
\begin{equation}\label{converting}E\left[X_{t}|\mathcal{G}_{t}\right]I_{\{t\ <\tau\}}={E\left[X_{t}I_{\{t\ <\tau\}}|\mathcal{F}_{t}\right]}G_t^{-1}I_{\{t\ <\tau\}}.
\end{equation}
%%{\rm{(b)}}  For any  $\mathbb{G}$-optional process, $X^{\mathbb{G}}$, there exists and $\mathbb{F}$-optional process $X^{\mathbb{F}}$ such that \begin{eqnarray}\label{optional}
%%X^{\mathbb{G}}I_{\Lbrack0,\tau\Lbrack}=X^{\mathbb{F}}I_{\Lbrack0,\tau\Lbrack}.\end{eqnarray}
\end{lemma}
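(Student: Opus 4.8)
The plan is to reduce the identity to the defining property of conditional expectation together with a structural description of $\mathcal{G}_t$ on the event $\{t<\tau\}$. First I would record two elementary reductions. Since $\{t<\tau\}=\{D_t=0\}\in\mathcal{G}^0_t\subseteq\mathcal{G}_t$, the indicator $I_{\{t<\tau\}}$ is $\mathcal{G}_t$-measurable, so it can be pulled inside the conditional expectation to give $E[X_t\mid\mathcal{G}_t]I_{\{t<\tau\}}=E[X_tI_{\{t<\tau\}}\mid\mathcal{G}_t]$; thus it suffices to identify the latter. Moreover, by the very definition of $G$ in (\ref{GGtilde}) we have $E[I_{\{t<\tau\}}\mid\mathcal{F}_t]=G_t$, whence $E[I_{\{t<\tau\}}I_{\{G_t=0\}}]=E[G_tI_{\{G_t=0\}}]=0$. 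Therefore $G_t>0$ $P$-a.s. on $\{t<\tau\}$, and since the right-hand side carries the factor $I_{\{t<\tau\}}$, the expression $E[X_tI_{\{t<\tau\}}\mid\mathcal{F}_t]G_t^{-1}I_{\{t<\tau\}}=:R$ is well defined and $\mathcal{G}_t$-measurable.

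The key step is the structural lemma: for every bounded $\mathcal{G}_t$-measurable $V$ there is a bounded $\mathcal{F}_t$-measurable $U$ with $VI_{\{t<\tau\}}=UI_{\{t<\tau\}}$. I would prove this first for the smaller field $\mathcal{G}^0_t=\mathcal{F}_t\vee\sigma(D_s,\ s\le t)$ by a monotone-class argument on the $\pi$-system $\{A\cap C:\ A\in\mathcal{F}_t,\ C\in\sigma(D_s,\ s\le t)\}$: since $D_s=I_{\{\tau\le s\}}$, for $s\le t$ one has $\{\tau\le s\}\cap\{t<\tau\}=\emptyset$ and $\{\tau>s\}\cap\{t<\tau\}=\{t<\tau\}$, so each such $C$ has trace $\emptyset$ or $\{t<\tau\}$ on $\{t<\tau\}$, and thus every generator reduces there to an $\mathcal{F}_t$-set. \emph{The main obstacle} is upgrading this from $\mathcal{G}^0_t$ to $\mathcal{G}_t=\mathcal{G}^0_{t+}$: a set $B\in\mathcal{G}_t$ lies in $\mathcal{G}^0_s$ for every $s>t$, and one must argue that its trace on $\{t<\tau\}$ remains $\mathcal{F}_t$-measurable. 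Here I would invoke the right-continuity and completeness of $\mathbb F$ (this is precisely the standard closure property underlying the progressive-enlargement results quoted above, cf. \cite{ACJ,ChoulliDavelooseVanmaele}) to pass to the limit as $s\downarrow t$; this right-continuity passage is the delicate point of the whole proof.

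Granting the structural lemma, I would verify that $R=E[X_tI_{\{t<\tau\}}\mid\mathcal{G}_t]$ by checking $E[X_tI_{\{t<\tau\}}V]=E[RV]$ for all bounded $\mathcal{G}_t$-measurable $V$. Splitting $V=VI_{\{t<\tau\}}+VI_{\{t\ge\tau\}}$, the part on $\{t\ge\tau\}$ contributes $0$ to both sides because $X_tI_{\{t<\tau\}}$ and $R$ both vanish there; on $\{t<\tau\}$ I replace $V$ by the $\mathcal{F}_t$-measurable $U$ and compute, conditioning on $\mathcal{F}_t$ and using $E[I_{\{t<\tau\}}\mid\mathcal{F}_t]=G_t$, that $E[RUI_{\{t<\tau\}}]=E[E[X_tI_{\{t<\tau\}}\mid\mathcal{F}_t]G_t^{-1}U\,G_t]=E[E[X_tI_{\{t<\tau\}}\mid\mathcal{F}_t]U]=E[X_tI_{\{t<\tau\}}U]=E[X_tI_{\{t<\tau\}}V]$, where the factor $G_t^{-1}G_t$ causes no trouble since $E[X_tI_{\{t<\tau\}}\mid\mathcal{F}_t]$ also vanishes on $\{G_t=0\}$. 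This establishes the claim for bounded $X_t$. Finally I would extend to arbitrary nonnegative $X$ by monotone convergence, and to integrable $X$ by linearity together with dominated convergence, neither of which needs further structural input.
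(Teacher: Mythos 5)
Your proof is correct, and it is worth noting that the paper itself does not prove this lemma at all: it is recalled as a classical fact of progressive enlargement (it is the standard ``key lemma'' of Jeulin and Dellacherie--Meyer, cf.\ \cite{Jeulin1980,DellacherieMeyer80}), so there is no in-paper argument to compare against. What you supply is a self-contained derivation from first principles, and the skeleton is exactly the classical one: reduce to $E[X_tI_{\{t<\tau\}}\mid\mathcal{G}_t]$, observe $G_t>0$ a.s.\ on $\{t<\tau\}$ so the right-hand side is well defined, establish that every bounded $\mathcal{G}_t$-measurable $V$ coincides on $\{t<\tau\}$ with a bounded $\mathcal{F}_t$-measurable $U$, and then verify the defining identity of conditional expectation by conditioning on $\mathcal{F}_t$ and using $E[I_{\{t<\tau\}}\mid\mathcal{F}_t]=G_t$; the handling of $\{G_t=0\}$ and the monotone/dominated convergence extensions are all in order. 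The one place you leave genuinely unexecuted is the passage from $\mathcal{G}^0_t$ to $\mathcal{G}_t=\mathcal{G}^0_{t+}$, which you correctly flag as the delicate point. It does close along the lines you indicate: pick $s_n\downarrow t$, obtain bounded $U_n\in\mathcal{F}_{s_n}$ with $VI_{\{s_n<\tau\}}=U_nI_{\{s_n<\tau\}}$, and set $U:=\limsup_n U_n$, which belongs to $\bigcap_m\mathcal{F}_{s_m}=\mathcal{F}_{t+}=\mathcal{F}_t$ by right-continuity of $\mathbb F$ and agrees with $V$ on $\{t<\tau\}=\bigcup_n\{s_n<\tau\}$ because $U_n=V$ on $\{s_m<\tau\}$ for all $n\geq m$. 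With that two-line completion your argument is a full proof, and arguably a useful addition given that the paper leans on this identity repeatedly (e.g.\ in the proof of Theorem \ref{SnellEvelopG2F}) while only stating it.
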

Throughout the paper, we assume the following assumption 
\begin{eqnarray}\label{Assumption4Tau}
 G>0\quad (\mbox{i.e., $G$ is a positive process) and}\quad 0<\tau<+\infty\quad P\mbox{-a.s.}.
\end{eqnarray}
Now, we recall \cite[Proposition 4.3]{Choulli1} that will be useful throughout the paper.
\begin{proposition}Suppose that $G>0$ and consider the process
 \begin{equation}\label{Ztilde}
\widetilde{Z}:=1/{\cal E}(G_{-}^{-1}\is m).
\end{equation}
Then the following assertions hold.\\
{\rm{(a)}} The process $\widetilde{Z}^{\tau}$ is a $\mathbb G$-martingale, and for any $T\in (0,+\infty)$, $\widetilde{Q}_T$ given by 
 \begin{equation}\label{Qtilde}
 \frac{d{\widetilde{Q}_T}}{dP}:=\widetilde{Z}_{T\wedge\tau}.
\end{equation}
is well defined probability measure on ${\cal G}_{\tau\wedge T}$.\\
{\rm{(a)}} For any  $M\in {\cal M}_{loc}(\mathbb F)$, we have $M^{T\wedge \tau}\in {\cal M}_{loc}(\mathbb G, \widetilde{Q})$.  In particular $W^{T\wedge\tau}$ is a Brownian motion for $(\widetilde{Q}, \mathbb G)$, for any $T\in (0,+\infty)$.
\end{proposition}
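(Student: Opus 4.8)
The plan is to treat $\widetilde{Z}^{\tau}$ as a stochastic exponential and to read off both its $\mathbb{G}$-martingale property and the associated change of measure from the decomposition \eqref{processMhat} of Theorem~\ref{Toperator}. First I would record that under the standing assumption $G>0$ one has $\widetilde{G}=G_{-}+\Delta m>0$ (combine \eqref{processm} and \eqref{GGtilde}), so that $1+G_{-}^{-1}\Delta m=\widetilde{G}/G_{-}>0$; hence ${\cal E}(G_{-}^{-1}\is m)$ is strictly positive and $\widetilde{Z}$ in \eqref{Ztilde} is a well-defined positive $\mathbb{F}$-adapted RCLL process with $\widetilde{Z}_{0}=1$. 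Using the reciprocal-exponential identity I would then write $\widetilde{Z}={\cal E}(L')$ with $L'=-G_{-}^{-1}\is m+G_{-}^{-2}\is\langle m^{c}\rangle+\sum(\Delta m)^{2}(G_{-}\widetilde{G})^{-1}$, the last term coming from $1+\Delta L'=(1+G_{-}^{-1}\Delta m)^{-1}$.

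The core of assertion (a) is that the drift of $L'$ disappears once we stop at $\tau$ and replace $m^{\tau}$ by its $\mathbb{G}$-canonical decomposition. Since $G>0$ kills the jump term in \eqref{processMhat}, Theorem~\ref{Toperator} gives that ${\cal T}(m)=m^{\tau}-\widetilde{G}^{-1}I_{\Lbrack0,\tau\Lbrack}\is[m,m]$ is a $\mathbb{G}$-local martingale. Substituting $m^{\tau}={\cal T}(m)+\widetilde{G}^{-1}I_{\Lbrack0,\tau\Lbrack}\is[m,m]$ into $(L')^{\tau}$ and splitting $[m,m]=\langle m^{c}\rangle+\sum(\Delta m)^{2}$, I expect the two continuous corrections to cancel (on the support of $\langle m^{c}\rangle$ one has $\widetilde{G}=G_{-}$) and the two pure-jump corrections to cancel as well, leaving $(L')^{\tau}=-G_{-}^{-1}\is{\cal T}(m)$. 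Consequently $\widetilde{Z}^{\tau}={\cal E}(-G_{-}^{-1}\is{\cal T}(m))$ is a positive $\mathbb{G}$-local martingale, hence a $\mathbb{G}$-supermartingale with $\widetilde{Z}_{0}=1$.

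To upgrade this to a genuine $\mathbb{G}$-martingale --- and thereby to make $\widetilde{Q}_{T}$ a probability on ${\cal G}_{\tau\wedge T}$ --- it suffices to verify $\E[\widetilde{Z}_{t\wedge\tau}]=1$ for every finite $t$. Here I would split $\widetilde{Z}_{t\wedge\tau}=\widetilde{Z}_{t}I_{\{\tau>t\}}+\widetilde{Z}_{\tau}I_{\{\tau\le t\}}$; since $\widetilde{Z}$ is $\mathbb{F}$-adapted, the first term has expectation $\E[\widetilde{Z}_{t}G_{t}]$ by \eqref{GGtilde}, while the dual optional projection turns the second into $\E[\int_{(0,t]}\widetilde{Z}_{s}\,dD^{o,\mathbb{F}}_{s}]$. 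Writing the $\mathbb{F}$-semimartingale dynamics of $\widetilde{Z}G$ through $dG=dm-dD^{o,\mathbb{F}}$ and the exponential SDE for $\widetilde{Z}$, the local-martingale ($dm$) parts cancel and the remaining finite-variation terms telescope, via the jump relation $\Delta\widetilde{Z}=-\widetilde{Z}_{-}\Delta m/\widetilde{G}$, so that $\widetilde{Z}G+\widetilde{Z}\is D^{o,\mathbb{F}}$ has zero drift and stays equal to $\widetilde{Z}_{0}G_{0}=1$. This gives $\E[\widetilde{Z}_{t\wedge\tau}]=1$, so the positive supermartingale $\widetilde{Z}^{\tau}$ is a true $\mathbb{G}$-martingale and $\widetilde{Q}_{T}$ is a probability measure.

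For assertion (b) I would invoke Girsanov with density process $\widetilde{Z}^{\tau}={\cal E}(-G_{-}^{-1}\is{\cal T}(m))$: a $P$-$\mathbb{G}$-local martingale $X$ becomes a $\widetilde{Q}$-$\mathbb{G}$-local martingale after subtracting $(\widetilde{Z}^{\tau}_{-})^{-1}\is[X,\widetilde{Z}^{\tau}]=-G_{-}^{-1}\is[X,{\cal T}(m)]$. Applying this to $X={\cal T}(M)$ and then adding the drift of $M^{\tau}$ from \eqref{processMhat}, the claim $M^{T\wedge\tau}\in{\cal M}_{loc}(\mathbb{G},\widetilde{Q})$ reduces to the bracket identity $\widetilde{G}^{-1}I_{\Lbrack0,\tau\Lbrack}\is[M,m]=G_{-}^{-1}\is[{\cal T}(M),{\cal T}(m)]$ on $\Lbrack0,\tau\Lbrack$. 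For $M=W$ this is transparent: $W$ is continuous, so $[W,m]=\langle W,m^{c}\rangle$, the bracket $[{\cal T}(W),{\cal T}(m)]$ loses all its jump corrections, and $\widetilde{G}=G_{-}$ on the support of $\langle W,m^{c}\rangle$; hence $W^{T\wedge\tau}$ is a continuous $\widetilde{Q}$-$\mathbb{G}$-local martingale, and since its quadratic variation $t\wedge T\wedge\tau$ is unchanged by the measure change, Lévy's characterization identifies it as a $(\widetilde{Q},\mathbb{G})$-Brownian motion. The main obstacle is precisely the general bracket identity: tracking the jump contributions of $[{\cal T}(M),{\cal T}(m)]$ and matching them to $\widetilde{G}^{-1}\Delta[M,m]$ through $\widetilde{G}=G_{-}+\Delta m$ and $\Delta{\cal T}(M)=\Delta M-\widetilde{G}^{-1}\Delta[M,m]$ is where the computation is delicate, even though the Brownian case that is actually needed here is clean.
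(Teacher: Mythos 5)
First, a point of comparison: the paper does not actually prove this proposition --- it is recalled verbatim from \cite[Proposition 4.3]{Choulli1} --- so there is no in-paper argument to measure your proof against, and it must be judged on its own. Your proof of assertion (a) is correct and is the natural one: the identity $1+G_{-}^{-1}\Delta m=\widetilde G/G_{-}>0$, the reciprocal-exponential computation giving $\widetilde Z^{\tau}={\cal E}(-G_{-}^{-1}\is{\cal T}(m))$ (the continuous and jump corrections do cancel, since $d\langle m^{c}\rangle$ does not charge the thin set $\{\widetilde G\neq G_{-}\}$), and the verification $\E[\widetilde Z_{t\wedge\tau}]=\E[\widetilde Z_{t}G_{t}+(\widetilde Z\is D^{o,\mathbb F})_{t}]=1$; in fact $\widetilde ZG+\widetilde Z\is D^{o,\mathbb F}\equiv G_{0}=1$ holds pathwise, being the multiplicative decomposition $G=G_{0}\,{\cal E}(G_{-}^{-1}\is m)\,{\cal E}(-\widetilde G^{-1}\is D^{o,\mathbb F})$ in disguise, so the passage to expectations is immediate.

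Assertion (b), however, has a genuine gap for general $M$. The bracket identity you reduce it to, $\widetilde G^{-1}I_{\Lbrack0,\tau\Lbrack}\is[M,m]=G_{-}^{-1}\is[{\cal T}(M),{\cal T}(m)]$, is false at the jumps of $m$: since $\Delta{\cal T}(M)=\Delta M\,G_{-}/\widetilde G$ and $\Delta{\cal T}(m)=\Delta m\,G_{-}/\widetilde G$ on $\Lbrack0,\tau\Lbrack$, the right-hand side jumps by $(G_{-}/\widetilde G^{2})\Delta M\Delta m$ while the left-hand side jumps by $\widetilde G^{-1}\Delta M\Delta m$; these differ by the factor $G_{-}/\widetilde G$, and the discrepancy $\sum I_{\Lbrack0,\tau\Lbrack}\Delta M(\Delta m)^{2}\widetilde G^{-2}$ is a finite-variation process that is not a $\widetilde Q$-local martingale in general. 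The root cause is the Girsanov form you invoke: subtracting $(\widetilde Z^{\tau}_{-})^{-1}\is[X,\widetilde Z^{\tau}]$ is not a correct general statement of the theorem --- one must subtract the predictable compensator $(\widetilde Z^{\tau}_{-})^{-1}\is\langle X,\widetilde Z^{\tau}\rangle$, or use Lenglart's optional version with $(\widetilde Z^{\tau})^{-1}\is[X,\widetilde Z^{\tau}]$. A clean repair bypasses Girsanov: show directly that $M^{\tau}\widetilde Z^{\tau}\in{\cal M}_{loc}(P,\mathbb G)$ by integration by parts; writing $M^{\tau}={\cal T}(M)+\widetilde G^{-1}I_{\Lbrack0,\tau\Lbrack}\is[M,m]$ and $\widetilde Z^{\tau}={\cal E}(-G_{-}^{-1}\is{\cal T}(m))$, all finite-variation jump contributions cancel exactly through $\widetilde G=G_{-}+\Delta m$. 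For $M=W$ every bracket involved is continuous, so this issue disappears and your argument, including the L\'evy characterization of $W^{T\wedge\tau}$ under $(\widetilde Q,\mathbb G)$, is complete in that case.
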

%%%%%%%%%%%%%%%%%%%%%%%%%%%%%%%%%%%%%%%%%%%%%%
\begin{remark} In general, the $\mathbb G$-martingale $\widetilde{Z}^{\tau}$ might not be uniformly integrable, and hence in general $\widetilde{Q}$ might not be extended to $(0,+\infty]$. For these fact, we refer the reader to \cite[Proposition 4.3]{Choulli1} for details, where conditions for $\widetilde{Z}^{\tau}$  being uniformly integrable are fully singled out when $G>0$. 
\end{remark}
%%%%%%%%%%%%%%%%%%%%%%%%%%%%%%%%%%%%%%%%%%%%%%
\section{The Snell envelop under random horizon}

Throughout the paper,  $\mathcal{J}_{\sigma_1}^{\sigma_2}(\mathbb{H})$ denotes the set of all $\mathbb{H}$-stopping times with values in $\Lbrack
\sigma_1,\sigma_2\Rbrack$, for any two $\mathbb H$-stopping times $\sigma_1$ and  $\sigma_2$  such that $\sigma_1\leq\sigma_2$ . 

\begin{proposition}\label{PropositionG2F}Suppose (\ref{Assumption4Tau}) holds, and let $X^{\mathbb G}$ be a $\mathbb G$-optional process such that $(X^{\mathbb G})^{\tau}=X^{\mathbb G}$. Then there exists a unique pair $(X^{\mathbb F}, k^{(pr)})$ of processes such hat $X^{\mathbb F}$ is $\mathbb F$-optional and $k^{(pr)}$ is $\mathbb F$-progressive and 
\begin{eqnarray}\label{EqualityG2F}
X^{\mathbb G}=X^{\mathbb F} I_{\Lbrack0,\tau\Lbrack}+k^{(pr)}\is D.
\end{eqnarray}
Furthermore,  the following assertions hold.\\
{\rm{(a)}} $X^{\mathbb G}$ is RCLL if and only if $X^{\mathbb F}$ is RCLL.\\
{\rm{(b)}} $X^{\mathbb G}$ is a $\mathbb G$-semimartingale if and only if $X^{\mathbb F}$ is an $\mathbb F$-semimartingale, and 
\begin{eqnarray}\label{Decompo4XG}
(X^{\mathbb G})^{\tau}=(X^{\mathbb F})^{\tau}+(k^{(pr)}-X^{\mathbb F})\is D.
\end{eqnarray}
{\rm{(c)}}$E\left[\sup_{t\geq 0} \vert X^{\mathbb G}_{t}\vert\right]<+\infty$ if and only if 
\begin{eqnarray}
k^{(pr)}\in  L^1\left(\widetilde\Omega, {\rm{Prog}}(\mathbb F), P\otimes D\right)\quad\mbox{and}\quad E\left[\int_0^{+\infty} \sup_{0\leq s< t}\vert X^{\mathbb F}_s\vert dD^{o,\mathbb F}_t\right]<+\infty.\end{eqnarray}
\end{proposition}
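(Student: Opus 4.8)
The plan is to prove the decomposition formula first, then read off the three equivalences from it. The key structural fact is that any $\mathbb{G}$-optional process $X^{\mathbb{G}}$ with $(X^{\mathbb{G}})^{\tau}=X^{\mathbb{G}}$ decomposes on $\Lbrack 0,\tau\Lbrack$ through an $\mathbb{F}$-optional process and picks up a jump at $\tau$. Concretely, I would invoke the standard characterization of $\mathbb{G}$-optional (or $\mathbb{G}$-progressive) processes stopped at $\tau$: before $\tau$ any $\mathbb{G}$-optional process agrees with an $\mathbb{F}$-optional process, and its value at $\tau$ is an $\mathcal{F}_{\tau}$-measurable (hence $\mathbb{F}$-progressively generated) random variable. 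This gives the representation
\begin{eqnarray*}
X^{\mathbb{G}}=X^{\mathbb{F}}I_{\Lbrack 0,\tau\Lbrack}+k^{(pr)}\is D,
\end{eqnarray*}
where $k^{(pr)}$ is the process recording the value at the jump time, and $k^{(pr)}\is D=k^{(pr)}_{\tau}I_{\Lbrack\tau,+\infty\Rbrack}$. For uniqueness, I would argue that on $\Lbrack 0,\tau\Lbrack$ the $\mathbb{F}$-optional part is pinned down by the $\mathbb{G}$-optional projection / the relation $G>0$ of \eqref{Assumption4Tau}, so $X^{\mathbb{F}}$ is determined $P\otimes dt$-a.e. on $\{t<\tau\}$, and then $k^{(pr)}$ is forced as the residual jump; Lemma~\ref{G-projection} is the right tool to convert $\mathbb{G}$-conditional expectations on $\{t<\tau\}$ into $\mathbb{F}$-quantities, which underlies both existence of $X^{\mathbb{F}}$ and its uniqueness.

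Granting the decomposition, part (a) is immediate: $k^{(pr)}\is D$ is a pure-jump process that is RCLL automatically (it is constant away from $\tau$), so $X^{\mathbb{G}}$ is RCLL on $\Lbrack 0,\tau\Lbrack$ exactly when $X^{\mathbb{F}}$ is, and right-continuity at and past $\tau$ is governed by the single jump. I would spell out that $\tau$ is positive and finite under \eqref{Assumption4Tau}, so there is no pathology at $0$ or $+\infty$. For part (b), the semimartingale equivalence, the plan is to apply the general optional-decomposition machinery: the $\mathbb{G}$-semimartingale property of $(X^{\mathbb{G}})^{\tau}$ transfers to the $\mathbb{F}$-semimartingale property of $(X^{\mathbb{F}})^{\tau}$ because the difference $(k^{(pr)}-X^{\mathbb{F}})\is D$ is a finite-variation pure-jump process (one jump at $\tau$). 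The identity \eqref{Decompo4XG} then follows by inserting the decomposition into $(X^{\mathbb{G}})^{\tau}$ and collecting the jump term: $(X^{\mathbb{G}})^{\tau}=X^{\mathbb{F}}I_{\Lbrack 0,\tau\Lbrack}+k^{(pr)}_{\tau}D=(X^{\mathbb{F}})^{\tau}-X^{\mathbb{F}}_{\tau}D+k^{(pr)}_{\tau}D=(X^{\mathbb{F}})^{\tau}+(k^{(pr)}-X^{\mathbb{F}})\is D$, using that $(X^{\mathbb{F}})^{\tau}=X^{\mathbb{F}}I_{\Lbrack 0,\tau\Lbrack}+X^{\mathbb{F}}_{\tau}D$.

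For part (c), the norm equivalence, the plan is to split the supremum according to the decomposition. On the one hand, $\sup_t|X^{\mathbb{G}}_t|$ dominates $|k^{(pr)}_{\tau}|I_{\{\tau<+\infty\}}=|X^{\mathbb{G}}_{\tau}|$, whose expectation is exactly the $L^1(\mathrm{Prog}(\mathbb{F}),P\otimes D)$-norm of $k^{(pr)}$; on the other hand, on $\Lbrack 0,\tau\Lbrack$ we have $X^{\mathbb{G}}=X^{\mathbb{F}}$, so $\sup_{s<\tau}|X^{\mathbb{F}}_s|$ enters. The bridge between $E[\sup_{s<\tau}|X^{\mathbb{F}}_s|]$ and $E[\int_0^{+\infty}\sup_{0\leq s<t}|X^{\mathbb{F}}_s|\,dD^{o,\mathbb{F}}_t]$ is the key computation: since $\sup_{0\leq s<t}|X^{\mathbb{F}}_s|$ is an $\mathbb{F}$-optional (indeed left-continuous, increasing) process, taking $\mathbb{F}$-optional projections converts $E[(\,\cdot\,)_{\tau}I_{\{\tau<\infty\}}]$ into an integral against the dual optional projection $D^{o,\mathbb{F}}$ of $D=I_{\Lbrack\tau,+\infty\Rbrack}$. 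I expect this last step — rigorously passing between the random-time evaluation at $\tau$ and the $D^{o,\mathbb{F}}$-integral for the running supremum, and handling that the sup is evaluated strictly before $t$ — to be the main obstacle, both because one must justify Fubini/projection exchanges for a possibly non-integrable increasing process and because the strict inequality $s<t$ versus $s\leq t$ must be matched to whether $\tau$ charges the jumps of $D^{o,\mathbb{F}}$. The two estimates together give the ``if and only if'': finiteness of $E[\sup_t|X^{\mathbb{G}}_t|]$ is equivalent to finiteness of both the $k^{(pr)}$ term and the running-supremum $D^{o,\mathbb{F}}$-integral.
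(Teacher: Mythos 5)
Your overall architecture — establish the decomposition via the standard pre-$\tau$ representation of $\mathbb G$-optional processes, pin down uniqueness through $G>0$, and then read off (a), (b), (c) — is exactly the paper's, and your treatment of the decomposition, of (a), and of (c) (including the identification of the running-supremum/$D^{o,\mathbb F}$ conversion as the key computation, justified by the $\mathbb F$-optionality of $\sup_{0\leq s<t}\vert X^{\mathbb F}_s\vert$) matches the paper's proof in substance.

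There is, however, a genuine gap in your part (b). You argue that since $(k^{(pr)}-X^{\mathbb F})\is D$ has finite variation, the $\mathbb G$-semimartingale property of $(X^{\mathbb G})^{\tau}$ "transfers to the $\mathbb F$-semimartingale property of $(X^{\mathbb F})^{\tau}$." Subtracting a finite-variation process only tells you that $X^{\mathbb F}I_{\Lbrack0,\tau\Lbrack}$ (equivalently $(X^{\mathbb F})^{\tau}$) is a $\mathbb G$-semimartingale; it says nothing yet about $X^{\mathbb F}$ in the filtration $\mathbb F$ — indeed $(X^{\mathbb F})^{\tau}$ is not even $\mathbb F$-adapted since $\tau$ is not an $\mathbb F$-stopping time. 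The hard direction of (b) is precisely the descent from $\mathbb G$ to $\mathbb F$ and from "stopped at $\tau$" to "unstopped," and your appeal to unspecified "optional-decomposition machinery" does not supply it. The paper's argument is: after reducing by localization to the bounded case, take the $\mathbb F$-optional projection, $^{o,\mathbb F}\left(X^{\mathbb F}I_{\Lbrack0,\tau\Lbrack}\right)=X^{\mathbb F}G$, which is an $\mathbb F$-semimartingale because the optional projection of a bounded RCLL $\mathbb G$-semimartingale is an RCLL $\mathbb F$-semimartingale (Dellacherie--Meyer), and then divide by $G$, using that $G>0$ and $G$ is an $\mathbb F$-semimartingale. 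Without this projection step your equivalence in (b) is not proved. A similar (milder) imprecision occurs in (a): that $X^{\mathbb F}I_{\Lbrack0,\tau\Lbrack}$ RCLL implies $X^{\mathbb F}$ RCLL everywhere again rests on $G>0$ together with a projection/section argument, not merely on inspecting the single jump at $\tau$.
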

%%%%%%%%%%%%%%%%%%%%%%%%%%%%%%%%%%%%%%%%%%%%%%%%%%%%%%%
\begin{proof} Consider a $\mathbb G$-optional process $X^{\mathbb G}$. Then thanks to \cite[Lemma B.1 ]{Aksamit} (see also \cite[Lemma 4.4]{Jeulin1980}, there exists a pair $(X^{\mathbb F}, k^{(pr)})$ such that $X^{\mathbb F}$ is an $\mathbb F$-optional and $k^{(pr)}$ is $\mbox{Prog}(\mathbb F)$-measurable such that 
\begin{eqnarray*}
X^{\mathbb G}I_{\Lbrack0,\tau\Lbrack}=X^{\mathbb F}I_{\Lbrack0,\tau\Lbrack},\quad\mbox{and}\quad X^{\mathbb G}_{\tau}=k^{(pr)}_{\tau}.
\end{eqnarray*}
Furthermore, this pair is unique due to $G>0$. Thus, the condition  $X^{\mathbb G}=(X^{\mathbb G})^{\tau}$ allows us to derive 
\begin{eqnarray*}
X^{\mathbb G}=X^{\mathbb G}I_{\Lbrack0,\tau\Lbrack}+X^{\mathbb G}_{\tau} I_{\Lbrack\tau,+\infty\Lbrack}=X^{\mathbb F}I_{\Lbrack0,\tau\Lbrack}+k^{(pr)}\is D,
\end{eqnarray*}
and the equality (\ref{EqualityG2F}) is proved. \\
a) Thanks to (\ref{EqualityG2F}) and the fact that $k^{(pr)}\is D$ is a RCLL process, we deduce that $X^{\mathbb G}$ is a RCCL process if and only if $X^{\mathbb F} I_{\Lbrack0,\tau\Lbrack}$ is a RCLL process. Remark, due to $G>0$ and \cite{DellacherieMeyer80}, this latter fact is equivalent to $X^{\mathbb F}$ being RCLL. This ends the proof of assertion (a).\\
b) It is clear that $k^{(pr)}\is D$ is a RCLL $\mathbb G$-semimartingale, and hence $X^{\mathbb G}$ is a RCLL $\mathbb G$-semimartingale if and only if $X^{\mathbb F}I_{\Lbrack0,\tau\Lbrack}$ is a RCLL $\mathbb G$-semimartingale. By stopping, there is no loss of generality in assuming  $X^{\mathbb G}$ is bounded, which leads to the boundedness of $X^{\mathbb F}$, see  \cite[Lemma B.1]{Aksamit}  or  \cite[Lemma 4.4 (b), page 63]{Jeulin1980}. Thus, thanks to \cite[Th\'eor\`eme 47, p. 119 and Th\'eor\`eme 59, p. 268]{DellacherieMeyer80} that implies that the optional projection of a bounded RCLL $\mathbb G$-semimartingale is a RCLL $\mathbb F$-semimartingale, we deduce that $ X^{\mathbb F}G=^{o,\mathbb F}\left( X^{\mathbb F}I_{\Lbrack0,\tau\Lbrack}\right)$ is a RCLL $\mathbb F$-semimartingale. This with the condition $G>0$ and $G$ is an $\mathbb F$-semimartingale yield that $ X^{\mathbb F}$ is an $\mathbb F$-semimartingale. Furthermore, it is clear that when $X^{\mathbb F}$ is an $\mathbb F$-semimartingale, we have 
 \begin{eqnarray}\label{equalityG2Fbis}
 X^{\mathbb F}I_{\Lbrack0,\tau\Lbrack}=( X^{\mathbb F})^{\tau}- X^{\mathbb F}\is D\quad\mbox{is a $\mathbb G$-semimartingale},\end{eqnarray}
 and (\ref{Decompo4XG}) follows from this equality and (\ref{EqualityG2F}). \\
c)  Here, we prove assertion (c). To this end, we use  (\ref{EqualityG2F}) and notice that 
\begin{eqnarray*}
{{I}\over{2}} \leq \sup_{t\geq 0} \vert X^{\mathbb G}_{t}\vert=\max\left(\sup_{ 0\leq{t}<\tau} \vert X^{\mathbb F}_{t}\vert,\vert{k}^{(pr)}_{\tau}\vert\right)\leq I,\quad I:=\int_0^{\infty} \left(\sup_{ 0\leq{u}<t} \vert X^{\mathbb F}_{u}\vert+\vert{k}^{(pr)}_t\vert\right)dD_t.
\end{eqnarray*}
Hence, these inequalities imply that $E\left[\sup_{t\geq 0} \vert X^{\mathbb G}_{t}\vert\right]<+\infty$ iff $E\left[\int_0^{\infty} \vert{k}^{(pr)}_t\vert{d}D_t\right]<+\infty$ and 
\begin{eqnarray*}
E\left[\int_0^{\infty} \sup_{ 0\leq{u}<t} \vert X^{\mathbb F}_{u}\vert{d}D_t\right]=E\left[\int_0^{\infty} \sup_{ 0\leq{u}<t} \vert{X}^{\mathbb F}_{u}\vert{d}D_t^{o,\mathbb F}\right]<+\infty.
\end{eqnarray*}
This altter equality  is due to $\sup_{ 0\leq{u}<t} \vert{X}^{\mathbb F}_{u}$ being $\mathbb F$-optional. This ends the proof of the proposition. 
\end{proof}
 %%%%%%%%
\begin{lemma}\label{stoppingTimeLemma}
Let $\sigma_{1}$ and $\sigma_{2}$ be two $\mathbb{F}$-stopping times such that $\sigma_{1}\leq\sigma_{2}$ P-a.s.. Then, for any $\mathbb{G}$- stopping time, $\sigma^{\mathbb{G}}$, satisfying
\begin{equation}\label{sigmaG}
\sigma_{1}\wedge\tau\leq\sigma^{\mathbb{G}}\leq\sigma_{2}\wedge\tau \hspace{5mm} P\mbox{-a.s.},
\end{equation}
there exists an $\mathbb{F}$- stopping time $\sigma^{\mathbb{F}}$ such that 
\begin{equation}\label{sigmaF}
\sigma_{1}\leq\sigma^{\mathbb{F}}\leq\sigma_{2}\quad \mbox{ and }\quad \sigma^{\mathbb{F}}\wedge\tau=\sigma^{\mathbb{G}} \hspace{5mm} P\mbox{-a.s.}
\end{equation}
\end{lemma}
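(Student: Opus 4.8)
The plan is to reduce the statement to the elementary fact that every $\mathbb{G}$-stopping time dominated by $\tau$ coincides, up to $\tau$, with an $\mathbb{F}$-stopping time, and then to repair that $\mathbb{F}$-stopping time by an order truncation so that it is squeezed between $\sigma_1$ and $\sigma_2$. Observe first that the hypothesis $\sigma^{\mathbb{G}}\leq\sigma_2\wedge\tau$ forces $\sigma^{\mathbb{G}}\leq\tau$, hence $\sigma^{\mathbb{G}}\wedge\tau=\sigma^{\mathbb{G}}$, so that the target identity $\sigma^{\mathbb{F}}\wedge\tau=\sigma^{\mathbb{G}}$ is really an identity up to $\tau$.

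First I would produce an $\mathbb{F}$-stopping time $R$ with $R\wedge\tau=\sigma^{\mathbb{G}}$. To this end I consider the $\mathbb{G}$-optional, nondecreasing, $\{0,1\}$-valued process $B^{\mathbb{G}}:=(I_{\Lbrack\sigma^{\mathbb{G}},+\infty\Lbrack})^{\tau}$, which satisfies $(B^{\mathbb{G}})^{\tau}=B^{\mathbb{G}}$ and, because $\sigma^{\mathbb{G}}\leq\tau$, equals $I_{\{\sigma^{\mathbb{G}}\leq t\}}$ on $\Lbrack0,\tau\Lbrack$. Proposition \ref{PropositionG2F} then yields an $\mathbb{F}$-optional process $X^{\mathbb{F}}$ with $X^{\mathbb{F}}I_{\Lbrack0,\tau\Lbrack}=B^{\mathbb{G}}I_{\Lbrack0,\tau\Lbrack}$, i.e. $X^{\mathbb{F}}_t=I_{\{\sigma^{\mathbb{G}}\leq t\}}$ on $\{t<\tau\}$, this representation being unique thanks to $G>0$ and $X^{\mathbb{F}}$ being RCLL by assertion (a) of that proposition. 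I then set $R:=\inf\{t\geq 0:X^{\mathbb{F}}_t\geq 1\}$, which is an $\mathbb{F}$-stopping time by the début theorem for RCLL $\mathbb{F}$-adapted processes. A direct check on $\{\sigma^{\mathbb{G}}<\tau\}$, where $X^{\mathbb{F}}$ vanishes strictly before $\sigma^{\mathbb{G}}$ and equals $1$ on the interval $[\sigma^{\mathbb{G}},\tau)$, together with the trivial case $\{\sigma^{\mathbb{G}}=\tau\}$, yields $R\wedge\tau=\sigma^{\mathbb{G}}$. (Alternatively this representation step can simply be quoted from the enlargement-of-filtration literature.)

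Finally I would set $\sigma^{\mathbb{F}}:=(\sigma_1\vee R)\wedge\sigma_2$, an $\mathbb{F}$-stopping time that, since $\sigma_1\leq\sigma_2$, automatically satisfies $\sigma_1\leq\sigma^{\mathbb{F}}\leq\sigma_2$. The remaining identity follows from the lattice computation $(\sigma_1\vee R)\wedge\tau=(\sigma_1\wedge\tau)\vee(R\wedge\tau)=(\sigma_1\wedge\tau)\vee\sigma^{\mathbb{G}}=\sigma^{\mathbb{G}}$, where the last step uses $\sigma_1\wedge\tau\leq\sigma^{\mathbb{G}}$, followed by $\sigma^{\mathbb{F}}\wedge\tau=[(\sigma_1\vee R)\wedge\tau]\wedge(\sigma_2\wedge\tau)=\sigma^{\mathbb{G}}\wedge(\sigma_2\wedge\tau)=\sigma^{\mathbb{G}}$, where the last step uses $\sigma^{\mathbb{G}}\leq\sigma_2\wedge\tau$. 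I expect the main obstacle to be the representation step: Proposition \ref{PropositionG2F} only pins down the $\mathbb{F}$-optional process on $\Lbrack0,\tau\Lbrack$, so the work lies in converting that partial information into a genuine $\mathbb{F}$-stopping time via the début argument and in verifying that it agrees with $\sigma^{\mathbb{G}}$ before $\tau$; the concluding order truncation is then routine lattice algebra.
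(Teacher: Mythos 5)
Your proof is correct and follows essentially the same route as the paper: both arguments first represent $\sigma^{\mathbb{G}}$ as $\sigma\wedge\tau$ for some $\mathbb{F}$-stopping time $\sigma$ and then set $\sigma^{\mathbb{F}}=\min(\max(\sigma,\sigma_{1}),\sigma_{2})$, concluding with the same lattice computation. The only divergence is that the paper simply cites Dellacherie--Meyer for the representation step, whereas you reconstruct it from Proposition \ref{PropositionG2F} applied to $(I_{\Lbrack\sigma^{\mathbb{G}},+\infty\Lbrack})^{\tau}$ together with a d\'ebut argument, which is a valid and self-contained substitute.
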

%%%%%%%%%%%%%%%%%%%%%%%%%%%%%%%%%%%%%%%%%%%%%%%%%%%%%%%%%%%%%
   The following is our main result of this section, where we write in different manners the Snell envelop of a process under $\mathbb G$ as a sum of a transformation of an $\mathbb F$-Snell envelop and $\mathbb G$-martingales.
%%%%%%%%%%%%%%%%%%%%%%%%%%%%%%%%%%%%%%%%%%%%%%%%%%%%%%%%%%
%%%%%%%%%%%%%%%%%%%%%%%%%%%%%%%%%%%%%%%%%%%%%%%%%%%%%%%%%%%%%%%%%%%%%%%%%%%%%%%%%%%%%
\begin{theorem}\label{SnellEvelopG2F}Suppose $G>0$, and let $X^{\mathbb G}$ be a RCLL and $\mathbb G$-adapted process such that $(X^{\mathbb G})^{\tau}=X^{\mathbb G}$. Then consider the unique pair of processes $(X^{\mathbb F}, k^{(pr)})$ associated to $X^{\mathbb G}$, and $k^{(op)}$ the $\mathbb F$-optional projection of $k^{(pr)}$ with respect to the measure $P\otimes D$.   Then the following assertions hold.\\
{\rm{(a)}} If either $X^{\mathbb G}$ is nonnegative or $E\left[\sup_{t\geq 0} \max(X^{\mathbb G}_{t},0)\right]<+\infty$, then the $(\mathbb G,P)$-Snell envelop of $X^{\mathbb G}$, denoted ${\cal S}(X^{\mathbb G};\mathbb G,P)$, is given by
\begin{eqnarray}\label{Snell4(G,P)}
{\cal S}(X^{\mathbb G};\mathbb G,P)&&={{{\cal S}(X^{\mathbb F}G+k^{(op)}\is D^{o,\mathbb F}; \mathbb F, P)}\over{G}}I_{\Lbrack0,\tau\Lbrack}+(k^{(pr)}-k^{(op)})\is D+{{(k^{(op)}\is D^{o,\mathbb F})_{-}}\over{G_{-}^2}}\is {\cal T}(m)\nonumber\\
&&+\left(k^{(op)}+{{k^{(op)}\is D^{o,\mathbb F}}\over{G}}\right)\is N^{\mathbb G},
\end{eqnarray}
where ${\cal S}(X^{\mathbb F}G+k^{(op)}\is D^{o,\mathbb F}; \mathbb F, P)$ is the $(\mathbb F, P)$-Snell envelop of $X^{\mathbb F}G+k^{(op)}\is D^{o,\mathbb F}$.\\
{\rm{(b)}} Let T$\in (0,+\infty)$ and $\widetilde{Q}$ be given in (\ref{Qtilde}).  If either $X^{\mathbb G}\geq 0$ or $E^{\widetilde{Q}}\left[\sup_{T\geq t\geq 0} \max(X^{\mathbb G}_{t},0)\vert\right]<+\infty$, then the $(\mathbb G, \widetilde{Q})$-Snell envelop of $X^{\mathbb G}$, denoted ${\cal S}(X^{\mathbb G};\mathbb G,\widetilde{Q})$, is given on $[0,T]$ by
\begin{eqnarray}\label{Snell4(G,Qtilde)}
{\cal S}(X^{\mathbb G};\mathbb G,\widetilde{Q})={{{\cal S}(X^{\mathbb F}{\widetilde{\cal E}}-k^{(op)}\is{\widetilde{\cal E}}; \mathbb F, P)}\over{\widetilde{\cal E}}}I_{\Lbrack0,\tau\Lbrack}+(k^{(pr)}-k^{(op)})\is D^T+\left(k^{(op)}-{{k^{(op)}\is \widetilde{\cal E}}\over{\widetilde{\cal E}}}\right)\is (N^{\mathbb G})^T.
\end{eqnarray}
\end{theorem}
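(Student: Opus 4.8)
The plan is to exploit the dynamic-programming (optimal-stopping) characterisation of the Snell envelope, namely that $\mathcal{S}(X^{\mathbb{G}};\mathbb{G},P)_t$ is the essential supremum of $E[X^{\mathbb{G}}_\sigma\mid\mathcal{G}_t]$ over $\mathbb{G}$-stopping times $\sigma\geq t$, and to transfer this problem to $\mathbb{F}$. First I would fix $t$ and work on $\{t<\tau\}$. Since $X^{\mathbb{G}}=(X^{\mathbb{G}})^\tau$ we may replace $\sigma$ by $\sigma\wedge\tau$, and Lemma~\ref{stoppingTimeLemma} (applied with $\sigma_1=t$, $\sigma_2=+\infty$) shows that $\{\sigma\wedge\tau:\ \sigma\in\mathcal{J}_t^{+\infty}(\mathbb{G})\}=\{\sigma^{\mathbb{F}}\wedge\tau:\ \sigma^{\mathbb{F}}\in\mathcal{J}_t^{+\infty}(\mathbb{F})\}$, so the $\mathbb{G}$-optimisation is carried by $\mathbb{F}$-stopping times. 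Using the decomposition $X^{\mathbb{G}}=X^{\mathbb{F}}I_{\Lbrack0,\tau\Lbrack}+k^{(pr)}\is D$ of Proposition~\ref{PropositionG2F} and $\sigma^{\mathbb{F}}\geq t$, one gets $X^{\mathbb{G}}_{\sigma^{\mathbb{F}}\wedge\tau}I_{\{t<\tau\}}=X^{\mathbb{F}}_{\sigma^{\mathbb{F}}}I_{\{\sigma^{\mathbb{F}}<\tau\}}+k^{(pr)}_\tau I_{\{t<\tau\leq\sigma^{\mathbb{F}}\}}$.

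Next I would project onto $\mathbb{F}$. Applying Lemma~\ref{G-projection} together with the optional-sampling identity $E[X^{\mathbb{F}}_{\sigma^{\mathbb{F}}}I_{\{\sigma^{\mathbb{F}}<\tau\}}\mid\mathcal{F}_{\sigma^{\mathbb{F}}}]=X^{\mathbb{F}}_{\sigma^{\mathbb{F}}}G_{\sigma^{\mathbb{F}}}$ for the first piece, and the very definition of $k^{(op)}$ as the $\mathbb{F}$-optional projection of $k^{(pr)}$ relative to $P\otimes D$ (so that $E[k^{(pr)}_\tau I_{\{t<\tau\leq\sigma^{\mathbb{F}}\}}\mid\mathcal{F}_t]=E[\int_{\Rbrack t,\sigma^{\mathbb{F}}\Rbrack}k^{(op)}_s\,dD^{o,\mathbb{F}}_s\mid\mathcal{F}_t]$) for the second, I obtain on $\{t<\tau\}$ that $E[X^{\mathbb{G}}_{\sigma^{\mathbb{F}}\wedge\tau}\mid\mathcal{G}_t]=G_t^{-1}\big(E[Y^{\mathbb{F}}_{\sigma^{\mathbb{F}}}\mid\mathcal{F}_t]-(k^{(op)}\is D^{o,\mathbb{F}})_t\big)$, where $Y^{\mathbb{F}}:=X^{\mathbb{F}}G+k^{(op)}\is D^{o,\mathbb{F}}$. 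Taking the essential supremum over $\sigma^{\mathbb{F}}$ (the integrability hypotheses guarantee the relevant Snell envelopes exist and the representation holds) gives the value on $\Lbrack0,\tau\Lbrack$, and combined with $\mathcal{S}(X^{\mathbb{G}};\mathbb{G},P)_\tau=X^{\mathbb{G}}_\tau=k^{(pr)}_\tau$ and Proposition~\ref{PropositionG2F} it yields the intermediate identity $\mathcal{S}(X^{\mathbb{G}};\mathbb{G},P)=G^{-1}\big(\mathcal{S}(Y^{\mathbb{F}};\mathbb{F},P)-k^{(op)}\is D^{o,\mathbb{F}}\big)I_{\Lbrack0,\tau\Lbrack}+k^{(pr)}\is D$.

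It then remains to rewrite this intermediate form as the stated \eqref{Snell4(G,P)}. Subtracting the first term $G^{-1}\mathcal{S}(Y^{\mathbb{F}};\mathbb{F},P)I_{\Lbrack0,\tau\Lbrack}$, the problem reduces to the purely stochastic-calculus identity
\[
-\frac{k^{(op)}\is D^{o,\mathbb{F}}}{G}\,I_{\Lbrack0,\tau\Lbrack}+k^{(op)}\is D=\frac{(k^{(op)}\is D^{o,\mathbb{F}})_{-}}{G_{-}^{2}}\is\mathcal{T}(m)+\Big(k^{(op)}+\frac{k^{(op)}\is D^{o,\mathbb{F}}}{G}\Big)\is N^{\mathbb{G}},
\]
which I would establish by an integration by parts on the $\mathbb{G}$-semimartingale $G^{-1}(k^{(op)}\is D^{o,\mathbb{F}})I_{\Lbrack0,\tau\Lbrack}$: the factor $1/G$ contributes, through $d(1/G)$ and Theorem~\ref{Toperator}(a) applied to $M=m$, the $\mathcal{T}(m)$-term with weight $G_-^{-2}$, while the jump at $\tau$ is compensated by $N^{\mathbb{G}}$ from Theorem~\ref{Toperator}(b); here $\widetilde{G}\geq G>0$ annihilates the exceptional set $\{\widetilde{G}=0<G_{-}\}$ in \eqref{processMhat}. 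Verifying that the resulting drift cancels, i.e.\ matching the finite-variation and local-martingale parts, is the computational heart of the argument and the step I expect to be the main obstacle.

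Finally, for part (b) I would run the same scheme under $\widetilde{Q}$ on $[0,T]$. The decisive simplification is that, by the proposition containing~\eqref{Qtilde}, every $M\in\mathcal{M}_{loc}(\mathbb{F})$ satisfies $M^{T\wedge\tau}\in\mathcal{M}_{loc}(\mathbb{G},\widetilde{Q})$, so the drift correction $\mathcal{T}$ is trivial under $\widetilde{Q}$; this is exactly why \eqref{Snell4(G,Qtilde)} carries no $\mathcal{T}(m)$-term. Repeating the optimal-stopping and projection steps with the discount factor $\widetilde{\mathcal{E}}$ playing the role of $G$ and $\is\widetilde{\mathcal{E}}$ the role of $\is D^{o,\mathbb{F}}$ (the sign change reflecting that $\widetilde{\mathcal{E}}$ is non-increasing) produces the value $\widetilde{\mathcal{E}}^{-1}\mathcal{S}(X^{\mathbb{F}}\widetilde{\mathcal{E}}-k^{(op)}\is\widetilde{\mathcal{E}};\mathbb{F},P)$ on $\Lbrack0,\tau\Lbrack$, and the only surviving correction is the $(N^{\mathbb{G}})^{T}$-term, giving \eqref{Snell4(G,Qtilde)}.
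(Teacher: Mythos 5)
Your proposal is correct and follows essentially the same route as the paper's own proof: reduction of the $\mathbb G$-optimal-stopping problem to $\mathbb F$-stopping times via Lemma \ref{stoppingTimeLemma}, decomposition of $X^{\mathbb G}$ through Proposition \ref{PropositionG2F}, projection onto $\mathbb F$ via Lemma \ref{G-projection} and the $P\otimes D$-optional projection $k^{(op)}$ (yielding exactly the paper's intermediate identity (\ref{mainequality1}), since the residual terms there collapse to $k^{(pr)}\is D$), and finally the It\^o/integration-by-parts computation on $G^{-1}(k^{(op)}\is D^{o,\mathbb F})I_{\Lbrack0,\tau\Lbrack}$ using $d(1/G^{\tau})=(G\widetilde{G})^{-1}I_{\Rbrack0,\tau\Rbrack}dD^{o,\mathbb F}-G_{-}^{-2}d{\cal T}(m)$ to produce the ${\cal T}(m)$ and $N^{\mathbb G}$ terms. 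The step you flag as the main obstacle is precisely the short computation the paper carries out after (\ref{mainequality1}), and your treatment of part (b) — no ${\cal T}(m)$-term because $\mathbb F$-local martingales stopped at $T\wedge\tau$ stay $(\widetilde{Q},\mathbb G)$-local martingales, with $\widetilde{\cal E}$ replacing $G$ — matches the paper's argument.
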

\begin{proof}
Let $\theta\in {\cal T}_{t\wedge\tau}^{\tau}(\mathbb G)$, then thanks to Lemma \ref{stoppingTimeLemma} there exists $\sigma\in {\cal T}_t^{\infty}(\mathbb F)$ such that  $\theta=\sigma\wedge\tau$.  Then notice that 
%%\begin{eqnarray*}
%%X_{\theta}^{\mathbb G}&&=X_{\sigma\wedge\tau}^{\mathbb G}I_{\{t<\tau\}}+ h^{(pr)}_{\tau}I_{\{t\geq\tau\}}=X_{\sigma}^{\mathbb G}I_{\{\sigma<\tau\}}+X_{\tau}^{\mathbb G}I_{\{t<\tau\leq\sigma\}}+ h^{(pr)}_{\tau}I_{\{t\geq\tau\}}\\
%%&&=X_{\sigma}^{\mathbb F}I_{\{\sigma<\tau\}}+\int_t^{\sigma} h^{(pr)}_s dD_s + (h^{(pr)}\is D)_t\\
%%&&=X_{\sigma}^{\mathbb F}I_{\{\sigma<\tau\}}+\int_0^{\sigma} h^{(pr)}_s dD_s \\
%%%&&=X_{\sigma}^{\mathbb F}I_{\{\sigma<\tau\}}+ \left({{h^{(op)}}\over{\widetilde G}} \is D^{o,\mathbb F}\right)_{\sigma\wedge\tau}+ (h^{(op)} \is N^{\mathbb G})_{\sigma}+(h^{(pr)}_s - h^{(op)}_s )\is D_{\sigma} .\end{eqnarray*}
\begin{eqnarray}\label{equa100}
X_{\theta}^{\mathbb G}&&=X_{\sigma\wedge\tau}^{\mathbb G}I_{\{\sigma<\tau\}}+ h^{(pr)}_{\tau}I_{\{\sigma\geq\tau\}}=X_{\sigma}^{\mathbb F}I_{\{\sigma<\tau\}}+\int_0^{\sigma} h^{(pr)}_s dD_s \nonumber\\
&&=X_{\sigma}^{\mathbb F}I_{\{\sigma<\tau\}}+ ({{h^{(op)}}\over{\widetilde G}} \is D^{o,\mathbb F})_{\sigma\wedge\tau}+ (h^{(op)} \is N^{\mathbb G})_{\theta}+(h^{(pr)}- h^{(op)})\is D_{\theta} .
%&&= X^{\mathbb F}_{\sigma\wedge\tau}-(X^{\mathbb F}\is D)_{\sigma}+ ({{h^{(op)}}\over{\widetilde G}} \is D^{o,\mathbb F})_{\sigma\wedge\tau}+ (h^{(op)} \is N^{\mathbb G})_{\sigma}+(h^{(pr)}- h^{(op)})\is D_{\sigma}\\
%&&= X^{\mathbb F}_{\sigma\wedge\tau}+ ({{h^{(op)}-X^{\mathbb F}}\over{\widetilde G}} \is D^{o,\mathbb F})_{\sigma\wedge\tau}+ ((h^{(op)}-X^{\mathbb F}) \is N^{\mathbb G})_{\theta}+(h^{(pr)}- h^{(op)})\is D_{\theta} 
\end{eqnarray}
Furthermore, it is clear that both processes  $ h^{(op)} \is N^{\mathbb G}$ and $ (h^{(pr)}-h^{(op)})\is D$ are $\mathbb G$-martingale, and hence  by combining these remarks with Lemma \ref{G-projection}-(a) and taking conditional expectation with respect to ${\cal G}_t$ on both sides of the above equality, we derive 
\begin{eqnarray*}
&&Y_t(\theta)\\
&&:= E\left[ X_{\theta}^{\mathbb G}\big|{\cal G}_t\right]= E\left[ X_{\sigma}^{\mathbb F}I_{\{\sigma<\tau\}}+\int_0^{\sigma\wedge\tau} {{h^{(op)}_s}\over{{\widetilde G}_t}}  dD^{o,\mathbb F}_s \big|{\cal G}_t\right]+(h^{(op)} \is N^{\mathbb G})_t+(h^{(pr)}- h^{(op)})\is D_t \\
&&=E\left[ X_{\sigma}^{\mathbb F}I_{\{\sigma<\tau\}}+\int_{t\wedge\tau}^{\sigma\wedge\tau} {{h^{(op)}_s}\over{{\widetilde G}_t}} dD^{o,\mathbb F}_s \bigg|{\cal G}_t\right]+({{h^{(op)}_s}\over{{\widetilde G}_t}}\is D^{o,\mathbb F})_{t\wedge\tau}+(h^{(op)} \is N^{\mathbb G})_t+(h^{(pr)}- h^{(op)})\is D_t \\
&&=E\left[ X_{\sigma}^{\mathbb F}I_{\{\sigma<\tau\}}+\int_{t\wedge\tau}^{\sigma\wedge\tau} {{h^{(op)}_s}\over{{\widetilde G}_t}} dD^{o,\mathbb F}_s \bigg|{\cal F}_t\right]{{I_{\{\tau>t\}}}\over{G_t}}+({{h^{(op)}_s}\over{{\widetilde G}_t}}\is D^{o,\mathbb F})_{t\wedge\tau}+(h^{(op)} \is N^{\mathbb G})_t+(h^{(pr)}- h^{(op)})\is D_t \\
&&=E\left[ G_{\sigma}X_{\sigma}^{\mathbb F} +\int_t^{\sigma}h^{(op)}_s dD^{o,\mathbb F}_s \bigg|{\cal F}_t\right]{{I_{\{\tau>t\}}}\over{G_t}}+({{h^{(op)}_s}\over{{\widetilde G}_t}}\is D^{o,\mathbb F})_{t\wedge\tau}+(h^{(op)} \is N^{\mathbb G})_t+(h^{(pr)}- h^{(op)})\is D_t \\
&&=:{{X^{\mathbb F}_t(\sigma)}\over{G_t}} I_{\{t<\tau\}}-{{(h^{(op)}\is D^{o,\mathbb F})_t}\over{G}} I_{\{t<\tau\}}+({{h^{(op)}_s}\over{{\widetilde G}_t}}\is D^{o,\mathbb F})_{t\wedge\tau}+(h^{(op)} \is N^{\mathbb G})_t+(h^{(pr)}- h^{(op)})\is D_t 
\end{eqnarray*}
Thus, by taking the essential supremum over all $\theta\in {\cal T}_{t\wedge\tau,\tau}(\mathbb G)$, we deduce that 
\begin{eqnarray}\label{mainequality1}
{\cal S}(X^{\mathbb G};\mathbb G,P)&&={{{\cal S}(X^{\mathbb F}G+h^{(op)}\is D^{o,\mathbb F}; \mathbb F, P)}\over{G}}I_{\Lbrack0,\tau\Lbrack}-{{(h^{(op)}\is D^{o,\mathbb F})}\over{G}} I_{\Lbrack0,\tau\Lbrack}\nonumber\\
&&+({{h^{(op)}_s}\over{{\widetilde G}_t}}\is D^{o,\mathbb F})_{t\wedge\tau}+(h^{(op)} \is N^{\mathbb G})_t+(h^{(pr)}- h^{(op)})\is D_t .
\end{eqnarray}
Furthermore, put $V:=h^{(op)}\is D^{o,\mathbb F}$ and by remarking that $d(1/G^{\tau})=(G\widetilde{G})^{-1}I_{\Rbrack0,\tau\Rbrack} dD^{o,\mathbb F}-G_{-}^{-2}d{\cal T}(m)$ and using It\^o, we derive 
\begin{eqnarray*}
d\left({{V^{\tau}}\over{G^{\tau}}}\right)=V_{-}d(1/G^{\tau})+G^{-1}h^{(op)}I_{\Rbrack0,\tau\Rbrack} dD^{o,\mathbb F}={{V}\over{G{\widetilde{G}}}}I_{\Rbrack0,\tau\Rbrack} dD^{o,\mathbb F}-{{V_{-}}\over{G_{-}^2}}d {\cal T}(m)+  {{h^{(op)}}\over{\widetilde{G}}}I_{\Rbrack0,\tau\Rbrack} dD^{o,\mathbb F}.
\end{eqnarray*}
Thus, (\ref{Snell4(G,P)}) follows immediately from combining this equality with (\ref{mainequality1}) and the easy fact that 
\begin{eqnarray}\label{X-Fsemimartinagle}
XI_{\Lbrack0,\tau\Lbrack}=X^{\tau}-X\is D,\quad \mbox{for any}\quad \mathbb F\mbox{-semimartingale}\ X.\end{eqnarray}
This ends the proof of assertion (a).\\
2) Here, we fix $T\in (0,+\infty)$ and let $\theta\in {\cal T}_{t\wedge\tau}^{T\wedge\tau}(\mathbb G)$ and $\sigma\in  {\cal T}_t^T(\mathbb F)$ such that $\theta=\sigma\wedge\tau$. Then, similarly as in part 1),  by taking conditional expectation under $\widetilde{Q}$ and using the fact that the two processes  both processes  $ h^{(op)} \is N^{\mathbb G}$ and $ (h^{(pr)}-h^{(op)})\is D$ are remain  $\mathbb G$-martingale under $\widetilde{Q}$, we write  
\begin{eqnarray*}
 &&{\widetilde Y}_t(\theta):=E^{\widetilde Q} \left[ X_{\theta}^{\mathbb G}\big|{\cal G}_t\right]\\
 &&=E^{\widetilde Q}\left[ X_{\sigma}^{\mathbb F}I_{\{\sigma<\tau\}}+\int_t^{\sigma\wedge\tau} {{h^{(op)}_s}\over{{\widetilde G}_s}} dD^{o,\mathbb F}_s \big|{\cal G}_t\right]+({{h^{(op)}}\over{{\widetilde G}}}\is D^{o,\mathbb F})_{t\wedge\tau}+(h^{(op)} \is N^{\mathbb G})_t+(h^{(pr)}- h^{(op)})\is D_t \\
 &&=E\left[ {{\widetilde{Z}_{\sigma}}\over{\widetilde{Z}_t}}X_{\sigma}^{\mathbb F}I_{\{\sigma<\tau\}}+\int_t^{\sigma\wedge\tau} {{h^{(op)}_s\widetilde{Z}_s}\over{{\widetilde G}_s\widetilde{Z}_t}} dD^{o,\mathbb F}_s \big|{\cal G}_t\right]+({{h^{(op)}}\over{{\widetilde G}}}\is D^{o,\mathbb F})_{t\wedge\tau}+(h^{(op)} \is N^{\mathbb G})_t+(h^{(pr)}- h^{(op)})\is D_t \\
 &&=E\left[ \widetilde{Z}_{\sigma}X_{\sigma}^{\mathbb F}I_{\{\sigma<\tau\}}+\int_t^{\sigma\wedge\tau} {{h^{(op)}_s}\over{{\widetilde G}_s}}dV^{\mathbb F}_s \big|{\cal F}_t\right]{{I_{\{\tau>t\}}}\over{\widetilde{Z}_tG_t}}+({{h^{(op)}}\over{{\widetilde G}}}\is D^{o,\mathbb F})_{t\wedge\tau}+(h^{(op)} \is N^{\mathbb G})_t+(h^{(pr)}- h^{(op)})\is D_t \\
 &&=E\left[ \widetilde{\cal E}_{\sigma}X_{\sigma}^{\mathbb F}+\int_t^{\sigma} h^{(op)}_sdV^{\mathbb F}_s \big|{\cal F}_t\right]{{I_{\{\tau>t\}}}\over{\widetilde{\cal E}_t}}+({{h^{(op)}}\over{{\widetilde G}}}\is D^{o,\mathbb F})_{t\wedge\tau}+(h^{(op)} \is N^{\mathbb G})_t+(h^{(pr)}- h^{(op)})\is D_t \\
 &&=:{{X^{\mathbb F}_t(\sigma)}\over{\widetilde{\cal E}_t}}I_{\{\tau>t\}}-{{ (h^{(op)}\is V^{\mathbb F})_t}\over{\widetilde{\cal E}_t}}I_{\{\tau>t\}}+({{h^{(op)}}\over{{\widetilde G}}}\is D^{o,\mathbb F})_{t\wedge\tau}+(h^{(op)} \is N^{\mathbb G})_t+(h^{(pr)}- h^{(op)})\is D_t \\
\end{eqnarray*}
By taking essential supremum over all $\theta$, we get 
\begin{eqnarray}\label{mainequality2}
{\cal S}(X^{\mathbb G};\mathbb G,\widetilde{Q})&&={{{\cal S}(X^{\mathbb F}{\widetilde{\cal E}}+h^{(op)}\is D^{o,\mathbb F} ;\mathbb F, P)}\over{\widetilde{\cal E}}}I_{\Lbrack0,\tau\Lbrack}-{{ (h^{(op)}\is V^{\mathbb F})_t}\over{\widetilde{\cal E}_t}}I_{\{\tau>t\}}\nonumber\\
&&+({{h^{(op)}_s}\over{{\widetilde G}_t}}\is D^{o,\mathbb F})_{t\wedge\tau}+(h^{(op)} \is N^{\mathbb G})_t+(h^{(pr)}- h^{(op)})\is D_t .
\end{eqnarray}
Similar arguments, as in part 1) after equation (\ref{mainequality1}) applied to $V= h^{(op)}\is V^{\mathbb F}:= - h^{(op)}\is {\cal E}$, leads to 
$$-(h^{(op)}\is V^{\mathbb F}){\widetilde{\cal E}}^{-1}I_{\Lbrack0,\tau\Lbrack}+ (h^{(op)}_s{\widetilde G}^{-1}\is D^{o,\mathbb F})^{\tau}=(h^{(op)}\is V^{\mathbb F}){\widetilde{\cal E}}^{-1}\is N^{\mathbb G}.$$
Thus, (\ref{Snell4(G,Qtilde)}) follows from combining this fact with (\ref{mainequality2}), and the proof of assertion (b) is completed.  This ends the proof of theorem.\end{proof}
%%%%%%%%%%%%\subsection{The case of simple linear BSED}%%%%%%%%%%%%%%%%%%%%%%%%%%%%%%%%%%%%
%%%%%%%%%%%%%%%%%%%%%%%%%%%%%%%%%%%%%%%%%%%%%%%%%%%%%%%%%%%%%%%%%%%%%%%%%%%%%%%%%%%%%%%%%
%%%%%%%%%%%%%%%%%%%%%%%%%%%%%%%%%%%%%%%%%%%%%%%%%%%%%%%%%%%%%%%%%%%%%%%%%%%%%%%%%%%%%%%%
\section{The case of linear RBSDEs with bounded horizon}\label{LinearboundedSection}
%%%%%%%%%%%%%%%%%%%%%%%%%%%%%%%%%%%%%%%%%%%%%%%%%%%%%%%%%%%%%%%%%%%%%%%%%%%%%%%%%%%%%%%%%
%%%%%%%%%%%%%%%%%%%%%%%%%%%%%%%%%%%%%%%%%%%%%%%%%%%%%%%%%%%%%%%%%%%%%%%%%%%%%%%%%%%%%%
In this section, we start by a given triplet $\left(f, S, \xi\right)$, called the data-triplet, where $f$ is an $\mathbb F$-progressively measurable process representing the driver of the BSDE, $S$ is a RCLL $\mathbb F$-adapted process that models the barrier of the RBSDE, and $\xi$ is ${\cal F}_{T\wedge\tau}$-measurable random variable which is the terminal condition such that $\xi\geq S_{\tau\wedge T}$. Therefore, in virtue of Proposition \ref{PropositionG2F}, there exists an $\mathbb{F}$-optional $h$ such that
\begin{equation}\label{sh}
 \xi=h_{T\wedge\tau},\quad P\mbox{-a.s..}
 \end{equation}
 Hence, the $\mathbb G$-triplet  data $\left(f, S, \xi\right)$ is equivalent to the $\mathbb F$-triplet  data $\left(f, S, h\right)$. In this section, our aim lies in addressing the following RBSDE under $\mathbb G$ given by 
 \begin{eqnarray}\label{RBSDEG}
\begin{cases}
dY=-f(t)d(t\wedge\tau)-d(K+M)+ZdW^{\tau},\quad Y_{\tau}=Y_{T}=\xi,\\
Y_{t}\geq S_{t};\quad 0\leq t<  T\wedge\tau,\quad \displaystyle\int_{0}^{T\wedge\tau}(Y_{t-}-S_{t-})dK_{t}=0,\quad P\mbox{-a.s..}
\end{cases}\end{eqnarray}
%%%%%%%%%%%%%%%%%%%%%%%%%%%%%%%%%%%%%%%%%%%%%%%%%%%%%%%%%%%%%%%%%%%%%%%%%%%
This section is divided into two subsection. The first subsection elaborates estimates inequalities for the solution of the RBSDE (when it exists), while the second subsection address the existence and uniqueness of the solution and the $\mathbb F$-RBSDE counterpart of (\ref{RBSDEG}).
%%%%%%%%%%%%%%%%%%%%%%%%%%%%%%%%%%%%%%%%%%%%%%%%%%%%%%%%%%%%%%%%%%%%%%%%%%%%%%%%%%%%%
\subsection{Various norm-estimates for the solutions} 
%%%%%%%%%%%%%%%%%%%%%%%%%%%%%%%%%%%%%%%%%%%%%%%%%%%%%%%%%%%%%%%%%%%%%%%%%%%
   %%%%%%%%%%%%%%%%%%%%%%%%%%
   This subsection elaborates estimates for the solution of the RBSDE  (\ref{RBSDEG}). To this end, we start elaborating some useful intermediate results that we summarize in two lemmas.
   %%%%%%%%%%%%%%%%%%%%%%%%%%%%%%%%%%%%%%%%%%%%%%%%%%%%%%%%%%%%%%%%%%%%
    \begin{lemma}\label{Lemma4.11} The following assertions hold.\\
      {\rm{(a)}} For any $T\in (0,+\infty)$, $m^{T\wedge\tau}$ is a BMO  $(\widetilde Q, \mathbb G)$-martingale. Furthermore, we have 
      \begin{eqnarray}\label{BMO(m)}
      E^{\widetilde Q}\left[[m,m]_{T\wedge\tau}-[m,m]_{t\wedge\tau-}\big|{\cal G}_t\right]\leq \Vert m\Vert_{BMO(P)},\quad P\mbox{-a.s.}.
      \end{eqnarray}
     {\rm{(b)}} For any $t\in (0,T]$, we have 
     \begin{eqnarray}\label{domination1}
        E^{\widetilde{Q}}\left[D^{o,\mathbb{F}}_{T\wedge\tau} -D^{o,\mathbb{F}}_{(t\wedge\tau)-}\big|{\cal G}_{t}\right]\leq G_{t-}I_{\{t<\tau\}}\leq 1,\quad P\mbox{-a.s.}.\end{eqnarray}
       {\rm{(c)}} For any $a\in(0,+\infty)$, it always holds that 
         \begin{eqnarray}\label{estimation1}
     \max(a,1) {\widetilde G}^{-1}\is D^{o,\mathbb F}-\widetilde {V}^{(a)}\quad \mbox{is nondecreasing and}\quad E\left[\int_{t\wedge\tau}^{T\wedge\tau}{\widetilde G}^{-1}_s dD^{o,\mathbb F}_s\big| {\cal G}_t\right]\leq 1 ,\quad P-\mbox{a.s.},\end{eqnarray}
      where  $\widetilde {V}^{(a)}$ is the process defined by 
      \begin{eqnarray}\label{Vepsilon}
     \widetilde {V}^{(a)}:={{a}\over{{\widetilde G}}}\is D^{o,\mathbb F}+\sum \left(-{{a\Delta D^{o,\mathbb F}}\over{\widetilde G}}+1-\left(1-{{\Delta D^{o,\mathbb F}}\over{\widetilde G}}\right)^a\right)
       \end{eqnarray}
     \end{lemma}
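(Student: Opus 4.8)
The three assertions run on a single engine, so the plan is to first isolate a conversion device and then feed each $\mathbb F$-adapted increasing process into it. Recall that $\widetilde{\mathcal E}=\widetilde Z G$ is the nonincreasing $\mathbb F$-adapted discount factor attached to $\tau$, and that by Lemma~\ref{stoppingTimeLemma} every $\mathbb G$-stopping time in $\Lbrack t\wedge\tau,T\wedge\tau\Rbrack$ is of the form $\sigma\wedge\tau$ with $\sigma$ an $\mathbb F$-stopping time. For an $\mathbb F$-adapted increasing $A$, using that $\widetilde Z^\tau$ is the $\mathbb G$-density of $\widetilde Q$, integrating by parts to move $\widetilde Z$ inside the integrator, and then projecting onto $\mathbb F$ with Lemma~\ref{G-projection}, I would establish on $\{t<\tau\}$ the identity
\[
E^{\widetilde Q}\big[A_{T\wedge\tau}-A_{(t\wedge\tau)-}\mid{\cal G}_t\big]\,I_{\{t<\tau\}}=\frac{I_{\{t<\tau\}}}{\widetilde{\mathcal E}_t}\,E\Big[\int_{[t,T]}\widetilde Z_s\widetilde G_s\,dA_s\,\Big|\,{\cal F}_t\Big],
\]
where the weight $\widetilde Z_s\widetilde G_s={}^{o,\mathbb F}(\widetilde Z_sI_{\{s\le\tau\}})$ coincides with $\widetilde{\mathcal E}_s$ up to the jumps of $D^{o,\mathbb F}$ and, being then dominated by $\widetilde{\mathcal E}_t$ for $s\ge t$ thanks to the monotonicity of $\widetilde{\mathcal E}$, reduces each estimate to a pure $(\mathbb F,P)$ computation.

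For assertion (a), since $m\in{\cal M}_{loc}(\mathbb F)$ is a $P$-BMO martingale, the recalled fact that $M^{T\wedge\tau}\in{\cal M}_{loc}(\widetilde Q,\mathbb G)$ for $M\in{\cal M}_{loc}(\mathbb F)$ gives $m^{T\wedge\tau}\in{\cal M}_{loc}(\widetilde Q,\mathbb G)$, while the square bracket is unchanged by the absolutely continuous change of measure and by stopping, so $[m^{T\wedge\tau},m^{T\wedge\tau}]=[m,m]^{T\wedge\tau}$. Applying the device to $A=[m,m]$ and bounding the weight by $\widetilde{\mathcal E}_t$ yields
\[
E^{\widetilde Q}\big[[m,m]_{T\wedge\tau}-[m,m]_{(t\wedge\tau)-}\mid{\cal G}_t\big]\le E\big[[m,m]_T-[m,m]_t\mid{\cal F}_t\big]\le E\big[[m,m]_\infty-[m,m]_{t-}\mid{\cal F}_t\big]\le\Vert m\Vert_{BMO(P)}.
\]
Running the same estimate along $\theta=\sigma\wedge\tau$ (Lemma~\ref{stoppingTimeLemma}) upgrades $m^{T\wedge\tau}$ to a genuine $(\widetilde Q,\mathbb G)$-BMO martingale with norm controlled by $\Vert m\Vert_{BMO(P)}$, which is precisely (a) together with \eqref{BMO(m)}.

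For assertion (b), feeding $A=D^{o,\mathbb F}$ into the device reduces the claim to the $\mathbb F$-estimate $E\big[\int_{[t,T]}\widetilde{\mathcal E}_{s-}\,dD^{o,\mathbb F}_s\mid{\cal F}_t\big]\le G_{t-}\widetilde{\mathcal E}_t$. I would obtain this by integration by parts on the product $G\widetilde{\mathcal E}$: combining $dG=dm-dD^{o,\mathbb F}$ with the dynamics of $\widetilde{\mathcal E}=\widetilde ZG$ expresses $\widetilde{\mathcal E}\,dD^{o,\mathbb F}$ through $d(G\widetilde{\mathcal E})$ and the $\mathbb F$-martingale increment $\widetilde{\mathcal E}\,dm$; the latter vanishes under $E[\,\cdot\mid{\cal F}_t]$, and the nonnegativity of $G\widetilde{\mathcal E}$ at $T$ leaves exactly the bound $G_{t-}\widetilde{\mathcal E}_t$. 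Dividing by $\widetilde{\mathcal E}_t$ and using $G_{t-}\le 1$ gives \eqref{domination1}.

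For assertion (c), the integrability bound is immediate from Theorem~\ref{Toperator}: since $N^{\mathbb G}=D-\widetilde G^{-1}I_{\Lbrack0,\tau\Lbrack}\is D^{o,\mathbb F}$ is a $\mathbb G$-martingale, $\widetilde G^{-1}I_{\Lbrack0,\tau\Lbrack}\is D^{o,\mathbb F}$ is the $\mathbb G$-compensator of $D$, whence $E\big[\int_{t\wedge\tau}^{T\wedge\tau}\widetilde G_s^{-1}dD^{o,\mathbb F}_s\mid{\cal G}_t\big]=E\big[D_{T\wedge\tau}-D_{(t\wedge\tau)-}\mid{\cal G}_t\big]\le 1$ because that increment lies in $\{0,1\}$. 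For the domination, I would read off that the process $\max(a,1)\widetilde G^{-1}\is D^{o,\mathbb F}-\widetilde V^{(a)}$ has continuous part $\max(1-a,0)\,\widetilde G^{-1}\is(D^{o,\mathbb F})^c\ge 0$ and jump at $s$ equal to $\max(a,1)x_s-\big(1-(1-x_s)^a\big)$, where $x_s:=\Delta D^{o,\mathbb F}_s/\widetilde G_s\in[0,1)$ (using $\widetilde G_s=G_{s-}+\Delta D^{o,\mathbb F}_s$ and $G_{s-}>0$); nonnegativity of these jumps is then the elementary inequality $\max(a,1)\,x\ge 1-(1-x)^a$ on $[0,1)$, true by comparing derivatives from $0$ when $a\ge 1$ and by $(1-x)^a\ge 1-x$ when $a<1$. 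The genuinely delicate point, and the only obstacle, is the conversion device in full generality: justifying the projection step when $A=[m,m]$ jumps at $\mathbb F$-stopping times and pinning the weight $\widetilde Z_s\widetilde G_s$ against $\widetilde{\mathcal E}$ across jumps (the $\widetilde G_s$ versus $G_{s-}$ discrepancy); this is exactly where the sharp filtration-free martingale inequalities of \cite{Choulli4} and the properties of $\widetilde{\mathcal E}$ from \cite{Choulli1} enter, the continuous-path case being already transparent from the monotonicity of $\widetilde{\mathcal E}$.
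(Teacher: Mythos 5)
Your proof is correct and its engine is the paper's own: Bayes' rule for $\widetilde Q$, Lemma \ref{G-projection} to pass from ${\cal G}_t$ to ${\cal F}_t$, the factorization $G=G_0\,{\cal E}(G_{-}^{-1}\is m)\,\widetilde{\cal E}$ to identify the weight $\widetilde Z_s\widetilde G_s=\widetilde{\cal E}_{s-}$, and the monotonicity of $\widetilde{\cal E}$ to discharge it; part (a) and the monotonicity half of (c) (the elementary inequality $\max(a,1)x\ge 1-(1-x)^a$ applied separately to the continuous part and the jumps of $\max(a,1)\widetilde G^{-1}\is D^{o,\mathbb F}-\widetilde V^{(a)}$) coincide with the paper's argument essentially line by line. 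You deviate in two places, both defensibly. For (b) you integrate by parts on $G\widetilde{\cal E}$ using $dG=dm-dD^{o,\mathbb F}$, whereas the paper telescopes the exponential SDE directly via $\widetilde{\cal E}_{-}\,dD^{o,\mathbb F}=-\widetilde G\,d\widetilde{\cal E}$; both routes produce a bound of the announced form up to the same left-endpoint and jump bookkeeping that the paper itself leaves loose (its own display for (b) terminates in $2\Delta D^{o,\mathbb F}_s$ rather than the stated $G_{t-}$), and what is actually used downstream is only boundedness by a universal constant. For the integrability half of (c) you observe that $\widetilde G^{-1}I_{\Lbrack0,\tau\Lbrack}\is D^{o,\mathbb F}$ is the $\mathbb G$-compensator of $D$ by Theorem \ref{Toperator}, so the conditional expectation equals $E[D_{T\wedge\tau}-D_{(t\wedge\tau)-}\,\vert\,{\cal G}_t]\le 1$; this is cleaner than the source, whose appendix proof of (c) treats only the monotonicity claim and omits this bound altogether.
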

     The proof of this lemma is relegated to Appendix \ref{Appendix4Proofs}. The following lemma connects under some assumptions the solution to  (\ref{RBSDEG}) --when it exists-- to Snell envelop.
     
     \begin{lemma}\label{Solution2SnellEnvelop} Let  $p\in [1,+\infty)$, and suppose that the triplet $(f, S, \xi)$ satisfies 
\begin{eqnarray}\label{MainAssumption}
E^{\widetilde{Q}}\left[\vert\xi\vert^p+\left(\int_0^{T\wedge\tau}\vert f(s)\vert ds\right)^p+\sup_{0\leq u\leq\tau\wedge T}(S_u^+)^p\right]<+\infty.
\end{eqnarray}
   If $(Y^{\mathbb G}, Z^{\mathbb G}, M^{\mathbb G}, K^{\mathbb G})$ is a solution to (\ref{RBSDEG}), then   \begin{eqnarray}\label{RBSDE2Snell}
 Y^{\mathbb G}_t=\rm{ess}\sup_{\theta\in \mathcal{J}_{t\wedge\tau}^{T\wedge\tau}(\mathbb{G})}E^{\widetilde{Q}}\left[\int_{t\wedge\tau}^{\theta}f(s)ds + S_{\theta}1_{\{\theta <T\wedge\tau\}}+\xi 1_{\{\theta=T\wedge \tau\}}\ \Big|\ \mathcal{G}_{t}\right],\quad 0\leq t\leq T.
 \end{eqnarray}
     \end{lemma}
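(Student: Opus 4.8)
The plan is to read the right-hand side of (\ref{RBSDE2Snell}) as the $(\mathbb G,\widetilde Q)$-Snell envelope of the obstacle/reward system attached to (\ref{RBSDEG}), and to show that the first component of any solution coincides with it by establishing the two usual inequalities. Throughout I would work under $\widetilde Q$ on $\Lbrack 0,T\wedge\tau\Rbrack$, where $W^{T\wedge\tau}$ is a $(\widetilde Q,\mathbb G)$-Brownian motion (\cite[Proposition 4.3]{Choulli1}), so that $Z^{\mathbb G}\is W^{\tau}$ is a $\widetilde Q$-local martingale, while $M^{\mathbb G}$ is a $\widetilde Q$-local martingale by the notion of solution. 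Introduce $A_t:=\int_0^{t\wedge\tau} f(s)\,ds$ and the shifted process $\overbar Y:=Y^{\mathbb G}+A$; integrating (\ref{RBSDEG}) gives $\overbar Y=\overbar Y_0-K^{\mathbb G}+\mathcal N$ with $\mathcal N:=Z^{\mathbb G}\is W^{\tau}-M^{\mathbb G}$ a $\widetilde Q$-local martingale. The integrability hypothesis (\ref{MainAssumption}), together with the integrability of the solution components ($Y^{\mathbb G}\in\mathbb D_{T\wedge\tau}(\widetilde Q,p)$ and $K^{\mathbb G}\in\mathcal A^p(\widetilde Q,\mathbb G)$), shows that $\sup_{t}|\overbar Y_t|$ is $\widetilde Q$-integrable; hence $\overbar Y$ is of class (D), $\mathcal N$ is a true $\widetilde Q$-martingale, and $\overbar Y$ is a $\widetilde Q$-supermartingale, being a martingale minus the nondecreasing $K^{\mathbb G}$.

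For the inequality ``$\ge$'' I would take any $\theta\in\mathcal J_{t\wedge\tau}^{T\wedge\tau}(\mathbb G)$ and apply optional sampling to the supermartingale $\overbar Y$ between $t\wedge\tau$ and $\theta$. Cancelling the $\mathcal G_t$-measurable term $A_{t\wedge\tau}$ yields $Y^{\mathbb G}_{t\wedge\tau}\ge E^{\widetilde Q}[\,Y^{\mathbb G}_{\theta}+\int_{t\wedge\tau}^{\theta}f(s)\,ds\mid\mathcal G_t]$. Since $Y^{\mathbb G}\ge S$ on $\Lbrack 0,T\wedge\tau\Lbrack$ and $Y^{\mathbb G}_{T\wedge\tau}=\xi$, one may bound $Y^{\mathbb G}_\theta\ge S_\theta 1_{\{\theta<T\wedge\tau\}}+\xi 1_{\{\theta=T\wedge\tau\}}$, and taking the essential supremum over $\theta$ gives ``$\ge$'' (identifying $Y^{\mathbb G}_t$ with $Y^{\mathbb G}_{t\wedge\tau}$, the case $\{t\ge\tau\}$ being trivial since both sides reduce to $\xi$).

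For the reverse inequality I would use the $\varepsilon$-optimal stopping times $\theta_\varepsilon:=\inf\{s\ge t\wedge\tau:\ Y^{\mathbb G}_s\le S_s+\varepsilon\}\wedge(T\wedge\tau)\in\mathcal J_{t\wedge\tau}^{T\wedge\tau}(\mathbb G)$ for $\varepsilon>0$. On $\Lbrack t\wedge\tau,\theta_\varepsilon\Rbrack$ one has $Y^{\mathbb G}_{-}\ge S_{-}+\varepsilon>S_{-}$, so the flat-off condition $\int_0^{T\wedge\tau}(Y_{s-}-S_{s-})\,dK_s=0$ forces $K^{\mathbb G}$ to be constant there; hence $\overbar Y^{\theta_\varepsilon}$ is a genuine $\widetilde Q$-martingale and $Y^{\mathbb G}_{t\wedge\tau}=E^{\widetilde Q}[Y^{\mathbb G}_{\theta_\varepsilon}+\int_{t\wedge\tau}^{\theta_\varepsilon}f\,ds\mid\mathcal G_t]$. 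By right continuity $Y^{\mathbb G}_{\theta_\varepsilon}\le S_{\theta_\varepsilon}+\varepsilon$ on $\{\theta_\varepsilon<T\wedge\tau\}$ and $=\xi$ otherwise, so the right-hand side is dominated by the essential supremum in (\ref{RBSDE2Snell}) plus $\varepsilon$; letting $\varepsilon\downarrow0$ closes the argument and yields equality.

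I expect the main obstacle to be this reverse inequality, specifically justifying that the flat-off condition makes $K^{\mathbb G}$ constant on $\Lbrack t\wedge\tau,\theta_\varepsilon\Rbrack$ while controlling the left limits and a possible predictable jump of $K^{\mathbb G}$ at $\theta_\varepsilon$; the $\varepsilon$-buffer is precisely the device that upgrades the a.s.\ inequality $Y^{\mathbb G}_{-}\ge S_{-}$ to the strict inequality needed to activate that condition. A secondary technical point is the class-(D) step promoting $\mathcal N$ from a local to a true $\widetilde Q$-martingale, which rests on the $L^p$-integrability of the data in (\ref{MainAssumption}) and of the solution.
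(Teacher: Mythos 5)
Your proposal is correct and follows essentially the same route as the paper's own proof: the $\ge$ inequality via optional sampling of the $\widetilde Q$-supermartingale $Y^{\mathbb G}+\int_0^{\cdot\wedge\tau}f(s)\,ds$ (equivalently, conditioning the integrated equation and dropping the nondecreasing $K^{\mathbb G}$), and the $\le$ inequality via the approximate stopping times $\theta_n=\inf\{u\ge t\wedge\tau:\ Y_u<S_u+1/n\}\wedge(T\wedge\tau)$, on whose stochastic interval the Skorokhod condition forces $K^{\mathbb G}$ to be flat, followed by letting $n\to\infty$. Your $\varepsilon$-buffer is exactly the paper's $1/n$, and your class-(D) remark only makes explicit an integrability step the paper leaves implicit.
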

     The proof of this lemma is relegated to Appendix \ref{Appendix4Proofs}, while herein we elaborate our first estimate.
     %%%%%%%%%%%%%%%%%%%%%%%%%%%%%%%%%%%%%%%%%%%%%%%%%%%%%%%%%%%%%%%%%%%%%%%%%%%%%%
     %%%%%%%%%%%%%%%%%%%%%%%%%%%%%%%%%%%%%%%%%%%%%%%%%%%%%%%%%%%%%%%%%%%%%%%%%%%%%%%%%%%%%%
 %%%%%%%%%%%%%%%%%%%%%%%%%%%%%%%%%%%%%%%%%%%%%%%%THEOREM1
   \begin{theorem}\label{EstimatesUnderQtilde} For any $p\in (1,+\infty)$ there exists a positive constant $C$ that depends on $p$ only such that if ($Y^{\mathbb{G}},Z^{\mathbb{G}},K^{\mathbb{G}}, M^{\mathbb{G}}$) is a solution to (\ref{RBSDEG}), then 
%\footnotesize
\begin{align}\label{estimate100}
&E^{\widetilde{Q}}\left[\sup_{0\leq t\leq T\wedge\tau}\vert{Y}^{{\mathbb{G}}}_{t}\vert^p+\left(\int_{0}^{T\wedge\tau}\vert{Z}^{{\mathbb{G}}}_{s}\vert^{2}ds+[ M^{\mathbb{G}}, M^{\mathbb{G}}]_{T\wedge\tau}\right)^{p/2}+(K^{{\mathbb{G}}}_{T\wedge\tau})^p\right]\nonumber\\
&\leq{C} E^{\widetilde{Q}}\left[\vert\xi\vert^p+\left(\int_{0}^{T\wedge\tau}\vert{f}(s)\vert{d}s\right)^p+\sup_{0\leq t\leq T\wedge\tau}(S^{+}_{t})^p\right].
\end{align}
\end{theorem}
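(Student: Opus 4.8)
The plan is to work throughout under the measure $\widetilde{Q}$ of (\ref{Qtilde}), under which $W^{T\wedge\tau}$ is a Brownian motion and every $\mathbb F$-local martingale stopped at $\tau$ — in particular $M^{\tau}$ — is a $\mathbb G$-local martingale. Since (\ref{estimate100}) is vacuous when its right-hand side is infinite, I may assume that (\ref{MainAssumption}) holds, so that Lemma \ref{Solution2SnellEnvelop} is available. I introduce the $\widetilde{Q}$-local martingale $\mathcal N:=Z^{\mathbb G}\is W^{\tau}-M^{\mathbb G}$, which is exactly the martingale part of $Y^{\mathbb G}$ in (\ref{RBSDEG}); by orthogonality of the Brownian integral and $M^{\mathbb G}$ its quadratic variation is $[\mathcal N]_{T\wedge\tau}=\int_0^{T\wedge\tau}\vert Z^{\mathbb G}_s\vert^2 ds+[M^{\mathbb G},M^{\mathbb G}]_{T\wedge\tau}$, so that the middle term of (\ref{estimate100}) is precisely $([\mathcal N]_{T\wedge\tau})^{p/2}$. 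The estimate then splits into three pieces: a bound on $\sup\vert Y^{\mathbb G}\vert$, a bound on $K^{\mathbb G}_{T\wedge\tau}$, and a bound on $[\mathcal N]_{T\wedge\tau}$.

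For the first piece I would use the Snell-envelope representation (\ref{RBSDE2Snell}). Writing $\Gamma:=\vert\xi\vert+\int_0^{T\wedge\tau}\vert f(s)\vert ds+\sup_{0\le u\le T\wedge\tau}(S_u^+)$, each reward inside the essential supremum in (\ref{RBSDE2Snell}) is dominated by $\Gamma$, while the choice $\theta=T\wedge\tau$ provides a matching lower bound; hence $\vert Y^{\mathbb G}_t\vert\le E^{\widetilde{Q}}[\Gamma\,\vert\,\mathcal G_t]=:\Xi_t$. Since $\Xi$ is a $\widetilde{Q}$-martingale, Doob's $L^p$ maximal inequality (this is where $p>1$ enters) gives $E^{\widetilde{Q}}[\sup_{t\le T\wedge\tau}\vert Y^{\mathbb G}_t\vert^p]\le (p/(p-1))^p E^{\widetilde{Q}}[\Gamma^p]$, and an elementary convexity inequality bounds $E^{\widetilde{Q}}[\Gamma^p]$ by $C$ times the right-hand side of (\ref{estimate100}).

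The crux is the bound on $K^{\mathbb G}$, and here I would avoid the circular route through $\mathcal N$. Set $\widetilde\xi:=\int_0^{T\wedge\tau}f(s)ds+\xi$ and $U:=Y^{\mathbb G}+\int_0^{\cdot\wedge\tau}f(s)ds$; by (\ref{RBSDE2Snell}) the process $U$ is the $(\mathbb G,\widetilde{Q})$-Snell envelope of the shifted reward $\widetilde R_\theta$, and its Mertens decomposition reads $U=U_0+\mathcal N-K^{\mathbb G}$ with $K^{\mathbb G}$ predictable nondecreasing and $U_{T\wedge\tau}=\widetilde\xi$. For any $\mathbb G$-stopping time $\rho\le T\wedge\tau$, taking $E^{\widetilde{Q}}[\,\cdot\,\vert\,\mathcal G_\rho]$ in $K^{\mathbb G}_{T\wedge\tau}-K^{\mathbb G}_\rho=(\mathcal N_{T\wedge\tau}-\mathcal N_\rho)+(U_\rho-\widetilde\xi)$ annihilates the martingale increment and leaves $E^{\widetilde{Q}}[K^{\mathbb G}_{T\wedge\tau}-K^{\mathbb G}_\rho\,\vert\,\mathcal G_\rho]=U_\rho-E^{\widetilde{Q}}[\widetilde\xi\,\vert\,\mathcal G_\rho]\le E^{\widetilde{Q}}[\Phi\,\vert\,\mathcal G_\rho]$, where $\Phi:=\Gamma+\vert\widetilde\xi\vert\ge0$ and I used $U_\rho\le E^{\widetilde{Q}}[\Gamma\,\vert\,\mathcal G_\rho]$ (an essential supremum of conditional expectations of rewards all dominated by $\Gamma$). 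This conditional domination is exactly the hypothesis of the Garsia--Neveu inequality, which for a predictable nondecreasing process yields $\Vert K^{\mathbb G}_{T\wedge\tau}\Vert_{\mathbb{L}^{p}(\widetilde{Q})}\le c_p\Vert\Phi\Vert_{\mathbb{L}^{p}(\widetilde{Q})}$ for every $p\ge1$ with no case-splitting on $p$, and $\Vert\Phi\Vert_{\mathbb{L}^{p}(\widetilde{Q})}^{p}$ is again controlled by the right-hand side of (\ref{estimate100}). If $\mathcal N$ is only a $\widetilde{Q}$-local martingale one first localizes along a sequence $T_n\uparrow\infty$ and then passes to the limit by monotone convergence in $K^{\mathbb G}$.

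Finally, combining the two previous bounds through the pathwise inequality $\sup_{t\le T\wedge\tau}\vert\mathcal N_t\vert\le 2\sup_{t\le T\wedge\tau}\vert Y^{\mathbb G}_t\vert+\int_0^{T\wedge\tau}\vert f(s)\vert ds+K^{\mathbb G}_{T\wedge\tau}$ gives $E^{\widetilde{Q}}[\sup_t\vert\mathcal N_t\vert^p]\le C\cdot(\text{right-hand side})$, and the Burkholder--Davis--Gundy inequality under $\widetilde{Q}$ — valid for this $\mathbb G$-local martingale through the martingale inequalities slightly generalized from \cite{Choulli4}, whose applicability under $\widetilde{Q}$ is secured by the BMO and domination facts of Lemma \ref{Lemma4.11} — yields $E^{\widetilde{Q}}[([\mathcal N]_{T\wedge\tau})^{p/2}]\le C_p E^{\widetilde{Q}}[\sup_t\vert\mathcal N_t\vert^p]$. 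Recalling $[\mathcal N]_{T\wedge\tau}=\int_0^{T\wedge\tau}\vert Z^{\mathbb G}_s\vert^2 ds+[M^{\mathbb G},M^{\mathbb G}]_{T\wedge\tau}$ and summing the three pieces delivers (\ref{estimate100}). I expect the main obstacle to be precisely the circular coupling between $K^{\mathbb G}$ and the martingale part $\mathcal N$: a naive approach estimates each of them in terms of the other through BDG, and the resolution is the direct Garsia--Neveu domination of $K^{\mathbb G}$ afforded by the Snell/supermartingale structure, which is also what makes the whole argument run uniformly in $p>1$.
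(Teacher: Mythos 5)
Your proof is correct, and while it opens exactly as the paper does (Snell-envelope domination of $Y^{\mathbb G}$ via Lemma \ref{Solution2SnellEnvelop} followed by Doob's maximal inequality), it resolves the coupling between $K^{\mathbb G}$ and the martingale part by a genuinely different mechanism. The paper's Part 2 bounds $K^{\mathbb G}_{T\wedge\tau}$ from the equation itself, which leaves a term $C_{BDG}E^{\widetilde Q}[([M^{\mathbb G},M^{\mathbb G}]+\int\vert Z^{\mathbb G}\vert^2)^{p/2}]$ on the right; its Part 3 then applies It\^o to $(Y^{\mathbb G})^2$, invokes the Fefferman-type Lemma \ref{Lemma4.8FromChoulliThesis} on the term $Y^{\mathbb G}_{-}\is(Z^{\mathbb G}\is W^{\tau}+M^{\mathbb G})$, and closes the resulting circular system of inequalities by choosing $\epsilon$ small enough that the net coefficient $C_{\epsilon}$ of the quadratic-variation term remains positive. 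You instead bound $K^{\mathbb G}$ \emph{unconditionally} via the Garsia--Neveu potential inequality applied to the Snell/supermartingale structure of $U=Y^{\mathbb G}+\int_0^{\cdot\wedge\tau}f(s)ds$ --- this is precisely the Dellacherie--Meyer Theorem \ref{DellacherieAndMeyer} recalled in the paper's Appendix, and it is in fact the tool the authors themselves deploy to control $K^{\mathbb G}$ in the nonlinear case (proof of Theorem \ref{WhyNot}) --- and you then recover the quadratic variation from the pathwise identity $\mathcal N=Y^{\mathbb G}-Y^{\mathbb G}_0+\int_0^{\cdot\wedge\tau}f(s)ds+K^{\mathbb G}$ together with the easy direction of BDG. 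Your route buys a shorter argument with no It\^o computation and no $\epsilon$-bookkeeping, still uniform in $p>1$; the paper's route is self-contained within its It\^o/Fefferman machinery and yields the explicit constants it tracks later. Two small remarks: your hedge that BDG for $\mathcal N$ requires the generalized inequalities of \cite{Choulli4} and the BMO facts of Lemma \ref{Lemma4.11} is unnecessary --- plain BDG applies to any $(\widetilde Q,\mathbb G)$-local martingale regardless of the filtration; and since you dominate the right potential $E^{\widetilde Q}[K^{\mathbb G}_{T\wedge\tau}-K^{\mathbb G}_{\rho}\mid\mathcal G_{\rho}]$ rather than the left potential $E^{\widetilde Q}[K^{\mathbb G}_{T\wedge\tau}-K^{\mathbb G}_{\rho-}\mid\mathcal G_{\rho}]$, you should invoke the optional version of the Garsia--Neveu inequality, which is legitimate because $K^{\mathbb G}$, being predictable, is a fortiori optional.
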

%%%%%%%%
\begin{proof} This proof is divided into four parts, where we control and estimate, in a way or another, the four terms in the left-hand-side of (\ref{estimate100}).\\
{\bf Part 1.} Thanks to (\ref{RBSDE2Snell}), we conclude that $Y^{\mathbb G}$ satisfies 
    \begin{equation*}
  Y_{t}^{\mathbb{G}}=\underset{\upsilon\in \mathcal{J}_{t\wedge\tau,T\wedge\tau}(\mathbb{G})}{\rm{ess}\sup}\hspace{2mm}E^{\widetilde{Q}}\left[\int_{t\wedge\tau}^{\upsilon\wedge\tau}f(s)ds + S_{\upsilon}1_{\{\upsilon\ <T\wedge\tau\}}+\xi 1_{\{\upsilon =T\wedge \tau\}}\ \Big|\ \mathcal{G}_{t}\right].
  \end{equation*}
Therefore, by taking $\upsilon=T\wedge\tau\in \mathcal{J}_{t\wedge\tau}^{T\wedge\tau}(\mathbb G)$ and using $\xi\geq -\xi^-$ and $\int_{t\wedge\tau}^{\upsilon\wedge\tau}f(s)ds \geq -\int_{0}^{T\wedge\tau}(f(s))^-ds $, we deduce that 
\begin{eqnarray*}
 E^{\widetilde{Q}}\left[-\int_0^{T\wedge\tau} (f(s))^- ds-\xi^-\ \Big|\ \mathcal{G}_{t}\right] \leq Y_{t}^{\mathbb{G}}\leq E^{\widetilde{Q}}\left[\int_0^{T\wedge\tau} (f(s))^+ ds + \sup_{0\leq u\leq\tau\wedge T} S_u^+ +\xi^+ \ \Big|\ \mathcal{G}_{t}\right].
 \end{eqnarray*}
 This clearly leads to  
 \begin{eqnarray}\label{Domination4YG}
 \vert Y_{t}^{\mathbb{G}}\vert\leq  {\widetilde M}_t:=E^{\widetilde{Q}}\left[\int_0^{T\wedge\tau}\vert f(s)\vert ds + \sup_{0\leq u\leq\tau\wedge T} S_u^+ +\vert\xi\vert\ \Big|\ \mathcal{G}_{t}\right].
  \end{eqnarray}
Hence, by applying Doob's inequality to $\widetilde M$ under $(\widetilde Q, \mathbb G)$, denoting $C_{DB}$ the Doob's constant, and using $(\sum_{i=1}^n \vert x_i\vert)^p\leq n^{p-1}\sum_{i=1}^n\vert x_i\vert^p$, we derive 
\begin{align}\label{yesyes}
E^{\widetilde{Q}}\left[\sup_{0\leq t\leq T\wedge\tau}\vert Y^{{\mathbb{G}}}_t\vert^p\right]\leq 3^{p-1}C_{DB} E^{\widetilde{Q}}\left[\left(\int_{0}^{T\wedge\tau}\vert f(s)\vert ds\right)^p +\underset{0\leq s \leq T\wedge\tau}{\sup}(S^{+}_{s})^p+\vert\xi \vert^p\right].
\end{align}
{\bf Part 2.}  By combining $ K_{T\wedge\tau}^{\mathbb{G}}=Y^{\mathbb{G}}_{0}-\xi  +\int_{0}^{T\wedge\tau}f(t)dt- M^{\mathbb{G}}_{T\wedge\tau}+  \int_{0}^{T\wedge\tau}Z^{\mathbb{G}}_{s}dW_{t}^{\tau}$,  (\ref{yesyes}), $(\sum_{i=1}^n x_i )^p\leq n^{p-1}\sum_{i=1}^n x_i^p$, and the BDG inequalities for the $(\widetilde{Q}, \mathbb G)$-martingale $-M^{\mathbb{G}}+  Z^{\mathbb{G}}\is{W}^{\tau}$, we get
       \begin{align}\label{Control4KG}
  &E^{\widetilde{Q}} \left[(K_{T\wedge\tau}^{\mathbb{G}})^p\right]\nonumber\\
  &\leq 4^{p-1}E^{\widetilde{Q}}\left[\vert{Y}^{\mathbb{G}}_{0}\vert^p+\vert\xi\vert^p +  \left(\int_{0}^{T\wedge\tau}\vert f(t)\vert dt\right)^p+ C_{BDG}\left([M^{\mathbb{G}}, M^{\mathbb{G}}]_{T\wedge \tau}+  \int_{0}^{T\wedge\tau}\vert{Z}^{\mathbb{G}}_{s}\vert^2 dt\right)^{p/2}\right]\nonumber\\
  &\leq 4^{p-1}(3^{p-1}+1)   E^{\widetilde{Q}}\left[\vert\xi \vert^p+\left(\int_{0}^{T\wedge\tau}\vert f(s)\vert ds\right)^p +\underset{0\leq s \leq T\wedge\tau}{\sup}(S^{+}_{s})^p\right]\nonumber\\
  &+ 4^{p-1}C_{BDG}E^{\widetilde{Q}}\left[\left([M^{\mathbb{G}}, M^{\mathbb{G}}]_{T\wedge \tau}+  \int_{0}^{T\wedge\tau}\vert{Z}^{\mathbb{G}}_{s}\vert^2 dt\right)^{p/2}\right].    \end{align}
{\bf Part 3.} A combination of It\^o and (\ref{RBSDEG}) implies that 
\begin{align}
d(Y^{\mathbb G})^2&=2Y_{-}^{{\mathbb{G}}}dY^{\mathbb G}+d[ Y^{\mathbb G},Y^{\mathbb G}]\nonumber\\
&=-2Y_{s-}^{\mathbb G}f(s)d(s\wedge\tau)-2Y_{-}^{\mathbb G}dK^{\mathbb G}+2Y_{-}^{\mathbb G} Z^{\mathbb G}dW^{\tau}-2Y_{-}^{\mathbb G}dM^{\mathbb G}\nonumber\\
&+d[M^{\mathbb G},M^{\mathbb G}]+d[K^{\mathbb G},K^{\mathbb G}]+(Z^{\mathbb G})^2d(s\wedge\tau)+2d[{K}^{\mathbb G},{M}^{\mathbb G}].\label{Ito1}\end{align}
As the three processes $[ M^{\mathbb{G}}, K^{\mathbb{G}}]$, $Y_{-}^{\mathbb{G}}Z^{\mathbb{G}}\is W^{\tau}$ and $Y_{-}^{\mathbb{G}}\is M^{\mathbb{G}}$ are $\widetilde Q$-local martingales, then there exists a sequence of $\mathbb G$-stopping times $(T_n)_n$ that increases to infinity such that when these processes are stopped at each $T_n$ they become true martingale. Thus, by using Young's inequality when it is convenient, we get 
\begin{align}
2\sum\vert\Delta{K}^{\mathbb G}\Delta{M}^{\mathbb G}\vert&\leq 2\sqrt{\sum(\Delta{K}^{\mathbb G})^2}\sqrt{\sum(\Delta{M}^{\mathbb G})^2}\leq \epsilon [M^{\mathbb G},M^{\mathbb G}]+\epsilon^{-1}[K^{\mathbb G}, K^{\mathbb G}]\nonumber\\
&\leq \epsilon [M^{\mathbb G},M^{\mathbb G}]+\epsilon^{-1}\sup_{0\leq s\leq T\wedge\tau}E^{\widetilde{Q}}[\sup_{0\leq t\leq T\wedge\tau}\vert Y^{{\mathbb{G}}}_t\vert\ \big|{\cal G}_s]K^{\mathbb G}\nonumber\\
&\leq \epsilon [M^{\mathbb G},M^{\mathbb G}]+\epsilon^{-3}\sup_{0\leq s\leq T\wedge\tau}E^{\widetilde{Q}}[\sup_{0\leq t\leq T\wedge\tau}\vert Y^{{\mathbb{G}}}_t\vert\ \big|{\cal G}_s]^2+\epsilon(K^{\mathbb G})^2,\end{align}
and for any stopping times $\tau_n$, for any $\epsilon\in(0,1)$, we derive
\begin{align}
&(1-\epsilon)[M^{\mathbb G},M^{\mathbb G}]_{\tau\wedge\tau_n}+\int_0^{\tau\wedge\tau_n}(Z^{\mathbb G}_s)^2ds\nonumber\\
&\leq (2+\epsilon^{-1})\sup_{0\leq s\leq\tau\wedge\tau_n}\vert Y^{\mathbb G}_s\vert^2+\left(\int_0^{\tau\wedge\tau_n}\vert f(s)\vert ds\right)^2\nonumber\\
&+\epsilon^{-3}\sup_{0\leq s\leq T\wedge\tau}E^{\widetilde{Q}}[\sup_{0\leq t\leq T\wedge\tau}\vert Y^{{\mathbb{G}}}_t\vert\ \big|{\cal G}_s]^2+2\epsilon({K}^{\mathbb G}_{\tau\wedge\tau_n})^2+2\sup_{0\leq s\leq\tau\wedge\tau_n}\vert (Y_{-}^{\mathbb G} \is (Z^{\mathbb G}\is{W}^{\tau}+{M}^{\mathbb G}))_s\vert.\label{Ito2}
\end{align}
On the other hand, by applying Lemma \ref{Lemma4.8FromChoulliThesis} to $Z^{\mathbb G}\is{W}^{\tau}+{M}^{\mathbb G}$ and $a=b=p$, we get 
\begin{align*}%\label{MyLemma}
&E^{\widetilde{Q}}\left[\sup_{0\leq s\leq\tau\wedge\tau_n}\vert (Y_{-}^{\mathbb G} \is (Z^{\mathbb G}\is{W}^{\tau}+{M}^{\mathbb G}))_s\vert^{p/2}\right]\\
&\leq C_p E^{\widetilde{Q}}\left[\sup_{0\leq s\leq\tau\wedge\tau_n}\vert Y^{\mathbb G} \vert^p\right]^{1/2}E^{\widetilde{Q}}\left[\left(\int_0^{\tau\wedge\tau_n} (Z^{\mathbb G}_s)^2ds+[M^{\mathbb G},M^{\mathbb G}]_{\tau\wedge\tau_n}\right)^{p/2}\right]^{1/2}\\
&\leq {{C_p^2}\over{\epsilon}} E^{\widetilde{Q}}\left[\sup_{0\leq s\leq\tau\wedge\tau_n}\vert Y^{\mathbb G} \vert^p\right]+\epsilon E^{\widetilde{Q}}\left[\left(\int_0^{\tau\wedge\tau_n} Z^{\mathbb G}_s)^2ds+[M^{\mathbb G},M^{\mathbb G}]_{\tau\wedge\tau_n}\right)^{p/2}\right].\end{align*} 
Thus, by combining this latter inequality (that is due to Young's inequality) and (\ref{Ito2}),  we put 
\begin{eqnarray}\label{C3}
C_3:= (2+\epsilon^{-1})^{p/2}+2^{p/2} C_p^2{\epsilon}^{-1}+\epsilon^{-3p/2}C_{DB},\end{eqnarray} and we derive
\begin{align*}\label{Ito3}
&((1-\epsilon)^{p/2}-(10)^{p/2}\epsilon)E^{\widetilde{Q}}\left[\left(\int_0^{\tau\wedge\tau_n} (Z^{\mathbb G}_s)^2ds+[M^{\mathbb G},M^{\mathbb G}]_{\tau\wedge\tau_n}\right)^{p/2}\right]\nonumber\\
&\leq 5^{p/2}E^{\widetilde{Q}}\left[C_3 \sup_{0\leq s\leq\tau\wedge\tau_n}\vert Y^{\mathbb G} \vert^p+\left(\int_0^{\tau\wedge\tau_n}\vert f(s)\vert ds\right)^p+(2\epsilon)^{p/2}({K}^{\mathbb G}_{\tau\wedge\tau_n})^p\right].
\end{align*} 
Therefore, by inserting (\ref{yesyes}) and (\ref{Control4KG}) in the above inequality, we obtain 
 \begin{align*}
&\left((1-\epsilon)^{p/2}-(10)^{p/2}\epsilon-(20\epsilon)^{p/2}{{C_{BDG}}\over{4}}\right)E^{\widetilde{Q}}\left[\left(\int_0^{\tau\wedge\tau_n} (Z^{\mathbb G}_s)^2ds+[M^{\mathbb G},M^{\mathbb G}]_{\tau\wedge\tau_n}\right)^{p/2}\right]\nonumber\\
&\leq  C_4 E^{\widetilde{Q}}\left[\left(\int_{0}^{T\wedge\tau}\vert f(s)\vert ds\right)^p +\underset{0\leq s \leq T\wedge\tau}{\sup}(S^{+}_{s})^p+\vert\xi \vert^p\right],\end{align*} 
where 
$$
C_4:=5^{p/2}( 3^{p-1}C_3C_{DB}+1)+(20\epsilon)^{p/2}{{3C_{DB}+1}\over{4}}.
$$
Hence, it is enough to choose $\epsilon>0$ very small such that $C_{\epsilon}:=(1-\epsilon)^{p/2}-(10)^{p/2}\epsilon-(20\epsilon)^{p/2}{{C_{BDG}}\over{4}}>0$, and combine the above inequality with (\ref{yesyes}), the proof of (\ref{estimate100}) follows immediately with the constant equal to $C=C_4C_{\epsilon}^{-1}+3^{p-1}C_{DB} $ that depends on $p$ only. This ends the proof of the theorem. 
     \end{proof} 
%%%%%%%%%%%%%%%%%%%%%%%%%%%%%%%%%%%%%%%%%%%%%%%%%%%%%%%%%
We end this subsection, by elaborating norm-estimate for the difference of solutions as follows. 
 \begin{theorem}\label{EstimatesUnderQtilde1} Suppose that ($Y^{\mathbb{G},i},Z^{\mathbb{G},i},K^{\mathbb{G},i}, M^{\mathbb{G},i}$)  is a  solution to the RBSDE (\ref{RBSDEG}) that correspond to  $(f^{(i)}, S^{(i)}, \xi^{(i)})$, for each $i=1,2$. Then for any $p>1$, there exist positive $C_1$ and $C_2$ that depend on $p$ only such that
  \begin{eqnarray}\label{estimate1001}
   &&E^{\widetilde{Q}}\left[\sup_{0\leq t\leq{T}\wedge\tau}\vert\delta Y^{{\mathbb{G}}}_{t}\vert^p\right]+E^{\widetilde{Q}}\left[\left(\int_{0}^{T\wedge\tau}\vert\delta Z^{{\mathbb{G}}}_{s}\vert^{2}ds+[\delta M^{\mathbb{G}}, \delta M^{\mathbb{G}}]_{T\wedge\tau}\right)^{p/2}\right]\nonumber\\
   &&\leq{C_1}E^{\widetilde{Q}}\left[\vert\delta\xi\vert^p+\left(\int_{0}^{T\wedge\tau}\vert \delta f(s)\vert ds\right)^p +\sup_{0\leq s \leq T\wedge\tau}\vert\delta S_{s}\vert^p\right]\nonumber\\
  % &&+C_2\Vert\sup_{0\leq t\leq T\wedge\tau}\vert\delta S_{t}\vert\Vert_{L^p(\widetilde{Q})}^{p/2}\sqrt{\sum_{i=1}^2\left\{\Vert\xi^{(i)}\Vert_{L^p(\widetilde{Q})}^p+\Vert\int_{0}^{T\wedge\tau}\vert{f}^{(i)}(s)\vert ds\Vert_{L^p(\widetilde{Q})}^p +\Vert\sup_{0\leq s \leq T\wedge\tau}(S_s^{(i)})^+\Vert_{L^p(\widetilde{Q})}^p\right\}}\\
   &&+C_2\Vert\sup_{0\leq t\leq T\wedge\tau}\vert\delta S_{t}\vert\Vert_{L^p(\widetilde{Q})}^{p/2}\sqrt{\sum_{i=1}^2E^{\widetilde{Q}}\left[\vert\xi^{(i)}\vert^p+\left(\int_{0}^{T\wedge\tau}\vert{f}^{(i)}(s)\vert ds\right)^p +\sup_{0\leq s \leq T\wedge\tau}((S_s^{(i)})^+)^p\right]},\end{eqnarray}
  where  $ \delta Y^{\mathbb{G}},\delta Z^{\mathbb{G}},\delta K^{\mathbb{G}},\delta M^{\mathbb{G}},\delta f,\delta \xi,$ and $\delta S$ are given by
  \begin{eqnarray*}
   &&\delta Y^{\mathbb{G}}:=Y^{\mathbb{G},1}-Y^{\mathbb{G},2},\quad \delta Z^{{\mathbb{G}}}:=Z^{\mathbb{G},1}-Z^{\mathbb{G},2},\quad \delta M^{{\mathbb{G}}}:=M^{\mathbb{G},1}-M^{\mathbb{G},2}, \quad \delta K^{{\mathbb{G}}}:=K^{\mathbb{G},1}-K^{\mathbb{G},2},\\
   && \delta f:=f^{(1)}-f^{(2)},\quad \delta\xi:=\xi^{(1)}-\xi^{(2)},\quad  \delta S:=S^{(1)}-S^{(2)},
   \end{eqnarray*}
\end{theorem}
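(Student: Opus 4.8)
The plan is to derive the stability estimate \eqref{estimate1001} by subtracting the two RBSDEs and running essentially the same machinery used in Theorem \ref{EstimatesUnderQtilde}, but with an extra difficulty coming from the cross term involving the reflection processes $K^{\mathbb{G},i}$. First I would write down the dynamics of $\delta Y^{\mathbb{G}}$: subtracting the two copies of \eqref{RBSDEG} gives
\begin{eqnarray*}
d(\delta Y^{\mathbb{G}})=-\delta f(s)\,d(s\wedge\tau)-d(\delta K^{\mathbb{G}}+\delta M^{\mathbb{G}})+\delta Z^{\mathbb{G}}\,dW^{\tau},\qquad \delta Y^{\mathbb{G}}_{T\wedge\tau}=\delta\xi,
\end{eqnarray*}
where $\delta K^{\mathbb{G}}=K^{\mathbb{G},1}-K^{\mathbb{G},2}$ is now a difference of nondecreasing processes and hence only of finite variation, \emph{not} monotone. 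The sup-estimate on $\delta Y^{\mathbb{G}}$ is obtained via the Snell-envelope representation \eqref{RBSDE2Snell}: since each $Y^{\mathbb{G},i}_t$ is an essential supremum of the same type of functional, the difference is controlled by the sup over $\theta$ of the conditional expectation of $|\delta\xi|$, $\int|\delta f|\,ds$ and $|\delta S_\theta|$, and applying Doob's inequality under $(\widetilde Q,\mathbb{G})$ yields the first line of the right-hand side bound on $E^{\widetilde Q}[\sup_{t}|\delta Y^{\mathbb{G}}_t|^p]$.

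Next I would run the It\^o computation on $(\delta Y^{\mathbb{G}})^2$, exactly mirroring \eqref{Ito1}: the problematic term is now $2\delta Y^{\mathbb{G}}_{-}\,d(\delta K^{\mathbb{G}})$ together with the bracket $d[\delta K^{\mathbb{G}},\delta M^{\mathbb{G}}]$. Because $\delta K^{\mathbb{G}}$ is no longer nondecreasing, the integration-by-parts term $\int \delta Y^{\mathbb{G}}_{-}\,d(\delta K^{\mathbb{G}})$ cannot simply be dropped or absorbed as in the single-solution case; instead I would exploit the Skorokhod minimality conditions $\int(Y^{\mathbb{G},i}_{-}-S^{(i)}_{-})\,dK^{\mathbb{G},i}=0$. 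Writing $\delta Y^{\mathbb{G}}_{-}\,d(\delta K^{\mathbb{G}})=(\delta Y^{\mathbb{G}}_{-}-\delta S_{-})\,d(\delta K^{\mathbb{G}})+\delta S_{-}\,d(\delta K^{\mathbb{G}})$ and using that $(Y^{\mathbb{G},i}_{-}-S^{(i)}_{-})\,dK^{\mathbb{G},i}=0$ while $(Y^{\mathbb{G},i}_{-}-S^{(i)}_{-})\geq 0$ and $dK^{\mathbb{G},j}\geq 0$, the "diagonal" part has a favourable sign and the remaining cross term is bounded by $\int|\delta S|\,d(K^{\mathbb{G},1}+K^{\mathbb{G},2})$. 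This is precisely the source of the second, qualitatively different term in \eqref{estimate1001}: it will be estimated by H\"older's inequality in the form
\begin{eqnarray*}
E^{\widetilde Q}\Big[\int_0^{T\wedge\tau}|\delta S|\,d(K^{\mathbb{G},1}+K^{\mathbb{G},2})\Big]^{p/2}\leq \big\Vert\sup_t|\delta S_t|\big\Vert_{L^p(\widetilde Q)}^{p/2}\,\Big(\sum_i E^{\widetilde Q}[(K^{\mathbb{G},i}_{T\wedge\tau})^p]\Big)^{1/2},
\end{eqnarray*}
after which Theorem \ref{EstimatesUnderQtilde} is invoked to replace each $E^{\widetilde Q}[(K^{\mathbb{G},i}_{T\wedge\tau})^p]$ by the data-norms of $(f^{(i)},S^{(i)},\xi^{(i)})$, producing the square-root factor on the last line.

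The remaining steps parallel the proof of Theorem \ref{EstimatesUnderQtilde} verbatim: I would use Young's inequality to split $2\sum|\Delta(\delta K^{\mathbb{G}})\,\Delta(\delta M^{\mathbb{G}})|$ into an $\epsilon[\delta M^{\mathbb{G}},\delta M^{\mathbb{G}}]$ piece plus an $\epsilon^{-1}[\delta K^{\mathbb{G}},\delta K^{\mathbb{G}}]$ piece, apply the auxiliary Lemma \ref{Lemma4.8FromChoulliThesis} to the martingale $\delta Z^{\mathbb{G}}\is W^{\tau}+\delta M^{\mathbb{G}}$ to handle the stochastic-integral sup-term, localize by stopping times $\tau_n$, take expectations, and finally choose $\epsilon>0$ small enough so that the coefficient multiplying $E^{\widetilde Q}[(\int|\delta Z^{\mathbb{G}}|^2ds+[\delta M^{\mathbb{G}},\delta M^{\mathbb{G}}])^{p/2}]$ on the left is strictly positive and can be transferred to the right. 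I expect the main obstacle to be \emph{exactly} the treatment of the cross term $\int\delta Y^{\mathbb{G}}_{-}\,d(\delta K^{\mathbb{G}})$: in the single-solution estimate the reflection term dropped out because $K^{\mathbb{G}}$ is increasing and $Y^{\mathbb{G}}\geq S$, but here I must carefully combine the two Skorokhod conditions to control a genuinely signed finite-variation term, and it is this manoeuvre that forces the appearance of the extra $\Vert\sup|\delta S|\Vert^{p/2}_{L^p}$-times-square-root term rather than a clean Lipschitz-type bound. A secondary technical point is ensuring the localization $\tau_n\uparrow\infty$ is compatible with passing to the limit (monotone/Fatou) so that the final constants $C_1,C_2$ depend on $p$ only.
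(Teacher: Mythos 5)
Your proposal is correct and follows essentially the same route as the paper: the sup-bound on $\delta Y^{\mathbb G}$ via the Snell-envelope representation plus Doob, the It\^o computation on $(\delta Y^{\mathbb G})^2$, the use of the two Skorokhod conditions to reduce $\int\delta Y^{\mathbb G}_{-}\,d(\delta K^{\mathbb G})$ to $\int|\delta S_{-}|\,d\mathrm{Var}(\delta K^{\mathbb G})$, the Cauchy--Schwarz step producing the $\Vert\sup|\delta S|\Vert^{p/2}$-times-square-root term with $\mathrm{Var}(\delta K^{\mathbb G})\leq K^{\mathbb G,1}+K^{\mathbb G,2}$ controlled by Theorem \ref{EstimatesUnderQtilde}, and the treatment of the martingale sup-term via Lemma \ref{Lemma4.8FromChoulliThesis} with Young's inequality and localization. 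The only cosmetic difference is how the jump cross term $[\delta K^{\mathbb G},\delta M^{\mathbb G}]$ is absorbed (the paper feeds it through Lemma \ref{Lemma4.8FromChoulliThesis} using $^{p,\mathbb G}(\Delta\delta Y^{\mathbb G}_{-})=-\Delta\delta K^{\mathbb G}$, while you propose the Young split used in the single-solution estimate), which does not change the argument.
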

%%%%%%%%
\begin{proof} This proof is achieved in two  parts, where we control in norm the first and the second terms of the left-hand-side of (\ref{estimate1001}). \\
{\bf {Part 1.}} Thanks to (\ref{RBSDE2Snell}), we conclude that  
   \begin{align*}
Y^{\mathbb{G},1}-Y^{\mathbb{G},2}&\leq \underset{\upsilon\in \mathcal{J}_{t\wedge\tau,T\wedge\tau}(\mathbb{G})}{\rm{ess}\sup}\hspace{2mm}E^{\widetilde{Q}}\left[\int_{t\wedge\tau}^{\upsilon\wedge\tau}  \delta f(s)ds +  \delta S_{\upsilon}1_{\{\upsilon\ <T\wedge\tau\}}+  \delta \xi 1_{\{\upsilon =T\wedge \tau\}}\ \Big|\ \mathcal{G}_{t}\right]\\
&\leq E^{\widetilde{Q}}\left[\int_{t\wedge\tau}^{T\wedge\tau} \vert \delta f(s)\vert ds +  \underset{t\wedge\tau\leq s\leq T\wedge\tau}{\sup}\vert\delta S_{s}\vert+ \vert \delta \xi \vert \Big|\ \mathcal{G}_{t}\right]
  \end{align*} 
  and 
   \begin{align*}
Y^{\mathbb{G},2}-Y^{\mathbb{G},1}&\leq \underset{\upsilon\in \mathcal{J}_{t\wedge\tau,T\wedge\tau}(\mathbb{G})}{\rm{ess}\sup}\hspace{2mm}E^{\widetilde{Q}}\left[\int_{t\wedge\tau}^{\upsilon\wedge\tau}  -\delta f(s)ds  - \delta S_{\upsilon}1_{\{\upsilon\ <T\wedge\tau\}} - \delta \xi 1_{\{\upsilon =T\wedge \tau\}}\ \Big|\ \mathcal{G}_{t}\right]\\
&\leq E^{\widetilde{Q}}\left[\int_{t\wedge\tau}^{T\wedge\tau} \vert \delta f(s)\vert ds +  \underset{t\wedge\tau\leq s\leq T\wedge\tau}{\sup}\vert\delta S_{s}\vert+ \vert \delta \xi \vert \Big|\ \mathcal{G}_{t}\right].
  \end{align*} 
Therefore, these yield
    \begin{equation*}
   \vert\delta  Y_{t}^{\mathbb{G}}\vert\leq {\widetilde M}_t:=E^{\widetilde{Q}}\left[\int_{0}^{T\wedge\tau} \vert \delta f(s)\vert ds +  \underset{0\leq s\leq T\wedge\tau}{\sup}\vert\delta S_{s}\vert+ \vert \delta \xi \vert \Big|\ \mathcal{G}_{t}\right].
  \end{equation*}
By applying Doob's inequality to $\widetilde M$ under $(\widetilde Q, \mathbb G)$ and using $(\sum_{i=1}^n \vert x_i\vert)^p\leq n^{p-1}\sum_{i=1}^n\vert x_i\vert^p$, we get 
\begin{align}\label{yesyes1}
E^{\widetilde{Q}}\left[\sup_{0\leq t\leq T\wedge\tau}\vert \delta Y^{{\mathbb{G}}}_t\vert^p\right]\leq 3^{p-1}C_{DB} E^{\widetilde{Q}}\left[\left(\int_{0}^{T\wedge\tau}\vert\delta f(s)\vert ds\right)^p +\sup_{0\leq s \leq T\wedge\tau}\vert\delta S_{s}\vert^p+\vert\delta\xi \vert^p\right], 
\end{align}
where $C_{DB}$ is the universal Doob's constant that depends on $p$ only.\\
{\bf Part 2.} Here we focus on  $\displaystyle\int_{0}^{ \cdot}(\delta  Z_{s}^{\mathbb{G}})^{2}ds+[\delta  M^{\mathbb{G}},\delta  M^{\mathbb{G}}]$. Thus, we apply It\^o to $(\delta{Y}^{\mathbb G})^2$ and get 
  \begin{align}\label{Ito000}
&d [\delta{M}^{\mathbb G},\delta{M}^{\mathbb G}]+(\delta{Z}^{\mathbb G})^2d(\cdot\wedge\tau)+d [\delta{K}^{\mathbb G},\delta{K}^{\mathbb G}]\nonumber\\
&=d(\delta{Y}^{\mathbb G})^2+2\delta  Y_{-}^{\mathbb{G}}\delta {f}d(\cdot\wedge\tau)+2\delta  Y_{-}^{\mathbb{G}}d\delta{K}^{\mathbb{G}}-2\Delta\delta{K}^{\mathbb G}d\delta{M}^{\mathbb G}+2\delta{Y}_{-}^{\mathbb G}d(\delta{Z}^{\mathbb G}\is{W}^{\tau}-\delta{M}^{\mathbb G})). \end{align}
Thus, we use this equality and mimic the first step of Part 3 in the proof of Theorem \ref{EstimatesUnderQtilde}, and derive
  \begin{align}
&{\cal Q}^{\mathbb G}:= [\delta{M}^{\mathbb G},\delta{M}^{\mathbb G}]+\int_0^{\cdot} (\delta{Z}^{\mathbb G}_s)^2d(s\wedge\tau)\nonumber\\
&\leq \sup_{0\leq{t}\leq\cdot}(\delta{Y}^{\mathbb G}_t)^2+2\int_{0}^{\cdot}\delta  Y_{s-}^{\mathbb{G}}\delta  f(s)d{s}+2\delta  Y_{-}^{\mathbb{G}}\is\delta{K}^{\mathbb{G}}-2[\delta{K}^{\mathbb G},\delta{M}^{\mathbb G}]+\overbrace{2\sup_{0\leq{t}\leq\cdot}\vert (\delta{Y}_{-}^{\mathbb G}\is(\delta{Z}^{\mathbb G}\is{W}^{\tau}-\delta{M}^{\mathbb G}))_t\vert}^{=:\Gamma^{\mathbb G}}\nonumber\\
 &\leq 2\sup_{0\leq{t}\leq\cdot}(\delta{Y}^{\mathbb G}_t)^2+\int_{0}^{\cdot}\vert\delta  f(s)\vert{d}{s}+2\delta  S_{s-}^{\mathbb{G}}\is\delta{K}-2\Delta(\delta{K}^{\mathbb G})\is\delta{M}^{\mathbb G}+ \Gamma^{\mathbb G}.\label{Ito10}
 \end{align}
 As $M^{\mathbb{G}}$ and $\delta  Z^{\mathbb{G}}\is W^{\tau}-M^{\mathbb{G}}$ are $(\widetilde Q,\mathbb G)$-local martingales and $^{p,\mathbb G}(\Delta(\delta  Y_{-}^{\mathbb{G}}))=-\Delta(\delta{K}^{\mathbb G})$, we consider  $(T_n)_n$  a sequence of $\mathbb G$-stopping times that increases to infinity such that when the processes are stopped at each $T_n$ they become true martingale, and by applying Lemma \ref{Lemma4.8FromChoulliThesis} and using Young's inequality afterwards, we obtain
 \begin{align*}
& E^{\widetilde{Q}}\left[(\Gamma^{\mathbb G}_{\tau\wedge{T_n}})^{p/2}\right]\leq C^p\epsilon^{-1}\Vert\sup_{0\leq s \leq T\wedge\tau} \vert\delta Y_{s}^{\mathbb{G}}\vert\Vert_{L^p(\widetilde{Q})}^p+\epsilon {E}^{\widetilde{Q}}\left[\left(\int_{0}^{ T_n\wedge\tau}(\delta  Z_{s}^{\mathbb{G}})^{2}ds+ [\delta  M^{\mathbb{G}},\delta  M^{\mathbb{G}}]_{T_n\wedge}\right)^{p/2}\right],\\
&\mbox{and}\\
 &E^{\widetilde{Q}}\left[\sup_{0\leq{t}\leq\tau\wedge{T_n}}\vert-2[\delta{K}^{\mathbb G},\delta{M}^{\mathbb G}]_t\vert^{p/2}\right]\leq C^p\epsilon^{-1}\Vert\sup_{0\leq s \leq T\wedge\tau} \vert\delta Y_{s}^{\mathbb{G}}\vert\Vert_{L^p(\widetilde{Q})}^p+\epsilon {E}^{\widetilde{Q}}\left[[\delta  M^{\mathbb{G}},\delta  M^{\mathbb{G}}]_{T_n\wedge}^{p/2}\right].
  \end{align*}
Therefore, by taking expectation on both sides of (\ref{Ito10}) and inserting the two inequalities above in the resulting inequality afterwards, we get 
 \begin{align}
 &(1-2{\epsilon}5^{p/2})E^{\widetilde{Q}}\left[ ({\cal Q}^{\mathbb G}_{\tau\wedge{T_n}})^{p/2}\right]\leq 2(10C)^{p/2}\epsilon^{-1}\Vert\sup_{0\leq s \leq T\wedge\tau} \vert\delta Y_{s}^{\mathbb{G}}\vert\Vert_{L^p(\widetilde{Q})}^p\nonumber\\
 &+5^{p/2}\Vert\int_{0}^{\tau\wedge{T_n}}\vert\delta  f(s)\vert{d}{s}\Vert_{L^p(\widetilde{Q})}^p+5^{p/2}\sqrt{\Vert\sup_{0\leq{t}\leq{\tau\wedge{T_n}}}\vert\delta{S}_t\vert\Vert_{L^p(\widetilde{Q})}^p\Vert\mbox{Var}_{\tau\wedge{T_n}}(\delta{K}^{\mathbb G})\Vert_{L^p(\widetilde{Q})}^p}.\label{Control4QG}
    \end{align}
Furthermore, remark that $\mbox{Var}_{\tau\wedge{T_n}}(\delta{K}^{\mathbb G})\leq {K}^{\mathbb G,1}_{\tau\wedge{T_n}}+{K}^{\mathbb G,2}_{\tau\wedge{T_n}}$. Thus, by inserting this latter inequality in  (\ref{Control4QG})  and applying Theorem \ref{EstimatesUnderQtilde} to each ${K}^{\mathbb G, i}$, $i=1,2$, and using Fatou afterwards, we get 

 \begin{align*}
 &(1-2{\epsilon}5^{p/2})E^{\widetilde{Q}}\left[ ({\cal Q}^{\mathbb G}_{T\wedge\tau})^{p/2}\right]\leq 2(10C)^{p/2}\epsilon^{-1}\Vert\delta Y^{\mathbb{G}}\Vert_{{\mathbb{D}}_{ T\wedge\tau}(\widetilde{Q},p)}^p+5^{p/2}\Vert\int_{0}^{\tau\wedge{T}}\vert\delta  f(s)\vert{d}{s}\Vert_{L^p(\widetilde{Q})}^p\nonumber\\
 &+5^{p/2}\sqrt{C} \Vert\delta S\Vert_{{\mathbb{D}}_{ T\wedge\tau}(\widetilde{Q},p)}^{p/2}\sqrt{\sum_{i=1}^2\left\{\Vert\xi^{(i)}\Vert_{L^p(\widetilde{Q})}^p+\Vert\int_{0}^{T\wedge\tau}\vert{f}^{(i)}(s)\vert ds\Vert_{L^p(\widetilde{Q})}^p +\Vert(S^{(i)})^+\Vert_{{\mathbb{D}}_{ T\wedge\tau}(\widetilde{Q},p)}^p\right\}}.
    \end{align*}
Therefore, by combinung this inequality with (\ref{yesyes1}) and putting
  \begin{align*}
 \epsilon=5^{-p/2}/4,\quad C_1=3^{p-1}4C_{DB}(50C)^{p/2}+5^{p/2}4+3^{p-1}C_{DB}\quad\mbox{and}\quad C_2=2\sqrt{C}5^{p/2},
  \end{align*}
the theorem follows immediately. This ends the proof of the theorem.
     \end{proof} 
%%%%%%%%%%%%%%%%%%%%%%%%%%%%%%%%%%%%%%%%%%%%%%%%%%%%%%%%%%%%%%%%%%%%%%%%%%
%%%%%%%%%%%%%%%%%%%%%%%%%%%%%%%%%%%%%%%%%%%%%%%%%%%%%%%%%%%%%%%%%%%%%
%%%%%%%%%%%%%%%%%%%%%%%%%%%%%%%%%%%%%%%%%%%%%%%%%%%%%%%%%%%%%%%%
%%To this end, we consider the two following assumptions
%%\begin{lemma}For any $p\in [1,+\infty)$, there is equivalence between 
%%\begin{eqnarray}\label{MainAssumption}
%%E^{\widetilde{Q}}\left[\vert\xi\vert^p+\left(\int_0^{T\wedge\tau}\vert f(s)\vert ds\right)^p+\sup_{0\leq u\leq\tau\wedge T}(S_u^+)^p\right]<+\infty,
%%\end{eqnarray}
%%and 
%%\begin{eqnarray}\label{MainAssumption4LinearBounded}
%%E\left[\int_0^T \vert{h}\vert^p_s (dV^{\mathbb F}_s+{\widetilde{\cal E}}_T\delta_T(ds))+\int_0^T {\widetilde{\cal E}}_{s-}d(F(s)^p+\sup_{0\leq u\leq s}(S^+_u)^p)\right]<+\infty,\ F(t):=\int_0^t \vert f(s)\vert ds.
%%\end{eqnarray}
%%\end{lemma}
%%%The proof of the lemma will be given in the appendix. 
\subsection{Existence for the $\mathbb G$-RBSDE and its relationship to $\mathbb F$-RBSDE}\label{Subsection4.1}
In this subsection, we prove the existence and the uniqueness of the solution to the RBSDE (\ref{RBSDEG}), and we establish explicit connection between this RBSDE and its $\mathbb F$-RBSDE counterpart, and highlight the explicit relationship between their solutions as well. 
%%%%%%%%%%%%%%%%%%%%%%%%%%%%%%%%%%%%%%%%%%%%%%%%%%%%%%%%%%%%%%%%%%%%%%%%
%%%%%%%%%%%%%%%%%%%%%%%%%%%%%%%%%%%%%%%%%%%%%%%%%%%%%%%%%%%%%%%%%%%%%%
%%%%%%%%%%%%%%%%%%%%%%%%%%%%%%%%%%%%\subsection{RBSDE for simple linear driver}
 %%%%%%%%%%%%%%%%%%%%%%%%%%%%%%%%%%%%%%%%%%%%%%%%%
%%%%%%%%%%%%%%%%%%%%%%%%%%%%%%%%%%%%%%%%%%%%%%%%%%%%%%%%%%%%%%%%
%%%%%%%%%%%%%%%%%%%%%%%%%%%%%%%%%%%%%%%%%%%%%%%%%%%%%%%%%%%
  %%%%%%%%%%%%%%%%%%%%%%%%%%%%%%%%%%%%%%%%%%%%%%%%%%%%%%%%%%%%%%%%%%%%%%%%%%%%%%%%%%%%%%%%%%%%%%%%%%%%%%%%%%%%%%%%%%%%%%%%%%%%%%%%%%%%%
%%%%%%%%%%%%%%%%%%%%%%%%%%%%%%%%%%%%%%%%%%%%%%%%%%%%%%%%%%%%%
\begin{theorem}\label{abcde}Let $p\in (1,\infty)$, suppose that (\ref{MainAssumption}) holds, and consider $(f^{\mathbb{F}},S^{\mathbb{F}})$ and $(\xi^{\mathbb{F}},V^{\mathbb F})$ given by 
  \begin{eqnarray}
  f^{\mathbb{F}}:={\widetilde{\cal E}}f,\quad S^{\mathbb{F}}:= {\widetilde{\cal E}}S,\quad  \xi^{\mathbb{F}}:={\widetilde{\cal E}_T}h_{T},\quad V^{\mathbb F}:=1-{\widetilde{\cal E}},\quad\mbox{where}\quad {\widetilde{\cal E}}:={\cal E}\left(-{\widetilde G}^{-1}\is D^{o,\mathbb{F}}\right). \label{ProcessVFandXiF}
  \end{eqnarray}  
Then the following  assertions hold.\\
{\rm{(a)}} The following RBSDE under $\mathbb F$, associated to the triplet $ \left(f^{\mathbb{F}},S^{\mathbb{F}}, \xi^{\mathbb F}\right)$,
\begin{eqnarray}\label{RBSDEF}
\begin{cases}
Y_{t}= \displaystyle\xi^{\mathbb{F}}+\int_{t}^{T}f^{\mathbb{F}}(s)ds+\int_{t}^{T}h_{s}dV^{\mathbb{F}}_{s}+K_{T}-K_{t}-\int_{t}^{T}Z_{s}dW_{s},\\
Y_{t}\geq S_{t}^{\mathbb{F}}1_{\{t\ <T\}}+\xi^{\mathbb{F}}1_{\{t\ =T\}},\quad
 \displaystyle\int_{0}^{T}(Y_{t-}-S_{t-}^{\mathbb{F}})dK_{t}=0 ,\quad P\mbox{-a.s.,}
\end{cases}
\end{eqnarray}
has a unique $L^p(P,\mathbb F)$-solution $(Y^{\mathbb F},  Z^{\mathbb F}, K^{\mathbb F})$, and 
 \begin{eqnarray}\label{RBSDE2SnellF}
 Y^{\mathbb F}_t=\rm{ess}\sup_{\sigma\in \mathcal{J}_{t}^{T}(\mathbb{F})}E\left[\int_{t\wedge\tau}^{\sigma}f^{\mathbb F}(s)ds+\int_{t\wedge\tau}^{\sigma}h_s dV^{\mathbb F}_s + S_{\sigma}^{\mathbb F}1_{\{\sigma <T\}}+\xi^{\mathbb F} I_{\{\sigma =T\}}\ \Big|\ \mathcal{F}_{t}\right],\quad 0\leq t\leq T.
 \end{eqnarray}
{\rm{(b)}} The RBSDE  defined in (\ref{RBSDEG}) has a unique  $L^p(\widetilde{Q},\mathbb G)$-solution $(Y^{\mathbb{G}},Z^{\mathbb{G}},K^{\mathbb{G}},M^{\mathbb{G}})$ given by 
\begin{eqnarray}
   Y^{\mathbb{G}}= \frac{Y^{\mathbb{F}}}{\widetilde{\cal E}}I_{\Lbrack0,\tau\Lbrack}+\xi{I_{\Lbrack\tau,+\infty\Lbrack}},\ 
  Z^{\mathbb{G}}=\frac{Z^{\mathbb{F}}}{{\widetilde{\cal E}}_{-}} I_{\Rbrack0,\tau\Rbrack},\  
   K^{\mathbb{G}}=\frac{1}{{\widetilde{\cal E}}_{-}}\is (K ^{\mathbb{F}})^{\tau}\ \mbox{and}\ 
      M^{\mathbb{G}}=\left(h-\frac{Y^{\mathbb{F}}}{{\widetilde{\cal E}}}\right)\is N^{\mathbb{G}}.\label{secondrelation}
       \end{eqnarray}
        \end{theorem}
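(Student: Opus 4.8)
The plan is to establish (a) by reducing the $\mathbb{F}$-RBSDE (\ref{RBSDEF}) to a classical linear reflected problem, and then to deduce (b) from (a) through the Snell-envelope dictionary of Theorem \ref{SnellEvelopG2F} together with the a priori estimates of Theorems \ref{EstimatesUnderQtilde} and \ref{EstimatesUnderQtilde1}.

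For assertion (a), I would first note that (\ref{RBSDEF}) is \emph{linear}: its driver $f^{\mathbb{F}}(s)\,ds+h_s\,dV^{\mathbb{F}}_s$ does not involve $(Y,Z)$, so well-posedness amounts to solving an optimal stopping problem. The key preliminary is to check that the $\mathbb{F}$-data sit in the right $L^p(P,\mathbb{F})$ spaces. Since $\widetilde{\cal E}$ is positive, non-increasing and bounded by $1$, and since $V^{\mathbb{F}}=1-\widetilde{\cal E}$ gives $h\is V^{\mathbb{F}}=-h\is\widetilde{\cal E}$, the change-of-measure and discounting identities linking $E^{\widetilde{Q}}[\,\cdot\,]$ of quantities stopped at $\tau$ to $P$-expectations weighted by $\widetilde{\cal E}$ (cf. (\ref{Ztilde})--(\ref{Qtilde}) and \cite{Choulli1}) turn hypothesis (\ref{MainAssumption}) into the required integrability of $(f^{\mathbb{F}},S^{\mathbb{F}},\xi^{\mathbb{F}})$ and of $h\is V^{\mathbb{F}}$. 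The classical theory of reflected BSDEs over a bounded horizon (\cite{Touzi,Popier}) then yields a unique $L^p(P,\mathbb{F})$-solution $(Y^{\mathbb{F}},Z^{\mathbb{F}},K^{\mathbb{F}})$, while (\ref{RBSDE2SnellF}) is the standard identification of $Y^{\mathbb{F}}$ with the value process of the associated optimal stopping problem, i.e. with the $(\mathbb{F},P)$-Snell envelope of the reward obtained from $S^{\mathbb{F}}$, $\xi^{\mathbb{F}}$ and the running terms.

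For assertion (b), uniqueness is immediate from Theorem \ref{EstimatesUnderQtilde1}: feeding two solutions with identical data $(f,S,\xi)$ into (\ref{estimate1001}) forces $\delta Y^{\mathbb{G}}=\delta Z^{\mathbb{G}}=\delta M^{\mathbb{G}}=0$, and then $\delta K^{\mathbb{G}}=0$ follows from the dynamics in (\ref{RBSDEG}). For existence I would \emph{verify} that the quadruplet (\ref{secondrelation}) solves (\ref{RBSDEG}). By Lemma \ref{Solution2SnellEnvelop} the $Y$-component of any solution must equal the $(\mathbb{G},\widetilde{Q})$-Snell envelope of the $\mathbb{G}$-reward process, so it suffices to show that $Y^{\mathbb{G}}$ in (\ref{secondrelation}) is that envelope and that the remaining components are exactly its canonical $(\mathbb{G},\widetilde{Q})$-decomposition. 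Decomposing the $\mathbb{G}$-reward via Proposition \ref{PropositionG2F} into an $\mathbb{F}$-optional part and a $\is D$ part whose terminal trace is $\xi=h_\tau$ (so that $k^{(pr)}$ and its projection $k^{(op)}$ are built from $h$), formula (\ref{Snell4(G,Qtilde)}) of Theorem \ref{SnellEvelopG2F}(b) expresses this envelope as $\widetilde{\cal E}^{-1}$ times an inner $(\mathbb{F},P)$-Snell envelope on $\Lbrack0,\tau\Lbrack$, plus explicit $N^{\mathbb{G}}$- and $D$-integrals. The crucial matching is that, thanks to $S^{\mathbb{F}}=\widetilde{\cal E}S$, $\xi^{\mathbb{F}}=\widetilde{\cal E}_T h_T$ and $h\is V^{\mathbb{F}}=-h\is\widetilde{\cal E}$, the inner reward $X^{\mathbb{F}}\widetilde{\cal E}-k^{(op)}\is\widetilde{\cal E}$ coincides with the reward of (\ref{RBSDEF}); hence the inner envelope is precisely $Y^{\mathbb{F}}$, identifying the leading term of $Y^{\mathbb{G}}$ as $Y^{\mathbb{F}}/\widetilde{\cal E}$ on $\Lbrack0,\tau\Lbrack$ and its value at $\tau$ as $\xi$.

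It then remains to read off $(Z^{\mathbb{G}},M^{\mathbb{G}},K^{\mathbb{G}})$ from the $(\mathbb{G},\widetilde{Q})$-canonical decomposition of this Snell envelope. I would apply It\^o's product rule to $Y^{\mathbb{F}}/\widetilde{\cal E}$ on $\Lbrack0,\tau\Lbrack$, using that $\widetilde{\cal E}$ has finite variation and that the continuous martingale part of $Y^{\mathbb{F}}$ is $Z^{\mathbb{F}}\is W$: since $W^\tau$ is a $(\widetilde{Q},\mathbb{G})$-Brownian motion, the Brownian part of $Y^{\mathbb{G}}$ is $(Z^{\mathbb{F}}/\widetilde{\cal E}_-)\is W^\tau$, giving $Z^{\mathbb{G}}=(Z^{\mathbb{F}}/\widetilde{\cal E}_-)I_{\Rbrack0,\tau\Rbrack}$; the $\tau$-jump terms assemble into the $N^{\mathbb{G}}$-integral $M^{\mathbb{G}}=(h-Y^{\mathbb{F}}/\widetilde{\cal E})\is N^{\mathbb{G}}$, a $(\widetilde{Q},\mathbb{G})$-local martingale as in Theorem \ref{SnellEvelopG2F}(b); and the increasing $\mathbb{F}$-predictable $K^{\mathbb{F}}$, after the $\widetilde{\cal E}_-^{-1}$-rescaling and stopping at $\tau$, becomes the $\mathbb{G}$-predictable increasing process $K^{\mathbb{G}}=\widetilde{\cal E}_-^{-1}\is(K^{\mathbb{F}})^\tau$. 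The Skorokhod minimality transfers because $Y^{\mathbb{F}}_{t-}-S^{\mathbb{F}}_{t-}=\widetilde{\cal E}_{t-}(Y^{\mathbb{G}}_{t-}-S_{t-})$ on $\Lbrack0,\tau\Lbrack$, and $L^p(\widetilde{Q},\mathbb{G})$-membership follows from Theorem \ref{EstimatesUnderQtilde}. I expect the main obstacle to be exactly this last bookkeeping: cleanly splitting the $\mathbb{G}$-martingale component into its continuous ($Z^{\mathbb{G}}\is W^\tau$) and purely-discontinuous ($M^{\mathbb{G}}$ via $N^{\mathbb{G}}$) parts through the product-rule computation, and confirming that $K^{\mathbb{G}}$ simultaneously inherits $\mathbb{G}$-predictability and the reflection condition.
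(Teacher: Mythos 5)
Your proposal is correct and follows essentially the same route as the paper: uniqueness and $L^p$-membership from Theorems \ref{EstimatesUnderQtilde} and \ref{EstimatesUnderQtilde1}, assertion (a) by reduction to a classical linear reflected problem under $\mathbb F$ (the paper defers this to Subsection \ref{GeneralRBSDEfromG2F}), and existence in (b) by direct verification of (\ref{secondrelation}) via It\^o applied to $Y^{\mathbb F}/\widetilde{\cal E}$ together with the transfer of the barrier and Skorokhod conditions through $Y^{\mathbb F}_{-}-S^{\mathbb F}_{-}=\widetilde{\cal E}_{-}(Y^{\mathbb G}_{-}-S_{-})$. Your additional detour through Theorem \ref{SnellEvelopG2F}(b) to identify $Y^{\mathbb G}$ as the $(\mathbb G,\widetilde Q)$-Snell envelope is valid but redundant with the It\^o verification; the paper itself uses that identification only in the remark following the theorem.
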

%%%%%%%%%%%%%%%%%%%%%%%%%%%%%%%%%%%%%%%%%%%%%%%%%%%%%%%%%%%%
\begin{proof} Assertion (a) is the linear case of a general RBSDE under $\mathbb F$ given in Subsection \ref{GeneralRBSDEfromG2F}, see (\ref{RBSDEFGENERAL}). Thus, the proof of the existence and uniqueness of the $L^p(\mathbb F, P)$-solution will be omitted here, and we refer the reader to Subsection \ref{GeneralRBSDEfromG2F}. Furthermore, the proof of (\ref{RBSDE2SnellF}) mimics exactly the proof of (\ref{RBSDE2Snell}). Thus, the remaining part of this proof will focus on proving assertion (b). To this end, on the one hand, we remark that in virtue of Theorem \ref{EstimatesUnderQtilde} and (\ref{MainAssumption}), we conclude that a solution to (\ref{RBSDEG}), when it exists, it is in fact an $L^p(\widetilde{Q}, \mathbb G)$-solution. On the other hand, thanks to Theorem \ref{EstimatesUnderQtilde1} and the assumption (\ref{MainAssumption}), we deduce that there is at most one $L^p(\widetilde{Q},\mathbb G)$-solution. Thus, the rest of this proof focuses on proving the existence of the solution to (\ref{RBSDEG}) that is given by   (\ref{secondrelation}). To this end, we put 
\begin{eqnarray}\label{Yoverline}
\overline{Y}:= \frac{Y^{\mathbb{F}}}{\widetilde{\cal E}}I_{\Lbrack0,\tau\Lbrack}+\xi{I_{\Lbrack\tau,+\infty\Lbrack}},\quad \overline{Z}:=\frac{Z^{\mathbb{F}}}{{\widetilde{\cal E}}_{-}} I_{\Rbrack0,\tau\Rbrack},\quad \overline{M}:=\left(h-\frac{Y^{\mathbb{F}}}{{\widetilde{\cal E}}}\right)\is N^{\mathbb{G}}\quad \overline{K}:=\frac{1}{{\widetilde{\cal E}}_{-}}\is (K ^{\mathbb{F}})^{\tau}, 
\end{eqnarray}
 and prove that  $(\overline{Y}, \overline{Z}, \overline{M},\overline{K}$  is a solution to  (\ref{RBSDEG}). Hence, we put
\begin{eqnarray*}\label{Gamma}
\Gamma:=\frac{Y^{\mathbb F}}{\widetilde{\cal E}}= Y^{\mathbb F}{\cal E}(G^{-1}\is D^{o,\mathbb F}).\end{eqnarray*}
 and remark that, in virtue of the first equality in (\ref{secondrelation}), we have 
\begin{eqnarray}\label{YGGamma}
\overline{Y}=\Gamma^{\tau} +(h-\Gamma)\is D.
\end{eqnarray}
Thanks to It\^o, the facts that ${\widetilde{\cal E}}^{-1}={\cal E}(G^{-1}\is D^{o,\mathbb F})$ and ${\widetilde{\cal E}}={\widetilde{\cal E}}_{-}G/{\widetilde{G}}$, (\ref{RBSDEF}) and (\ref{ProcessVFandXiF}), we derive 
\begin{align}
d\Gamma&={{\Gamma}\over{\widetilde G}}dD^{o,\mathbb F}+{1\over{\widetilde{\cal E}_{-}}}dY^{\mathbb F}={{\Gamma- h}\over{\widetilde G}} dD^{o,\mathbb F}-f(t)dt-{1\over{\widetilde{\cal E}_{-}}}dK^{\mathbb F}+{{Z^{\mathbb F}}\over{\widetilde{\cal E}_{-}}}dW.\label{GammaYF}
\end{align}
Thus, by inserting this latter equation in (\ref{YGGamma}) and arranging terms we get 
\begin{align}\label{SDE4YG}
d\overline{Y}=-f(t)d(t\wedge\tau)-{1\over{\widetilde{\cal E}_{-}}}d(K^{\mathbb F})^{\tau}+(h- \Gamma)dN^{\mathbb{G}}+{{Z^{\mathbb F}}\over{\widetilde{\cal E}_{-}}}dW^{\tau}.
\end{align} 
This proves that the processes defined in (\ref{secondrelation}) satisfy the first equation in (\ref{RBSDEG}). To prove the second condition in (\ref{RBSDEG}), it is enough to remark that we have 
\begin{eqnarray*}
Y^{\mathbb{F}}_{t}\geq S_{t}^{\mathbb{F}}I_{\{t\ <T\}}+\xi^{\mathbb{F}}I_{\{t\ =T\}},\end{eqnarray*}
which implies that for any  $t\in[0,T)$, $Y_{t}^{\mathbb{F}}({\widetilde{\cal E}}_{t})^{-1}I_{\{t\ <\tau\}}\geq  S_{t}I_{\{t\ <\tau\}}.$
    This is obviously equivalent to the second condition of (\ref{RBSDEG}). To prove the Skorokhod condition (the last condition in (\ref{RBSDEG})), we  use the Skorokhod condition for the triplet $(Y^{\mathbb F}, Z^{\mathbb F}, K^{\mathbb F})$ (as it is the solution to the RBSDE (\ref{RBSDEF}) with the data-triplet $(f^{\mathbb F}, S^{\mathbb F}, \xi^{\mathbb F})$) given by 
  \begin{eqnarray}\label{SkorokhodF}
  \int_0^T(Y^{\mathbb F}_{t-}-S^{\mathbb F}_{t-})dK^{\mathbb F}_t=0,\quad P\mbox{-a.s..}\end{eqnarray}
   As $\overline{Y}_{-}-S_{-}\geq 0$ on $\Rbrack0,\tau\Rbrack$ and  $\overline{K}$ is an increasing process, we get 
  \begin{eqnarray*}
\int_{0}^{T}(\overline{Y}_{t-}-S_{t-})d\overline{K}_{t}=\int_{0}^{T\wedge\tau}(Y^{\mathbb F}_{t-}-S^{\mathbb F}_{t-}){\widetilde{\cal E}_{t-}}^{-2}dK^{\mathbb F}_t\leq \int_{0}^{T}(Y^{\mathbb F}_{t-}-S^{\mathbb F}_{t-}){\widetilde{\cal E}_{t-}}^{-2}dK^{\mathbb F}_t=0,\quad P\mbox{-a.s..} \end{eqnarray*}
It is clear that the last equality is equivalent to (\ref{SkorokhodF}) due the fact that $K^{\mathbb F}$ is nondecreasing and $Y^{\mathbb F}_{-}-S^{\mathbb F}_{-}\geq 0$ .  This ends the proof of the theorem.\end{proof}

\begin{remark} One can prove, under weaker integrability conditions than those of (\ref{MainAssumption}), that  any solution to the RBSDE  (\ref{RBSDEG}), denoted by $(Y,Z,K, M)$, coincides with $(Y^{\mathbb G},Z^{\mathbb G},K^{\mathbb G}, M^{\mathbb G})$ defined in (\ref{secondrelation}). To this end, thanks to the Doob-Meyer decomposition under $(\widetilde{Q},\mathbb G)$, we remark that $(Y,Z,K, M)=(Y^{\mathbb G},Z^{\mathbb G},K^{\mathbb G}, M^{\mathbb G})$ is equivalent to $Y=Y^{\mathbb G}$. To prove this equality, we notice that due to (\ref{RBSDE2Snell}), we have 
\begin{eqnarray*}
Y+\int_0^{\tau\wedge{T}\wedge\cdot}f(s)ds={\cal S}(X^{\mathbb G}; \mathbb G, \widetilde{Q})\quad\mbox{with}\quad X^{\mathbb G}:=\int_0^{\tau\wedge{T}\wedge\cdot}f(s)ds+SI_{\Lbrack0,\tau\wedge{T}\Lbrack}+h_{\tau\wedge{T}}I_{\Lbrack\tau\wedge{T},+\infty\Lbrack}.
\end{eqnarray*}
Therefore, to apply Theorem \ref{SnellEvelopG2F}-(b), we need to find the unique pair $(X^{\mathbb F}, k^{(pr)})$ associated to $X^{\mathbb G}$. To this end, we remark that 
\begin{eqnarray*}
SI_{\Lbrack0,\tau\wedge{T}\Lbrack}=SI_{\Lbrack0,\tau\Lbrack}I_{\Lbrack0,{T}\Lbrack}\quad\mbox{and}\quad h_{\tau\wedge{T}}I_{\Lbrack\tau\wedge{T},+\infty\Lbrack}=h_{\tau\wedge{T}}I_{\Lbrack0,\tau\Lbrack}+h_{T}I_{\Lbrack0,{T}\Lbrack}I_{\Lbrack{T},+\infty\Lbrack},\end{eqnarray*}
and derive
\begin{eqnarray*}
X^{\mathbb F}=\int_0^{T\wedge\cdot}f(s)ds+SI_{\Lbrack0,{T}\Lbrack}+h_{T}I_{\Lbrack{T},+\infty\Lbrack},\quad
k^{(pr)}=k^{(op)}=\int_0^{T\wedge\cdot}f(s)ds+hI_{\Lbrack0,T\Lbrack}+h_{T}I_{\Lbrack{T},+\infty\Lbrack}.
\end{eqnarray*}
Furthermore, we have 
\begin{eqnarray*}
&&{\widetilde{\cal E}}X^{\mathbb F}-k^{(op)}\is{\widetilde{\cal E}}=\int_0^{T\wedge\cdot}f^{\mathbb F}(s)ds+S^{\mathbb F} I_{\Lbrack0,T\Lbrack}+(h\is V^{\mathbb F})^T+\xi^{\mathbb F}I_{\Lbrack{T},+\infty\Lbrack},\\
&& k^{(op)}{\widetilde{\cal E}}-k^{(op)}\is{\widetilde{\cal E}}=\int_0^{T\wedge\cdot}f^{\mathbb F}(s)ds+(h\is V^{\mathbb F})^T+\widetilde{\cal E}hI_{\Lbrack0,T\Lbrack} +\xi^{\mathbb F}I_{\Lbrack{T},+\infty\Lbrack}.
\end{eqnarray*}
Remark also that
\begin{eqnarray*}
Y^{\mathbb F}+L^{\mathbb F}={\cal S}\left(L^{\mathbb F}+\xi^{\mathbb F}I_{\Lbrack{T},+\infty\Lbrack}+S^{\mathbb F} I_{\Lbrack0,T\Lbrack};\mathbb F, P\right),\quad L^{\mathbb F}:=\int_0^{T\wedge\cdot}f(s)ds+\int_0^{T\wedge\cdot}h_sdV^{\mathbb F}_s.
\end{eqnarray*}
Thus, by directly applying  Theorem \ref{SnellEvelopG2F}-(b) to $Y$, on $\Lbrack0,T\Rbrack$, we have
\begin{align*}
Y+\int_0^{\tau\wedge{T}\wedge\cdot}f(s)ds&={\cal S}(X^{\mathbb G}; \mathbb G, \widetilde{Q})\\
&={{{\cal S}\left(X^{\mathbb F}{\widetilde{\cal E}}-k^{(op)}\is \widetilde{\cal E};\mathbb F, P\right)}\over{ \widetilde{\cal E}}}I_{\Lbrack0,\tau\Lbrack}+{{L^{\mathbb F}+\widetilde{\cal E}hI_{\Lbrack0,T\Lbrack} +\xi^{\mathbb F}I_{\Lbrack{T},+\infty\Lbrack}}\over{\widetilde{\cal E}}}\is N^{\mathbb G}\\
&={{Y^{\mathbb F}+L^{\mathbb F}}\over{ \widetilde{\cal E}}}I_{\Lbrack0,\tau\Lbrack}+{{L^{\mathbb F}}\over{ \widetilde{\cal E}}}\is N^{\mathbb G}+\left(hI_{\Lbrack0,T\Lbrack} +h_T{I}_{\Lbrack{T},+\infty\Lbrack}\right)\is N^{\mathbb G}\\
&={{Y^{\mathbb F}}\over{ \widetilde{\cal E}}}I_{\Lbrack0,\tau\Lbrack}+{1\over{ \widetilde{\cal E}_{-}}}\is (L^{\mathbb F})^{\tau}+h_{T\wedge\cdot}\is D- {{h}\over{\widetilde{G}}}I_{\Rbrack0,\tau\wedge{T}\Rbrack}\is D^{o,\mathbb F}\\
%&&={{Y^{\mathbb F}}\over{ \widetilde{\cal E}}}I_{\Lbrack0,\tau\Lbrack}+\int_0^{\tau\wedge{T}\wedge\cdot}f(s)ds+\int_0^{\tau\wedge{T}\wedge\cdot}{{h_s}\over{\widetilde{G}}}dD^{o,\mathbb F}_s- \left(hI_{\Lbrack0,T\Lbrack} +h_T{I}_{\Lbrack{T},+\infty\Lbrack}\right)\is N^{\mathbb G}\\
&={{Y^{\mathbb F}}\over{ \widetilde{\cal E}}}I_{\Lbrack0,\tau\Lbrack}+\int_0^{\tau\wedge{T}\wedge\cdot}f(s)ds+\xi{I}_{\Lbrack\tau,+\infty\Lbrack}=Y^{\mathbb G}+\int_0^{\tau\wedge{T}\wedge\cdot}f(s)ds.
\end{align*}
The fourth equality follows from the following lemma, that we prove in Appendix \ref{Appendix4Proofs}.\end{remark}
%%%%%%%%%%%%%%%%%%%%%%%%%%%%%%%%%%%%%%%%%%%%%%%%%%%%%%%%%%%%%%%%%
%%%%%%%%%%%%%%%%%%%%%%%%%%%%%%%%%%%%%%%%%%%%%%%%%%%%%%%%%%%%
\begin{lemma}\label{L/EpsilonTilde} For any $\mathbb F$-semimartingale $L$, the following holds.
\begin{eqnarray*}
{{L}\over{\widetilde{\cal E}}}I_{\Lbrack0,\tau\Lbrack}+{{L}\over{\widetilde{\cal E}}}\is N^{\mathbb G}={1\over{\widetilde{\cal E}_{-}}}\is L^{\tau}.
\end{eqnarray*}
\end{lemma}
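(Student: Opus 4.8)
The plan is to reduce the asserted identity to a statement about the $\mathbb F$-semimartingale $U:=L/\widetilde{\cal E}=L\,{\cal E}(G^{-1}\is D^{o,\mathbb F})$, which is well defined since $\widetilde{\cal E}$ and $\widetilde{\cal E}_-$ are positive, and then to prove that statement by integration by parts. First I would apply (\ref{X-Fsemimartinagle}) with $X=U$ to get $U\,I_{\Lbrack0,\tau\Lbrack}=U^{\tau}-U\is D$, and use the definition (\ref{processNG}) of $N^{\mathbb G}$ together with associativity of Stieltjes integration to write $U\is N^{\mathbb G}=U\is D-\big(U\widetilde G^{-1}I_{\Lbrack0,\tau\Lbrack}\big)\is D^{o,\mathbb F}$. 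The two copies of $U\is D$ cancel, so the claim collapses to
\[
U^{\tau}-\big(U\widetilde G^{-1}I_{\Lbrack0,\tau\Lbrack}\big)\is D^{o,\mathbb F}=\widetilde{\cal E}_-^{-1}\is L^{\tau}.
\]

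Next I would compute $dU$. Since $\widetilde{\cal E}^{-1}={\cal E}(G^{-1}\is D^{o,\mathbb F})$ is of finite variation with $d\widetilde{\cal E}^{-1}=\widetilde{\cal E}_-^{-1}G^{-1}\,dD^{o,\mathbb F}$, the product rule gives $dU=\widetilde{\cal E}_-^{-1}\,dL+L_-\widetilde{\cal E}_-^{-1}G^{-1}\,dD^{o,\mathbb F}+d[L,\widetilde{\cal E}^{-1}]$, where $d[L,\widetilde{\cal E}^{-1}]=\Delta L\,\widetilde{\cal E}_-^{-1}G^{-1}\,dD^{o,\mathbb F}$ because $\widetilde{\cal E}^{-1}$ is of finite variation. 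The key input is the jump relation $\widetilde{\cal E}=\widetilde{\cal E}_-G/\widetilde G$, that is $\widetilde{\cal E}_{s-}^{-1}G_s^{-1}=\widetilde{\cal E}_s^{-1}\widetilde G_s^{-1}$ at the atoms of $D^{o,\mathbb F}$ (and trivially on its continuous part, where $\widetilde G=G$). Using it to absorb the covariation term into the drift, and replacing $L_-+\Delta L$ by $L$, yields the compact form $dU=\widetilde{\cal E}_-^{-1}\,dL+U\,\widetilde G^{-1}\,dD^{o,\mathbb F}$. Stopping at $\tau$ and subtracting $\big(U\widetilde G^{-1}I_{\Lbrack0,\tau\Lbrack}\big)\is D^{o,\mathbb F}$, the two $dD^{o,\mathbb F}$–contributions coincide on $\Lbrack0,\tau\Lbrack$, so only $\widetilde{\cal E}_-^{-1}\,dL$ stopped at $\tau$ survives, which is exactly $d\big(\widetilde{\cal E}_-^{-1}\is L^{\tau}\big)$.

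The main obstacle is the jump bookkeeping rather than the computation itself. Two points need care. The covariation term $[L,\widetilde{\cal E}^{-1}]$ is what forces me to pass from $\widetilde{\cal E}_-^{-1}$ to $\widetilde{\cal E}^{-1}$ inside the $dD^{o,\mathbb F}$–integral, and this is precisely what the relation $\widetilde{\cal E}=\widetilde{\cal E}_-G/\widetilde G$ delivers; I would isolate this as the one algebraic identity the argument really rests on. The more delicate point is the behaviour at the single instant $\tau$ (and at $0$): the stopped process $U^{\tau}$ retains the jump $\Delta U_\tau$, whereas the compensating integral is carried only over $\Lbrack0,\tau\Lbrack$, so the two $dD^{o,\mathbb F}$–measures differ by the boundary mass $U_\tau\widetilde G_\tau^{-1}\Delta D^{o,\mathbb F}_\tau$ at $\tau$. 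I would dispose of this by checking that both sides agree at $t=0$ and that this residual boundary term does not enter the identity in the form in which it is invoked, namely with $L=L^{\mathbb F}$ null at $0$ (see the application preceding the lemma); all remaining steps are the routine product-rule and stopping manipulations already used in the proof of Theorem \ref{abcde}.
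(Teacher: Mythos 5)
Your argument is correct and is essentially the paper's own proof in a slightly rearranged order: both rest on the identity $XI_{\Lbrack0,\tau\Lbrack}=X^{\tau}-X\is D$, the definition of $N^{\mathbb G}$, the product rule with the finite-variation process $\widetilde{\cal E}^{-1}={\cal E}(G^{-1}\is D^{o,\mathbb F})$, and the jump relation $\widetilde{\cal E}=\widetilde{\cal E}_{-}G/\widetilde{G}$. The paper transforms the left-hand side directly into the right-hand side, while you reduce the claim to $U^{\tau}-(U\widetilde{G}^{-1}I_{\Lbrack0,\tau\Lbrack})\is D^{o,\mathbb F}=\widetilde{\cal E}_{-}^{-1}\is L^{\tau}$ and verify it via $dU$; your extra care with the boundary masses at $0$ and $\tau$ is a point the paper glosses over, and your resolution of it is consistent with how the lemma is actually invoked.
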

%%%%%%%%%%%%%%%%%%%%%%%%%%%%%%%%%%%%%%%%%%%%%%%%%%%%%%%%%%%%%%%%%
%%%%%%%%%%%%%%%%%%%%%%%%%%%%%%%%%%%%%%%%%%%%%%%%%%%%%%%%%%%%%%%%%%%%%%%%%%%%%%%
 %%%%This theorem is a slight generalization  of linear RBSDE considered in the literature, as in general the drift might not be absolutely continuous with respect to Lebesgues measure. In the following, we address our RBSDE under $\mathbb G$ and we connect it to the $\mathbb F$-RBSDE above.
%%%%%%%%%%%%%%%%%%%%%%%%%%%%%%%%%%%%%%%%%%%%%%%%%%%%%%%%%%%%%%%%%%%%%%%%%%%%%%%%%%%%
 %%%%%%%%%%%%%%%%%%%%%%%%%%%%%%%%%%%%%%%%%%%%%%%%%%%%%%%%%%%%%%%%%%%%%%%%%%%
%%%%%%%%%%%%%%%%%%%%%%%%%%%%%%%%%%%%%%%%%%%%%%%%%%%%%%%%%%%%%%%%%%%%%%%%%%
\section{The case of linear RBSDE with unbounded horizon} \label{LinearUnboundedSection}
This section focuses on the following RBSDE 
\begin{eqnarray}\label{RBSDEGinfinite}
\begin{cases}
dY=-f(t)d(t\wedge\tau)-d(K+M)+ZdW^{\tau},\quad Y_{\tau}=\xi=h_{\tau},\\
\\
Y_{t}\geq S_{t}\quad; 0\leq t<  \tau,\quad \displaystyle\int_{0}^{\tau}(Y_{t-}-S_{t-})dK_{t}=0,\quad P\mbox{-a.s..}
\end{cases}
\end{eqnarray}
It is important to mention that the probability $\widetilde{Q}$ depends heavily on the finite horizon planning $T$, and the process ${\widetilde Z}^{\tau}$ defined in (\ref{Qtilde}) might not be a uniformly integrable martingale. This raises serious challenges in different direction.
%%%%%%%%%%%%%%%%%%%%%%%%%%%%%%%%%%%%%%%%%%%%%%%%%%%%%
 \subsection{Existence, uniqueness and estimates}
As we aforementioned it, the probability $\widetilde Q$ depends heavily on $T$, see \cite{Choulli1} for details, and in general ${\widetilde Z}^{\tau}$ might not be a uniformly integrable martingale.  Thus, the fact of letting $T$ goes to infinity triggers serious challenges in both the technical and the conceptual sides. In fact, both the condition (\ref{MainAssumption}) and the RBSDE (\ref{RBSDEF}) might not make sense when we take $T$ to infinity due to the fact the limit of $h_T$ when $T$ goes to infinity might not exist even. Our approach to these challenges will be in two steps. The first step relies on the following lemma and the two theorems that follow it, and aims to get rid-off of $\widetilde Q$ in the left-hand-sides of the estimates of Theorems \ref{EstimatesUnderQtilde} and \ref{EstimatesUnderQtilde1}.
%%%%%%%%%%%%%%%%%%%%%%%%%%%%%%%%%%%%%%%%%%%
%%%%%%%%%%%%%%%%%%%%%%%%%%%%%%%%%%%%%%%%%%%%%%%%%%%%%%%%%%%%%%%%%%%%%%%%%%%%%%%%%%%%%%%
     \begin{lemma}\label{technicallemma1} Let $T\in (0,+\infty)$, $\widetilde{Q}$ be the probability given in (\ref{Qtilde}), and $\widetilde{\cal E}$ be the process defined in (\ref{ProcessVFandXiF}). Then the following assertions hold.\\
     {\rm{(a)}} For any $p\in(1,+\infty)$ and any RCLL $\mathbb G$-semimartingale $Y$, we have
    \begin{eqnarray}\label{Equality4YG}
     E\left[\sup_{0\leq s\leq{T\wedge\tau}}{\widetilde{\cal E}}_s\vert{Y_s}\vert^p\right]\leq {G_0^{-1} }E^{\widetilde{Q}}\left[\sup_{0\leq s\leq{T\wedge\tau}}\vert{Y_s}\vert^p\right].     \end{eqnarray}
      {\rm{(b)}} For any $a\in (0,+\infty)$ and RCLL, nondecreasing and $\mathbb G$-optional process $K$ with $K_0=0$, we have 
  %%  \begin{eqnarray}\label{Sigma(K)a}
   %% \Sigma^{(a)}(K):=
    %%\end{eqnarray} 
   %%% Then the following inequality always holds
    \begin{eqnarray}\label{Equality4KG}
     E\left[\left(\int_0^{T\wedge\tau} ({\widetilde{\cal E}}_{s-})^{a}dK_s\right)^{1/a}\right]\leq \underbrace{3^{1/a}(5+(\max(a, a^{-1}))^{1/a})}_{=:\kappa(a)} G_0^{-1} E^{\widetilde{Q}}\left[K_{T\wedge\tau}^{1/a}+\sum_{0< s\leq _{T\wedge\tau}} \widetilde{G}_t(\Delta K_s)^{1/a}\right].\end{eqnarray}
        {\rm{(c)}} For any $p>1$ and any nonnegative and $\mathbb G$-optional process $H$, we have 
     \begin{eqnarray}\label{Equality4MG}
    E\left[({\widetilde{\cal E}}_{-}^{2/p}H\is [N^{\mathbb G},N^{\mathbb G}])_{T\wedge\tau} ^{p/2}\right]\leq \kappa(a)G_0^{-1}  E^{\widetilde{Q}}\left[(H\is [N^{\mathbb G},N^{\mathbb G}]_{T\wedge\tau})^{p/2}+ (H^{p/2}\widetilde{G}\is \mbox{Var}(N^{\mathbb G}))_{T\wedge\tau}\right].
     \end{eqnarray}
     {\rm{(d)}} For any $p>1$ and any nonnegative and $\mathbb F$-optional process $H$, we have 
     \begin{eqnarray}\label{Equality4MGOptionalF}
    E\left[({\widetilde{\cal E}}_{-}^{2/p}H\is [N^{\mathbb G},N^{\mathbb G}])_{T\wedge\tau} ^{p/2}\right]\leq \kappa(a)G_0^{-1}  E^{\widetilde{Q}}\left[(H\is [N^{\mathbb G},N^{\mathbb G}]_{T\wedge\tau})^{p/2}+ 2(H^{p/2}I_{\Rbrack0,\tau\Lbrack}\is D^{o,\mathbb F})_T\right].
     \end{eqnarray}
        \end{lemma}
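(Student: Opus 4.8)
The whole lemma rests on one structural identity. Since $\widetilde{Z}={\cal E}(G_{-}^{-1}\is m)^{-1}$ is $\mathbb F$-adapted and $E[\widetilde{Z}_{\sigma}I_{\{\sigma<\tau\}}\mid{\cal F}_{\sigma}]=\widetilde{Z}_{\sigma}G_{\sigma}$ for every $\mathbb F$-stopping time $\sigma$, one has
\[
\widetilde{\cal E}=\widetilde{Z}\,G,\qquad\text{equivalently}\qquad \widetilde{\cal E}_{-}=\widetilde{Z}_{-}G_{-},
\]
an identity that follows from \cite{Choulli1} and is already used implicitly in the proof of Theorem \ref{SnellEvelopG2F}. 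In particular $0<\widetilde{\cal E}\le\widetilde{Z}$ and $\widetilde{\cal E}_{-}\le\widetilde{Z}_{-}$ since $G\le1$. I shall also use two facts special to the Brownian setting: every $\mathbb F$-martingale is continuous, so $m$, $\widetilde{Z}$ and the $\mathbb G$-martingale $N:=\widetilde{Z}^{\tau}$ are continuous (this also forces $\widetilde{G}=G_{-}$, whence $\widetilde{\cal E}_{-}=\widetilde{Z}\,\widetilde{G}$); and $|\Delta N^{\mathbb G}|\le1$, because $\Delta N^{\mathbb G}_{\tau}=1$ while off $\tau$ we have $|\Delta N^{\mathbb G}_{s}|=\widetilde{G}_{s}^{-1}\Delta D^{o,\mathbb F}_{s}=1-G_{s}/\widetilde{G}_{s}\le1$. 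Finally recall $\frac{d\widetilde{Q}}{dP}=\widetilde{Z}_{T\wedge\tau}$ on ${\cal G}_{T\wedge\tau}$, so $E^{\widetilde{Q}}[X]=E[\widetilde{Z}_{T\wedge\tau}X]$.

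For (a) the plan is to pass to stopping times so as to avoid any Doob loss. By the optional section theorem (see \cite{DellacherieMeyer80}) it suffices to bound $E[\widetilde{\cal E}_{\rho}|Y_{\rho}|^{p}]$ uniformly over $\mathbb G$-stopping times $\rho\le T\wedge\tau$, since $E[\sup_{s\le T\wedge\tau}\widetilde{\cal E}_{s}|Y_{s}|^{p}]=\sup_{\rho}E[\widetilde{\cal E}_{\rho}|Y_{\rho}|^{p}]$. For such a $\rho$, using $\widetilde{\cal E}_{\rho}=\widetilde{Z}_{\rho}G_{\rho}\le\widetilde{Z}_{\rho}$, the optional sampling theorem for the $\mathbb G$-martingale $N$ (so $\widetilde{Z}_{\rho}=E[\widetilde{Z}_{T\wedge\tau}\mid{\cal G}_{\rho}]$), and the ${\cal G}_{\rho}$-measurability of $|Y_{\rho}|^{p}$, I would write
\[
E[\widetilde{\cal E}_{\rho}|Y_{\rho}|^{p}]\le E\big[\widetilde{Z}_{\rho}|Y_{\rho}|^{p}\big]=E\big[\widetilde{Z}_{T\wedge\tau}|Y_{\rho}|^{p}\big]\le E\Big[\widetilde{Z}_{T\wedge\tau}\sup_{s\le T\wedge\tau}|Y_{s}|^{p}\Big]=E^{\widetilde{Q}}\Big[\sup_{s\le T\wedge\tau}|Y_{s}|^{p}\Big],
\]
which gives (a) as $G_{0}^{-1}\ge1$. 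The supremum is controlled by section and optional sampling rather than by a maximal inequality, which is exactly what produces the clean constant.

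Part (b) is the heart of the lemma and the main obstacle. Substituting $\widetilde{\cal E}_{s-}=\widetilde{Z}_{s}\,\widetilde{G}_{s}$ on $\Lbrack0,\tau\Rbrack$ (so the weight $\widetilde{G}_{s}$ of the announced correction appears intrinsically) and using $\frac{d\widetilde{Q}}{dP}=\widetilde{Z}_{T\wedge\tau}$, the claim reduces to a sharp \emph{weighted} moment inequality for the continuous $\mathbb G$-martingale $\widetilde{Z}^{\tau}$: for nondecreasing $\mathbb G$-optional $K$ with $K_{0}=0$,
\[
E\Big[\big(\textstyle\int_{0}^{T\wedge\tau}\widetilde{Z}_{s}^{\,a}\widetilde{G}_{s}^{\,a}\,dK_{s}\big)^{1/a}\Big]\le\kappa(a)\,G_{0}^{-1}\,E\Big[\widetilde{Z}_{T\wedge\tau}\Big(K_{T\wedge\tau}^{1/a}+\sum_{0<s\le T\wedge\tau}\widetilde{G}_{s}(\Delta K_{s})^{1/a}\Big)\Big].
\]
This is precisely the type of martingale inequality established in \cite{Choulli4}, which I would need to generalize slightly to this stopped, $\widetilde{G}$-weighted form. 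I expect to prove it by a Garsia--Neveu/Lenglart domination argument: using that $\widetilde{Z}^{a}$ is a submartingale for $a\ge1$ and a supermartingale for $0<a\le1$ to get the estimate dominating $E[\int_{\rho}^{T\wedge\tau}\widetilde{Z}_{s}^{a}\widetilde{G}_{s}^{a}\,dK_{s}\mid{\cal G}_{\rho}]$, then transferring to $\widetilde{Q}$ via the density. The constant $\kappa(a)=3^{1/a}(5+(\max(a,a^{-1}))^{1/a})$ reflects the two regimes $a\gtrless1$, while the bulk of the integral yields the $K_{T\wedge\tau}^{1/a}$ term (after $\widetilde{G}^{a}\le1$) and the jumps of $K$ (in particular at $\tau$) yield the $\sum\widetilde{G}_{s}(\Delta K_{s})^{1/a}$ term, the weight $\widetilde{G}_{s}=G_{s-}$ being exactly the factor linking $\widetilde{\cal E}_{s-}$ to $\widetilde{Z}$ at such times. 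Pinning down this inequality with the stated constant and jump correction is where the real work lies.

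Finally, (c) and (d) are corollaries of (b). For (c) I would apply (b) with $a=2/p$ and $K=H\is[N^{\mathbb G},N^{\mathbb G}]$ (nondecreasing as $H\ge0$): then $K_{T\wedge\tau}^{1/a}=(H\is[N^{\mathbb G},N^{\mathbb G}]_{T\wedge\tau})^{p/2}$, and since $[N^{\mathbb G},N^{\mathbb G}]$ is purely discontinuous with $\Delta K_{s}=H_{s}(\Delta N^{\mathbb G}_{s})^{2}$, the correction becomes $\sum\widetilde{G}_{s}H_{s}^{p/2}|\Delta N^{\mathbb G}_{s}|^{p}\le\sum\widetilde{G}_{s}H_{s}^{p/2}|\Delta N^{\mathbb G}_{s}|=(H^{p/2}\widetilde{G}\is\mathrm{Var}(N^{\mathbb G}))_{T\wedge\tau}$, where I used $|\Delta N^{\mathbb G}|\le1$ and $p\ge1$ (here $\kappa(a)$ is read as $\kappa(2/p)$). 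For (d), with $H$ now $\mathbb F$-optional, I would rewrite the correction of (c) by splitting $\widetilde{G}\is\mathrm{Var}(N^{\mathbb G})$ into its part on $\Lbrack0,\tau\Lbrack$, equal to $I_{\Lbrack0,\tau\Lbrack}\is D^{o,\mathbb F}$, and its jump at $\tau$ of size $\widetilde{G}_{\tau}$; converting the latter into a $D^{o,\mathbb F}$-integral through the dual optional projection (equivalently, through the $\widetilde{Q}$-martingale property of $N^{\mathbb G}$, valid because $N^{\mathbb G}$ is orthogonal to the continuous density $\widetilde{Z}^{\tau}$) produces the stated bound with the factor $2$.
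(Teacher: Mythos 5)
There are two genuine gaps. First, your proof of (a) rests on the identity $E[\sup_{0\le s\le T\wedge\tau}\widetilde{\cal E}_s\vert Y_s\vert^p]=\sup_{\rho}E[\widetilde{\cal E}_\rho\vert Y_\rho\vert^p]$, which you attribute to the optional section theorem. This identity is false: for a nonnegative RCLL martingale $X$ on $[0,T]$ one has $\sup_\rho E[X_\rho]=E[X_0]$ by optional sampling, while $E[\sup_s X_s]>E[X_0]$ in general. The section theorem produces, for each $\varepsilon>0$, a stopping time at which the process nearly attains its running supremum \emph{pointwise on a large set}; it does not interchange $E$ and $\sup$ for a single stopping time. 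The paper avoids any such reduction: writing $Y^*_t:=\sup_{s\le t}\vert Y_s\vert$ and integrating by parts, $\widetilde{\cal E}(Y^*)^p=(Y^*_0)^p+\widetilde{\cal E}\is(Y^*)^p+(Y^*_-)^p\is\widetilde{\cal E}\le (Y^*_0)^p+\widetilde{\cal E}\is(Y^*)^p$ because $\widetilde{\cal E}$ is nonincreasing; then $\widetilde{\cal E}=G/(G_0{\cal E}(G_-^{-1}\is m))$ and $G\le 1$ yield (\ref{Equality4YG}) after passing to $\widetilde Q$. (Incidentally, your structural identity should read $\widetilde{\cal E}=\widetilde Z G/G_0$, not $\widetilde Z G$; the missing $G_0$ is exactly where the constant $G_0^{-1}$ comes from.)

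Second, and more seriously, you do not actually prove (b): you reduce it to a weighted moment inequality for $\widetilde Z^{\tau}$ and then state that you ``expect to prove it by a Garsia--Neveu/Lenglart domination argument'' and that pinning down the constant and the jump correction is ``where the real work lies''. That is precisely the content of the assertion, so the heart of the lemma is left unproved. The paper's argument is different and more elementary: it integrates by parts to get $\widetilde{\cal E}_-^{\,a}\is K=K\widetilde{\cal E}^a+K_-\widetilde{\cal E}_-^{\,a}\is\widetilde V^{(a)}+\Delta K\,\widetilde{\cal E}_-^{\,a}\is\widetilde V^{(a)}$ with $\widetilde V^{(a)}$ as in (\ref{Vepsilon}), applies $(x+y+z)^{1/a}\le 3^{1/a}(x^{1/a}+y^{1/a}+z^{1/a})$, controls the first two terms via Lemma \ref{Lemma4.11} and the change of measure, and handles the jump term by distinguishing $a\ge 1$ (elementary subadditivity of $x\mapsto x^{1/a}$) from $a<1$ (Theorem \ref{DellacherieAndMeyer} applied to the potential of $\sup_u\Delta K_u\widetilde{\cal E}_{u-}^{\,a}$); this case analysis is exactly what produces the weight $\widetilde G$ and the shape of $\kappa(a)$. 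Your derivations of (c) and (d) from (b) do essentially match the paper's, but they remain conditional on the missing part (b).
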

  For the sake of simple exposition, we relegate the proof of this lemma to Appendix \ref{Appendix4Proofs}. In the following, we elaborate estimates for the solution to  (\ref{RBSDEG}) under the probability $P$ instead.  
     %%%%%%%%%%%%%%%%%%%%%%%%%%%%%%%%%%%%%%%%%%%%%%%%%%%%%%%%%%%%%%%%%%%%%%%%%%%%%%%%%%%%%%%%
     %%%%%%%%%%%%%%%%%%%%%%%%%%%%%%%%%%%%%%%%%%%%%%%%%%%%%%%%%%%%%%%%%%%%%%%%%%%%%%
  %%%%%%%%%%%%%%%%%%%%%%%%%%%%%%%%%%%%%%%%%%%%%%%%%%%%%%%%%%%%%%%%%%%%%%%%%%%%%%%
%%%%%%%%%%%%%%%%%%%%%%%%%%%%%%%%%%%%%%%%%%%%%%%%%%%%%%%%%%%%%%%%%%%%%%%%%%%%%%%%
%%%%%%%%%%%%%%%%%%%%%%%%%%%%%%%%%%%%%%%%%%%%%%%%%%%%%%%%%%%%%%%%%%%%%%%%
  \begin{theorem}\label{estimates} For any $p>1$, there exists a positive constant $C=C(p)$ that depends on $p$ only such that the unique solution to the RBSDE (\ref{RBSDEG}), denoted by ($Y^{\mathbb{G}},Z^{\mathbb{G}},K^{\mathbb{G}}, M^{\mathbb{G}}$), satisfies  
\begin{eqnarray}\label{Estimate4P}
&&E\left[\sup_{0\leq t\leq T}\widetilde{\cal E}_t\vert{Y}^{{\mathbb{G}}}_{t}\vert^p+\left(\int_{0}^{T}(\widetilde{\cal E}_{s-})^{2/p}\vert Z^{{\mathbb{G}}}_{s}\vert^{2}ds\right)^{p/2}+\left((\widetilde{\cal E}_{-})^{1/p}\is K^{\mathbb{G}}_T\right)^p+((\widetilde{\cal E}_{-})^{2/p}\is[ M^{\mathbb{G}}, M^{\mathbb{G}}]_T)^{p/2}\right]\nonumber\\
&&\leq C E^{\widetilde{Q}}\left[\left(\int_{0}^{T\wedge\tau}\vert f(s)\vert ds\right)^p +\underset{0\leq s \leq T\wedge\tau}{\sup}(S^{+}_{s})^p+\vert\xi \vert^p\right],
\end{eqnarray}
where $\widetilde{\cal E}$ is the process given by 
\begin{eqnarray}\label{Xitilde111}
\widetilde{\cal E}:={\cal E}(-{\widetilde{G}}^{-1}\is D^{o,\mathbb{F}}).\end{eqnarray}
\end{theorem}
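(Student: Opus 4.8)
The plan is to transfer each of the four terms on the left of (\ref{Estimate4P})---all of which are stopped at $\tau$---from a $P$-expectation carrying the discount weight $\widetilde{\cal E}$ into an ordinary $(\widetilde Q,\mathbb G)$-expectation by invoking the corresponding part of Lemma \ref{technicallemma1}, and then to dominate every resulting $\widetilde Q$-quantity by the right-hand side of (\ref{Estimate4P}) through the estimate (\ref{estimate100}) of Theorem \ref{EstimatesUnderQtilde}. Throughout I work with the unique $L^p(\widetilde Q,\mathbb G)$-solution produced by Theorem \ref{abcde}, whose components are given explicitly in (\ref{secondrelation}); in particular $M^{\mathbb G}=(h-\Gamma)\is N^{\mathbb G}$ with $\Gamma:=Y^{\mathbb F}/\widetilde{\cal E}$ an $\mathbb F$-adapted RCLL process. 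Since the four components are stopped at $\tau$ and $\widetilde{\cal E}$ is nonincreasing, I first observe that $\sup_{0\le t\le T}$ and $\int_0^T$ in (\ref{Estimate4P}) may be replaced by $\sup_{0\le t\le T\wedge\tau}$ and $\int_0^{T\wedge\tau}$. Finally, under (\ref{Assumption4Tau}) the Brownian filtration has a trivial $\mathcal F_0$ and $\tau>0$ $P$-a.s., so $G_0=1$; hence the factor $G_0^{-1}$ in Lemma \ref{technicallemma1} disappears and every constant below depends on $p$ only.

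For the $Y$-term I would apply Lemma \ref{technicallemma1}(a) to $Y^{\mathbb G}$ and then (\ref{estimate100}). For the $Z$-term I regard $A:=\int_0^{\cdot}\vert Z^{\mathbb G}_s\vert^2\,ds$ as a continuous nondecreasing $\mathbb G$-optional process and apply Lemma \ref{technicallemma1}(b) with $a=2/p$; because $A$ is continuous its jump contribution vanishes, and the bound collapses to a multiple of $E^{\widetilde Q}[(\int_0^{T\wedge\tau}\vert Z^{\mathbb G}_s\vert^2\,ds)^{p/2}]$, which (\ref{estimate100}) controls. For the $K$-term I apply Lemma \ref{technicallemma1}(b) with $a=1/p$ to $K^{\mathbb G}$, and dispose of the jump term via $\sum_s\widetilde G_s(\Delta K^{\mathbb G}_s)^p\le\sum_s(\Delta K^{\mathbb G}_s)^p\le (K^{\mathbb G}_{T\wedge\tau})^p$, valid for $p\ge1$ since $K^{\mathbb G}$ is nondecreasing; the whole bound is then a multiple of $E^{\widetilde Q}[(K^{\mathbb G}_{T\wedge\tau})^p]$, again controlled by (\ref{estimate100}).

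The $M$-term is where the real work lies. Writing $[M^{\mathbb G},M^{\mathbb G}]=H\is[N^{\mathbb G},N^{\mathbb G}]$ with $H:=(h-\Gamma)^2$, which is $\mathbb F$-optional, Lemma \ref{technicallemma1}(d) yields a principal term $E^{\widetilde Q}[[M^{\mathbb G},M^{\mathbb G}]_{T\wedge\tau}^{p/2}]$, dominated by (\ref{estimate100}), plus a residual $2\,E^{\widetilde Q}[(H^{p/2}I_{\Rbrack0,\tau\Lbrack}\is D^{o,\mathbb F})_T]$. To handle this residual I will use that $N^{\mathbb G}$ of (\ref{processNG}) is a $(\widetilde Q,\mathbb G)$-martingale (Theorem \ref{Toperator}(b)): evaluating the martingale $H^{p/2}\is N^{\mathbb G}=H^{p/2}\is D-H^{p/2}\widetilde G^{-1}I_{\Lbrack0,\tau\Lbrack}\is D^{o,\mathbb F}$ at $T\wedge\tau$ gives $E^{\widetilde Q}[(H^{p/2}\widetilde G^{-1}I_{\Lbrack0,\tau\Lbrack}\is D^{o,\mathbb F})_{T\wedge\tau}]=E^{\widetilde Q}[H^{p/2}_\tau I_{\{\tau\le T\}}]$, and since $\widetilde G\le1$ this bounds the residual by $E^{\widetilde Q}[H^{p/2}_\tau I_{\{\tau\le T\}}]=E^{\widetilde Q}[\vert h_\tau-\Gamma_\tau\vert^p I_{\{\tau\le T\}}]$. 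The punchline is that $\Delta N^{\mathbb G}_\tau=1$ forces $h_\tau-\Gamma_\tau=\Delta M^{\mathbb G}_\tau$, whence $\vert h_\tau-\Gamma_\tau\vert^p I_{\{\tau\le T\}}=\vert\Delta M^{\mathbb G}_\tau\vert^p I_{\{\tau\le T\}}\le[M^{\mathbb G},M^{\mathbb G}]_{T\wedge\tau}^{p/2}$, so the residual is itself absorbed into the quadratic variation and controlled by (\ref{estimate100}).

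Summing the four bounds and collecting the finitely many $p$-dependent constants (the Doob and Burkholder--Davis--Gundy constants hidden in (\ref{estimate100}), the factors $\kappa(1/p),\kappa(2/p)$, and the $2$'s) then yields (\ref{Estimate4P}) with $C=C(p)$. I expect the main obstacle to be precisely the $M$-term residual: one must trade the $D^{o,\mathbb F}$-integral for the single jump $\Delta M^{\mathbb G}_\tau$ through the $\widetilde Q$-martingale property of $N^{\mathbb G}$, and the legitimacy of evaluating $H^{p/2}\is N^{\mathbb G}$ at $T\wedge\tau$ must be secured by a localization argument, using the integrability of $[M^{\mathbb G},M^{\mathbb G}]_{T\wedge\tau}^{p/2}$ already guaranteed by (\ref{estimate100}).
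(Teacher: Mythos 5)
Your treatment of the $Y$-, $Z$- and $K$-terms coincides with the paper's own proof: Lemma \ref{technicallemma1}-(a) for $Y^{\mathbb G}$, Lemma \ref{technicallemma1}-(b) with $a=2/p$ for $\int_0^{\cdot}\vert Z^{\mathbb G}_s\vert^2ds$ (whose jump contribution indeed vanishes), Lemma \ref{technicallemma1}-(b) with $a=1/p$ for $K^{\mathbb G}$ together with $\sum_s\widetilde G_s(\Delta K^{\mathbb G}_s)^p\le(K^{\mathbb G}_{T\wedge\tau})^p$, and then Theorem \ref{EstimatesUnderQtilde} to pass to the data. The $M$-term is also set up exactly as in the paper, via $[M^{\mathbb G},M^{\mathbb G}]=(h-\Gamma)^2\is[N^{\mathbb G},N^{\mathbb G}]$ and Lemma \ref{technicallemma1}-(d). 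The divergence, and the gap, is in how you absorb the residual $2E^{\widetilde Q}\bigl[(H^{p/2}I_{\Rbrack0,\tau\Lbrack}\is D^{o,\mathbb F})_T\bigr]$.

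Your argument hinges on the claim $\Delta N^{\mathbb G}_\tau=1$, hence $h_\tau-\Gamma_\tau=\Delta M^{\mathbb G}_\tau$. This fails for a general random time: the $\mathbb G$-martingale compensating $D$ integrates $\widetilde G^{-1}dD^{o,\mathbb F}$ up to and including $\tau$, so $\Delta N^{\mathbb G}_\tau=1-\Delta D^{o,\mathbb F}_\tau/\widetilde G_\tau=G_\tau/\widetilde G_\tau$ and consequently $\Delta M^{\mathbb G}_\tau=(h_\tau-\Gamma_\tau)G_\tau/\widetilde G_\tau$. Thus $\vert\Delta M^{\mathbb G}_\tau\vert\le\vert h_\tau-\Gamma_\tau\vert$ -- the inequality points the wrong way -- and the factor $\widetilde G_\tau/G_\tau$ needed to reverse it is unbounded under the sole hypothesis $G>0$. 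One has $\Delta N^{\mathbb G}_\tau=1$ only when $\tau$ avoids the jump times of $D^{o,\mathbb F}$, which is not assumed here; the paper carries the jumps $\Delta D^{o,\mathbb F}$ throughout (see, e.g., Lemma \ref{Lemma4.11}). Replacing the identification by the crude bound $H_\tau^{p/2}\le 2^{p-1}(\vert\xi\vert^p+\vert\Gamma_\tau\vert^p)$ does not rescue the step either, because $\Gamma_\tau=Y^{\mathbb F}_\tau/\widetilde{\cal E}_\tau$ is the value at $\tau$ of an $\mathbb F$-process that is \emph{not} $Y^{\mathbb G}_\tau$ and is not controlled by $\sup_{0\le t\le T\wedge\tau}\vert Y^{\mathbb G}_t\vert$. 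The paper never evaluates $H$ at $\tau$: the residual integral is supported on $\Rbrack0,\tau\Lbrack$, where $\Gamma=Y^{\mathbb G}$, so there $H^{p/2}\le2^{p-1}(\vert h\vert^p+\vert Y^{\mathbb G}\vert^p)$; the $\vert h\vert^p$ part is converted into $E^{\widetilde Q}[\vert h_\tau\vert^pI_{\{\tau\le T\}}]=E^{\widetilde Q}[\vert\xi\vert^pI_{\{\tau\le T\}}]$ by the very projection identity you invoke, the $\vert Y^{\mathbb G}\vert^p$ part is dominated by $E^{\widetilde Q}[\sup_{0\le t\le T\wedge\tau}\vert Y^{\mathbb G}_t\vert^p]$, and both are then controlled by Theorem \ref{EstimatesUnderQtilde}. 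Your final absorption step should be replaced by this splitting.
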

%%%%%%%%%%%%%%%%%%%%%%%%%%%%%%%%%%%%%%%%%%%%%%%%%%%%%%%%%%%%%%%%%%%%%%%%%%
%%%%%%%%%%%%%%%%%%%%%%%%%%%%%%%%%%%%%%%%%%%%%%%%%%%%%%%%%%%%%%%%%%%%%%%%%

\begin{proof}  
By applying Lemma \ref{technicallemma1}-(b) to the process $K_t=\int_0^{t\wedge\tau} (Z^{\mathbb G}_{s})^{2}ds$ and $a=2/p$, we get 
 \begin{eqnarray}\label{Estimate4ZGepsilon}
     E\left[\left(\int_0^{T\wedge\tau} ({\widetilde{\cal E}}_{s-})^{2/p}(Z^{\mathbb G}_{s})^{2}ds\right)^{p/2}\right]\leq \kappa(a)G_0^{-1} E^{\widetilde{Q}}\left[\left(\int_0^{T\wedge\tau}
     (Z^{\mathbb G}_{s})^{2}ds\right)^{p/2}\right].\end{eqnarray}
By applying Lemma \ref{technicallemma1}-(a) to the process $Y=Y^{\mathbb G}$, we get 
  \begin{eqnarray}\label{Estimate4YGepsilon}
     E\left[\sup_{0\leq s\leq{T\wedge\tau}}{\widetilde{\cal E}}_s\vert{Y_s^{\mathbb G}}\vert^p\right]\leq {G_0^{-1} }E^{\widetilde{Q}}\left[\sup_{0\leq s\leq{T\wedge\tau}}\vert{Y_s^{\mathbb G}}\vert^p\right]. \end{eqnarray}

By applying Lemma \ref{technicallemma1}-(b) to the process $K=K^{\mathbb G}$ and $a=1/p$,  and using the fact that we always have  $\sum_{0< s\leq _{T\wedge\tau}} \widetilde{G}_s(\Delta K_s^{\mathbb G})^p\leq (K_{T\wedge\tau}^{\mathbb G})^p$, we get 
 \begin{eqnarray}\label{Estimate4KGepsilon}
     E\left[\left(\int_0^{T\wedge\tau} ({\widetilde{\cal E}}_{s-})^{1/p}dK_s^{\mathbb G}\right)^p\right]\leq{{\kappa(a)}\over{ G_0}} E^{\widetilde{Q}}\left[(K_{T\wedge\tau}^{\mathbb G})^p+\sum_{0\leq s\leq _{T\wedge\tau}} \widetilde{G}_s(\Delta K_s^{\mathbb G})^p\right]\leq {{2\kappa(a)}\over{ G_0}}  E^{\widetilde{Q}}\left[(K_{T\wedge\tau}^{\mathbb G})^p\right].\end{eqnarray}
Thanks to Theorem \ref{abcde}, we write $[M^{\mathbb G}, M^{\mathbb G}]=H\is [N^{\mathbb G}, N^{\mathbb G}]$ with $H:=(h-Y^{\mathbb F}{\widetilde {\cal E}}^{-1})^2$ is a nonnegative $\mathbb F$-optional process. Thus, a direct application of Lemma  \ref{technicallemma1}-(d), we get \\
 \begin{align}\label{Equality4MG00}
    E\left[({\widetilde{\cal E}}_{-}^{2/p}H\is [N^{\mathbb G},N^{\mathbb G}])_{T\wedge\tau} ^{p/2}\right]\leq C(p)G_0^{-1}  E^{\widetilde{Q}}\left[(H\is [N^{\mathbb G},N^{\mathbb G}]_{T\wedge\tau})^{p/2}+ 2(H^{p/2}I_{\Rbrack0,\tau\Lbrack}\is D^{o,\mathbb F})_T\right].
     \end{align}
Thus, we need to control the second term in the right-hand-side of the above inequality. To this end, we remark that $(H^{p/2}I_{\Rbrack0,\tau\Lbrack}\is D^{o,\mathbb F})\leq 2^{p-1}(\vert{h}\vert^p+\vert{Y}^{\mathbb G}\vert^pI_{\Rbrack0,\tau\Lbrack}\is D^{o,\mathbb F})$. Thus, by using this, we derive 
\begin{align*}
2E^{\widetilde{Q}}\left[(H^{p/2}I_{\Rbrack0,\tau\Lbrack}\is D^{o,\mathbb F})_T\right]\leq 2^pE^{\widetilde{Q}}\left[\vert{h}_{\tau}\vert^p I_{\{\tau\leq{T}\}}\right]+2^pE^{\widetilde{Q}}\left[\sup_{0\leq{t}\leq\tau\wedge{T}}\vert{Y}^{\mathbb G}_t\vert^p\right].
\end{align*}
 Therefore, by combining this inequality with ${h}_{\tau}I_{\{\tau\leq{T}\}}=\xi I_{\{\tau\leq{T}\}}$, (\ref{Equality4MG00}), (\ref{Estimate4KGepsilon}), (\ref{Estimate4YGepsilon}), (\ref{Estimate4ZGepsilon}) and Theorem \ref{EstimatesUnderQtilde}, the proof of the theorem follows immediately.
\end{proof}

Similarly, the following theorem gives a version of Theorems \ref{EstimatesUnderQtilde1} where the left-hand-side of its estimate does not involve the probability $\widetilde Q$. 

%%%%%%%%
%%%%%%%%%%%%%%%%%%%%%%%%%%%%%%%%%%%%%%%%%%%%%%%%%%%%%%%%%%%%%%%%%%%%%%%%%%%
%%%%%%%%%%%%%%%%%%%%%%%%%%%%%%%%%%%%%%%%%%%%%%%%%%%%%%%%%%%%%%%%%%%%%%%%%%%%
      \begin{theorem}\label{estimates1} 
       Let ($Y^{\mathbb{G},i},Z^{\mathbb{G},i},K^{\mathbb{G},i}, M^{\mathbb{G},i}$)  be a  solution to the RBSDE (\ref{RBSDEG}) that correspond to  $(f_{i}, S_{i}, \xi^i)$, $i=1,2$ respectively. Then, there exist $C_1$ and $C_2$ that depend on $p$ only such that 
       \begin{align}\label{Estimate4P1}
&\Vert (\widetilde{\cal E})^{1/p}\delta Y^{\mathbb G} \Vert_{\mathbb{D}_{T\wedge\tau}(P,p)}^{p}+\Vert(\widetilde{\cal E}_{-})^{1/p}\delta Z^{\mathbb{G}}\Vert_{\mathbb{S}_{T\wedge\tau}(P,p)}^p+\Vert(\widetilde{\cal E}_{-})^{1/p} \is \delta {M^{\mathbb{G}}}\Vert^{p}_{{\cal {M}}^p_{T\wedge\tau}(\widetilde{P})}\\
&\leq C_1\left\{\Vert \delta \xi\Vert^{p}_{\mathbb{L}^{p}(\widetilde{Q})}+ \Vert\int_{0}^{T\wedge\tau}\vert\delta f(s)\vert ds\Vert^{p}_{\mathbb{L}^{p}(\widetilde{Q})}+\Vert  \delta S\Vert_{\mathbb{D}_{T\wedge\tau}(\widetilde{Q},p)}^p\right\} \\
&+C_2\Vert  \delta S\Vert_{\mathbb{D}_{T\wedge\tau}(\widetilde{Q},p)}^{p/2}\sqrt{\sum_{i=1}^2
\left\{\Vert\xi^{(i)}\Vert^{p}_{\mathbb{L}^{p}(\widetilde{Q})}+ \Vert\int_{0}^{T\wedge\tau}\vert{f}^{(i)}(s)\vert ds\Vert^{p}_{\mathbb{L}^{p}(\widetilde{Q})}+\Vert (S^{(i)})^+\Vert_{\mathbb{D}^{p}_{T\wedge\tau}(\widetilde{Q})}^p\right\}}.
\end{align}
\end{theorem}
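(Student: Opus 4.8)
The plan is to reduce everything to Theorem \ref{EstimatesUnderQtilde1} by transferring each of the three terms on the left-hand side of (\ref{Estimate4P1}) from $P$ (weighted by the discount factor $\widetilde{\cal E}$) to the measure $\widetilde{Q}$ through Lemma \ref{technicallemma1}, exactly as in the proof of Theorem \ref{estimates}. Since both data-triplets $(f^{(i)}, S^{(i)}, \xi^{(i)})$ render the right-hand side finite, Theorem \ref{abcde} applies and each solution is an $L^p(\widetilde{Q},\mathbb G)$-solution given explicitly; in particular $M^{\mathbb G,i} = (h^{(i)} - Y^{\mathbb F,i}/\widetilde{\cal E})\is N^{\mathbb G}$ for $i=1,2$, an identity I will exploit for the martingale term.

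For the first term I would apply Lemma \ref{technicallemma1}-(a) to $Y=\delta Y^{\mathbb G}$, giving $\Vert(\widetilde{\cal E})^{1/p}\delta Y^{\mathbb G}\Vert^p_{\mathbb D_{T\wedge\tau}(P,p)}\le G_0^{-1}E^{\widetilde{Q}}[\sup_{0\le s\le T\wedge\tau}\vert\delta Y^{\mathbb G}_s\vert^p]$. For the $Z$-term I would apply Lemma \ref{technicallemma1}-(b) with $a=2/p$ to the continuous process $K=\int_0^{\cdot\wedge\tau}(\delta Z^{\mathbb G}_s)^2\,ds$, whose jump sum vanishes, yielding $\Vert(\widetilde{\cal E}_{-})^{1/p}\delta Z^{\mathbb G}\Vert^p_{\mathbb S_{T\wedge\tau}(P,p)}\le \kappa(2/p)G_0^{-1}E^{\widetilde{Q}}[(\int_0^{T\wedge\tau}(\delta Z^{\mathbb G}_s)^2\,ds)^{p/2}]$.

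For the martingale term I would write $[\delta M^{\mathbb G},\delta M^{\mathbb G}]=H\is[N^{\mathbb G},N^{\mathbb G}]$ with the $\mathbb F$-optional integrand $H:=(\delta h - \delta Y^{\mathbb F}/\widetilde{\cal E})^2$, and invoke Lemma \ref{technicallemma1}-(d). This produces the genuine martingale contribution $E^{\widetilde{Q}}[([\delta M^{\mathbb G}])_{T\wedge\tau}^{p/2}]$ together with the extra term $2E^{\widetilde{Q}}[(H^{p/2}I_{\Rbrack0,\tau\Lbrack}\is D^{o,\mathbb F})_T]$. To dispose of the latter I would use $H^{p/2}\le 2^{p-1}(\vert\delta h\vert^p+\vert\delta Y^{\mathbb F}/\widetilde{\cal E}\vert^p)$ together with the identity $Y^{\mathbb G,i}=Y^{\mathbb F,i}/\widetilde{\cal E}$ on $\Lbrack0,\tau\Lbrack$, so that $\vert\delta Y^{\mathbb F}/\widetilde{\cal E}\vert^p I_{\Rbrack0,\tau\Lbrack}\le \sup_{0\le t\le\tau\wedge T}\vert\delta Y^{\mathbb G}_t\vert^p$, while $\delta h_{\tau}I_{\{\tau\le T\}}=\delta\xi\,I_{\{\tau\le T\}}$; this gives $2E^{\widetilde{Q}}[(H^{p/2}I_{\Rbrack0,\tau\Lbrack}\is D^{o,\mathbb F})_T]\le 2^p E^{\widetilde{Q}}[\vert\delta\xi\vert^p I_{\{\tau\le T\}}]+2^p E^{\widetilde{Q}}[\sup_{0\le t\le\tau\wedge T}\vert\delta Y^{\mathbb G}_t\vert^p]$.

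Gathering the three bounds, every resulting right-hand side is a $\widetilde{Q}$-expectation of precisely the shape controlled by Theorem \ref{EstimatesUnderQtilde1} (namely $\sup\vert\delta Y^{\mathbb G}\vert^p$ and the sum $\int(\delta Z^{\mathbb G})^2+[\delta M^{\mathbb G}]$), plus $\Vert\delta\xi\Vert^p_{\mathbb L^p(\widetilde{Q})}$ which already appears in its bound. Substituting the estimate of Theorem \ref{EstimatesUnderQtilde1} then reproduces exactly the $C_1$-term and the cross $C_2$-term of (\ref{Estimate4P1}), with the constants absorbing $G_0^{-1}$, $\kappa(2/p)$ and the universal factors. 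I expect the main obstacle to be the martingale term: one must correctly identify the integrand $H$ via the representation $M^{\mathbb G,i}=(h^{(i)}-Y^{\mathbb F,i}/\widetilde{\cal E})\is N^{\mathbb G}$ and then tame the non-martingale $D^{o,\mathbb F}$-remainder produced by Lemma \ref{technicallemma1}-(d) by feeding $\sup\vert\delta Y^{\mathbb G}\vert$ back into Theorem \ref{EstimatesUnderQtilde1}, which is precisely what forces the appearance of the cross term rather than a clean linear estimate.
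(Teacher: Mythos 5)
Your proposal is correct and follows essentially the same route as the paper: transfer each left-hand term from $P$ to $\widetilde{Q}$ via Lemma \ref{technicallemma1}, use the representation $\delta M^{\mathbb{G}}=(\delta h-\delta Y^{\mathbb{F}}/\widetilde{\cal E})\is N^{\mathbb{G}}$ from Theorem \ref{abcde} to tame the $D^{o,\mathbb F}$-remainder by $\vert\delta\xi\vert^p$ and $\sup\vert\delta Y^{\mathbb G}\vert^p$, and then substitute Theorem \ref{EstimatesUnderQtilde1}. The only cosmetic difference is that the paper applies Lemma \ref{technicallemma1}-(b) to the sum $\int(\delta Z^{\mathbb G})^2ds+[\delta M^{\mathbb G},\delta M^{\mathbb G}]$ and bounds the resulting jump sum directly, whereas you split the two terms and invoke part (d) for the bracket; the two computations are equivalent.
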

%%%%%%%%%%%%%%%%%%%%%%%%%%%%%%%%%%%%%%%%%%%%%%%%%%%%%%%%%%%%%%%%%
\begin{proof}%%%[Proof of Theorem \ref{estimates1} ]  
Remark that by applying Lemma \ref{technicallemma1}-(a) to $Y=\delta{Y}^{\mathbb G}$ and $a=1/p$, we deduce that 
\begin{align}\label{Control4deltaYGInfinity}
E\left[\sup_{0\leq t\leq T}\widetilde{\cal E}_t\vert\delta Y^{\mathbb{G}}_{t}\vert^p\right]\leq \kappa E^{\widetilde{Q}}\left[\sup_{0\leq t\leq T}\vert\delta Y^{\mathbb{G}}_{t}\vert^p\right]\end{align}
Thus, the rest of the proof focuses on controlling the remaining terms in the left-hand-side of (\ref{Estimate4P1}). To this end, we apply  Lemma \ref{technicallemma1}-(b) to $K=\int_0^{\cdot }(\delta{Z}^{\mathbb G}_s)^2ds +[\delta{M}^{\mathbb G},\delta{M}^{\mathbb G}]$ and $a=2/p$, and get 
\begin{align}\label{Control4deltaZGInfinity}
&E\left[\left(\int_0^{T\wedge\tau}\widetilde{\cal E}_{-})^{2/p}(\delta Z^{\mathbb{G}})^2 ds+\int_0^{T\wedge\tau}(\widetilde{\cal E}_{-})^{2/p}d[\delta{M}^{\mathbb G},\delta{M}^{\mathbb G}]_s\right)^{p/2}\right]\nonumber\\
&\leq\kappa E^{\widetilde{Q}}\left [\left(\int_0^{T\wedge\tau}(\delta Z^{\mathbb{G}})^2 ds+[\delta{M}^{\mathbb G},\delta{M}^{\mathbb G}]_{T\wedge\tau}\right)^{p/2}+\sum_{0\leq{t}\leq {T\wedge\tau}}{\widetilde{G}_t}(\Delta(\delta{M}^{\mathbb G})_t)^p/2\right]
\end{align}
Then, thank to Theorem \ref{abcde} which implies that $\Delta(\delta{M}^{\mathbb G})=(h-Y^{\mathbb F}{\widetilde{\cal E}}^{-1})\Delta N^{\mathbb G}$, and by mimicking the footsteps of step 3 in the proof of Lemma \ref{technicallemma1}, we derive
\begin{align}\label{Control4deltaMGInfinity}
 E^{\widetilde{Q}}\left [\sum_{0\leq{t}\leq {T\wedge\tau}}{\widetilde{G}_t}\vert\Delta(\delta{M}^{\mathbb G})_t\vert^p\right]\leq 2^p E^{\widetilde{Q}}\left [(\vert{h}\vert^p+\vert{Y}^{\mathbb G}\vert^p)I_{\Rbrack0,\tau\Lbrack}\is D^{o,\mathbb F}_T\right]\leq 2^pE^{\widetilde{Q}}\left[\vert\xi\vert^p+ \sup_{0\leq{t}\leq{T\wedge\tau}}\vert Y^{\mathbb G}\vert^p\right].
\end{align}
Therefore, by combining (\ref{Control4deltaYGInfinity}), (\ref{Control4deltaZGInfinity}), (\ref{Control4deltaMGInfinity}) and Theorem \ref{EstimatesUnderQtilde1}, the proof of theorem follows immediately. This ends the prof of the theorem.
\end{proof}     
%%%%%%%%%%%%%%%%%%%%%%%%%%%%%%%%%%%%%%%%%%%%%%%%%%%%%%%%%%%%
 %%%%%%%%%%%%%%%%%%%%%%%%%%%%%%%%%%%%%%%%%%%%%%%%%%%%%%%%%%%%%%
 Our second step, in solving (\ref{RBSDEGinfinite}), relies on the following lemma, and focuses on simultaneously letting $T$ to go to infinity and getting rid-off $\widetilde{Q}$ in the norms of the data-triplet.
 %%%%%%%%%%%%%%%%%%%%%%%%%%%%%%%%%%%%%%%%%%%%%%%%%%%%%%%%%%%%%
     \begin{lemma}\label{ExpecationQtilde2P} Let $X$ be a non-negative and $\mathbb F$-optional process with $X_0=0$. Then the following hold.
     {\rm{(a)}} For any $T\in (0,\infty)$, we always have
     \begin{eqnarray}\label{XunderQtilde}
     E^{\widetilde Q}[X_{T\wedge\tau}]=G_0 E\left[\int_0^T X_sdV_s^{\mathbb F}+X_T{\widetilde {\cal E}}_T\right]\quad\mbox{and}\quad  E^{\widetilde Q}[X_{\tau}I_{\{\tau\leq{T}\}}]=G_0 E\left[\int_0^T X_sdV_s^{\mathbb F}\right].
     \end{eqnarray}
     {\rm{(b)}} If $X/{\cal E}(G_{-}^{-1}\is m)$ is bounded, then we get
     \begin{eqnarray}
     \lim_{T\to\infty} E^{\widetilde Q}[X_{T\wedge\tau}]=G_0\Vert X\Vert_{L^1(P\otimes V^{\mathbb F})}:=G_0 E\left[\int_0^{\infty} X_sdV_s^{\mathbb F}\right].\end{eqnarray}
     \end{lemma}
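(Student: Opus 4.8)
The plan is to reduce both identities to a single algebraic relation between the two exponentials $\widetilde{Z}={\cal E}(G_{-}^{-1}\is m)^{-1}$ and $\widetilde{\cal E}={\cal E}(-\widetilde{G}^{-1}\is D^{o,\mathbb F})$, and then to pass to the limit $T\to\infty$ by monotone convergence together with a vanishing boundary term. First I would establish the key identity $\widetilde{Z}_t G_t=G_0\,\widetilde{\cal E}_t$ for all $t\ge0$. Writing $U:=\widetilde{Z}G$, I would show by integration by parts that $U$ solves the linear equation $dU=U_{-}\,d{\cal L}$ with ${\cal L}:=-\widetilde{G}^{-1}\is D^{o,\mathbb F}$ and $U_0=G_0$, so that $U=G_0{\cal E}({\cal L})=G_0\widetilde{\cal E}$ by uniqueness of the Dol\'eans-Dade exponential. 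Concretely, using $G=m-D^{o,\mathbb F}$, the standard relations $\widetilde{G}=G+\Delta D^{o,\mathbb F}=G_{-}+\Delta m$, and the inverse-exponential formula $d\widetilde{Z}=-\widetilde{Z}_{-}\,dL+\widetilde{Z}_{-}\,d\langle L^c\rangle+\dots$ (with $L:=G_{-}^{-1}\is m$), the continuous martingale part of $U$ cancels, the continuous finite-variation part reduces to $-\widetilde{Z}_{-}\,dD^{o,\mathbb F,c}$, i.e. $U_{-}^{-1}dU^{c,\mathrm{fv}}=-G_{-}^{-1}dD^{o,\mathbb F,c}$, and the jump ratio is $\Delta U/U_{-}=G/\widetilde{G}-1=\Delta{\cal L}$; all three coincide with those of ${\cal E}({\cal L})$. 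The same identity yields, $dD^{o,\mathbb F}$-a.e., the left-limit form $\widetilde{Z}_s=\widetilde{Z}_{s-}G_{s-}\widetilde{G}_s^{-1}=G_0\widetilde{\cal E}_{s-}\widetilde{G}_s^{-1}$, which is exactly the density appearing in $dV^{\mathbb F}=-d\widetilde{\cal E}=\widetilde{\cal E}_{-}\widetilde{G}^{-1}\is D^{o,\mathbb F}$, so that $\widetilde{Z}_s\,dD^{o,\mathbb F}_s=G_0\,dV^{\mathbb F}_s$. (If this relation is already available from \cite{Choulli1,Choulli2}, I would simply cite it.)

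For part (a), I would start from $E^{\widetilde Q}[X_{T\wedge\tau}]=E[\widetilde{Z}_{T\wedge\tau}X_{T\wedge\tau}]$ and split on $\{\tau>T\}$ and $\{\tau\le T\}$. On $\{\tau>T\}$ one has $\widetilde{Z}_{T\wedge\tau}X_{T\wedge\tau}=\widetilde{Z}_TX_T$, and conditioning on ${\cal F}_T$ with $E[I_{\{\tau>T\}}\mid{\cal F}_T]=G_T$ together with the identity gives $E[\widetilde{Z}_TX_TI_{\{\tau>T\}}]=E[\widetilde{Z}_TG_TX_T]=G_0E[X_T\widetilde{\cal E}_T]$. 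On $\{\tau\le T\}$ one has $\widetilde{Z}_{T\wedge\tau}X_{T\wedge\tau}=\widetilde{Z}_\tau X_\tau$, and since $\widetilde{Z}X$ is nonnegative and $\mathbb F$-optional, the defining property of the dual optional projection $D^{o,\mathbb F}$ of $D=I_{\Lbrack\tau,+\infty\Lbrack}$ yields $E[\widetilde{Z}_\tau X_\tau I_{\{\tau\le T\}}]=E[\int_0^T\widetilde{Z}_sX_s\,dD^{o,\mathbb F}_s]$. Substituting $\widetilde{Z}_s\,dD^{o,\mathbb F}_s=G_0\,dV^{\mathbb F}_s$ turns this into $G_0E[\int_0^TX_s\,dV^{\mathbb F}_s]$. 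Adding the two pieces gives the first equality of \eqref{XunderQtilde}, while the second equality is precisely the $\{\tau\le T\}$ computation in isolation. All manipulations are legitimate in $[0,+\infty]$ thanks to $X\ge0$.

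For part (b), I would let $T\uparrow\infty$ in the first identity of part (a). Since $X\ge0$ and $V^{\mathbb F}$ is nondecreasing, $\int_0^TX_s\,dV^{\mathbb F}_s\uparrow\int_0^\infty X_s\,dV^{\mathbb F}_s$, so monotone convergence gives $G_0E[\int_0^TX_s\,dV^{\mathbb F}_s]\to G_0\|X\|_{L^1(P\otimes V^{\mathbb F})}$. It then remains to show the boundary term $G_0E[X_T\widetilde{\cal E}_T]$ vanishes. Here I would use the hypothesis that $X\widetilde{Z}=X/{\cal E}(G_{-}^{-1}\is m)$ is bounded, say by $C$, together with $\widetilde{\cal E}_T=\widetilde{Z}_TG_T/G_0$, to bound $X_T\widetilde{\cal E}_T=(X_T\widetilde{Z}_T)G_T/G_0\le CG_T/G_0$; taking expectations and using $E[G_T]=P(\tau>T)\to0$ (as $0<\tau<+\infty$ $P$-a.s.) forces $G_0E[X_T\widetilde{\cal E}_T]\le CP(\tau>T)\to0$. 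Combining the two limits yields the claim.

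The main obstacle is the first step: the identity $\widetilde{Z}G=G_0\widetilde{\cal E}$ is what makes the two a priori different exponentials compatible and converts the $\widetilde{Q}$-expectation into a $P$-integral against $V^{\mathbb F}$. Everything afterwards is a clean splitting plus a dual-projection argument and an elementary passage to the limit; the only points requiring care are the predictable-versus-optional evaluation of the integrand against $dD^{o,\mathbb F}$ (handled by the left-limit form $\widetilde{Z}_s=G_0\widetilde{\cal E}_{s-}\widetilde{G}_s^{-1}$, valid $dD^{o,\mathbb F}$-a.e.) and the justification of the limit operations, both of which are immediate from the nonnegativity of $X$.
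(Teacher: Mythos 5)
Your proposal is correct and follows essentially the same route as the paper: split $X_{T\wedge\tau}$ on $\{\tau\le T\}$ versus $\{\tau>T\}$, convert the first piece via the dual optional projection $D^{o,\mathbb F}$ and the second via $E[I_{\{\tau>T\}}\mid{\cal F}_T]=G_T$, and use the multiplicative decomposition $G=G_0\,{\cal E}(G_{-}^{-1}\is m)\,\widetilde{\cal E}$ (equivalently $\widetilde Z_s\,dD^{o,\mathbb F}_s=G_0\,dV^{\mathbb F}_s$) to land on the $V^{\mathbb F}$-integral, then pass to the limit by monotone convergence with the boundary term killed by $G_T\to0$. Your explicit domination $X_T\widetilde{\cal E}_T\le C\,G_T/G_0$ in part (b) is a welcome elaboration of the paper's terse appeal to $\lim_{t\to\infty}G_t=0$, but it is not a different argument.
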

       %%%%%%%%%%%%%%%%%%%%%%%%%%%%%%%%%%%%%%%%%%%%%%%%%%%%%%%%%%%%%%%%%
%%%%%%%%%%%%%%%%%%%%%%%%%%%%%%%%%%%%%%%%%%%%%%%%%%%%%%%%%%%%%%%%
This lemma, that will be proved in Appendix \ref{Appendix4Proofs}, allows us to take the limit of expectations under $\widetilde Q$ when some conditions hold. Below, we elaborate our principal result of this subsection.

\begin{theorem}\label{EstimateInfinite} Let $p\in (1,+\infty)$ and suppose that $G>0$ and  the data-triplet $(f, S, h)$ satisfies 
\begin{eqnarray}\label{MainAssumption4InfiniteHorizon}
E\left[\int_0^{\infty}\left(\vert{h}_t\vert^p+(F_t)^p+\sup_{0\leq{u}\leq t}(S_u^+)^p\right)dV^{\mathbb F}_t\right]<+\infty,\quad\mbox{where}\quad F_t:=\int_0^t\vert f(s)\vert ds.
\end{eqnarray}
Then the following assertions hold.\\
{\rm{(a)}} The RBSDE (\ref{RBSDEGinfinite}) admits a unique solution $\left(Y^{\mathbb{G}},Z^{\mathbb{G}},K^{\mathbb{G}},M^{\mathbb{G}}\right)$. \\
{\rm{(b)}} There exists a positive constant $C$, that depends on $p$ only, such that 
\begin{eqnarray}\label{Estimate4PTinfinity}
&&E\left[\sup_{0\leq t\leq \tau}\widetilde{\cal E}_t\vert{Y}^{{\mathbb{G}}}_{t}\vert^p+\left(\int_{0}^{\tau}(\widetilde{\cal E}_{s-})^{2/p}\vert Z^{{\mathbb{G}}}_{s}\vert^{2}ds\right)^{p/2}+\left(\int_0^{\tau}({\widetilde{\cal E}_{s-}})^{1/p}dK^{\mathbb{G}}_s\right)^p+\left(\int_{0}^{\tau}(\widetilde{\cal E}_{s-})^{2/p}d[ M^{\mathbb{G}}, M^{\mathbb{G}}]_s\right)^{p/2}\right]\nonumber\\
&&\leq C E\left[\int_0^{\infty}\left\{\vert{h}_t\vert^p+(F_t)^p+\sup_{0\leq s\leq t}(S_u^+)^p\right\}dV^{\mathbb F}_t\right].\end{eqnarray}
{\rm{(c)}}   Let $\left(Y^{\mathbb{G},i},Z^{\mathbb{G},i},K^{\mathbb{G},i}, M^{\mathbb{G},i}\right)$  be a  solution to the RBSDE (\ref{RBSDEGinfinite}) corresponding to  $(f^{(i)}, S^{(i)}, h^{(i)})$,  for $i=1,2$. Then, there exist positive $C_1$ and $C_2$ that depend on $p$ only such that 
 \begin{align}\label{Estimate4P1Tinifinite}
&E\left[\sup_{0\leq t\leq \tau}\widetilde{\cal E}_t\vert\delta Y^{{\mathbb{G}}}_{t}\vert^p+\left(\int_{0}^{\tau}(\widetilde{\cal E}_{s-})^{2/p}\vert \delta Z^{{\mathbb{G}}}_{s}\vert^{2}ds+\int_{0}^{\tau}(\widetilde{\cal E}_{s-})^{2/p}d[ \delta M^{\mathbb{G}},\delta M^{\mathbb{G}}]_s\right)^{p/2}\right]\nonumber\\
&\leq C_1 E\left[\displaystyle\int_0^{\infty}\left\{\vert\delta h_t\vert^p+\vert{\delta F_t}\vert^p+\sup_{0\leq{u}\leq{t}}\vert\delta{S}_u\vert^p\right\}dV^{\mathbb F}_t\right]\nonumber\\
&+C_2\sqrt{E\left[\displaystyle\int_0^{\infty}\sup_{0\leq{u}\leq{t}}\vert\delta{S}_u\vert^p dV^{\mathbb F}_t\right]} \sqrt{\sum_{i=1}^2
 E\left[\displaystyle\int_0^{\infty}\left\{\vert{h}_t^{(i)}\vert^p+\vert{ F}_t^{(i)}\vert^p+\sup_{0\leq{u}\leq{t}}\vert(S^{(i)}_u)^+\vert^p\right\}dV^{\mathbb F}_t\right]},
\end{align}
where 
\begin{eqnarray}\label{processesDelta}
\begin{cases}\delta{Y}^{\mathbb{G}}:=Y^{\mathbb{G},1}-Y^{\mathbb{G},2},\quad\delta{Z}^{\mathbb{G}}:=Z^{\mathbb{G},1}-Z^{\mathbb{G},2},\quad\delta{K}^{\mathbb{G}}:=K^{\mathbb{G},1}-K^{\mathbb{G},2},\quad \delta{M}^{\mathbb{G}}:=M^{\mathbb{G},1}-M^{\mathbb{G},2},\label{deltaProcesses}\\
\delta{h}:=h^{(1)}-h^{(2)} ,\quad\delta{F}:=F^{(1)}-F^{(2)},\quad F^{(i)}:=\int_0^{\cdot}\vert{f}^{(i)}_s\vert{d}s,\quad \delta{S}:=S^{(1)}-S^{(2)}.\end{cases}\end{eqnarray}
\end{theorem}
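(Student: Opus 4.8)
The plan is to bootstrap the bounded-horizon results of Section \ref{LinearboundedSection} by letting $T\to+\infty$. Both Theorems \ref{estimates} and \ref{estimates1} already carry the discount weights $\widetilde{\cal E}$ on their left-hand sides, so the only remaining tasks are to let $T\uparrow+\infty$ and to replace the $\widetilde Q$-expectations of the data on their right-hand sides by integrals against $dV^{\mathbb F}=-d\widetilde{\cal E}$ under $P$. The device for the latter is Lemma \ref{ExpecationQtilde2P}: for nonnegative $\mathbb F$-optional $X$ with $X_0=0$, letting $T\uparrow\infty$ in its part (a) yields $E[\widetilde Z_\tau X_\tau]=E[\int_0^\infty X_s\,dV^{\mathbb F}_s]$, since $\{\tau\le T\}\uparrow\Omega$ by the standing assumption $\tau<+\infty$ a.s.

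First I would construct the solution by approximation. Fix $T_n\uparrow+\infty$; by Theorem \ref{abcde} (after checking, via Lemma \ref{ExpecationQtilde2P}, that (\ref{MainAssumption4InfiniteHorizon}) forces (\ref{MainAssumption}) at each finite horizon) the bounded-horizon RBSDE (\ref{RBSDEG}) with horizon $T_n$ and terminal datum $h_{T_n\wedge\tau}$ has a unique solution $(Y^n,Z^n,M^n,K^n)$. For $m>n$ the two data triplets share the same $f$ and $S$ and differ only through the terminal condition; feeding this into the stability estimate of Theorem \ref{estimates1} and converting with Lemma \ref{ExpecationQtilde2P} bounds the difference of the two solutions, in the norm on the left of (\ref{Estimate4PTinfinity}), by the tail $E[\int_{T_n}^\infty(|h_t|^p+(F_t)^p+\sup_{0\le u\le t}(S_u^+)^p)\,dV^{\mathbb F}_t]$, which vanishes as $n\to\infty$ under (\ref{MainAssumption4InfiniteHorizon}). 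Hence the sequence is Cauchy and its limit $(Y^{\mathbb G},Z^{\mathbb G},M^{\mathbb G},K^{\mathbb G})$ solves (\ref{RBSDEGinfinite}); uniqueness follows from assertion (c) read with identical data.

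For (b) I would work directly with the limit. For each $T$ the quadruplet stopped at $T\wedge\tau$ solves (\ref{RBSDEG}) with horizon $T$ and terminal value $Y^{\mathbb G}_{T\wedge\tau}$, so Theorem \ref{estimates} gives
\[
E\Big[\sup_{0\le t\le T\wedge\tau}\widetilde{\cal E}_t|Y^{\mathbb G}_t|^p+\dots\Big]\le C\,E^{\widetilde Q}\Big[(F_{T\wedge\tau})^p+\sup_{0\le s\le T\wedge\tau}(S_s^+)^p+|Y^{\mathbb G}_{T\wedge\tau}|^p\Big].
\]
The left-hand side increases to the left-hand side of (\ref{Estimate4PTinfinity}) as $T\uparrow\infty$, so by monotone convergence it suffices to bound the right-hand side uniformly in $T$. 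The two running terms are nondecreasing in $T$, while through (\ref{RBSDE2Snell}) the terminal term is dominated by a uniformly integrable martingale built from $\int_0^\tau|f|\,ds+\sup_{0\le u\le\tau}S_u^++|h_\tau|$; using Lemma \ref{ExpecationQtilde2P} to convert then bounds everything by $C\,E[\int_0^\infty(|h_t|^p+(F_t)^p+\sup_{0\le u\le t}(S_u^+)^p)\,dV^{\mathbb F}_t]$. Assertion (c) is proved the same way from Theorem \ref{estimates1} applied to the two stopped solutions, the terminal difference being $Y^{\mathbb G,1}_{T\wedge\tau}-Y^{\mathbb G,2}_{T\wedge\tau}$ and the $\delta$-data entering linearly.

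The main obstacle is the passage $T\to+\infty$ in the absence of any immersion or pseudo-stopping-time hypothesis: since $\widetilde Z^\tau$ need not be uniformly integrable, $\widetilde Q$ does not extend to a probability at horizon $+\infty$, and one cannot take a single limiting Snell envelope. This is precisely why the argument must route through the $\widetilde{\cal E}$-weighted $P$-estimates and the identity $E[\widetilde Z_\tau X_\tau]=E[\int_0^\infty X\,dV^{\mathbb F}]$, and it localizes the difficulty in the boundary contributions $E[X_T\widetilde{\cal E}_T]=E[X_T\widetilde Z_TG_T]=E^{\widetilde Q}[X_TI_{\{\tau>T\}}]$ produced by Lemma \ref{ExpecationQtilde2P}(a). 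For the monotone data $(F)^p$ and $\sup(S^+)^p$ these are swallowed by the running integrals; for the non-monotone terminal the point is to estimate the solution value $Y^{\mathbb G}_{T\wedge\tau}$, dominated by a genuine uniformly integrable martingale, rather than $h_{T\wedge\tau}$ itself, the finiteness $E[G_T]=P(\tau>T)\to0$ of Lemma \ref{ExpecationQtilde2P}(b) being what forces these terms to vanish. Matching the distinct horizons and terminal data in the Cauchy step of the existence proof is the other place where this non-uniform-integrability must be handled with care.
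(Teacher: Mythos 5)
Your overall strategy (approximate by bounded horizons, invoke Theorems \ref{estimates} and \ref{estimates1}, convert the $\widetilde Q$-expectations of the data via Lemma \ref{ExpecationQtilde2P}, and pass to the limit by Fatou) is the same skeleton as the paper's proof, and you correctly locate the central difficulty in the non-uniform integrability of $\widetilde Z^{\tau}$. But there is a genuine gap: you invoke Lemma \ref{ExpecationQtilde2P}-(b) to compute $\lim_{T}E^{\widetilde Q}[X_{T\wedge\tau}]$, and that assertion is only proved under the hypothesis that $X/{\cal E}(G_{-}^{-1}\is m)$ is bounded. Under (\ref{MainAssumption4InfiniteHorizon}) alone this hypothesis fails, and the boundary term $G_0E[X_T\widetilde{\cal E}_T]$ produced by (\ref{XunderQtilde}) need not vanish: writing $\widetilde{\cal E}_T=\widetilde{\cal E}_{\infty}+\int_T^{\infty}dV^{\mathbb F}_s$, the piece $E[X_T\int_T^{\infty}dV^{\mathbb F}_s]$ is indeed absorbed by the running integral for monotone $X$, but $E[X_T\widetilde{\cal E}_{\infty}]$ is not controlled by $E[\int_0^{\infty}X\,dV^{\mathbb F}]$ and can diverge on the event $\{\widetilde{\cal E}_{\infty}>0\}$ (this is precisely why the companion Theorem \ref{Relationship4InfiniteBSDE} has to add the extra hypothesis (\ref{MainAssumption4InfiniteHorizonBIS})). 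Your claim that the monotone data terms are ``swallowed by the running integrals'' is therefore unjustified, and your treatment of the terminal term via a ``uniformly integrable martingale'' dominating $Y^{\mathbb G}_{T\wedge\tau}$ is circular: the domination (\ref{Domination4YG}) is a statement under $\widetilde Q_T$, which is exactly the object that does not stabilize as $T\to\infty$.

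The paper's proof supplies the missing idea with a \emph{double} truncation. Part~1 first assumes the data satisfy (\ref{BoundednessAssumption}), i.e.\ are dominated by $C\,{\cal E}(G_{-}^{-1}\is m)$; only then is Lemma \ref{ExpecationQtilde2P}-(b) applicable and the boundary terms $E[X_T\widetilde{\cal E}_T]\le (C/G_0)E[G_T]\to 0$ die. Part~2 removes (\ref{BoundednessAssumption}) by introducing the stopping times $T_n$ at which the ratios $\vert S\vert^p/{\cal E}(G_{-}^{-1}\is m)$ and $F^p/{\cal E}(G_{-}^{-1}\is m)$ exceed $n$, truncating $(f,S,h)$ accordingly, and showing the resulting solutions are Cauchy using only the tails $E[\int_{T_n}^{\infty}\{\cdot\}dV^{\mathbb F}]$, which vanish by dominated convergence under (\ref{MainAssumption4InfiniteHorizon}); this never requires controlling $E[X_T\widetilde{\cal E}_T]$ for the untruncated data. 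A secondary slip in your Cauchy step: Theorem \ref{estimates1} compares two solutions at a \emph{common} horizon, so you cannot treat the horizon-$T_n$ and horizon-$T_m$ problems as sharing the same $(f,S)$ and differing only in the terminal datum; you must truncate the driver and obstacle as in (\ref{Sequence4[0,n]}), which reintroduces the $\delta f$ and $\delta S$ tail terms of (\ref{LimitZero}).
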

%%%%%%%%%%%%%%%%%%%%%%%%%%%%%%%%%%%%%%%%%%%%%%%%%%%%%%%%%%%%%%%%%%%%%%
%%%%%%%%%%%%%%%%%%%%%%%%%%%%%%%%%%%%%%%%%%%%%%%%%%%%%%%%%%%%%%%%%%%%
The theorem states that, in the case when the horizon might be unbounded, we ``{\it discount}" somehow the solution of the RBSDE  (\ref{RBSDEGinfinite}), using the discount factor ${\widetilde{\cal E}}$,  and we estimate afterwards the resulting processes under $P$ with the data-triplet processes $(f, h, S)$ using the space $L^p(\widetilde{\Omega}, {\cal{F}}\otimes{\cal{B}}(\mathbb R^+),P\otimes V^{\mathbb F})$ and its norm instead.  This norm appeared naturally in our analysis, and it reflects the fact that $\tau$ is a random horizon that might span the whole set of fixed planning horizons. Equivalently, for the pair $(Y^{\mathbb{G}},Z^{\mathbb{G}})$ of the solution, we use the following two spaces and their norms 
given by 
\begin{eqnarray}\label{DtildeSpace}
 \begin{cases}
 \widetilde{\mathbb{D}}_{\sigma}(P,p):=\left\{Y\in {\mathbb{D}}_{\sigma}(P,p):\quad \Vert{Y}\Vert_{ \widetilde{\mathbb{D}}_{\sigma}(P,p)}:= \Vert{Y}{\widetilde{\cal E}}^{1/p}\Vert_{ {\mathbb{D}}_{\sigma}(P,p)}<+\infty\right\},\\
  \widetilde{\mathbb{S}}_{\sigma}(P,p):=\left\{Z\in {\mathbb{S}}_{\sigma}(P,p):\quad \Vert{Z}\Vert_{ \widetilde{\mathbb{S}}_{\sigma}(P,p)}:= \Vert{Z}{\widetilde{\cal E}_{-}}^{1/p}\Vert_{ {\mathbb{S}}_{\sigma}(P,p)}<+\infty\right\}.\end{cases}
 \end{eqnarray}
 For the remaining pair  $(K^{\mathbb{G}},M^{\mathbb{G}})$ of the solution we take the norm of the ``discounted" processes ${\widetilde{\cal E}_{-}}^{1/p}\is{K}^{\mathbb{G}}$  and ${\widetilde{\cal E}_{-}}^{1/p}\is{M}^{\mathbb{G}}$ instead of those of $({K}^{\mathbb{G}},{M}^{\mathbb{G}})$.\\
  Furthermore, direct It\^o calculations show that $\left(Y^{\mathbb{G}},Z^{\mathbb{G}},K^{\mathbb{G}},M^{\mathbb{G}}\right)$ is the unique solution to  (\ref{RBSDEGinfinite}) if and only if $\left(\widetilde{Y}^{\mathbb{G}},\widetilde{Z}^{\mathbb{G}},\widetilde{K}^{\mathbb{G}},\widetilde{M}^{\mathbb{G}}\right):=\left({\widetilde{\cal E}}^{1/p}Y^{\mathbb{G}},{\widetilde{\cal E}_{-}}^{1/p}Z^{\mathbb{G}},{\widetilde{\cal E}_{-}}^{1/p}\is{K}^{\mathbb{G}},{\widetilde{\cal E}_{-}}^{1/p}\is{M}^{\mathbb{G}}\right)$ is the unique solution to 
 \begin{align}\label{EquivalentRBSDE}
 \begin{cases}
 dY=-Y\left({{\widetilde{G}}\over{G}}\right)^{1/p}I_{\Rbrack0,\tau\Rbrack}dV^{(1/p)}-{\widetilde{\cal E}_{-}}^{1/p}f(t)d(t\wedge\tau)-dK-dM+ZdW^{\tau},\\
 Y_{\tau}={\widetilde{\cal E}_{\tau}}^{1/p}\xi,\displaystyle\quad Y\geq {\widetilde{\cal E}}^{1/p}S\quad\mbox{on}\quad \Lbrack0,\tau\Lbrack,\quad \int_0^{\tau}(Y_{u-}- {\widetilde{\cal E}_{u-}}^{1/p}S_{u-})dK_u=0,
 \end{cases}
 \end{align}
 where $V^{(1/p)}$ is defined in (\ref{Vepsilon}). Furthermore, under (\ref{MainAssumption4InfiniteHorizon}), this solution is an $L^p(P,\mathbb G)$-solution and there exists a positive constant $C$ that depends on $p$ only such that 
 \begin{align*}
 \Vert \widetilde{Y}^{\mathbb{G}}\Vert_{\mathbb{D}_{\tau}(P,p)}+ \Vert \widetilde{Z}^{\mathbb{G}}\Vert_{\mathbb{S}_{\tau}(P,p)}+\Vert\widetilde{K}^{\mathbb{G}}_T\Vert_{L^p(P)}+\Vert\widetilde{M}^{\mathbb{G}}\Vert_{{\cal{M}}^p(P)}\leq C\Vert{F}+\vert{h}\vert+\sup_{0\leq u\leq \cdot}S_u^+\Vert_{L^p(P\otimes{V}^{\mathbb{F}})}.
 \end{align*}
%%%%%%%%%%%%%%%%%%%%%%%%%%%%%%%%%%%%%%%%%%%%%%%%%%%%%%%%%%%%%%%%
%%%%%%%%%%%%%%%%%%%%%%%%%%%%%%%%%%%%%%%%%%%%%%%%%%%%%%%%%%%%%%%%%%%%%%%%
\begin{proof}[Proof of Theorem \ref{EstimateInfinite}] In virtue of Lemma \ref{ExpecationQtilde2P}, we will prove the theorem in two parts.\\
{\bf Part 1.} Here, we assume that there exists a positive constant $C\in (0,+\infty)$ such that 
\begin{eqnarray}\label{BoundednessAssumption}
\max\left(\vert{h}\vert^p, \left(\int_0^{\cdot}\vert f_s\vert ds\right)^p, \sup_{0\leq u\leq\cdot}\vert{S}_u\vert^p\right)\leq C {\cal E}(G_{-}^{-1}\is m),\end{eqnarray}
and prove that the theorem holds under this assumption. To this end, we consider the sequence  of data $(f^{(n)}, h^{(n)}, S^{(n)})$ given by 
\begin{eqnarray}\label{Sequence4[0,n]}
f^{(n)}:=fI_{\Lbrack0,n\Rbrack},\quad h^{(n)}:=hI_{\Lbrack0,n\Rbrack},\quad S^{(n)}_t:=S_{n\wedge{t}},\quad\xi^{(n)}:=h_{\tau}I_{\{\tau\leq{n}\}},\quad \forall\ n\geq 1.
\end{eqnarray}
For any $n\geq 1$, the RBSDE (\ref{RBSDEGinfinite}) has a unique solution for any horizon $T\geq n$. For any $n, m\geq 1$, we apply  Theorem \ref{estimates} to each $(f^{(n)}, h^{(n)}, S^{(n)})$ and Theorem \ref{estimates1} to the triplet
\begin{eqnarray*}
\left(\delta{f}, \delta{h},\delta{ S},\delta\xi\right):=\left(f^{(n)}-f^{(n+m)}, h^{(n)}-h^{(n+m)}, S^{(n)}-S^{(n+m)}, h_{\tau}I_{\{\tau\leq n\}}-h_{\tau}I_{\{\tau\leq n+m\}}\right),
\end{eqnarray*}
and the horizon for both theorems is $T=n+m$, and get
\begin{eqnarray}\label{Inequality4Limit}
&&E\left[\sup_{0\leq t\leq T}\widetilde{\cal E}_t\vert{Y}^{\mathbb{G},n}_{t}\vert^p+\left(\int_{0}^{T}(\widetilde{\cal E}_{s-})^{2/p}\vert Z^{\mathbb{G},n}_{s}\vert^{2}ds\right)^{p/2}+\left((\widetilde{\cal E}_{-})^{1/p}\is K^{\mathbb{G},n}_T\right)^p+((\widetilde{\cal E}_{-})^{2/p}\is[ M^{\mathbb{G},n}, M^{\mathbb{G},n}]_T)^{p/2}\right]\nonumber\\
&&\leq C E^{\widetilde{Q}}\left[\left(\int_{0}^{n\wedge\tau}\vert f(s)\vert ds\right)^p +\underset{0\leq s \leq n\wedge\tau}{\sup}(S^{+}_{s})^p+\vert\xi_n\vert^p\right],
\end{eqnarray}
and 
 \begin{align}\label{Inequa4Convergence}
&E\left[\sup_{0\leq t\leq T}\widetilde{\cal E}_t\vert{Y}^{\mathbb{G},n}_{t}-Y^{\mathbb{G},n+m}_{t}\vert^p+\left(\int_{0}^{T\wedge\tau}(\widetilde{\cal E}_{s-})^{2/p}\vert{ Z}^{\mathbb{G},n}_{s}-Z^{\mathbb{G},n+m}_{s}\vert^{2}ds\right)^{p/2}\right]\nonumber\\
&+E\left[\left(\int_{0}^{T}(\widetilde{\cal E}_{s-})^{2/p}d[ M^{\mathbb{G},n}-M^{\mathbb{G},n+m},M^{\mathbb{G},n}-M^{\mathbb{G},n+m}]_s\right)^{p/2}\right] \nonumber\nonumber\\
&\leq C_1  E^{\widetilde{Q}}\left[\vert\xi_n-\xi_{n+m}\vert^p+\left(\int_{0}^{\tau}\vert{f}_n(s)-f_{n+m}(s)\vert ds\right)^p +\sup_{0\leq t\leq \tau}\vert{S}_{t\wedge{n}}-S_{t\wedge{(n+m)}}\vert^p\right]\nonumber\\
&+C_2\sqrt{\Vert\sup_{0\leq t\leq\tau}\vert{S}_{t\wedge{n}}-S_{t\wedge{(n+m)}}\vert\Vert_{L^p(\widetilde{Q})}^p}\sqrt{\sum_{i\in\{n,n+m\}}E^{\widetilde{Q}}\left[\vert\xi^{(i)}\vert^p+\left(\int_{0}^{\tau}\vert{f}^{(i)}(s)\vert ds\right)^p +\sup_{0\leq t\leq \tau}({S}^{(i)}_t)^+)^p\right]}.
\end{align}
  The rest of this part is divided into two steps.\\
{\bf Step 1.} Here we calculate the limits, when $n$ and/or $m$ go to infinity, of the right-hand-sides of the inequalities (\ref{Inequality4Limit}) and (\ref{Inequa4Convergence}). \\
By directly applying Lemma \ref{ExpecationQtilde2P} to $\left(\int_{0}^{\cdot}\vert f(s)\vert ds\right)^p$, $\sup_{0\leq s \leq \cdot}(S^{+}_{s})^p$, and $\vert{h}\vert^p$, we deduce that 
\begin{align}\label{Limits4RightHandSide}
\begin{cases}
\lim_{n\to\infty}E^{\widetilde{Q}}\left[\left(\int_{0}^{n\wedge\tau}\vert f(s)\vert ds\right)^p\right]=E\left[\int_0^{\infty} (F_t)^p dV^{\mathbb F}_t\right],\\
\lim_{n\to\infty}E^{\widetilde{Q}}\left[\sup_{0\leq s \leq n\wedge\tau}(S^{+}_{s})^p\right]=E\left[\int_0^{\infty} \sup_{0\leq s \leq{t}}(S^{+}_{s})^pdV^{\mathbb F}_t\right],\\
\lim_{n\to\infty}E^{\widetilde{Q}}\left[\vert\xi_n\vert^p\right]=\lim_{n\to\infty}E^{\widetilde{Q}}\left[\vert{h}_{\tau}\vert^pI_{\{\tau\leq{n}\}}\right]=E\left[\int_0^{\infty} \vert{h}_t\vert^p dV^{\mathbb F}_t\right].
\end{cases}
\end{align} 
Furthermore, similar arguments allow us to derive
\begin{align}\label{LimitZero}
\begin{cases}
\displaystyle\lim_{n\to\infty}\sup_{m\geq1}E^{\widetilde{Q}}\left[\left(\int_{0}^{\tau}\vert{f}_n(s)-f_{n+m}(s)\vert {d}s\right)^p\right]=\lim_{n\to\infty}E^{\widetilde{Q}}\left[\left(\int_{n\wedge\tau}^{\tau}\vert{f}\vert {d}s\right)^p\right]=0,\\
\displaystyle\lim_{n\to\infty}\sup_{m\geq1}E^{\widetilde{Q}}\left[\sup_{0\leq t\leq \tau}\vert{S}_{t\wedge{n}}-S_{t\wedge{(n+m)}}\vert^p\right]=\lim_{n\to\infty}E^{\widetilde{Q}}\left[\sup_{n\leq{t}\leq \tau}\vert{S}_{n}-S_{t}\vert^pI_{\{\tau>n\}}\right]=0,\\
\displaystyle\lim_{n\to\infty}\sup_{m\geq1}E^{\widetilde{Q}}\left[\vert\xi_n-\xi_{n+m}\vert^p\right]=\lim{n\to\infty}E^{\widetilde{Q}}\left[\vert{h}_{\tau}\vert^pI_{\{\tau>n\}}\right]=0.\end{cases}
\end{align} 
 %%%$ \sup_{m\geq 1}E^{\widetilde{Q}}\left[\vert\xi_n-\xi_{n+m}\vert^p\right]$, $E^{\widetilde{Q}}\left[\left(\int_{0}^{\tau}\vert{f}_n(s)-f_{n+m}(s)\vert ds\right)^p\right]$ and $ E^{\widetilde{Q}}\left[\sup_{0\leq t\leq \tau}\vert{S}_{t\wedge{n}}-S_{t\wedge{(n+m)}}\vert^p\right]$ converge to zero, and the there sequences 
%%%$E^{\widetilde{Q}}\left[\left(\int_{0}^{n\wedge\tau}\vert f(s)\vert ds\right)^p\right]$, $E^{\widetilde{Q}}\left[\sup_{0\leq s \leq n\wedge\tau}(S^{+}_{s})^p\right]$ and $E^{\widetilde{Q}}\left[\vert\xi_n\vert^p\right]$ converge and we calculate their limits......\\
%%%%Thanks to Theorem \ref{EstimatesUnderQtilde} applied to each solution ($Y^{\mathbb{G},n},Z^{\mathbb{G},n},K^{\mathbb{G},n}, M^{\mathbb{G},n}$)
%%\begin{align}\label{Boujndeness4KG}
%%&\sup_{n\geq 0} E^{\widetilde{Q}}\left[(K^{{\mathbb{G},n}}_{T\wedge\tau})^p\right]\leq{C} \sup_{n\geq0}E^{\widetilde{Q}}\left[\vert\xi^n\vert^p+\left(\int_{0}^{T\wedge\tau}\vert{f_n}(s)\vert{d}s\right)^p+\sup_{0\leq t\leq T\wedge\tau}((S_n(t))^{+})^p\right].\end{align}

{\bf Step 2.} This step proves assertions (a), (b) and (c) of the theorem under the assumption (\ref{BoundednessAssumption}).\\
Thus, by combining  (\ref{LimitZero}) and (\ref{Inequa4Convergence}), we deduce that the sequence $(Y^{\mathbb{G},n}, Z^{\mathbb{G},n}, K^{\mathbb G,n},M^{\mathbb{G},n})$ is Cauchy sequence in norm, and hence it converges to $(Y^{\mathbb{G}}, Z^{\mathbb{G}}, K^{\mathbb G},M^{\mathbb{G}})$ in norm and almost surely for a subsequence.  It is clear then that $(Y^{\mathbb{G}}, Z^{\mathbb{G}}, K^{\mathbb G},M^{\mathbb{G}})$ is a solution to (\ref{RBSDEGinfinite}), and hence this equation admits a solution. The uniqueness of this solution is a direct consequence of assertion (c), and hence assertion (a) follows immediately as soon as we prove assertion (c). Besides, by taking the limit in (\ref{Inequality4Limit}), and using  Fatou and (\ref{Limits4RightHandSide}), assertion (b) follows immediately. Thus, the rest of this step deals with assertion (c). To this end, we consider two triplets $(f^{(i)}, S^{(i)},h^{(i)})$, $i=1,2$, that satisfy the boundedness assumption (\ref{BoundednessAssumption}). Then for each $i=1,2$ we associate a sequence $(f^{(i, n)}, S^{(i,n)}, h^{(i,n)})$ as in (\ref{Sequence4[0,n]}). Thus, on the one hand, we apply Theorem \ref{estimates1} for each  $(\delta\xi^{(i)}, \delta f^{(i)}, \delta S^{(i)}):=\left(f_{i, n}-f_{i, n+m}, S_{i,n}-S_{i,n+m}, \xi^{i,n}-\xi^{i,n+m}\right)$ and get similar inequalities as (\ref{Inequa4Convergence}) for each $i=1,2$ and deduce afterwards that the sequence $(Y^{\mathbb{G},(i,n)}, Z^{\mathbb{G},(i,n)}, K^{\mathbb G,(i,n)},M^{\mathbb{G},(i,n)})$ converses in norm and almost surely for a subsequence to the solution $(Y^{\mathbb{G},(i)}, Z^{\mathbb{G},(i)}, K^{\mathbb G,(i)},M^{\mathbb{G},(i)})$. On the other hand, for each $n\geq 1$, we apply Theorem \ref{estimates1} for  
\begin{align*}
&\delta f^{(n)}:=f^{(1, n)}- f^{(2, n)},\quad\delta S^{(n)}:=S^{(1,n)}-S^{(2,n)},\quad\delta\xi^{(n)}):= \xi^{(1,n)}-\xi^{(2,n)},\\
& \mbox{and}\\
&\begin{cases}
\delta{Y}^{\mathbb{G},(n)}:=Y^{\mathbb{G},(1,n)}-Y^{\mathbb{G},(2,n)} \quad ,\delta{ Z}^{\mathbb{G},(n)}:=Z^{\mathbb{G},(1,n)}-Z^{\mathbb{G},(2,n)},\\
 \delta{K}^{\mathbb G,(n)}:=K^{\mathbb G,(1,n)}-K^{\mathbb G,(2,n)},\quad \delta{M}^{\mathbb{G},(n)}:=M^{\mathbb{G},(1,n)})-M^{\mathbb{G},(2,n)},
\end{cases}\end{align*}
 and get 
 \begin{align}\label{Convergence4Differences}
 &E\left[\sup_{0\leq t\leq T}\widetilde{\cal E}_t\vert\delta{Y}^{\mathbb{G},(n)}_{t}\vert^p+\left(\int_{0}^{T\wedge\tau}(\widetilde{\cal E}_{s-})^{2/p}\vert\delta{ Z}^{\mathbb{G},(n)}_{s}\vert^{2}ds+\int_{0}^{T}(\widetilde{\cal E}_{s-})^{2/p}d[ \delta{M}^{\mathbb{G},(n)},\delta{M}^{\mathbb{G},(n)}]_s\right)^{p/2}\right]\nonumber\\
&\leq C_1  E^{\widetilde{Q}}\left[\vert\delta\xi^{(n)}\vert^p+\left(\int_{0}^{\tau}\vert\delta f^{(n)}_s\vert ds\right)^p +\sup_{0\leq t\leq \tau}\vert \delta S^{(n)}_t\vert^p\right]\nonumber\\
&+C_2\sqrt{\Vert\sup_{0\leq t\leq\tau}\vert \delta S^{(n)}_t\vert\Vert_{L^p(\widetilde{Q})}^p}\sqrt{\sum_{i=1}^2E^{\widetilde{Q}}\left[\vert\xi^{(i,n)}\vert^p+\left(\int_{0}^{\tau}\vert{f}^{(i,n)}(s)\vert ds\right)^p +\sup_{0\leq t\leq \tau}({S}^{(i,n)}_t)^+)^p\right]}.
\end{align}
%% \begin{eqnarray*}\label{equa202}
%% &&E\left[\sup_{0\leq t\leq \tau}\widetilde{\cal E}_t(\delta Y^{\mathbb{G},(n)}_{t})^{2}+\int_{0}^{\tau}\widetilde{\cal E}_{s-}\vert \delta Z^{\mathbb{G},(n)}_{s}\vert^{2}ds+\left(\int_{0}^{\tau}\sqrt{\widetilde{\cal E}_{s-}} d  \mbox{Var}_{s}(\delta K^{\mathbb G,(n)})\right)^{2}\right]\nonumber\\
%% &&+E\left[\int_{0}^{\tau}\widetilde{\cal E}_{s-}d[ \delta M^{\mathbb{G},(n))}, \delta M^{\mathbb{G},(n))}]_s\right] \nonumber\\
%% &&\leq C  E^{\widetilde{Q}}\left[(\delta \xi_n)^{2}+\left(\int_{0}^{\tau}\vert \delta f_n(s)\vert ds\right)^{2} +\sup_{0\leq t\leq \tau}(\delta S_{t\wedge n})^2\right].
%%%\end{eqnarray*}
Similarly, as in the proof of (\ref{Limits4RightHandSide}), we use Lemma \ref{ExpecationQtilde2P}  and the boundedness assumption  (\ref{BoundednessAssumption}) that each  triplet $(f^{(i)}, S^{(i)},h^{(i)})$ satisfies, $i=1,2$, and get
\begin{align}\label{Limits4Differences}
\begin{cases}
\displaystyle\lim_{n\to\infty}E^{\widetilde{Q}}\left[\left(\int_{0}^{n\wedge\tau}\vert \delta{f}^{(n)}_s)\vert ds\right)^p\right]=E\left[\int_0^{\infty} \vert\delta{F}_t\vert^p dV^{\mathbb F}_t\right],\ \lim_{n\to\infty}E^{\widetilde{Q}}\left[\vert\delta\xi^{(n)}\vert^p\right]=E\left[\int_0^{\infty} \vert\delta{h}_t\vert^p dV^{\mathbb F}_t\right],\\
\displaystyle\lim_{n\to\infty}E^{\widetilde{Q}}\left[\sup_{0\leq s \leq{n}\wedge\tau}((S^{(i)})^{+}_{s})^p\right]=E\left[\int_0^{\infty} \sup_{0\leq s \leq{t}}((S^{(i)})^{+}_{s})^pdV^{\mathbb F}_t\right],\quad i=1,2.\end{cases}
\end{align}
 Thus, by taking the limit in (\ref{Convergence4Differences}), using Fatou's lemma for its left-hand-side term, and using (\ref{Limits4Differences}) for its right-hand-side term, we conclude that assertion (c) holds. This ends the first part.\\
{\bf Part 2.} This step proves the theorem without any assumption. Hence, we consider the following sequence of stopping times
\begin{eqnarray*}
T_n:=\inf\left\{t\geq 0\ :\quad {{\vert{S}_t\vert^p}\over{{\cal E}_t(G_{-}^{-1}\is m)}} >n\quad\mbox{or}\quad  {{(\int_0^t\vert f(s)\vert ds)^p}\over{{\cal E}_t(G_{-}^{-1}\is m)}} >n\right\},\end{eqnarray*}
and the sequences
\begin{eqnarray}\label{Consutrction4DataSequence}
h^{(n)}:=h I_{\{\vert{h}\vert^p\leq n{\cal E}(G_{-}^{-1}\is m)\}}I_{\Lbrack0, T_n\Lbrack},\quad f^{(n)}:=fI_{\Lbrack0, T_n\Rbrack},\quad S^{(n)}:=S I_{\Lbrack0, T_n\Lbrack}.
\end{eqnarray}
Thus, it is clear that  all triplets $(f^{(n)}, h^{(n)}, S^{(n)})$ satisfy  (\ref{BoundednessAssumption}), for any $n\geq 1$.  Thus, thanks to the first part, we deduce the existence of unique solution to (\ref{RBSDEGinfinite}), denoted by $(Y^{\mathbb{G},(n)}, Z^{\mathbb{G},(n)}, K^{\mathbb G,(n)},M^{\mathbb{G},(n)})$, associated to the data $(f^{(n)}, h^{(n)}, S^{(n)})$ and  satisfying\\
\begin{align}\label{Estimate4PTinfinityproof}
&E\left[\sup_{0\leq t\leq \tau}\widetilde{\cal E}_t\vert{Y}^{{\mathbb{G}},n}_{t}\vert^p+\left(\int_{0}^{\tau}(\widetilde{\cal E}_{s-})^{2/p}\vert Z^{{\mathbb{G}},n}_{s}\vert^{2}ds+\int_{0}^{\tau}(\widetilde{\cal E}_{s-})^{2/p}d[ M^{\mathbb{G},n}, M^{\mathbb{G},n}]_s\right)^{p/2}+\left(\int_0^{\tau}({\widetilde{\cal E}_{s-}})^{1/p}dK^{\mathbb{G},n}_s\right)^p\right]\nonumber\\
&\leq C E\left[\int_0^{\infty}\left\{\vert{h}^{(n)}_t\vert^p+({ F}_t^{(n)})^p+\sup_{0\leq s\leq t}\vert(S^{(n)}_u)^+\vert^p\right\}dV^{\mathbb F}_t\right],\end{align}
 and for any $n\geq 1$ and $m\geq 1$
 \begin{align}\label{Estimate4P1Tinifiniteproof}
&E\left[\sup_{0\leq t\leq \tau}\widetilde{\cal E}_t\vert{Y}^{{\mathbb{G},n}}_{t}-{Y}^{{\mathbb{G},n+m}}_{t}\vert^p+\left(\int_{0}^{\tau}(\widetilde{\cal E}_{s-})^{2/p}\vert{ Z}^{{\mathbb{G}},n}_{s}-Z^{{\mathbb{G}},n+m}_{s}\vert^{2}ds\right)^{p/2}\right]\nonumber\\
&+E\left[\left(\int_{0}^{\tau}(\widetilde{\cal E}_{s-})^{2/p}d[ M^{\mathbb{G},n}-M^{\mathbb{G},n+m},M^{\mathbb{G},n}-M^{\mathbb{G},n+m}]_s\right)^{p/2}\right]\nonumber\\
&\leq C_1 E\left[\displaystyle\int_0^{\infty}\left\{\vert {h}_t^{(n)}-{h}_t^{(n+m)} \vert^p+\vert{ F}_t^{(n)}-{ F}_t^{(n+m)}\vert^p+\sup_{0\leq{u}\leq{t}}\vert{S}_u^{(n)}-{S}_u^{(n+m)}\vert^p\right\}dV^{\mathbb F}_t\right]\nonumber\\
&+C_2\sqrt{E\left[\displaystyle\int_0^{\infty}\sup_{0\leq{u}\leq{t}}\vert{S}_u^{(n)}-{S}_u^{(n+m)}\vert^p dV^{\mathbb F}_t\right]} \sqrt{\sum_{i\in\{n,n+m\}}
 E\left[\displaystyle\int_0^{\infty}\left\{\vert{h}_t^{(i)}\vert^p+\vert{ F}_t^{(i)}\vert^p+\sup_{0\leq{u}\leq{t}}\vert(S^{(i)}_u)^+\vert^p\right\}dV^{\mathbb F}_t\right]}.
\end{align}
This latter inequality is obtained from (\ref{Estimate4P1Tinifinite}) by considering 
\begin{eqnarray*}
\begin{cases}\delta{Y}^{\mathbb{G}}:=Y^{\mathbb{G},n}-Y^{\mathbb{G},n+m},\ \delta{Z}^{\mathbb{G}}:=Z^{\mathbb{G},n}-Z^{\mathbb{G},n+m},\ \delta{K}^{\mathbb{G}}:=K^{\mathbb{G},n}-K^{\mathbb{G},n+m},\ \delta{M}^{\mathbb{G}}:=M^{\mathbb{G},n}-M^{\mathbb{G},n+m},\label{deltaProcesses}\\
\delta{h}:=h^{(n)}-h^{(n+m)} ,\quad\delta{F}:=F^{(n)}-F^{(n+m)},\quad F^{(i)}:=\int_0^{\cdot}\vert{f}^{(i)}_s\vert{d}s,\quad \delta{S}:=S^{(n)}-S^{(n+m)}.\end{cases}\end{eqnarray*}
Thus, in virtue of (\ref{MainAssumption4InfiniteHorizon}) and the dominated convergence theorem, we derive 
\begin{align*}
&\lim_{n\to+\infty}\sup_{m\geq 1}E\left[\displaystyle\int_0^{\infty}\left\{\vert {h}_t^{(n)}-{h}_t^{(n+m)} \vert^p+\vert{ F}_t^{(n)}-{ F}_t^{(n+m)}\vert^p+\sup_{0\leq{u}\leq{t}}\vert{S}_u^{(n)}-{S}_u^{(n+m)}\vert^p\right\}dV^{\mathbb F}_t\right]\\
&\leq\lim_{n\to+\infty}{E}\left[\displaystyle\int_{T_n}^{\infty}\left\{\vert {h}_t\vert^p+\vert{ F}_t\vert^p+\sup_{0\leq{u}\leq{t}}\vert{S}_u\vert^p\right\}dV^{\mathbb F}_t\right]=0.
\end{align*}
A combination of this with (\ref{Estimate4P1Tinifiniteproof}) proves that the sequence $(Y^{\mathbb{G},(n)}, Z^{\mathbb{G},(n)}, K^{\mathbb G,(n)},M^{\mathbb{G},(n)})$ is a Cauchy sequence in norm, and hence it converges  to $(Y^{\mathbb{G}}, Z^{\mathbb{G}}, K^{\mathbb G},M^{\mathbb{G}})$ in norm and almost surely for a subsequence. Furthermore, $(Y^{\mathbb{G}}, Z^{\mathbb{G}}, K^{\mathbb G},M^{\mathbb{G}})$ clearly satisfies (\ref{RBSDEGinfinite}), and due to Fatou's lemma and (\ref{Estimate4PTinfinityproof}) we conclude that (\ref{Estimate4PTinfinity}) holds. To prove that (\ref{Estimate4P1Tinifinite})  holds, we repeat this analysis for the pair of data $(f^{(i)}, S^{(i)},h^{(i)}, \xi^{(i)})$, $i=1,2$, and obtain two sequences  $(Y^{\mathbb{G},(n,i)}, Z^{\mathbb{G},(n,i)}, K^{\mathbb G,(n,i)},M^{\mathbb{G},(n,i)})_{n\geq 1}$  ($i=1,2$) solution to  (\ref{RBSDEGinfinite}) corresponding to the data $(f^{(n,i)}, h^{(n,i)}, S^{(n,i)})$ that is constructed from $(f^{(i)}, h^{(i)}, S^{(i)})$ via (\ref{Consutrction4DataSequence}). Furthermore, each $(Y^{\mathbb{G},(n,i)}, Z^{\mathbb{G},(n,i)}, K^{\mathbb G,(n,i)},M^{\mathbb{G},(n,i)})_{n\geq1}$ converges (in norm and almost surely for a subsequence) to  $(Y^{\mathbb{G},i}, Z^{\mathbb{G},i}, K^{\mathbb G,i},M^{\mathbb{G},i})$ that is solution to  (\ref{RBSDEGinfinite}) corresponding to $(f^{(i)}, S^{(i)},h^{(i)}, \xi^{(i)})$, for each $i=1,2$. Thus, by applying (\ref{Estimate4P1Tinifinite})  to $(\delta{f}, \delta{h}, \delta{S}):=(f^{(n,1)}-f^{(n,2)}, h^{(n,1)}-h^{(n,2)}, S^{(n,1)}-S^{(n,2)})$, and taking the limits on both sides, we easily deduce that (\ref{Estimate4P1Tinifinite}) holds for the general case.  This ends the second part, and completes the proof of theorem.\end{proof}
%%%%%%%%%%%%%%%%%%%%%%%%%%%%%%%%%%%%%%%%%%%%%%%%%%%%%%%%%%%%%%
%%%%%%%%%%%%%%%%%%%%%%%%%%%%%%%%%%%%%%%%%%%%%%%%%%%%%%%%%%%%%%%%
%%%%%%%%%%%%%%%%%%%%%%%%%%%%%%%%%%%%%%%%%%%%%%%%%%%%%%%%%%%%%%
%%%%%%%%%%%%%%%%%%%%%%%%%%%%%%%%%%%%%%%%%%%%%%%%%%%%%%%%%%%%%%%%
%%%%%%%%%%%%%%%%%%%%%%%%%%%%%%%%%%%%%%%%%%%%%%%%%%%%%%%%%%%%%%
%%%%%%%%%%%%%%%%%%%%%%%%%%%%%%%%%%%%%%%%%%%%%%%%%%%%%%%%%%%%%%%%
\subsection{Relationship to RBSDE under $\mathbb F$}
In this subsection, we establish the RBSDE under $\mathbb F$ that is directly related to  (\ref{RBSDEGinfinite}).
   
\begin{theorem}\label{Relationship4InfiniteBSDE} Suppose that $G>0$ and $(f, S, h)$ satisfies (\ref{MainAssumption4InfiniteHorizon}) and
\begin{eqnarray}\label{MainAssumption4InfiniteHorizonBIS}
E\left[\left(F_{\infty}{\widetilde{\cal E}}_{\infty}\right)^p\right]<+\infty,\quad\mbox{where}\quad {\widetilde{\cal E}}:={\cal E}(-{\widetilde G}^{-1}\is D^{o,\mathbb{F}})\quad\mbox{and}\quad F_{\infty}:=\int_0^{\infty}\vert{f}_s\vert ds .\end{eqnarray}
Consider the pair $(f^{\mathbb{F}},S^{\mathbb{F}})$ given by (\ref{ProcessVFandXiF}).  Then the following assertions hold.\\
{\rm{(a)}} The following RBSDE, under $\mathbb F$, generated by the triplet $ \left(f^{\mathbb{F}},S^{\mathbb{F}},h\right)$
\begin{eqnarray}\label{RBSDEFinfinite}
\begin{cases}
Y_{t}= \displaystyle\int_{t}^{\infty}f^{\mathbb{F}}(s)ds+\int_{t}^{\infty}h_{s}dV^{\mathbb{F}}_{s}+K_{\infty}-K_{t}-\int_{t}^{\infty}Z_{s}dW_{s},\\
Y_{t}\geq S_{t}^{\mathbb{F}},\quad
 \displaystyle{E}\left[\int_{0}^{\infty}(Y_{t-}-S_{t-}^{\mathbb{F}})dK_{t}\right]=0 ,
\end{cases}
\end{eqnarray}
has a unique  $L^p(P,\mathbb F)$-solution $(Y^{\mathbb F},  Z^{\mathbb F}, K^{\mathbb F})$ .\\
{\rm{(b)}} The unique solution to (\ref{RBSDEGinfinite}), that we denote by $\left(Y^{\mathbb{G}},Z^{\mathbb{G}},K^{\mathbb{G}},M^{\mathbb{G}}\right)$, satisfies
\begin{eqnarray}\label{firstrelationInfnite}
   Y^{\mathbb{G}}= \frac{Y^{\mathbb{F}}}{{\widetilde{\cal E}}}I_{\Lbrack0,\tau\Lbrack}+\xi{I}_{\Lbrack\tau,+\infty\Lbrack},\ 
  Z^{\mathbb{G}}=\frac{Z^{\mathbb{F}}}{{\widetilde{\cal E}}_{-}} I_{\Rbrack0,\tau\Rbrack},\ 
   K^{\mathbb{G}}=\frac{1}{{\widetilde{\cal E}}_{-}}\is (K ^{\mathbb{F}})^{\tau},\ \mbox{and}\ 
      M^{\mathbb{G}}=\left(h-\frac{Y^{\mathbb{F}}}{\widetilde{\cal E}}\right)\is N^{\mathbb{G}}.\label{secondrelationInfinite}
       \end{eqnarray}
\end{theorem}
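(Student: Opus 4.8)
The plan is to treat this as the infinite-horizon counterpart of Theorem \ref{abcde}, building the $\mathbb F$-solution as a limit of finite-horizon $\mathbb F$-solutions and then transporting the explicit correspondence (\ref{secondrelation}) to the limit. Concretely, for the truncated data $(f^{(n)},S^{(n)},h^{(n)})$ constructed as in the proof of Theorem \ref{EstimateInfinite} (supported on $\Lbrack0,n\Rbrack$), Theorem \ref{abcde}-(a) furnishes, for each horizon $T\geq n$, a unique $L^p(P,\mathbb F)$-solution $(Y^{\mathbb F,n},Z^{\mathbb F,n},K^{\mathbb F,n})$ of the bounded-horizon $\mathbb F$-RBSDE (\ref{RBSDEF}), while Theorem \ref{EstimateInfinite} already supplies the convergent sequence of $\mathbb G$-solutions $(Y^{\mathbb G,n},Z^{\mathbb G,n},K^{\mathbb G,n},M^{\mathbb G,n})$ linked to the former through (\ref{secondrelation}). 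Both assertions will then follow by passing these objects and their relation to the limit.

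First I would establish part (a). For each $n$, the finite-horizon relation (\ref{secondrelation}) expresses $(Y^{\mathbb F,n},Z^{\mathbb F,n},K^{\mathbb F,n})$ in terms of $(Y^{\mathbb G,n},Z^{\mathbb G,n},K^{\mathbb G,n})$ through the positive factor $\widetilde{\cal E}$; since the $\mathbb G$-sequence is Cauchy in the $\widetilde{\cal E}$-weighted $P$-norms (this is exactly the Cauchy step inside the proof of Theorem \ref{EstimateInfinite}), the same correspondence forces $(Y^{\mathbb F,n},Z^{\mathbb F,n},K^{\mathbb F,n})_n$ to be Cauchy in $L^p(P,\mathbb F)$ and to converge to a triplet $(Y^{\mathbb F},Z^{\mathbb F},K^{\mathbb F})$ solving (\ref{RBSDEFinfinite}). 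The technical crux is to certify that the limiting driver integral $\int_0^{\infty}f^{\mathbb F}(s)\,ds=\int_0^{\infty}\widetilde{\cal E}_sf_s\,ds$ lies in $L^p(P)$; an integration by parts against $dV^{\mathbb F}=-d\widetilde{\cal E}$ yields $\int_0^{\infty}\widetilde{\cal E}_s|f_s|\,ds=F_{\infty}\widetilde{\cal E}_{\infty}+\int_0^{\infty}F_s\,dV^{\mathbb F}_s$, whose two terms are controlled precisely by the standing assumptions (\ref{MainAssumption4InfiniteHorizonBIS}) and (\ref{MainAssumption4InfiniteHorizon}) respectively. Uniqueness of the $\mathbb F$-solution then follows from the difference estimate for (\ref{RBSDEFinfinite}), inherited from Theorem \ref{estimates1} through the same correspondence.

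Next I would prove part (b). Rather than re-running a limiting argument, I would verify directly that the quadruplet defined by (\ref{firstrelationInfnite}) solves (\ref{RBSDEGinfinite}) and then invoke the uniqueness already granted by Theorem \ref{EstimateInfinite}. Setting $\Gamma:=Y^{\mathbb F}/\widetilde{\cal E}=Y^{\mathbb F}{\cal E}(G^{-1}\is D^{o,\mathbb F})$, the first relation in (\ref{firstrelationInfnite}) reads $Y^{\mathbb G}=\Gamma^{\tau}+(h-\Gamma)\is D$, exactly as in the bounded case. Feeding the It\^o dynamics of $\Gamma$ — computed from (\ref{RBSDEFinfinite}) together with $\widetilde{\cal E}^{-1}={\cal E}(G^{-1}\is D^{o,\mathbb F})$ and $\widetilde{\cal E}=\widetilde{\cal E}_{-}G/\widetilde G$ — into this expression reproduces, verbatim from the proof of Theorem \ref{abcde}-(b), the driver $-f(t)\,d(t\wedge\tau)$, the increasing part $-\widetilde{\cal E}_{-}^{-1}\,d(K^{\mathbb F})^{\tau}$, the martingale $(h-\Gamma)\,dN^{\mathbb G}$ and the Brownian term $\widetilde{\cal E}_{-}^{-1}Z^{\mathbb F}\,dW^{\tau}$. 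The terminal condition is read off from $Y^{\mathbb G}_{\tau}=\Gamma_{\tau}+(h_{\tau}-\Gamma_{\tau})=h_{\tau}=\xi$, while the barrier inequality $Y^{\mathbb G}\geq S$ on $\Lbrack0,\tau\Lbrack$ and the Skorokhod condition transfer from their $\mathbb F$-counterparts in (\ref{RBSDEFinfinite}) through the positivity of $\widetilde{\cal E}$ as before; Lemma \ref{L/EpsilonTilde} is the identity that packages $\Gamma^{\tau}$ and the $N^{\mathbb G}$-integral into $\widetilde{\cal E}_{-}^{-1}\is (Y^{\mathbb F})^{\tau}$.

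The main obstacle is the passage to the infinite horizon in part (a): unlike the bounded case, the terminal contribution $\widetilde{\cal E}_T h_T$ no longer stabilizes and $\widetilde Q$ is not defined at $+\infty$, so the convergence of the $\mathbb F$-stochastic integrals $\int_0^{T}Z^{\mathbb F,n}\,dW$ and the $L^p$-integrability of the limiting driver over $[0,\infty)$ must be secured purely from $P$-side norms. This is where Lemma \ref{ExpecationQtilde2P} — converting $\widetilde Q$-expectations of the data into $P\otimes V^{\mathbb F}$-expectations — together with the integration-by-parts identity above and assumption (\ref{MainAssumption4InfiniteHorizonBIS}) does the decisive work; once the limiting $\mathbb F$-solution is in hand, part (b) is a routine, if lengthy, It\^o verification identical in form to Theorem \ref{abcde}-(b).
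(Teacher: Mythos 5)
Your treatment of part (b) is essentially the paper's: the paper likewise observes that Theorem \ref{EstimateInfinite} already gives uniqueness for (\ref{RBSDEGinfinite}) and then verifies by the It\^o computation on $\widehat{\Gamma}:=Y^{\mathbb F}/\widetilde{\cal E}$ that the candidate quadruplet solves the equation, transferring the barrier and Skorokhod conditions through the positivity of $\widetilde{\cal E}$. Your identification of the integration-by-parts identity $\int_0^{\infty}\widetilde{\cal E}_s\vert f_s\vert\,ds=F_{\infty}\widetilde{\cal E}_{\infty}+\int_0^{\infty}F_s\,dV^{\mathbb F}_s$ as the place where (\ref{MainAssumption4InfiniteHorizonBIS}) and (\ref{MainAssumption4InfiniteHorizon}) enter is also exactly right; the paper uses it, together with $\bigl(\int_0^{\infty}\vert h_s\vert dV^{\mathbb F}_s\bigr)^p\leq\int_0^{\infty}\vert h_s\vert^p dV^{\mathbb F}_s$, to certify that the data of (\ref{RBSDEFinfinite}) lie in $L^p(P)$.

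For part (a), however, your route diverges from the paper's and contains a genuine gap. The paper does not approximate by finite-horizon $\mathbb F$-solutions at all: it characterizes any solution of (\ref{RBSDEFinfinite}) as the $(\mathbb F,P)$-Snell envelope of the reward process $L^{\mathbb F}+S^{\mathbb F}I_{\{\cdot<\infty\}}$ and obtains existence and uniqueness in one stroke from the Doob--Meyer decomposition of that Snell envelope, the data integrability above being what makes the envelope well defined. Your alternative --- deducing that $(Y^{\mathbb F,n},Z^{\mathbb F,n},K^{\mathbb F,n})_n$ is Cauchy in $L^p(P,\mathbb F)$ from the Cauchy property of the $\mathbb G$-solutions in the $\widetilde{\cal E}$-weighted norms --- does not go through as stated. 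The weighted $\mathbb G$-norms control quantities of the form $E\bigl[\sup_{0\leq t<\tau}\widetilde{\cal E}_t\vert\delta Y^{\mathbb G}_t\vert^p\bigr]=E\bigl[\sup_{0\leq t<\tau}\widetilde{\cal E}_t^{\,1-p}\vert\delta Y^{\mathbb F}_t\vert^p\bigr]$, i.e.\ a supremum taken only over $\Lbrack0,\tau\Lbrack$, whereas the $L^p(P,\mathbb F)$-norm of Definition \ref{RBSDE4Lp} requires $E\bigl[\sup_{t\geq0}\vert\delta Y^{\mathbb F}_t\vert^p\bigr]$ over the whole half-line. Since $\tau$ is finite a.s., the supremum up to $\tau$ can be strictly smaller, and unwinding it via optional projection reintroduces the factor $G$ (compare Proposition \ref{PropositionG2F}-(c)), which yields only a $P\otimes D^{o,\mathbb F}$-averaged control rather than the unweighted supremum norm. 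To close this you would need an independent $\mathbb F$-level stability estimate for the truncated $\mathbb F$-RBSDEs --- at which point you are effectively redoing the direct $\mathbb F$-side argument that the paper's Snell-envelope construction delivers for free.
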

\begin{proof} On the one hand, remark that, due to the assumptions  (\ref{MainAssumption4InfiniteHorizon})  and (\ref{MainAssumption4InfiniteHorizonBIS}), both random variables $\int_0^{\infty}\vert f^{\mathbb F}_s\vert ds$ and $\int_0^{\infty}\vert{h}_s\vert dV^{\mathbb F}_s$ belong to $L^p(P)$. In fact, this fact follows from the following two inequalities 
\begin{align*}
\int_0^{\infty}\vert f^{\mathbb F}_s\vert ds={\widetilde{\cal E}}_{\infty}F_{\infty}+\int_0^{\infty} F_s dV^{\mathbb F}_s,\quad\mbox{and}\quad \left(\int_0^{\infty}\vert{h}_s\vert dV^{\mathbb F}_s\right)^p\leq (V^{\mathbb F}_{\infty})^{p-1}\int_0^{\infty}\vert{h}_s\vert^p dV^{\mathbb F}_s\leq\int_0^{\infty}\vert{h}_s\vert^p dV^{\mathbb F}_s .
\end{align*}
On the other hand, similar arguments as in the proof of Theorem \ref{abcde}, one can prove that any solution to (\ref{RBSDEFinfinite}) $(Y,Z,K)$ satisfies
\begin{eqnarray*}
Y_t=\mbox{ess}\sup_{\sigma\in{\cal T}_t^{\infty}(\mathbb F)}E\left[\int_t^{\sigma} f^{\mathbb F}_s ds+\int_t^{\sigma}h_s dV^{\mathbb F}_s+S^{\mathbb F}_{\sigma}I_{\{\sigma<+\infty\}}\ \big|{\cal F}t\right]=:\overline{Y}.   \end{eqnarray*}
Furthermore, due to the Snell envelop theory, see \cite{ElqarouiBSDE} for details  or see also the proof of Theorem \ref{abcde}, the Doob-Meyer decomposition of the supermartingale $\overline{Y}+\int_0^{\cdot}h_s dV^{\mathbb F}_s+\int_0^{\cdot} f^{\mathbb F}_s ds=\overline{M}-\overline{K}$ gives the solution triplet to  (\ref{RBSDEFinfinite}) as $(\overline{Y},\overline{M},\overline{K})$. This proves that  (\ref{RBSDEFinfinite})  has a unique solution, and the proof of assertion (a) is complete.\\
 The rest of the proof deals with assertion (b). Remark that, in virtue of Theorem \ref{EstimateInfinite},  the RBSDE (\ref{RBSDEGinfinite}) has at most one solution. Therefore, we will prove that the triplet $(\widehat{Y}, \widehat{Z}, \widehat{K},\widehat{M})$ given by 
\begin{align*}
  \widehat{Y}:= \frac{Y^{\mathbb{F}}}{{\widetilde{\cal E}}}I_{\Lbrack0,\tau\Lbrack}+\xi{I}_{\Lbrack\tau,+\infty\Lbrack},\ 
\widehat{Z}:=\frac{Z^{\mathbb{F}}}{{\widetilde{\cal E}}_{-}} I_{\Rbrack0,\tau\Rbrack},\ 
  \widehat{K}:=\frac{1}{{\widetilde{\cal E}}_{-}}\is (K ^{\mathbb{F}})^{\tau},\ \mbox{and}\ 
   \widehat{M}:=\left(h-\frac{Y^{\mathbb{F}}}{\widetilde{\cal E}}\right)\is N^{\mathbb{G}},  \end{align*}
   is a solution to  (\ref{RBSDEGinfinite}). To prove this fact, we put 
$\widehat{\Gamma}:={Y}^{\mathbb F}/{\widetilde{\cal E}},$ and on the one hand we remark that  
\begin{eqnarray}\label{YGGammainfinite}
\widehat{Y} =\widehat{\Gamma}I_{\Lbrack0,\tau\Lbrack}+  h_{\tau} I_{\Lbrack\tau,+\infty\Lbrack}=\widehat{\Gamma}^{\tau} +(h-\widetilde{\Gamma})\is D.\end{eqnarray}
On the other hand, by combining It\^o applied to $\widehat{\Gamma}$, (\ref{RBSDEFinfinite}) that the triplet $(Y^{\mathbb F}, Z^{\mathbb{F}}, K^{\mathbb{F}})$ satisfies, ${\widetilde{\cal E}}^{-1}={\cal E}(G^{-1}\is D^{o,\mathbb F})$, ${\widetilde{\cal E}}={\widetilde{\cal E}}_{-}G/{\widetilde{G}}$, and $dV^{\mathbb F}={\widetilde{\cal E}}_{-}{\widetilde{G}}^{-1}dD^{o,\mathbb F}$, we derive 
\begin{eqnarray*}
d\widehat{\Gamma}&&=Y^{\mathbb F}d{\widetilde{\cal E}}^{-1}+{1\over{{\widetilde{\cal E}}_{-}}}dY^{\mathbb F}={{\widehat{\Gamma}}\over{\widetilde G}}dD^{o,\mathbb F}+{1\over{{\widetilde{\cal E}}_{-}}}d{Y}^{\mathbb F}={{\widehat{\Gamma}}\over{\widetilde G}}dD^{o,\mathbb F}-{{f^{\mathbb{F}}}\over{{\widetilde{\cal E}}_{-}}}ds-{{h}\over{{\widetilde{\cal E}}_{-}}}dV^{\mathbb{F}}-{{1}\over{{\widetilde{\cal E}}_{-}}}dK^{\mathbb{F}}+{{Z^{\mathbb{F}}}\over{{\widetilde{\cal E}}_{-}}}dW\nonumber\\\
&&={{\widehat{\Gamma}-h}\over{\widetilde G}}dD^{o,\mathbb F}-fds-{{1}\over{{\widetilde{\cal E}}_{-}}}dK^{\mathbb{F}}+{{Z^{\mathbb{F}}}\over{{\widetilde{\cal E}}_{-}}}dW.\label{equation4Gamma}
\end{eqnarray*}
Thus, by  stopping $\widehat{\Gamma}$ and inserting the above equality in (\ref{YGGammainfinite}) and arranging terms we get 
\begin{eqnarray}\label{SDE4YG}
d\widehat{Y} =-f(t)d(t\wedge\tau)-d\widehat{K}+d \widehat{M}+{\widehat{Z}}dW^{\tau},\quad\mbox{and}\quad \widehat{Y}_{\tau}=\xi.
\end{eqnarray} 
This proves that $(\widehat{Y}, \widehat{Z}, \widehat{K},\widehat{M})$ satisfies the first equation in (\ref{RBSDEGinfinite}). Furthermore, it is clear that   $
{Y}^{\mathbb{F}}_{t}\geq S_{t}^{\mathbb{F}}$ implies  the second condition in (\ref{RBSDEGinfinite}). To prove the Skorokhod condition (the last condition in (\ref{RBSDEGinfinite})), we combine the Skorokhod condition for the triplet $({Y}^{\mathbb F}, {Z}^{\mathbb F}, {K}^{\mathbb F})$, the fact that $\widehat{Y}_{-}\geq S_{-}$ on $\Rbrack0,\tau\Rbrack$, and 
  \begin{eqnarray*}
0\leq \int_{0}^{\tau}({\widehat{Y}}_{t-}-S_{t-})d\widehat{K}_t=\int_{0}^{\tau}({Y}^{\mathbb F}_{t-}-S^{\mathbb F}_{t-}){\widetilde{\cal E}}_{t-}^{-2}d{K}^{\mathbb F}_t\leq \int_{0}^{\infty}({Y}^{\mathbb F}_{t-}-S^{\mathbb F}_{t-}){\widetilde{\cal E}}_{t-}^{-2}dK^{\mathbb F}_t=0,\quad P\mbox{-a.s..}
 \end{eqnarray*}
This ends the second part, and the proof of theorem is complete.
\end{proof}
   %%%%%%%%%%%%%%%%%%%%%%%%%%%%%%%%%%%%%%%%%%%%%%%%%%%%%%%%%%%%%%%%%%%%%%%%
  \begin{remark} It is clear that, in general, the existence of an $L^p(P)$-solution to (\ref{RBSDEFinfinite}) requires stronger assumptions than the existence of $L^p(P)$-solution to (\ref{RBSDEGinfinite}).
  \end{remark}
  We end this section by elaborating the BSDE version of this section.
   %%%%%%%%%%%%%%%%%%%%%%%%%%%%%%%%%%%%%%%%%%%%%%%%%%%%%%%%%%%%%%%%%%%
   \begin{theorem}\label{LinearBSDEcase}
   Suppose $G>0$ and consider a pair $(f, h)$ of $\mathbb F$-optional processes satisfying
   \begin{eqnarray}\label{MainAssumption4InfiniteHorizonBSDE}
   E\left[\left(F_{\infty}{\widetilde{\cal E}_{\infty}}\right)^p+\int_0^{\infty}\left(\vert{h}_t\vert^p+(F_t)^p\right)dV^{\mathbb F}_t\right]<+\infty,\quad\mbox{where}\quad F_t:=\int_0^t\vert f(s)\vert ds.
   \end{eqnarray}
 If $f^{\mathbb{F}}$ and  ${\widetilde{\cal E}}$ denote the processes defined in (\ref{ProcessVFandXiF}), then the following assertions hold.\\
{\rm{(a)}} The following BSDE under $\mathbb F$ 
\begin{eqnarray}\label{BSDEFinfinite}
Y_{t}= \displaystyle\int_{t}^{\infty}f^{\mathbb{F}}(s)ds+\int_{t}^{\infty}h_{s}dV^{\mathbb{F}}_{s}-\int_{t}^{\infty}Z_{s}dW_{s},\end{eqnarray}
has a unique $L^p(\mathbb{F}, P)$-solution $(Y^{\mathbb F},  Z^{\mathbb F})$.\\
{\rm{(b)}} The following BSDE
\begin{eqnarray}\label{BSDEGinfinite}
dY=-f(t)d(t\wedge\tau)-dM+ZdW^{\tau},\quad Y_{\tau}=\xi=h_{\tau},
\end{eqnarray}
has a unique solution, denoted by $\left(Y^{\mathbb{G}},Z^{\mathbb{G}},M^{\mathbb{G}}\right)$, satisfying 
\begin{eqnarray}\label{firstrelationInfnite}
   Y^{\mathbb{G}}= \frac{Y^{\mathbb{F}}}{{\widetilde{\cal E}}}I_{\Lbrack0,\tau\Lbrack}+\xi{I}_{\Lbrack\tau,+\infty\Lbrack},\ 
  Z^{\mathbb{G}}=\frac{Z^{\mathbb{F}}}{{\widetilde{\cal E}}_{-}} I_{\Rbrack0,\tau\Rbrack},\ \mbox{and}\ 
      M^{\mathbb{G}}=\left(h-\frac{Y^{\mathbb{F}}}{\widetilde{\cal E}}\right)\is N^{\mathbb{G}}.\label{secondrelationInfinite}
       \end{eqnarray}
{\rm{(c)}} Let $(f^{(i)},h^{(i)})$, $i=1,2,$ be two pairs satisfying (\ref{MainAssumption4InfiniteHorizonBSDE}), and let any $p\in (1,+\infty)$. Then there exists a positive constant $C$ that depends on $p$ only such that 
\begin{align}\label{Estimate4P1TinifiniteBSDE}
&E\left[\sup_{0\leq t\leq \tau}\widetilde{\cal E}_t\vert\delta Y^{{\mathbb{G}}}_{t}\vert^p+\left(\int_{0}^{\tau}(\widetilde{\cal E}_{s-})^{2/p}\vert \delta Z^{{\mathbb{G}}}_{s}\vert^{2}ds+\int_{0}^{\tau}(\widetilde{\cal E}_{s-})^{2/p}d[ \delta M^{\mathbb{G}},\delta M^{\mathbb{G}}]_s\right)^{p/2}\right]\nonumber\\
&\leq C  E\left[\displaystyle\int_0^{\infty}\left(\vert\delta h_t\vert^p+\vert{\delta F_t}\vert^p\right)dV^{\mathbb F}_t\right],
\end{align}
where $(Y^{\mathbb{G},i},Z^{\mathbb{G},i},M^{\mathbb{G},i})$ is the solution to (\ref{BSDEGinfinite}) associated to $(f^{(i)},h^{(i)})$, for $i=1,2$, and 
\begin{eqnarray*}
&&\delta{Y}^{\mathbb{G}}:=Y^{\mathbb{G},1}-Y^{\mathbb{G},2},\quad\delta{Z}^{\mathbb{G}}:=Z^{\mathbb{G},1}-Z^{\mathbb{G},2},\quad \delta{M}^{\mathbb{G}}:=M^{\mathbb{G},1}-M^{\mathbb{G},2},\label{deltaProcesses}\\
&&\delta{h}:=h^{(1)}-h^{(2)} ,\quad\delta{F}:=\int_0^{\cdot}\vert{f}^{(1)}(s)-f^{(2)}(s)\vert{ds}.\end{eqnarray*}
   \end{theorem}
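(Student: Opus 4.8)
The plan is to recognize Theorem \ref{LinearBSDEcase} as the degenerate, barrier-free specialization of the reflected theory developed above: a linear BSDE is nothing but the RBSDE (\ref{RBSDEGinfinite}) (respectively its $\mathbb F$-counterpart (\ref{RBSDEFinfinite})) in which the barrier is pushed to $S\equiv-\infty$. With such a barrier the constraint $Y\geq S$ is vacuous, the Skorokhod condition $\int(Y_{-}-S_{-})dK=0$ forces $K\equiv 0$, and since $S^{+}=0$ the pair of hypotheses (\ref{MainAssumption4InfiniteHorizon}) and (\ref{MainAssumption4InfiniteHorizonBIS}) collapses exactly onto the BSDE hypothesis (\ref{MainAssumption4InfiniteHorizonBSDE}). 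Consequently I expect assertions (b) and (c) to follow by directly specializing Theorems \ref{Relationship4InfiniteBSDE} and \ref{EstimateInfinite}, while assertion (a), being linear, is most transparently proved by hand through Brownian martingale representation.

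For assertion (a) I would first record that under (\ref{MainAssumption4InfiniteHorizonBSDE}) the terminal datum $\zeta:=\int_0^{\infty}f^{\mathbb F}(s)\,ds+\int_0^{\infty}h_s\,dV^{\mathbb F}_s$ lies in $\mathbb L^p(P)$; this uses the same two bounds as in the proof of Theorem \ref{Relationship4InfiniteBSDE}, namely $\int_0^{\infty}\vert f^{\mathbb F}_s\vert ds={\widetilde{\cal E}}_{\infty}F_{\infty}+\int_0^{\infty}F_s\,dV^{\mathbb F}_s$ and $(\int_0^{\infty}\vert h_s\vert dV^{\mathbb F}_s)^p\leq \int_0^{\infty}\vert h_s\vert^p dV^{\mathbb F}_s$, combined with (\ref{MainAssumption4InfiniteHorizonBSDE}). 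Then I would set $N_t:=E[\zeta\,\vert\,{\cal F}_t]$, which is an $L^p$-bounded $\mathbb F$-martingale by Doob's inequality, and invoke the martingale representation theorem for the Brownian filtration $\mathbb F$ to produce $Z^{\mathbb F}$ with $N=N_0+Z^{\mathbb F}\is W$. Defining $Y^{\mathbb F}_t:=N_t-\int_0^t f^{\mathbb F}(s)ds-\int_0^t h_s dV^{\mathbb F}_s=E[\int_t^{\infty}f^{\mathbb F}(s)ds+\int_t^{\infty}h_s dV^{\mathbb F}_s\,\vert\,{\cal F}_t]$ shows that $(Y^{\mathbb F},Z^{\mathbb F})$ solves (\ref{BSDEFinfinite}); the BDG and Doob inequalities then certify that it is an $L^p(P,\mathbb F)$-solution, and uniqueness follows because the difference of two solutions is a martingale vanishing at $+\infty$.

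For assertion (b) I would check that the triplet $(\widehat Y,\widehat Z,\widehat M)$ defined by (\ref{secondrelationInfinite}) solves (\ref{BSDEGinfinite}) by mimicking verbatim the It\^o computation in the proof of Theorem \ref{Relationship4InfiniteBSDE}(b), simply discarding the $K^{\mathbb F}$ and $\widehat K$ terms; uniqueness is inherited from Theorem \ref{EstimateInfinite}(a) read with $S\equiv-\infty$. Assertion (c) is then obtained by applying the stability estimate (\ref{Estimate4P1Tinifinite}) of Theorem \ref{EstimateInfinite}(c) to the two data sets $(f^{(i)},-\infty,h^{(i)})$: since $\delta S\equiv 0$ and $K\equiv 0$, both the $\sup\vert\delta S\vert^p$ term and the entire $C_2$ cross-term vanish, leaving precisely (\ref{Estimate4P1TinifiniteBSDE}). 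I expect the only genuine obstacle to be bookkeeping rather than analysis: one must confirm that the limit $S\to-\infty$ is admissible in each invoked result — in particular that the Skorokhod condition truly collapses $K$ to zero and that the degenerate norm-estimates specialize correctly — since all the hard martingale-inequality work has already been carried out for the reflected case.
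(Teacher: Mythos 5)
Your proposal matches the paper's own proof: the paper likewise treats the BSDE as the RBSDE with $S\equiv-\infty$ and $K\equiv 0$, observes that the hypotheses (\ref{MainAssumption4InfiniteHorizon})--(\ref{MainAssumption4InfiniteHorizonBIS}) collapse onto (\ref{MainAssumption4InfiniteHorizonBSDE}), and obtains (a)--(b) by specializing Theorem \ref{Relationship4InfiniteBSDE} and (c) by specializing Theorem \ref{EstimateInfinite}. The only (harmless) difference is that for assertion (a) you give a self-contained martingale-representation argument, which is precisely what the cited reflected result degenerates to once the obstacle, and hence the nondecreasing part of the Doob--Meyer decomposition, disappears.
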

%%%%%%%%%%%%%%%%%%%%%%%%%%%%%%%%%%%%%%%%%%%%%%%%%%%%%%%%%%%%%%%%%%%%%%%%%%%%%
\begin{proof} Remark that a BSDE is an RBSDE with $S\equiv -\infty$ and the nondecreasing process part of its quadruplet solution is null, i.e. $K\equiv 0$. Thus, by keeping these in mind, we conclude the following.
\begin{enumerate}
\item The condition (\ref{MainAssumption4InfiniteHorizon}) take the form of (\ref{MainAssumption4InfiniteHorizonBSDE}). 
\item Assertion (a) is a particular case of assertion (a) of Theorem \ref{Relationship4InfiniteBSDE}, while assertion (b) is direct consequence of a combination of assertion (a) of Theorem \ref{RBSDEFinfinite} and assertion (b) of Theorem \ref{Relationship4InfiniteBSDE}.
\item Assertions (c) and (d) follow from Theorem \ref{EstimateInfinite}.
\end{enumerate}
This ends the proof of theorem.
\end{proof}
%%%%%%%%%%%%%%%%%%%%%%%%%%%%%%%%%%%%%%%%%%%%%%%%%%%%%%%%%%%%%
%%\begin{remark}
%%{\rm{(a)}} It is clear from Theorem \ref{EstimateInfinite} that the conditions for the existence of the solutions in $L^p(P)$ to the BSDE (\ref{BSDEGinfinite})  (respectively RBSDE  (\ref{RBSDEGinfinite}) in general) are weaker than the condition for their counterpart in $\mathbb F$ (\ref{BSDEFinfinite}) (respectively (\ref{RBSDEFinfinite})). This discrepancy in the conditions that guarantee the existence is intimately related to the unboundedness of horizon and it disappears in the case of bounded horizon (see Theorem \ref{abcde} in Subsection \ref{Subsection4.1}). \\
%%%{\rm{(b)}} In virtue of Theorem \ref{EstimateInfinite}, the existence of the solution to  (\ref{BSDEGinfinite}) is guaranteed under the assumption (\ref{MainAssumption4InfiniteHorizon}) only.....
%%%\end{remark}
%%%%%%%%%%%%%%%%%%%%%%%%%%%%%%%%%%%%%%%%%%%%%%%%%%%%%%%%%%%%%%%%%%%%%%%%%%%%%
%%%%%%%%%%%%%%%%%%%%%%%%%%%%%%%%%%%%%%%%%%%%%%%%%%%%%%%%%%%%%%%%%%%%%%%%%%%%%
\section{Stopped general RBSDEs: The case of bounded horizon}
In this section, we address the RBSDE with general generator satisfying the following assumption
\begin{equation}\label{LipschitzAssumption}
\exists\ C_{Lip}> 0,\quad \vert f(t,y,z)-f(t,y^{'},z^{'})\vert\leq C_{Lip}(\vert y-y^{'}\vert+\Vert z-z ^{'}\Vert),\quad \mbox{for all}\ y,y^{'}\in\mathbb{R},\ z,z^{'}\in\mathbb{R}^{d}.
\end{equation}
In this section, we are interested in the following RBSDE, 
 %%%%Throughout this chapter let $Q$ be the probability defined by (\ref{QQQ}).
%%% Let us recall our RBSDE. The solution of our RBSDE, in the filtration $\mathbb{G}$, is a triple $(Y^{\mathbb{G}},Z^{\mathbb{G}},U^{\mathbb{G}})\in \mathbb{D}^{p}_{\tau}(P)\times\mathbb{S}^{p}_{\tau}(P)\times\mathbb{U}^{p}_{\tau}(P)$  satisfying:\\
\begin{eqnarray}\label{nonLinear}
\begin{cases}
dY_{t}=-f(t,Y_{t},Z_{t})d(t\wedge\tau)-d(K_{t\wedge\tau}+M_{t\wedge\tau})+Z_{t}dW_{t}^{\tau},\quad Y_{\tau}=\xi=Y_{T},\\
\\
Y_{t}\geq S_{t},\quad 0\leq t<  T\wedge\tau,\quad\rm{and}\quad E\left[\displaystyle\int_{0}^{T\wedge\tau}(Y_{t-}-S_{t-})dK_t\right]=0,
\end{cases}
\end{eqnarray}
where $(\xi, S, f)$ is such that $S$ is an $\mathbb F$-adapted and RCLL process, $f(t,y,z)$ is a $\mbox{Prog}(\mathbb F)\times {\cal B}(\mathbb R)\times {\cal B}(\mathbb R)$-measurable functional and $\xi\in L^2({\cal G}_{T\wedge\tau})$. 

\subsection{Estimate inequalities for the solution}
This subsection derives a number of norm-estimates for the solution of the RBSDEs when this exists. These inequalities play important role in the proof of the existence of uniqueness of the solution of this RBSDE on the one hand. On the other hand, the role of these estimates in studying the stability of RBSDEs is without reproach.  
%%%%%%%%%%%%%%%%%%%%%%%%%%%%%%%%%%%%%%%%%%%%%%%%%%%%%%%%%%%%%%%%%%%%%%%%%%%%%%%%%%%%%%%%
%%%%%%%%%%%%%%%%%%%%%%%%%%%%%%%%%%%%%%%%%%%%%%%%%%%%%%%%%%
\begin{lemma}\label{Estimation4DeltaY} Let $T\in (0,+\infty)$. Then the following assertions hold.\\
{\rm{(a)}}  If ($Y^{\mathbb{G}},Z^{\mathbb{G}},K^{\mathbb{G}}, M^{\mathbb{G}}$)  is a  solution to the RBSDE (\ref{nonLinear}) that corresponds to  $(f, S, \xi)$, then \begin{eqnarray}\label{YGessSup}
Y^{\mathbb G}_t=\rm{ess}\sup_{\theta\in {\cal T}_{t\wedge\tau}^{T\wedge\tau}(\mathbb G)} E^{\widetilde Q}\left[\int_{t\wedge\tau}^{\theta} f(s, Y^{\mathbb G}_s, Z^{\mathbb G}_s)ds +S_{\theta}I_{\{\theta<T\wedge\tau\}}+\xi I _{\{\theta=T\wedge\tau\}}\ \Bigg|\ {\cal G}_t\right].\end{eqnarray}
{\rm{(b)}} If ($Y_{i}^{\mathbb{G}},Z_{i}^{\mathbb{G}},K_{i}^{\mathbb{G}}, M_{i}^{\mathbb{G}}$)  is a  solution to the RBSDE (\ref{nonLinear}) that corresponds to  $(f^{(i)}, S^{(i)}, \xi^{(i)})$, $i=1,2$, then for any $\alpha>0$ the following holds
\begin{eqnarray}\label{inequa4deltaYG2bis}
\exp\left({{\alpha(t\wedge\tau)}\over{2}}\right)\vert \delta Y^{\mathbb G}_t\vert&&\leq 
E^{\widetilde Q}\left[\sup_{0<s\leq T\wedge\tau}e^{\alpha s/2}\vert \delta S_u\vert +e^{\alpha(T\wedge\tau)/2}\vert \delta\xi\vert+{{C_{Lip}}\over{\sqrt{\alpha}}} \sqrt{\int_0^{T\wedge\tau} e^{\alpha s}(\delta Z^{\mathbb G}_s)^2 ds}\ \Bigg|\ {\cal G}_t\right]\nonumber\\
&&+{1\over{\sqrt{\alpha}}}E^{\widetilde Q}\left[ C_{Lip}\sqrt{\int_0^{T\wedge\tau} e^{\alpha s}\vert\delta Y^{\mathbb G}_s\vert^2 ds}  +\sqrt{\int_0^{T\wedge\tau} e^{\alpha s}\vert\delta f_s\vert^2 ds}   \ \bigg|\ {\cal G}_t\right].
\end{eqnarray}
{\rm{(c)}} If ($Y^{\mathbb{G}},Z^{\mathbb{G}},K^{\mathbb{G}}, M^{\mathbb{G}}$)  is a  solution to the RBSDE (\ref{nonLinear}) that corresponds to  $(f, S, \xi)$, then for any $\alpha>0$,  any $\mathbb F$-stopping time $\sigma\in {\cal T}_0^T(\mathbb F)$ and any $t\in [0,T]$, the following holds
%\begin{eqnarray}\label{inequa4deltaYG2}
%\exp\left({{\alpha(t\wedge\tau)}\over{2}}\right)\vert{Y}^{\mathbb G}_t\vert&&\leq 
%E^{\widetilde Q}\left[\sup_{0<u\leq T\wedge\tau}e^{\alpha u/2} S_u^+ +e^{\alpha(T\wedge\tau)/2}\vert\xi\vert+{{C_{Lip}}\over{\sqrt{\alpha}}} \sqrt{\int_0^{T\wedge\tau} e^{\alpha s}( Z^{\mathbb G}_s)^2 ds}\ \Bigg|\ {\cal G}_t\right]\nonumber\\
%&&+{1\over{\sqrt{\alpha}}}E^{\widetilde Q}\left[ C_{Lip}\sqrt{\int_0^{T\wedge\tau} e^{\alpha s}\vert{ Y}^{\mathbb G}_s\vert^2 ds}  +\sqrt{\int_0^{T\wedge\tau} e^{\alpha s}\vert{f}(s,0,0)\vert^2 ds}   \ \bigg|\ {\cal G}_t\right].
%\end{eqnarray}
\begin{align}\label{inequa4deltaYG2}
&\exp\bigl({{{\alpha}\over{2}}(t\wedge\tau)}\bigr)\vert{Y}^{\mathbb G}_t\vert{I}_{\{\sigma\leq\tau\}}I_{\{\sigma\leq {t}\}}\nonumber\\
&\leq{E}^{\widetilde Q}\left[\sup_{\sigma\wedge\tau\leq{u}\leq T\wedge\tau}e^{\alpha u/2} S_u^+ +e^{\alpha(T\wedge\tau)/2}\vert\xi\vert{I}_{\{\sigma\leq\tau\}} +{1\over{\sqrt{\alpha}}}\sqrt{\int_{\sigma\wedge\tau}^{T\wedge\tau} e^{\alpha s}\vert{f}(s,0,0)\vert^2 ds}  \ \Bigg|\ {\cal G}_t\right]\nonumber\\
&+{{C_{Lip}}\over{\sqrt{\alpha}}}E^{\widetilde Q}\left[\sqrt{\int_{\sigma\wedge\tau}^{T\wedge\tau} e^{\alpha s}\vert{ Y}^{\mathbb G}_s\vert^2 ds}+ \sqrt{\int_{\sigma\wedge\tau}^{T\wedge\tau} e^{\alpha s}( Z^{\mathbb G}_s)^2 ds} \ \bigg|\ {\cal G}_t\right].
\end{align}
\end{lemma}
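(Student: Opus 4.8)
The plan is to treat the three assertions in sequence, each resting on the Snell-envelope machinery already established for the linear case. For assertion (a), the key observation is that once a solution $(Y^{\mathbb G},Z^{\mathbb G},M^{\mathbb G},K^{\mathbb G})$ to (\ref{nonLinear}) is fixed, the process $g(s):=f(s,Y^{\mathbb G}_s,Z^{\mathbb G}_s)$ becomes a \emph{known} driver, so that the very same quadruplet solves the \emph{linear} RBSDE (\ref{RBSDEG}) associated with the data-triplet $(g,S,\xi)$. The Lipschitz assumption (\ref{LipschitzAssumption}), combined with the membership of the solution in the relevant solution space, guarantees that $(g,S,\xi)$ satisfies the integrability requirement (\ref{MainAssumption}); hence Lemma \ref{Solution2SnellEnvelop} applies verbatim and produces (\ref{YGessSup}).

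For assertion (b) I would start from the representation (\ref{YGessSup}) for each of the two solutions and use the elementary fact that the difference of two essential suprema is dominated by the essential supremum of the differences taken along the common family $\theta\in{\cal T}_{t\wedge\tau}^{T\wedge\tau}(\mathbb G)$. The driver difference is split as
$$f^{(1)}(s,Y^{\mathbb G}_1,Z^{\mathbb G}_1)-f^{(2)}(s,Y^{\mathbb G}_2,Z^{\mathbb G}_2)=\delta f_s+\bigl[f^{(2)}(s,Y^{\mathbb G}_1,Z^{\mathbb G}_1)-f^{(2)}(s,Y^{\mathbb G}_2,Z^{\mathbb G}_2)\bigr],$$
where the bracketed term is controlled by $C_{Lip}(\vert\delta Y^{\mathbb G}_s\vert+\vert\delta Z^{\mathbb G}_s\vert)$ through (\ref{LipschitzAssumption}). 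Multiplying by the $\mathcal G_t$-measurable weight $e^{\alpha(t\wedge\tau)/2}$ (which may therefore be carried inside the conditional expectation) and using $t\wedge\tau\leq\theta$, the terminal and barrier contributions are dominated respectively by $e^{\alpha(T\wedge\tau)/2}\vert\delta\xi\vert$ and $\sup_{0<s\leq T\wedge\tau}e^{\alpha s/2}\vert\delta S_s\vert$. The time-integral contributions are absorbed by the weighted Cauchy--Schwarz estimate
$$e^{\frac{\alpha}{2}(t\wedge\tau)}\int_{t\wedge\tau}^{T\wedge\tau}\vert g_s\vert\,ds\leq\frac{1}{\sqrt{\alpha}}\sqrt{\int_{0}^{T\wedge\tau}e^{\alpha s}\vert g_s\vert^2\,ds},$$
which follows from $\int_{t\wedge\tau}^{T\wedge\tau}e^{-\alpha s}\,ds\leq\alpha^{-1}e^{-\alpha(t\wedge\tau)}$. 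Applying this with $g=\delta f$, $g=C_{Lip}\delta Y^{\mathbb G}$ and $g=C_{Lip}\delta Z^{\mathbb G}$ yields exactly the right-hand side of (\ref{inequa4deltaYG2bis}) for $Y^{\mathbb G}_1-Y^{\mathbb G}_2$; the symmetric argument with the indices exchanged gives the same bound for $Y^{\mathbb G}_2-Y^{\mathbb G}_1$, and taking the maximum completes the claim.

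Assertion (c) follows the same pattern applied to a single solution, the only new ingredient being the localization on the event $\{\sigma\leq\tau\}\cap\{\sigma\leq t\}$, on which $\sigma\wedge\tau=\sigma\leq t\wedge\tau\leq\theta\leq T\wedge\tau$. This ordering allows me to enlarge every integration window $[t\wedge\tau,\theta]$ to the fixed window $[\sigma\wedge\tau,T\wedge\tau]$ for nonnegative integrands, and likewise to replace the running supremum by one over $[\sigma\wedge\tau,T\wedge\tau]$ for the barrier term; here one uses the linear growth bound $\vert f(s,Y,Z)\vert\leq\vert f(s,0,0)\vert+C_{Lip}(\vert Y\vert+\vert Z\vert)$ in place of the Lipschitz difference. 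Multiplying by $e^{\alpha(t\wedge\tau)/2}$ and invoking the same weighted Cauchy--Schwarz inequality then delivers (\ref{inequa4deltaYG2}).

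The main obstacle I anticipate is not any single inequality but the measurability bookkeeping needed to pull the stochastic weight $e^{\alpha(t\wedge\tau)/2}$ inside the conditional expectation and to justify the interval manipulations: one must verify that $t\wedge\tau$ is $\mathcal G_t$-measurable (true since $\tau$ is a $\mathbb G$-stopping time, whence $\{t\wedge\tau\leq a\}=\{\tau\leq a\}\in\mathcal G_t$ for $a<t$), and that the indicator $I_{\{\sigma\leq\tau\}}I_{\{\sigma\leq t\}}$ appearing in (c) is $\mathcal G_t$-measurable, after which the exponential-weight and Cauchy--Schwarz steps are routine. Some care is likewise required in (a) to confirm that the frozen driver $g$ genuinely meets the hypotheses of Lemma \ref{Solution2SnellEnvelop}.
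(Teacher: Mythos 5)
Your proposal is correct and follows essentially the same route as the paper: assertion (a) via the linear Snell-envelope representation (the paper re-runs the argument of Lemma \ref{Solution2SnellEnvelop} with the frozen driver, which is equivalent to your reduction), assertion (b) via comparison of the two essential suprema, the Lipschitz split of the driver difference, and the weighted Cauchy--Schwarz bound (\ref{InequalitySchwartz}), and assertion (c) via the one-solution analogue with the linear growth bound and localization on the $\mathcal G_t$-measurable event $\{\sigma\leq\tau\}\cap\{\sigma\leq t\}$. The only cosmetic difference is that in (b) the paper cross-evaluates at (near-)optimal stopping times rather than invoking the esssup-of-differences inequality, but these yield the same intermediate bound (\ref{inequa4deltaYG1}).
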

\begin{proof} The proof of assertion (a) follows the same footsteps of the proof of (\ref{RBSDE2Snell}) in  Theorem \ref{abcde}. Thus, the rest of this proof focuses on proving assertions (b) and (c) in two parts.\\
{\bf Part 1.} This part proves assertion (b). To this end, we start by proving the following 
 \begin{eqnarray}\label{inequa4deltaYG1}
\vert \delta Y^{\mathbb G}_t\vert\leq E^{\widetilde Q}\left[\int_{t\wedge\tau}^{T\wedge\tau}\vert \Delta f_s\vert ds+\sup_{t\wedge\tau<s\leq T\wedge\tau}\vert \delta S_u\vert +\vert \delta\xi\vert\ \bigg|\ {\cal G}_t\right],\ \Delta f_t:=f_1(t, Y^{\mathbb G,1}_t, Z^{\mathbb G,1}_t)-f_2(t, Y^{\mathbb G,2}_t, Z^{\mathbb G,2}_t)
\end{eqnarray} 
Let $t\in [0,T]$ be arbitrary but fixed. Hence, by applying assertion (a) to each $Y^{\mathbb G, i}$, $i=1,2$, we deduce the existence of $\nu_i\in  {\cal T}_{t\wedge\tau}^{T\wedge\tau}(\mathbb {G})$ for $i=1,2$ such that the second equality of (\ref{YGessSup}) holds for each $Y^{\mathbb G, i}$, $i=1,2$, and in virtue of the first equality in (\ref{YGessSup}) we  have 
\begin{eqnarray*}
Y^{\mathbb G, 1}_t\geq E^{\widetilde Q}\left[\int_{t\wedge\tau}^{\nu_2\wedge\tau} f_1(s, Y^{\mathbb G,1}_s, Z^{\mathbb G,1}_s)ds +S_{\nu_2}^1I_{\{\nu_2<T\wedge\tau\}}+\xi^1 I_{\{\nu_2=T\wedge\tau\}}\ \Bigg|{\cal G}_t\ \right],\\
Y^{\mathbb G, 2}_t\geq  E^{\widetilde Q}\left[\int_{t\wedge\tau}^{\nu_1\wedge\tau} f_2(s, Y^{\mathbb G,2}_s, Z^{\mathbb G,2}_s)ds +S_{\nu_1}^2I_{\{\nu_1<T\wedge\tau\}}+\xi^2 I_{\{\nu_1=T\wedge\tau\}}\ \Bigg|{\cal G}_t\ \right].\end{eqnarray*}
By combining all these remarks, we derive 
\begin{eqnarray*} 
&&E^{\widetilde Q}\left[\int_{t\wedge\tau}^{\nu_2\wedge\tau} \Delta f_s ds +\delta S_{\nu_2}I_{\{\nu_2<T\wedge\tau\}}+\delta \xi I_{\{\nu_2=T\wedge\tau\}}\ \Bigg|{\cal G}_t\right]\leq\delta Y^{\mathbb G}_t\\
\rm{and}&&\delta Y^{\mathbb G}_t\leq E^{\widetilde Q}\left[\int_{t\wedge\tau}^{\nu_1\wedge\tau} \Delta f_s ds +\delta S_{\nu_1}I_{\{\nu_1<T\wedge\tau\}}+\delta\xi I_{\{\nu_1=T\wedge\tau\}}\ \Bigg|{\cal G}_t\right].\end{eqnarray*}
Therefore,  on the one hand, (\ref{inequa4deltaYG1}) follows immediately from these two inequalities. On the other hand, due to H\"older's inequality, for any nonnegative and progressively measurable process $h$, we have
\begin{eqnarray}\label{InequalitySchwartz}
\int_{t\wedge\tau}^{T\wedge\tau} h_s ds\leq \left({{{p_1}\over{\alpha' q_1}}}\right)^{{1\over{q_1}}}\exp\left(-{{\alpha'(t\wedge\tau)}\over{p_1}}\right)\left(\int_{t\wedge\tau}^{T\wedge\tau} e^{\alpha' s}h^{p_1}_s ds\right)^{{1\over{p_1}}},\  \alpha'>0,\  p_1>1,\ q_1:={{p_1}\over{p_1-1}}.
\end{eqnarray}
By using the fact that $\vert\Delta f_s\vert \leq \vert \delta f_s\vert +C_{Lip}\vert\delta Y^{\mathbb G}_s\vert +C_{Lip}\vert\delta Z^{\mathbb G}_s\vert $, and applying the above inequality repeatedly, we derive
\begin{eqnarray*}
\int_{t\wedge\tau}^{T\wedge\tau}\vert \Delta f_s\vert ds&&\leq {1\over{\sqrt{\alpha}}}\exp\left(-{{\alpha(t\wedge\tau)}\over{2}}\right)\left\{\left(\int_{t\wedge\tau}^{T\wedge\tau} e^{\alpha s}\vert\delta f_s\vert^{2} ds\right)^{{1\over{2}}}
+C_{Lip}\left(\int_{t\wedge\tau}^{T\wedge\tau} e^{\alpha s}\vert\delta Y^{\mathbb G}_s\vert^2 ds\right)^{{1\over{2}}}\right\}\\
&&\hskip 1cm +{{C_{Lip}}\over{\sqrt{\alpha}}}\exp\left(-{{\alpha(t\wedge\tau)}\over{2}}\right)\left(\int_{t\wedge\tau}^{T\wedge\tau} e^{\alpha s}(\delta Z^{\mathbb G}_s)^2 ds\right)^{{1\over{2}}}
\end{eqnarray*}
Thus, by combining this inequality with (\ref{inequa4deltaYG1}), we derive 
\begin{eqnarray*}
\exp\left({{\alpha(t\wedge\tau)}\over{ 2}}\right)\vert \delta Y^{\mathbb G}_t\vert&&\leq 
E^{\widetilde Q}\left[\sup_{t\wedge\tau<s\leq T\wedge\tau}e^{\alpha s/2}\vert \delta S_u\vert +e^{\alpha(T\wedge\tau)/2}\vert \delta\xi\vert+{{C_{Lip}}\over{\sqrt{\alpha}}} \sqrt{\int_{t\wedge\tau}^{T\wedge\tau} e^{\alpha s}(\delta Z^{\mathbb G}_s)^2 ds}\ \Bigg|\ {\cal G}_t\right]\\
&&+{1\over{\sqrt{\alpha}}}E^{\widetilde Q}\left[ C_{Lip}\left(\int_{t\wedge\tau}^{T\wedge\tau} e^{\alpha s}\vert\delta Y^{\mathbb G}_s\vert^2 ds\right)^{{1\over{2}}}  +\left(\int_{t\wedge\tau}^{T\wedge\tau} e^{\alpha s}\vert\delta f_s\vert^2 ds\right)^{{1\over{2}}}   \ \bigg|\ {\cal G}_t\right].
%%%&&+{{C_{Lip}}\over{\sqrt{\alpha}}}E^{\widetilde Q}\left[ \sqrt{\int_0^{T\wedge\tau} e^{\alpha s}(\delta Z^{\mathbb G}_s)^2 ds} \ \bigg|\ {\cal G}_t\right]
 \end{eqnarray*}
 Here $C_{Lip}$ is the Lipschitz' constant associated to the driver $f$ defined in (\ref{LipschitzAssumption}). Thus, (\ref{inequa4deltaYG2bis}) follows immediately from the inequality above, and hence part 1 is complete.\\
 {\bf Part 2.} Here we prove assertion (c). Thus, we consider $\alpha>0$ and an $\mathbb F$-stopping time $\sigma$. Similarly as in part 1, for any $t\in [0,T]$, thanks to (\ref{YGessSup}) we have
 \begin{align*}
 &Y^{\mathbb G}_t\geq-{E}^{\widetilde Q}\left[\int_{t\wedge\tau}^{T\wedge\tau} \left(f(s, Y^{\mathbb G}_s, Z^{\mathbb G}_s)\right)^-{d}s +\xi^- \Bigg|\ {\cal G}_t\right]\\
  &Y^{\mathbb G}_t\leq{E}^{\widetilde Q}\left[\int_{t\wedge\tau}^{T\wedge\tau} \left(f(s, Y^{\mathbb G}_s, Z^{\mathbb G}_s)\right)^+ds +\sup_{t\wedge\tau\leq\theta\leq{T}\wedge\tau}S_{\theta}^+ +\xi^+ \ \Bigg|\ {\cal G}_t\right].
  \end{align*}
  Thus, by combining these inequalities with $\vert{x}\vert=x^++x^-$, we obtain
  \begin{align}\label{MainInequality4Lemmac}
 \vert{Y}^{\mathbb G}_t\vert\leq {E}^{\widetilde Q}\left[\int_{t\wedge\tau}^{T\wedge\tau} \vert{f}(s, Y^{\mathbb G}_s, Z^{\mathbb G}_s)\vert{d}s +\sup_{t\wedge\tau\leq\theta\leq{T}\wedge\tau}S_{\theta}^{+} +\vert\xi\vert\quad \Bigg|\ {\cal G}_t\right]
   \end{align}
   Then the Lipschitz assumption of $f$ in (\ref{LipschitzAssumption}) implies that
    \begin{align*}
   \int_{\sigma\wedge\tau}^{T\wedge\tau} \vert{f}(s, Y^{\mathbb G}_s, Z^{\mathbb G}_s)\vert{d}s\leq  \int_{\sigma\wedge\tau}^{T\wedge\tau} \vert{f}(s, 0, 0)\vert{d}s+ C_{Lip}\int_{\sigma\wedge\tau}^{T\wedge\tau} \vert{Y}^{\mathbb G}_s\vert{d}s+ C_{Lip}\int_{\sigma\wedge\tau}^{T\wedge\tau} \vert{Z}^{\mathbb G}_s\vert{d}s.
     \end{align*}
     Hence, by applying (\ref{InequalitySchwartz}) to each term on the right-hand-side above for $p_1=q_1=2$,  and inserting the resulting inequality in (\ref{MainInequality4Lemmac}) afterwards, we obtain for any $t\in [0,T]$
      \begin{align*}%\label{MainInequality4Lemmac}
 \vert{Y}^{\mathbb G}_t\vert&\leq {{e^{-\alpha(t\wedge\tau)/2}}\over{\sqrt{\alpha}}}{E}^{\widetilde Q}\left[ \left(\int_{t\wedge\tau}^{T\wedge\tau} e^{\alpha s}(f(s,0,0))^2 ds\right)^{{1\over{2}}}  +C_{Lip} \left(\int_{t\wedge\tau}^{T\wedge\tau} e^{\alpha s}\vert\delta Y^{\mathbb G}_s\vert^2 ds\right)^{{1\over{2}}}\Bigg|\ {\cal G}_t\right]\\
 &+e^{-\alpha(t\wedge\tau)/2}{E}^{\widetilde Q}\left[{{C_{Lip}}\over{\sqrt{\alpha}}}\sqrt{\int_{t\wedge\tau}^{T\wedge\tau} e^{\alpha s}(\delta Z^{\mathbb G}_s)^2 ds} +\sup_{t\wedge\tau\leq\theta\leq{T}\wedge\tau}e^{\theta/2}S_{\theta}^{+} +e^{\alpha(T\wedge\tau)/2}\vert\xi\vert\quad \Bigg|\ {\cal G}_t\right].
   \end{align*}
Therefore, the inequality (\ref{inequa4deltaYG2}) follows immediately from combining the above inequality with the fact that $(\sigma\leq{t})\cap(\sigma\leq\tau)\in {\cal G}_t$. This proves assertion (c) and ends the proof  of the lemma.
\end{proof}
Throughout the rest of the paper, for any $ p\in (1,+\infty)$, $\alpha_0(p)$ and $\alpha_1(p)$ are given by 
\begin{eqnarray}\label{alphaZero}
\begin{cases}
\alpha_0(p):=\max\left(2C_{Lip}+2C_{Lip}^{2}+1,81\left\{1+{{9\sqrt{2}\kappa(1+C_{DB})}\over{3-\sqrt{8}}}\right\}^2 C_{DB}^2C_{Lip}^2\right)\\
\alpha_1(p):=\max\left( {{8}\over{9}}+4C_{Lip}+4C_{Lip}^2,{{81}\over{4}}C_{DB}^2C_{Lip}^2\left(1+{{9\sqrt{2}\kappa(1+C_{DB})}\over{3-\sqrt{8}}}\right)^2\right).
\label{alphaOne}
\end{cases}
\end{eqnarray}
Here $C_{DB}$ is the Doob's constant that depends on $p\in (1,+\infty)$ and $\kappa$ is the positive constant that depends on $p\in (1,+\infty)$ only given by Lemma \ref{Lemma4.8FromChoulliThesis}, and $C_{Lip}$ is the Lipschitz constant in (\ref{LipschitzAssumption}).

\begin{theorem}\label{WhyNot} For $p>1$ and $\alpha>\alpha_0(p)$, there exists $\widehat{C}>0$ that depend on $(\alpha,p)$ only such that for any $\mathbb F$-stopping time $\sigma\in {\cal T}_0^T(\mathbb F)$ and any solution to (\ref{nonLinear}), denoted by ($Y^{\mathbb{G}},Z^{\mathbb{G}},M^{\mathbb{G}},K^{\mathbb{G}}$), we have 
\begin{align*}
&\Vert {e^{\alpha\cdot/2}} Y^{\mathbb G}I_{\{\tau\geq\sigma\}}I_{\Lbrack\sigma,+\infty\Lbrack}\Vert_{\mathbb{D}_{\tau\wedge{T}}(\widetilde{Q},p)}+\Vert e^{ {\alpha}\cdot/2}Z^{\mathbb{G}}I_{\Rbrack\sigma,+\infty\Lbrack}\Vert_{\mathbb{S}_{\tau\wedge{T}}(\widetilde{Q}, p)}\nonumber\\
&+\Vert e^{ {\alpha}\cdot/2}Y^{\mathbb{G}}I_{\Rbrack\sigma,+\infty\Lbrack}\Vert_{\mathbb{S}_{\tau\wedge{T}}(\widetilde{Q}, p)}+\Vert (e^{ {\alpha'}(\tau\wedge\cdot)}I_{\Rbrack\sigma,+\infty\Lbrack}\is{K^{\mathbb{G}}})_{T\wedge\tau}\Vert_{L^p(\widetilde{Q})}+\Vert{e^{\alpha(\tau\wedge\cdot)/2}}I_{\Rbrack\sigma,+\infty\Lbrack} \is{M^{\mathbb{G}}}\Vert_{{\cal{M}}^{p}_T(\widetilde{Q})}\\
%&E^{\widetilde{Q}}\left[\sup_{0\leq s \leq T\wedge\tau}e^{p\alpha(s\wedge\tau)/2}\vert Y_{s}^{\mathbb{G}}\vert^p+\left( \int_{0}^{T\wedge\tau}e^{\alpha t/2}dK_{t}^{\mathbb{G}}\right)^{2}+\int_{0}^{T\wedge\tau}e^{\alpha s}( Z_{s}^{\mathbb{G}})^{2}ds+\int_{0}^{T\wedge\tau}e^{\alpha s}d[ M^{\mathbb{G}}, M^{\mathbb{G}}]_s\right]\\
&\leq \widehat{C}\left\{  \Vert{e^{\alpha(T\wedge\tau)/2}}\xi{I}_{\{\tau\geq\sigma\}}\Vert_{L^p(\widetilde{Q})}+\Vert{e^{(\alpha-\alpha')\cdot}}S^{+}I_{\Lbrack\sigma,+\infty\Lbrack}\Vert_{\mathbb{D}_{\tau\wedge{T}}(\widetilde{Q}, p)}+\Vert e^{ {\alpha}\cdot/2}f(\cdot,0,0)I_{\Rbrack\sigma,+\infty\Lbrack}\Vert_{\mathbb{S}_{\tau\wedge{T}}(\widetilde{Q}, p)}\right\}.\end{align*}
\end{theorem}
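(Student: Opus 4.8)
The plan is to prove the estimate by the weighted (exponential) a priori technique, adapted to the pair $(\widetilde Q,\mathbb G)$, starting from the pointwise control of $Y^{\mathbb G}$ supplied by Lemma \ref{Estimation4DeltaY}(c). Throughout, the role of the threshold $\alpha_0(p)$ in (\ref{alphaZero}) is to render the Lipschitz coefficients $C_{Lip}/\sqrt{\alpha}$ (and the constants derived from them) small enough to be absorbed into the left-hand side. Fix $\sigma\in{\cal T}_0^T(\mathbb F)$ and a solution $(Y^{\mathbb G},Z^{\mathbb G},M^{\mathbb G},K^{\mathbb G})$. The inequality (\ref{inequa4deltaYG2}) writes $e^{\alpha(t\wedge\tau)/2}|Y^{\mathbb G}_t|I_{\{\sigma\le\tau\}}I_{\{\sigma\le t\}}$ as a $(\widetilde Q,\mathbb G)$-conditional expectation of a single terminal variable $\Xi$ collecting $e^{\alpha(T\wedge\tau)/2}|\xi|I_{\{\tau\ge\sigma\}}$, the weighted barrier $\sup e^{\alpha u/2}S^+_u$, the weighted source $\alpha^{-1/2}\bigl(\int e^{\alpha s}|f(s,0,0)|^2 ds\bigr)^{1/2}$, and the two self-referential pieces $C_{Lip}\alpha^{-1/2}\bigl(\int e^{\alpha s}|Y^{\mathbb G}_s|^2 ds\bigr)^{1/2}$ and $C_{Lip}\alpha^{-1/2}\bigl(\int e^{\alpha s}(Z^{\mathbb G}_s)^2 ds\bigr)^{1/2}$ over $\Lbrack\sigma\wedge\tau,T\wedge\tau\Rbrack$. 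Dominating this process by the martingale $E^{\widetilde Q}[\Xi\,|\,{\cal G}_\cdot]$ and invoking Doob's maximal inequality under $(\widetilde Q,\mathbb G)$ bounds $\Vert e^{\alpha\cdot/2}Y^{\mathbb G}I_{\{\tau\ge\sigma\}}I_{\Lbrack\sigma,+\infty\Lbrack}\Vert_{\mathbb D_{\tau\wedge T}(\widetilde Q,p)}$ by $C_{DB}\Vert\Xi\Vert_{L^p(\widetilde Q)}$, whose expansion produces the three data-norms on the right together with the coefficients $C_{Lip}/\sqrt{\alpha}$ in front of the $\mathbb S$-norms of $Y^{\mathbb G}$ and $Z^{\mathbb G}$.

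Next I would recover the weighted $\mathbb S$-norms of $Y^{\mathbb G}$ and $Z^{\mathbb G}$ and the weighted quadratic variation of $M^{\mathbb G}$ through an energy identity: apply It\^o to $e^{\alpha(\cdot\wedge\tau)}|Y^{\mathbb G}|^2$ on $\Rbrack\sigma,\cdot\Rbrack$, localizing along $(T_n)$ so that $Y^{\mathbb G}_-\is(Z^{\mathbb G}\is W^\tau+M^{\mathbb G})$ and $[K^{\mathbb G},M^{\mathbb G}]$ become genuine $\widetilde Q$-martingales, exactly as in Part 3 of the proof of Theorem \ref{EstimatesUnderQtilde}. Differentiating the exponential produces the favourable drift $\alpha\int e^{\alpha s}|Y^{\mathbb G}_s|^2 ds$, which absorbs the quadratic-in-$Y$ terms arising when the driver $f(s,Y^{\mathbb G}_s,Z^{\mathbb G}_s)$ is bounded via (\ref{LipschitzAssumption}) by $|f(s,0,0)|+C_{Lip}(|Y^{\mathbb G}_s|+|Z^{\mathbb G}_s|)$; the mixed term $2C_{Lip}|Y^{\mathbb G}||Z^{\mathbb G}|$ is split by Young's inequality, its $|Y|^2$ part again absorbed by the drift and its $|Z|^2$ part by the left-hand $Z$-block. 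This is the content of the first branch of $\alpha_0(p)$ in (\ref{alphaZero}), namely $\alpha>2C_{Lip}+2C_{Lip}^2+1$. The Skorokhod term $2e^{\alpha\cdot}Y^{\mathbb G}_-\is K^{\mathbb G}$ is rewritten as $2e^{\alpha\cdot}S_-\is K^{\mathbb G}$ using $\int(Y^{\mathbb G}_--S_-)dK^{\mathbb G}=0$ and split by Young into a weighted-$K^{\mathbb G}$ piece and the weighted-barrier term, while the remaining local-martingale contribution is dominated through the filtration-robust Burkholder--Davis--Gundy inequality of Lemma \ref{Lemma4.8FromChoulliThesis} applied to $Z^{\mathbb G}\is W^\tau+M^{\mathbb G}$; the exponential weight only affects constants.

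For the weighted increasing process I would reconstruct $K^{\mathbb G}$ from (\ref{nonLinear}) as its predictable nondecreasing part and integrate $e^{\alpha'(\cdot\wedge\tau)}$ against $dK^{\mathbb G}$: an integration by parts transfers the weight onto $Y^{\mathbb G}$, while the martingale pieces $e^{\alpha'}\is M^{\mathbb G}$ and $e^{\alpha'}\is(Z^{\mathbb G}\is W^\tau)$ are controlled by BDG in terms of the weighted quadratic variations already estimated in the energy step. The rate split $\alpha=\alpha'+(\alpha-\alpha')$ is arranged so that the barrier enters, via the Skorokhod structure, with the weight $e^{(\alpha-\alpha')\cdot}$ displayed on the right-hand side. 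Collecting the three blocks, the self-referential $\mathbb S$-norms of $Y^{\mathbb G}$ and $Z^{\mathbb G}$, together with the $K^{\mathbb G}$- and $M^{\mathbb G}$-norms, reappear on the right multiplied by $C_{Lip}/\sqrt{\alpha}$, by $C_{DB}$ and by the Young parameters $\epsilon$; choosing $\epsilon$ small and $\alpha>\alpha_0(p)$ keeps the net coefficient of the pooled left-hand side strictly positive (this is where the second, $C_{DB}^2C_{Lip}^2$ branch of $\alpha_0(p)$ intervenes), so these terms can be moved to the left and the claimed constant $\widehat C=\widehat C(\alpha,p)$ emerges.

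The main obstacle is the simultaneous closure of the constants rather than any isolated inequality. The five blocks --- the $\sup$-norm of $Y^{\mathbb G}$, the $\mathbb S$-norms of $Y^{\mathbb G}$ and $Z^{\mathbb G}$, and the $K^{\mathbb G}$- and $M^{\mathbb G}$-norms --- are mutually coupled, since each bound feeds the others back on its right-hand side (Doob needs the $\mathbb S$-norms, the energy identity needs the $\sup$-norm and $K^{\mathbb G}$, and the $K^{\mathbb G}$-bound needs $Y^{\mathbb G}$ and the martingales). One must therefore carry $C_{DB}$, $\kappa$, $C_{Lip}$ and all the Young parameters through to the very end and verify that, for $\alpha>\alpha_0(p)$, the coefficient multiplying the aggregated left-hand side stays bounded away from zero. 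The precise form of $\alpha_0(p)$ in (\ref{alphaZero}) is reverse-engineered from exactly this positivity requirement, so the delicate point is the bookkeeping that certifies it, not the derivation of any single estimate.
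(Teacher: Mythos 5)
Your proposal is correct and follows essentially the same route as the paper: Lemma \ref{Estimation4DeltaY}-(c) plus Doob under $(\widetilde Q,\mathbb G)$ for the $\sup$-norm of $Y^{\mathbb G}$, the It\^o/Young energy identity with the Skorokhod condition and Lemma \ref{Lemma4.8FromChoulliThesis} for the $\mathbb S$- and ${\cal M}^p$-blocks, reconstruction of $K^{\mathbb G}$ from the equation with the $\alpha'$-weight, and a final closure of the coupled constants, which is exactly what dictates $\alpha_0(p)$. The one minor divergence is in the $K^{\mathbb G}$ step: you propose controlling the martingale pieces by BDG, whereas the paper takes conditional expectations (which annihilate them) and then invokes the Dellacherie--Meyer domination theorem (Theorem \ref{DellacherieAndMeyer}) to pass from the conditional bound to the $L^p$-norm of $(e^{\alpha'\cdot}I_{\Rbrack\sigma,+\infty\Lbrack}\is K^{\mathbb G})_{T\wedge\tau}$ --- both devices close the argument.
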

%%%%%%%%%%%%%%%%%%%%%%%%%%%%%%%%%%%%%%%%%%%%%%%%%%%%%%%%%%%%%%%%%%%
%%%%%%%%%%%%%%%%%%%%%%%%%%%%%%%%%%%%%%%%%%%%%%%%%%%%%%%%%%%%%%%%%%%
\begin{proof} Let $\sigma\in {\cal T}_0^T(\mathbb F)$ be an $\mathbb F$-stopping time. Remark that, in virtue of (\ref{inequa4deltaYG2}) and Doob's inequality under $(\widetilde{Q}, \mathbb G)$, on the one hand, we have
\begin{align}
&\Vert {e^{\alpha\cdot/2}}Y^{\mathbb G}I_{\{\tau\geq\sigma\}}I_{\Lbrack\sigma,+\infty\Lbrack} \Vert_{\mathbb{D}_{T\wedge\tau}(\widetilde{Q},p)}\nonumber\\
&\leq C_{DB}\left\{
\Vert {e^{\alpha\cdot/2}}S^+I_{\Lbrack\sigma,+\infty\Lbrack}\Vert_{\mathbb{D}_{T\wedge\tau}(\widetilde{Q},p)}+\Vert{e^{\alpha(T\wedge\tau)/2}}\xi{I}_{\{\tau\geq\sigma\}}\Vert_{L^p(\widetilde{Q})}+{1\over{\sqrt{\alpha}}} \Vert{e^{\alpha\cdot/2}} f(\cdot,0,0)I_{\Rbrack\sigma,+\infty\Lbrack}\Vert_{\mathbb{S}_{T\wedge\tau}(\widetilde{Q},p)}\right\}\nonumber\\
&+{{C_{DB}C_{Lip}}\over{\sqrt{\alpha}}}\left\{ \Vert{e^{\alpha\cdot/2}} Z^{\mathbb{G}}I_{\Rbrack\sigma,+\infty\Lbrack}\Vert_{\mathbb{S}_{T\wedge\tau}(\widetilde{Q},p)}+\Vert{e^{\alpha\cdot/2}}Y^{\mathbb{G}}I_{\Rbrack\sigma,+\infty\Lbrack}\Vert_{\mathbb{S}_{T\wedge\tau}(\widetilde{Q},p)}\right\}.\label{equa445}
\end{align}
On the other hand, by combining It\^o applied to $e^{\alpha{t}}(Y^{\mathbb G}_t)^2$, $e^{\alpha(\sigma\wedge\tau)}(Y^{\mathbb G}_{\sigma\wedge\tau})^2\geq0$,  (\ref{nonLinear}), and Young's inequality (i.e. $2xy\leq \epsilon{x^2}+y^2/\epsilon$ for any $\epsilon>0$), we derive 
\begin{align}
&\overbrace{(\alpha-2C_{Lip}-2C_{Lip}^{2}-{\epsilon}^{-1})}^{C} \int_{\sigma\wedge\tau}^{T\wedge\tau}e^{\alpha s}(Y_{s}^{\mathbb{G}})^{2}ds+\frac{1}{2} \int_{\sigma\wedge\tau}^{T\wedge\tau}e^{\alpha s}(Z_{s}^{\mathbb{G}})^{2}ds+ \int_{\sigma\wedge\tau}^{T\wedge\tau}e^{\alpha t}d[M^{\mathbb{G}}, M^{\mathbb{G}}]_{s}\nonumber\\
&\leq e^{\alpha (T\wedge\tau)}\xi^{2}I_{\{\sigma\leq\tau\}}+\epsilon\int_{\sigma\wedge\tau}^{T\wedge\tau}e^{\alpha s}\vert{ f(s,0,0)}\vert^{2}ds+2\int_{\sigma\wedge\tau}^{T\wedge\tau}e^{\alpha s}Y_{s-}^{\mathbb{G}}dK_{s}^{\mathbb{G}}+(I_{\Rbrack\sigma,+\infty\Lbrack}\is{L}^{\mathbb G,1})_{T\wedge\tau}\nonumber\\
&\leq{e^{\alpha (T\wedge\tau)}}\xi^{2}I_{\{\sigma\leq\tau\}}+\epsilon\int_{\sigma\wedge\tau}^{T\wedge\tau}e^{\alpha s}\vert{ f(s,0,0)}\vert^{2}ds+2\int_{\sigma\wedge\tau}^{T\wedge\tau}e^{\alpha s}{S_{s-}^+}dK_{s}^{\mathbb{G}}+\sup_{0\leq{t\leq{T}\wedge\tau}}\vert(I_{\Rbrack\sigma,+\infty\Lbrack}\is{L}^{\mathbb G,1})_t\vert, \label{equa446}\end{align}
where the last equality is due to the Skorokhod's condition and  $L^{\mathbb G,1}\in {\cal M}_{loc}(\mathbb G)$ is given by 
\begin{eqnarray}\label{LG1}
L^{\mathbb G,1}:=2e^{\alpha (\tau\wedge\cdot)}(Y_{-}^{\mathbb{G}}-\Delta{K_{s}^{\mathbb{G}}}))\is{M^{\mathbb{G}}}+2e^{\alpha (\tau\wedge\cdot)}(Y_{-}^{\mathbb{G}})Z^{\mathbb{G}}\is W^{\tau}.\end{eqnarray}
Thus, by applying Lemma \ref{Lemma4.8FromChoulliThesis} to $I_{\Rbrack\sigma,+\infty\Lbrack}\is{L}^{\mathbb G,1}$ with $a=b=p$, and using Doob's inequality afterwards to the martingale $E^{\widetilde{Q}}[\sup_{0\leq{s}\leq{T}\wedge\tau}\vert{Y}^{\mathbb G}_s\vert{I}_{\{\sigma\leq{s}\wedge\tau\}}\ \big|{\cal{G}}_t]$, we derive 
\begin{align}\label{Control4LG1}
&\Vert\sqrt{\vert{I}_{\Rbrack\sigma,+\infty\Lbrack}\is{L}^{\mathbb G,1}\vert}\Vert_{\mathbb{D}_{T\wedge\tau}(\widetilde{Q},p)}\nonumber\\
&\leq 2\sqrt{\left\{\kappa(1+C_{DB})\Vert {e}^{\alpha(\cdot\wedge\tau)/2}I_{\Rbrack\sigma,+\infty\Lbrack}\is{M}^{\mathbb{G}}\Vert_{{\cal{M}}^{p}_T(\widetilde{Q})}+\Vert{e}^{\alpha\cdot/2}Z_{s}^{\mathbb{G}}I_{\Rbrack\sigma,+\infty\Lbrack}\Vert _{\mathbb{S}_{T\wedge\tau}(\widetilde{Q},p)}\right\} \Vert {e^{\alpha\cdot/2}}Y^{\mathbb G}{I}_{\Lbrack\sigma,+\infty\Lbrack}\Vert_{\mathbb{D}_{T\wedge\tau}(\widetilde{Q},p)} } \nonumber\\
&\leq \epsilon_1\left\{\Vert{e^{\alpha(\tau\wedge\cdot)/2}}{I}_{\Rbrack\sigma,+\infty\Lbrack} \is {M^{\mathbb{G}}}\Vert_{{\cal{M}}^{p}_T(\widetilde{Q})} +\Vert{e^{\alpha\cdot/2 }}Z^{\mathbb{G}}\Vert_{\mathbb{S}_{T\wedge\tau}(\widetilde{Q},p)} \right\}+{{\kappa(1+C_{DB})}\over{\epsilon_1}}\Vert {e^{\alpha\cdot/2}}Y^{\mathbb G}{I}_{\Lbrack\sigma,+\infty\Lbrack}\Vert_{\mathbb{D}_{T\wedge\tau}(\widetilde{Q},p)}.
\end{align}
Therefore, by combining (\ref{equa445}), (\ref{equa446}), and (\ref{Control4LG1}),  and the fact that $\Vert\sqrt{\sum_{i=1}^n X_i}\Vert_{L^p(\widetilde{Q})}\geq n^{-1}\sum_{i=1}^n \Vert \sqrt{X_i}\Vert_{L^p(\widetilde{Q})}$ for any nonnegative random  variables $(X_i)_{i=1,...,n}$, we derive the following inequality.\\
\begin{align}\label{Major1} 
&\Vert {e^{\alpha\cdot/2}}Y^{\mathbb G}{I}_{\Lbrack\sigma,+\infty\Lbrack} \Vert_{\mathbb{D}_{T\wedge\tau}(\widetilde{Q},p)}+C_1\Vert{e^{\alpha\cdot/2}} Y^{\mathbb{G}}{I}_{\Rbrack\sigma,+\infty\Lbrack}\Vert_{\mathbb{S}_{T\wedge\tau}(\widetilde{Q},p)}\nonumber\\
&+C_2\Vert{e^{\alpha\cdot/2}}Z^{\mathbb{G}}{I}_{\Rbrack\sigma,+\infty\Lbrack}\Vert_{\mathbb{S}_{T\wedge\tau}(\widetilde{Q},p)}+C_3\Vert{e^{\alpha\cdot/2}} {I}_{\Rbrack\sigma,+\infty\Lbrack}\is {M^{\mathbb{G}}}\Vert_{{\cal M}^{p}_{T}(\widetilde{Q})}\nonumber\\
& \leq C_4 \Vert{e^{\alpha\cdot/2}} f(\cdot,0,0){I}_{\Rbrack\sigma,+\infty\Lbrack}\Vert_{\mathbb{S}_{T\wedge\tau}(\widetilde{Q},p)}+C_5 \Vert {e^{\alpha(T\wedge\tau)/2}}\xi{I}_{\{\sigma\leq\tau\}}\Vert_{\mathbb{L}^{p}(\widetilde{Q})}+C_6\Vert {e^{\alpha\cdot/2}} S^+{I}_{\Lbrack\sigma,+\infty\Lbrack}\Vert_{\mathbb{D}_{T\wedge\tau}(\widetilde{Q},p)}\nonumber\\
&+C_7\Vert {e^{(\alpha-\alpha')\cdot}}  S^+{I}_{\Lbrack\sigma,+\infty\Lbrack}\Vert_{\mathbb{D}_{T\wedge\tau}(\widetilde{Q},p)}^{1/2}\Vert{e}^{\alpha'(\tau\wedge\cdot)}{I}_{\Rbrack\sigma,+\infty\Lbrack}\is{K}^{\mathbb G}_T \Vert_{\mathbb{L}^{p}(\widetilde{Q})}^{1/2},\quad \mbox{for}\quad \alpha'<\alpha/2,
\end{align}
where $C_i$, $i=1,...,7$ are given by 
\begin{align}\label{Constants4estimates}
\begin{cases}
C_1:={\sqrt{C}\over{3}}-\left(1+{{\kappa(1+C_{DB})}\over{\epsilon_1}} \right){{ C_{DB}C_{Lip}}\over{\sqrt{\alpha}}},\quad{C}_2:={1\over{3\sqrt 2}}-\epsilon_1-\left(1+{{\kappa(1+C_{DB})}\over{\epsilon_1}} \right){{ C_{DB}C_{Lip}}\over{\sqrt{\alpha}}},\\
\\
C_3:={1\over {3}}-\epsilon_1,\quad C_4:=\sqrt{\epsilon}+{{ C_{DB}}\over{\sqrt{\alpha}}}\left(1+{{\kappa(1+C_{DB})}\over{\epsilon_1}} \right),\\
\\
C_5:=1+C_{DB}\left(1+{{\kappa(1+C_{DB})}\over{\epsilon_1}} \right),\quad C_6:=C_{DB}\left(1+{{\kappa(1+C_{DB})}\over{\epsilon_1}} \right),\quad C_7:=\sqrt{2}.\end{cases}
\end{align}
Thus, the next step consists of controlling the norm of $K^{\mathbb{G}}$. To this end, we use the RBSDE (\ref{nonLinear}) and Ito's formula,  and derive for any $\alpha'>0$ and $t>\sigma$
 \begin{align*}
    &\int_{t\wedge\tau}^{T\wedge\tau}e^{\alpha' {s}}dK_{s}^{\mathbb{G}}=-   \int_{t\wedge\tau}^{T\wedge\tau} e^{\alpha' {s}}dY^{\mathbb{G}}_{s}  -\int_{s\wedge\tau}^{T\wedge\tau}e^{\alpha' {s}}f(s,Y^{\mathbb{G}}_{s},Z^{\mathbb{G}}_{s})ds-  \int_{t\wedge\tau}^{T\wedge\tau}e^{\alpha' {s}}dM^{\mathbb{G}}_{t}+  \int_{t\wedge\tau}^{T\wedge\tau}e^{\alpha' {s}}Z^{\mathbb{G}}_{s}dW_{s}^{\tau}\\
    &\overset{Ito}{=}-  e^{\alpha' {T\wedge\tau}}Y^{\mathbb{G}}_{T\wedge\tau}+ e^{\alpha'{t\wedge\tau}}Y^{\mathbb{G}}_{t\wedge\tau}+ \int_{t\wedge\tau}^{T\wedge\tau} e^{\alpha' {s}}({\alpha'}Y^{\mathbb{G}}_{s}-f(s,Y^{\mathbb{G}}_{s},Z^{\mathbb{G}}_{s}))ds-  \int_{t\wedge\tau}^{T\wedge\tau}e^{\alpha' {s}}dM^{\mathbb{G}}_{s}+  \int_{t\wedge\tau}^{T\wedge\tau}e^{\alpha' {s}}Z^{\mathbb{G}}_{s}dW_{s}^{\tau}.
    \end{align*}
    Therefore, by using this latter equality together with (\ref{LipschitzAssumption}), we derive 
    
       \begin{align*}
& E^{\widetilde{Q}} \left[\int_{t\wedge\tau}^{T\wedge\tau}e^{ {\alpha'}s}dK_{s}^{\mathbb{G}}\ \big|{\cal G}_{t\wedge\tau}\right]\leq E^{\widetilde{Q}}\left[2\sup_{t\wedge\tau<u\leq T\wedge\tau}e^{ {\alpha'}s}\vert {Y^{\mathbb{G}}_s}\vert + \int_{t\wedge\tau}^{T\wedge\tau}e^{ {\alpha'}s}\vert {\alpha'}Y^{\mathbb{G}}_{s}+ f(s,Y^{\mathbb{G}}_{s},Z^{\mathbb{G}}_{s})\vert ds\ \big|{\cal G}_{t\wedge\tau}\right],\\
&\leq
E^{\widetilde{Q}}\left[2\sup_{t\wedge\tau<u\leq T\wedge\tau}e^{ {\alpha'}s}\vert {Y^{\mathbb{G}}_s}\vert + \int_{t\wedge\tau}^{T\wedge\tau}e^{ {\alpha'}s}( {\alpha'}+C_{Lip})\vert{Y^{\mathbb{G}}_s}\vert ds+ \int_{t\wedge\tau}^{T\wedge\tau}e^{ {\alpha'}s}\vert {f(s,0,0)}\vert ds\ \big|{\cal G}_{t\wedge\tau}\right]\\
&+ C_{Lip}E^{\widetilde{Q}}\left[\int_{t\wedge\tau}^{T\wedge\tau}e^{ {\alpha'}s}\vert {Z^{\mathbb{G}}_{s}}\vert ds\ \big|{\cal G}_{t\wedge\tau}\right].
    \end{align*}
    Then by applying (\ref{InequalitySchwartz}) for each term above, and choosing $\alpha'<\alpha/2$, we get for $t>\sigma$
    \begin{align*}
    & E^{\widetilde{Q}} \left[\int_{t\wedge\tau}^{T\wedge\tau}e^{ {\alpha'}s}dK_{s}^{\mathbb{G}}\ \big|{\cal G}_{t\wedge\tau}\right]\\
    &\leq E^{\widetilde{Q}}\left[2\sup_{\sigma\wedge\tau\leq{u}\leq T\wedge\tau}e^{ {\alpha'}s}\vert {Y^{\mathbb{G}}_s}\vert +{{\alpha'+C_{Lip}}\over{\sqrt{\alpha-2\alpha'}}} \sqrt{\int_{\sigma\wedge\tau}^{T\wedge\tau}e^{ {\alpha}s}(Y^{\mathbb{G}}_s)^2 ds}+ {1\over{\sqrt{\alpha-2\alpha'}}} \sqrt{\int_{\sigma\wedge\tau}^{T\wedge\tau}e^{ {\alpha}s}(f(s,0,0))^2 ds}\ \big|{\cal G}_{t\wedge\tau}\right]\\
&+ {{C_{Lip}}\over{\sqrt{\alpha-2\alpha'}}} E^{\widetilde{Q}}\left[\sqrt{\int_{\sigma\wedge\tau}^{T\wedge\tau}e^{ {\alpha}s}(Z^{\mathbb{G}}_s)^2 ds}\ \big|{\cal G}_{t\wedge\tau}\right].
    \end{align*}
%%%%%%%%%%%%%%%%%%%%%%%%%%%%%%%%%%%%%
Therefore, thanks to Theorem \ref{DellacherieAndMeyer}, we deduce that  for any $p>1$ and $\alpha'<\alpha/2$, we have 
 \begin{align}
&\Vert (e^{ {\alpha'}\cdot}{I}_{\Rbrack\sigma,+\infty\Lbrack}\is{K^{\mathbb{G}}})_{T\wedge\tau}\Vert_{L^p(\widetilde{Q})}\nonumber\\
&\leq C'\left\{\Vert {e^{\alpha\cdot/2}} Y^{\mathbb G}{I}_{\Lbrack\sigma,+\infty\Lbrack}\Vert_{\mathbb{D}_{T\wedge\tau}(\widetilde{Q},p)}+\Vert e^{ {\alpha}\cdot/2} Y^{\mathbb{G}}{I}_{\Rbrack\sigma,+\infty\Lbrack}\Vert_{\mathbb{S}_{T\wedge\tau}(\widetilde{Q}, p)}+\Vert e^{ {\alpha}\cdot/2}Z^{\mathbb{G}}{I}_{\Rbrack\sigma,+\infty\Lbrack}\Vert_{\mathbb{S}_{T\wedge\tau}(\widetilde{Q}, p)}\right\}\nonumber\\
&+C'\Vert e^{ {\alpha}\cdot/2}f(\cdot,0,0){I}_{\Rbrack\sigma,+\infty\Lbrack}\Vert_{\mathbb{S}_{T\wedge\tau}(\widetilde{Q}, p)},\label{Equa444}
    \end{align}
    where the constant $C'$ is given by 
    \begin{align*}
C':=p\max\left(2, \frac{\alpha'+C_{Lip}}{\sqrt{\alpha-2\alpha'}}\right).
\end{align*}
 Remark that for $\alpha>\alpha_0(p)$, and by choosing $\epsilon=9/5$ and $\epsilon_1=(3-\sqrt{8})/9\sqrt{2}$, we get $1/9<C_2\leq \min(C_1,C_3)$. Then by inserting (\ref{Equa444}) in (\ref{Major1}) and using Young's inequality afterwards, we get 
 \begin{align*}
 &\Vert {e}^{{{\alpha\cdot}\over{2}}} Y^{\mathbb G}{I}_{\Lbrack\sigma,+\infty\Lbrack}\Vert_{\mathbb{D}_{T\wedge\tau}(\widetilde{Q},p)}+\Vert e^{{{\alpha\cdot}\over{2}}} Y^{\mathbb{G}}{I}_{\Rbrack\sigma,+\infty\Lbrack}\Vert_{\mathbb{S}_{T\wedge\tau}(\widetilde{Q}, p)}+\Vert{e}^{{{\alpha\cdot}\over{2}}}Z^{\mathbb{G}}{I}_{\Rbrack\sigma,+\infty\Lbrack}\Vert_{\mathbb{S}_{T\wedge\tau}(\widetilde{Q},p)}+\Vert{e^{{{\alpha\cdot}\over{2}}}}{I}_{\Rbrack\sigma,+\infty\Lbrack} \is {M^{\mathbb{G}}}\Vert_{{\cal M}^{p}_{T}(\widetilde{Q})}\nonumber\\
& \leq \overline{C}\left\{ \Vert{e^{\alpha(\tau\wedge\cdot)/2}} f(\cdot,0,0){I}_{\Lbrack\sigma,+\infty\Lbrack}\Vert_{\mathbb{S}^{p}_{T}(\widetilde{Q})}+ \Vert {e^{\alpha(T\wedge\tau)/2}}\xi{I}_{\{\sigma\leq\tau\}}\Vert_{\mathbb{L}^{p}(\widetilde{Q})}+\Vert {e^{(\alpha-\alpha')\cdot}}  S^+{I}_{\Rbrack\sigma,+\infty\Lbrack}\Vert_{{\mathbb{D}}_{T\wedge\tau}(\widetilde{Q},p)}\right\},\end{align*}
where $\overline{C}:=(20(C')^2+C_6)/(C_2-(1/9))$. Therefore, the proof of the theorem follows immediately from combining the above inequality with (\ref{Equa444}) and choosing $\widehat{C}=\overline{C}(1+C')+C'$. This ends the proof of the theorem.
 % Then the proof of the theorem follows directly from combining (\ref{Equa444}) with (\ref{Major1}), and choosing 
% and put
% \begin{eqnarray*}
%&&\epsilon_1<{1\over{3\sqrt{2}}},\quad \alpha>2C_{Lip}+2C_{Lip}^{2}+{\epsilon}^{-1}+9({1\over{3\sqrt{2}}}-\epsilon_1)^2,\quad {{\epsilon_1+\kappa(1+C_{DB})}\over{\epsilon_1{1\over{3\sqrt 2}}-{\epsilon_1}^2-\epsilon_1\epsilon}}C_{DB}C_{Lip}<\sqrt{\alpha}\\
%&&\alpha_0(p):=\max\left(2C_{Lip}+2C_{Lip}^{2}+1,\left\{1+{{9\sqrt{2}\kappa(1+C_{DB})}\over{3-\sqrt{8}}}\right\}^2 C_{DB}^2C_{Lip}^2\right)
 %\end{eqnarray*}
\end{proof}
\begin{theorem}\label{uniquenessNonlinear}
If ($Y^{\mathbb{G},i},Z^{\mathbb{G},i},K^{\mathbb{G},i}, M^{\mathbb{G},i}$)  is a  solution to the RBSDE (\ref{nonLinear}) that correspond to  $(f_{i}, S_{i}, \xi^i)$, $i=1,2$ respectively, then for any $p>1$ and $\alpha>\max(\alpha_1(p), \alpha_0(p))$ given in (\ref{alphaZero}), there exist positive $\widehat{C}_j$, $j=1,2,3,4$, that depend on $(\alpha, p)$ only such that $\lim_{\alpha\to\infty}\widehat{C}_1=0$ and 
\begin{align}\label{MainDeltaInequality}
&\Vert {e^{\alpha\cdot/2}}\delta Y^{\mathbb G} \Vert_{\mathbb{D}_{T\wedge\tau}(\widetilde{Q},\mathbb{G})}+\Vert{e^{\alpha\cdot/2}} \delta Y^{\mathbb{G}}\Vert_{\mathbb{S}_{T\wedge\tau}(\widetilde{Q},p)}+\Vert{e^{\alpha\cdot/2}} \delta Z^{\mathbb{G}}\Vert_{\mathbb{S}_{T\wedge\tau}(\widetilde{Q},p)}+\Vert{e^{\alpha(\tau\wedge\cdot)/2}} \is \delta {M^{\mathbb{G}}}\Vert_{{\cal M}^{p}(\widetilde{Q})}\nonumber\\
& \leq \widehat{C}_1 \Vert{e^{\alpha\cdot/2}} \delta f\Vert_{\mathbb{S}_{T\wedge\tau}(\widetilde{Q},p)}+\widehat{C}_2\Vert {e^{\alpha(T\wedge\tau)/2}} \delta \xi\Vert_{\mathbb{L}^{p}(\widetilde{Q})}+\widehat{C}_3\Vert {e^{\alpha\cdot/2}} \delta S\Vert_{\mathbb{D}_{T\wedge\tau}(\widetilde{Q},p)}\nonumber\\
&+\widehat{C}_4\sqrt{\Vert {e^{\alpha\cdot/2}} \delta S\Vert_{\mathbb{D}_{T\wedge\tau}(\widetilde{Q},p)}\left\{\sum_{i=1}^2\Delta(f^{(i)}, S^{(i)}, \xi^{(i)})\right\}}.
\end{align}
Here $\Delta(f^{(i)}, S^{(i)}, \xi^{(i)})$ is 
\begin{eqnarray}\label{Delta(i)}
\Delta(f^{(i)}, S^{(i)}, \xi^{(i)}):= \Vert{e^{\alpha(T\wedge\tau)/2}}\xi^{(i)}\Vert_{L^p(\widetilde{Q})}+\Vert{e^{\alpha(\tau\wedge\cdot)}}(S^{(i)})^{+}\Vert_{\mathbb{S}_T(\widetilde{Q}, p)}+\Vert e^{ {\alpha}(\tau\wedge\cdot)/2}f^{(i)}(\cdot,0,0)\Vert_{\mathbb{S}_T(\widetilde{Q}, p)},%\\
%&&E^{\widetilde{Q}}\left[e^{{{p\alpha}\over{2}}(T\wedge\tau)}\vert\xi\vert^p+\sup_{0\leq{t}\leq {T}\wedge\tau}e^{p(\alpha-\alpha')(t\wedge\tau)}(S^{+}_t)^p+\left(\int_0^{T\wedge\tau} e^{ {{\alpha}\over{2}}s}\vert{f}(s,0,0)\vert^2 ds\right)^{p/2}\right]
\end{eqnarray}
and $(\delta Y^{\mathbb{G}}, \delta Z^{\mathbb{G}}, \delta M^{\mathbb{G}}, \delta K^{\mathbb{G}})$ and $(\delta f,\delta S, \delta\xi)$ are given by 
\begin{align*}
\delta Y^{\mathbb{G}}&:=Y^{\mathbb{G},1}-Y^{\mathbb{G},2},\ \delta Z^{\mathbb{G}}:=Z^{\mathbb{G},1}-Z^{\mathbb{G},2}, \delta M^{\mathbb{G}}:=M^{\mathbb{G},1}-M_{2}^{\mathbb{G},2}, \delta K^{\mathbb{G}}:=K^{\mathbb{G},1}-K^{\mathbb{G},2}, \\
\delta S&:=S^1-S^2,\quad \delta \xi:=\xi^1-\xi^2,\quad \delta f_t:= f_{1}(t,Y^{\mathbb{G},1}_t,Z^{\mathbb{G},1}_t)- f_{2}(t,Y^{\mathbb{G},1}_t,Z^{\mathbb{G},1}_t).
\end{align*}
\end{theorem}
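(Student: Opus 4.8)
The plan is to follow the same architecture as the proof of Theorem~\ref{WhyNot}, but carried out for the difference processes, the one genuinely new feature being the treatment of the coupling term $\delta Y^{\mathbb{G}}_{-}\is\delta K^{\mathbb{G}}$, for which no Skorokhod condition is available directly. Throughout I would work under $(\widetilde{Q},\mathbb{G})$, fix $p>1$ and $\alpha>\max(\alpha_0(p),\alpha_1(p))$, and localize by a sequence of $\mathbb{G}$-stopping times $(T_n)_n\uparrow\infty$ reducing the $(\widetilde{Q},\mathbb{G})$-local martingales $\delta M^{\mathbb{G}}$ and $\delta Z^{\mathbb{G}}\is W^{\tau}-\delta M^{\mathbb{G}}$ to true martingales; the final statement is then recovered by Fatou's lemma.

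First I would control the $\mathbb{D}$-norm of $e^{\alpha\cdot/2}\delta Y^{\mathbb{G}}$. Starting from the pointwise bound (\ref{inequa4deltaYG2bis}) of Lemma~\ref{Estimation4DeltaY}(b) and applying Doob's inequality under $(\widetilde{Q},\mathbb{G})$ to the resulting conditional expectation, one obtains an inequality of the shape of (\ref{equa445}): $\|e^{\alpha\cdot/2}\delta Y^{\mathbb{G}}\|_{\mathbb{D}_{T\wedge\tau}(\widetilde{Q},p)}$ is bounded by $C_{DB}$ times the norms of $\delta S,\delta\xi,\delta f$, plus a term $\tfrac{C_{DB}C_{Lip}}{\sqrt{\alpha}}\bigl(\|e^{\alpha\cdot/2}\delta Z^{\mathbb{G}}\|_{\mathbb{S}}+\|e^{\alpha\cdot/2}\delta Y^{\mathbb{G}}\|_{\mathbb{S}}\bigr)$ whose coefficient is small for large $\alpha$ and will later be absorbed.

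The core step is Itô's formula applied to $e^{\alpha t}(\delta Y^{\mathbb{G}}_t)^2$, exactly as in (\ref{equa446}). Expanding, using $e^{\alpha(\sigma\wedge\tau)}(\delta Y^{\mathbb{G}})^2\ge 0$, the Lipschitz split $|\Delta f_s|\le|\delta f_s|+C_{Lip}(|\delta Y^{\mathbb{G}}_s|+|\delta Z^{\mathbb{G}}_s|)$, and Young's inequality, produces on the left the quantities $(\alpha-\text{const})\int e^{\alpha s}(\delta Y^{\mathbb{G}}_s)^2ds$, $\tfrac12\int e^{\alpha s}(\delta Z^{\mathbb{G}}_s)^2ds$ and $\int e^{\alpha s}d[\delta M^{\mathbb{G}},\delta M^{\mathbb{G}}]_s$. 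The obstruction is the term $2\int e^{\alpha s}\delta Y^{\mathbb{G}}_{s-}d\delta K^{\mathbb{G}}_s$, and here I would reuse the argument from Part~2 of the proof of Theorem~\ref{EstimatesUnderQtilde1}: since $Y^{\mathbb{G},i}\ge S^{(i)}$ while $Y^{\mathbb{G},i}_{-}=S^{(i)}_{-}$ on the support of $dK^{\mathbb{G},i}$ (the individual Skorokhod conditions), one bounds $\delta Y^{\mathbb{G}}_{s-}\,d\delta K^{\mathbb{G}}_s\le \delta S_{s-}\,d\,\mathrm{Var}_s(\delta K^{\mathbb{G}})$ and then $\mathrm{Var}(\delta K^{\mathbb{G}})\le K^{\mathbb{G},1}+K^{\mathbb{G},2}$. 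The two stochastic integrals against $\delta M^{\mathbb{G}}$ and $\delta Z^{\mathbb{G}}\is W^{\tau}$ are controlled in $\mathbb{D}$-norm by Lemma~\ref{Lemma4.8FromChoulliThesis} with $a=b=p$ followed by Young, just as in (\ref{Control4LG1}).

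Finally I would combine the two blocks. Taking $(\widetilde{Q},\mathbb{G})$-expectations, raising to the power $p/2$, and applying Cauchy--Schwarz to the coupling term keeps it in the product form $\|e^{\alpha\cdot/2}\delta S\|_{\mathbb{D}}^{1/2}\,\bigl(\sum_i\|K^{\mathbb{G},i}\|\bigr)^{1/2}$; the individual $K$-norms are then bounded by $\Delta(f^{(i)},S^{(i)},\xi^{(i)})$ via the a priori estimate Theorem~\ref{WhyNot} (with $\sigma=0$), which produces the term carrying $\widehat{C}_4$. After inserting the $\mathbb{D}$-bound for $\delta Y^{\mathbb{G}}$ into this inequality, choosing the free parameters $\epsilon,\epsilon_1$ as in the proof of Theorem~\ref{WhyNot} and taking $\alpha>\max(\alpha_0(p),\alpha_1(p))$ large enough, the $\delta Y^{\mathbb{G}},\delta Z^{\mathbb{G}},\delta M^{\mathbb{G}}$ contributions on the right are absorbed into the left, while the coefficient in front of $\|e^{\alpha\cdot/2}\delta f\|_{\mathbb{S}}$ remains proportional to $1/\sqrt{\alpha}$, giving $\lim_{\alpha\to\infty}\widehat{C}_1=0$; a Fatou passage $n\to\infty$ removes the localization and yields (\ref{MainDeltaInequality}). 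I expect the delicate point to be precisely this Skorokhod cross term: bounding $\delta Y^{\mathbb{G}}_{-}\is\delta K^{\mathbb{G}}$ without a Skorokhod condition for the difference and keeping it multiplicative, so that the a priori $K$-bounds enter as the $\widehat{C}_4$-factor rather than forcing an uncontrolled $\|\delta K^{\mathbb{G}}\|$ onto the left-hand side.
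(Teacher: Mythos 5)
Your proposal is correct and follows essentially the same route as the paper's own proof: the $\mathbb{D}$-norm bound from Lemma \ref{Estimation4DeltaY}-(b) plus Doob, Itô applied to $e^{\alpha t}(\delta Y^{\mathbb{G}})^2$ with Young's inequality, the individual Skorokhod conditions to dominate the cross term by $|\delta S_{-}|\is\mathrm{Var}(\delta K^{\mathbb{G}})$, Lemma \ref{Lemma4.8FromChoulliThesis} for the martingale terms, and Theorem \ref{WhyNot} with $\sigma=0$ to convert $\Vert\mathrm{Var}(\delta K^{\mathbb{G}})\Vert$ into the $\Delta(f^{(i)},S^{(i)},\xi^{(i)})$ factor carried by $\widehat{C}_4$. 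The absorption for large $\alpha$ and the identification of $\widehat{C}_1$ with a coefficient vanishing as $\alpha\to\infty$ (via the choice $\epsilon=2/\alpha$) likewise match the paper.
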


%%%%%%%%%%%%%%%%%%%%%%%%%%%%%%%%%%%%%%%%%%%%%%%%%%%%%%%%%%%%%%%%%%%%%%%%%
%%%%%%%%%%%%%%%%%%%%%%%%%%%%%%%%%%%%%%%%%%%%%%%%%%%%%%%%%%%%%%%%%%%%%%%%%
\begin{proof} On the one hand, due to the Lipschitz assumption on $f$, we have 
\begin{align}
&\vert \Delta f_t\vert:=\vert f_{1}(t,Y^{\mathbb{G},1},Z^{\mathbb{G},1})- f_{2}(t,Y^{\mathbb{G},2},Z^{\mathbb{G},2})\vert \leq \vert \delta f_t\vert+C_{Lip}\vert\delta Y^{\mathbb{G}}_t\vert+C_{Lip}\vert \delta Z^{\mathbb{G}}_t\vert.\label{Lipschitz1}
\end{align}
On the other hand, in virtue of Lemma \ref{Estimation4DeltaY}-(b) and Doob's inequality, we get 
\begin{align}\label{Control4supYG}
\Vert {e^{\alpha\cdot/2}}\delta Y^{\mathbb G} \Vert_{\mathbb{D}_{T\wedge\tau}(\widetilde{Q},p)}&\leq C_{DB}\left\{
\Vert {e^{\alpha\cdot/2}}\delta S_u\Vert_{\mathbb{D}_{T\wedge\tau}(\widetilde{Q},p)}+\Vert{e^{\alpha(T\wedge\tau)/2}}\delta\xi\Vert_{L^p(\widetilde{Q})}+{1\over{\sqrt{\alpha}}} \Vert{e^{\alpha\cdot/2}} \delta f\Vert_{\mathbb{S}_{T\wedge\tau}(\widetilde{Q},p)}\right\}\nonumber\\
&+{{C_{DB}C_{Lip}}\over{\sqrt{\alpha}}}\left\{ \Vert{e^{\alpha\cdot/2}} \delta Z^{\mathbb{G}}\Vert_{\mathbb{S}_{T\wedge\tau}(\widetilde{Q},p)}+\Vert{e^{\alpha\cdot/2}} \delta Y^{\mathbb{G}}\Vert_{\mathbb{S}_{T\wedge\tau}(\widetilde{Q},p)}\right\}.
\end{align}
By combining It\^o applied to $e^{\alpha t}(\delta Y^{\mathbb{G}})^{2}$,  $(\delta Y_{0}^{\mathbb{G}})^{2}\geq0$ and (\ref{Lipschitz1}), and putting \begin{eqnarray}\label{LG}
L^{\mathbb G}:=e^{\alpha (\tau\wedge\cdot)}(\delta Y_{-}^{\mathbb{G}}-2\Delta(\delta K_{s}^{\mathbb{G}}))\is \delta M^{\mathbb{G}}+e^{\alpha (\tau\wedge\cdot)}(\delta Y_{-}^{\mathbb{G}})\delta Z^{\mathbb{G}}\is W^{\tau}\in {\cal M}_{loc}(\widetilde Q, \mathbb G),
\end{eqnarray}
 we derive 

\begin{align*}
&\alpha \int_{0}^{T\wedge\tau}e^{\alpha s}(\delta Y_{s}^{\mathbb{G}})^{2}ds+ \int_{0}^{T\wedge\tau}e^{\alpha s}(\delta Z_{s}^{\mathbb{G}})^{2}ds+ \int_{0}^{T\wedge\tau}e^{\alpha t}d[ \delta M^{\mathbb{G}}, \delta M^{\mathbb{G}}]_{s}\nonumber\\
&\leq e^{\alpha (T\wedge\tau)}(\delta \xi)^{2}+
2\int_{0}^{T\wedge\tau}e^{\alpha s}(\delta Y_{s}^{\mathbb{G}})\Delta f ds+2\int_{0}^{T\wedge\tau}e^{\alpha s}(\delta Y_{s-}^{\mathbb{G}})d\delta K_{s}^{\mathbb{G}}+L^{\mathbb G}_T,\\
%%(\delta Y_{0}^{\mathbb{G}})^{2}&+\alpha \int_{0}^{T\wedge\tau}e^{\alpha s}(\delta Y_{s}^{\mathbb{G}})^{2}ds+ \int_{0}^{T\wedge\tau}e^{\alpha s}(\delta Z_{s}^{\mathbb{G}})^{2}ds+ \int_{0}^{T\wedge\tau}e^{\alpha t}d[ \delta (M^{\mathbb{G}}+K^{\mathbb{G}}), \delta (M^{\mathbb{G}}+K^{\mathbb{G}})]_{s}\\
&\leq  e^{\alpha (T\wedge\tau)}(\delta \xi)^{2}+2\int_{0}^{T\wedge\tau}e^{\alpha s}|\delta Y_{s}^{\mathbb{G}}|(|\delta f_s|+C_{Lip}(\vert\delta Y^{\mathbb{G}}\vert+\vert\delta Z^{\mathbb{G}}\vert))ds+2(e^{\alpha(\tau\wedge\cdot)}(\delta Y_{-}^{\mathbb{G}})\is \delta K^{\mathbb{G}})_{T\wedge\tau}+L^{\mathbb G}_T\\
%%&+\text{Local martingale}\\
%%
&=e^{\alpha (T\wedge\tau)}(\delta \xi)^{2}+2\int_{0}^{T\wedge\tau}e^{\alpha s}\vert\delta Y_{s}^{\mathbb{G}}\vert\vert\delta f_s\vert ds+2\int_{0}^{T\wedge\tau}e^{\alpha s}C_{Lip}\vert\delta Y_{s}^{\mathbb{G}}\vert^{2}ds \\
&+2\int_{0}^{T\wedge\tau}e^{\alpha s}C_{Lip}\vert\delta Y_{s}^{\mathbb{G}}\vert\vert\delta Z^{\mathbb{G}}\vert ds+2\int_{0}^{T\wedge\tau}e^{\alpha s}(\delta Y_{s-}^{\mathbb{G}})d\delta K_{s}^{\mathbb{G}}+L^{\mathbb G}_T\\
&\overset{Young}{\leq}e^{\alpha (T\wedge\tau)}(\delta \xi)^{2}+\frac{1}{\epsilon}\int_{0}^{T\wedge\tau}e^{\alpha s}\vert\delta Y_{s}^{\mathbb{G}}\vert^{2}ds+\epsilon\int_{0}^{T\wedge\tau}e^{\alpha s}\vert\delta f_s\vert^{2} ds+2\int_{0}^{T\wedge\tau}e^{\alpha s}C_{Lip}\vert\delta Y_{s}^{\mathbb{G}}\vert^{2}ds \\&+2C_{Lip}^{2}\int_{0}^{T\wedge\tau}e^{\alpha s}\vert\delta Y_{s}^{\mathbb{G}}\vert^{2}ds+\frac{1}{2}\int_{0}^{T\wedge\tau}e^{\alpha s}\vert\delta Z^{\mathbb{G}}\vert^{2}ds+2\int_{0}^{T\wedge\tau}e^{\alpha s}(\delta Y_{s-}^{\mathbb{G}})d\delta K_{s}^{\mathbb{G}}+L^{\mathbb G}_T.
\end{align*}
Therefore, by arranging terms in the last inequality, we obtain 
\begin{align}
&\overbrace{(\alpha-2C_{Lip}-2C_{Lip}^{2}-{\epsilon}^{-1})}^{C} \int_{0}^{T\wedge\tau}e^{\alpha s}(\delta Y_{s}^{\mathbb{G}})^{2}ds+\frac{1}{2} \int_{0}^{T\wedge\tau}e^{\alpha s}(\delta Z_{s}^{\mathbb{G}})^{2}ds+ \int_{0}^{T\wedge\tau}e^{\alpha t}d[ \delta M^{\mathbb{G}},  \delta M^{\mathbb{G}}]_{s}\nonumber\\
&\leq e^{\alpha (T\wedge\tau)}(\delta \xi)^{2}+\epsilon\int_{0}^{T\wedge\tau}e^{\alpha s}\vert\delta f_s\vert^{2}ds+2\int_{0}^{T\wedge\tau}e^{\alpha s}(\delta Y_{s-}^{\mathbb{G}})d\delta K_{s}^{\mathbb{G}}+L^{\mathbb G}_T\label{equa399}\\
&\leq  e^{\alpha (T\wedge\tau)}(\delta \xi)^{2}+\epsilon\int_{0}^{T\wedge\tau}e^{\alpha s}\vert\delta f_s\vert^{2}ds+2\int_{0}^{T\wedge\tau}e^{\alpha s}\vert\delta S_{s-}\vert d\mbox{Var}_s(\delta{K^{\mathbb{G}}})+L^{\mathbb G}_T.\label{equa400}\end{align}
The last inequality is due to $e^{\alpha (\tau\wedge\cdot)}(\delta Y_{-}^{\mathbb{G}})\is \delta K^{\mathbb{G}}\leq e^{\alpha (\tau\wedge\cdot)}(\delta S_{-}^{\mathbb{G}})\is \delta K^{\mathbb{G}}\leq e^{\alpha (\tau\wedge\cdot)}\vert\delta S_{-}^{\mathbb{G}}\vert\is \mbox{Var}(\delta K^{\mathbb{G}})$ that follows from Skorokhod's condition. Furthermore, by applying Lemma \ref{Lemma4.8FromChoulliThesis} to $L^{\mathbb G}$ given in (\ref{LG}) with $a=b=p$ and Doob's inequality afterwards, there exists a constant $\kappa=\kappa(p)>0$ that depends on $p$ only such that
\begin{align}\label{Control4LG}
 &\Vert \vert L^{\mathbb G}\vert^{1/2}\Vert_{\mathbb{D}_{T\wedge\tau}(\widetilde{Q},p)}\nonumber\\
 &\leq \sqrt{\kappa(1+C_{DB})}\left\{\Vert{e}^{\alpha\cdot/2}\is\delta M^{\mathbb{G}}\Vert_{{\cal{M}}^p_T(\widetilde{Q})} +\Vert{e}^{\alpha\cdot/2}\delta Z^{\mathbb{G}}\Vert_{\mathbb{S}_{T\wedge\tau}(\widetilde{Q},p)}\right\}^{1/2}\Vert\delta Y^{\mathbb G}\Vert_{\mathbb{D}_{T\wedge\tau}(\widetilde{Q},p)}^{1/2}\nonumber\\
%&E^{\widetilde Q}\left[\sup_{0\leq t\leq T} \vert L^{\mathbb G}_t\vert^{p/2}\right]\nonumber \\
%&\leq \kappa\left\{E^{\widetilde Q}\left[([ e^{\alpha s/2}\is\delta M^{\mathbb{G}}]_T+\int_{0}^{T\wedge\tau}e^{\alpha s}(\delta Z_{s}^{\mathbb{G}})^{2}ds)^{p/2}\right]^{1/2}\right\}\Vert {e^{\alpha(\tau\wedge\cdot)/2}}\vert\delta Y^{\mathbb G}\vert\Vert_{\mathbb{D}^{p}_{T}(\widetilde{Q},\mathbb{G})}^{p/2}\nonumber\\
&\leq \epsilon_1\Vert{e}^{\alpha\cdot/2}\is\delta M^{\mathbb{G}}\Vert_{{\cal{M}}^p_T(\widetilde{Q})} +\epsilon_1\Vert{e}^{\alpha\cdot/2}\delta Z^{\mathbb{G}}\Vert_{\mathbb{S}_{T\wedge\tau}(\widetilde{Q},p)}+{{\kappa(1+C_{DB})}\over{\epsilon_1}}\Vert {e^{\alpha\cdot/2}}\delta Y^{\mathbb G}\Vert_{\mathbb{D}_{T\wedge\tau}(\widetilde{Q},p)}.
\end{align}
Therefore, by combining (\ref{equa400}), (\ref{Control4LG}) and (\ref{Control4supYG}) and choosing adequately $\alpha, \epsilon, \epsilon_1$ and using $n^{-1}\sum_{i=1}^n x_i^{p/2}\leq (\sum_{i=1}^n x_i)^{p/2}\leq n^{p/2}\sum_{i=1}^n x_i^{p/2}$ for any positive integer and any sequence of nonnegative number $x_i$, we derive 
\begin{align*}
&{1\over{3}}\left\{\sqrt{C}\Vert{e^{\alpha\cdot/2}} \delta Y^{\mathbb{G}}\Vert_{\mathbb{S}_{T\wedge\tau}(\widetilde{Q},p)} +2^{-1} \Vert{e^{\alpha\cdot/2}} \delta Z^{\mathbb{G}}\Vert_{\mathbb{S}_{T\wedge\tau}(\widetilde{Q},p)}+\Vert{e^{\alpha(\tau\wedge\cdot)/2}} \is \delta {M^{\mathbb{G}}}\Vert_{{\cal{M}}^{p}_T(\widetilde{Q})}\right\}\\
&\leq  \epsilon\Vert{e^{\alpha\cdot/2}} \delta f\Vert_{\mathbb{S}_{T\wedge\tau}(\widetilde{Q},p)}+\Vert {e^{\alpha(T\wedge\tau)/2}} \delta \xi\Vert_{\mathbb{L}^{p}(\widetilde{Q})}+\sqrt{2}\Vert {e^{\alpha\cdot/2}} \delta S\Vert_{\mathbb{D}_{T\wedge\tau}(\widetilde{Q},p)}^{1/2}\Vert \rm{Var}_T(e^{\alpha\cdot/2}\is\delta K^{\mathbb G}) \Vert_{\mathbb{L}^{p}(\widetilde{Q})}^{1/2}\\
&+\Vert\vert L^{\mathbb G}\vert^{1/2}\Vert_{\mathbb{D}_{T\wedge\tau}(\widetilde{Q},p)},\\
&\leq  \epsilon\Vert{e^{\alpha(\tau\wedge\cdot)/2}} \delta f\Vert_{\mathbb{S}_{T\wedge\tau}(\widetilde{Q})}+\Vert {e^{\alpha(T\wedge\tau)/2}} \delta \xi\Vert_{\mathbb{L}^{p}(\widetilde{Q})}+\sqrt{2}\Vert {e^{\alpha\cdot/2}} \delta S\Vert_{\mathbb{D}_{T\wedge\tau}(\widetilde{Q},p)}^{1/2}\Vert \rm{Var}_T(e^{\alpha\cdot/2}\is\delta K^{\mathbb G}) \Vert_{\mathbb{L}^{p}(\widetilde{Q})}^{1/2}\\
&+{\epsilon_1}\Vert{e^{\alpha(\tau\wedge\cdot)/2}} \is \delta {M^{\mathbb{G}}}\Vert_{{\cal{M}}_T^p(\widetilde{Q})}+{\epsilon_1}\Vert{e^{\alpha\cdot/2}} \delta Z^{\mathbb{G}}\Vert_{\mathbb{S}_{T\wedge\tau}(\widetilde{Q},p)}+{{\kappa(1+C_{DB})}\over{\epsilon_1}}\Vert {e^{\alpha\cdot/2}}\delta Y^{\mathbb G}\Vert_{\mathbb{D}_{T\wedge\tau}(\widetilde{Q},p)}.
\end{align*}
Then by combining this equality with (\ref{Control4supYG}) and (\ref{equa400}) we obtain
\begin{align}\label{BeforeLast}
&\Vert {e^{\alpha\cdot/2}}\delta Y^{\mathbb G} \Vert_{\mathbb{D}_{T\wedge\tau}(\widetilde{Q},\mathbb{G})}+C_1\Vert{e^{\alpha\cdot/2}} \delta Y^{\mathbb{G}}\Vert_{\mathbb{S}_{T\wedge\tau}(\widetilde{Q},p)}+C_2\Vert{e^{\alpha\cdot/2}} \delta Z^{\mathbb{G}}\Vert_{\mathbb{S}_{T\wedge\tau}(\widetilde{Q},p)}+C_3\Vert{e^{\alpha(\tau\wedge\cdot)/2}} \is \delta {M^{\mathbb{G}}}\Vert_{{\cal M}^{p}(\widetilde{Q})}\nonumber\\
& \leq {C}_4 \Vert{e^{\alpha\cdot/2}} \delta f\Vert_{\mathbb{S}_{T\wedge\tau}(\widetilde{Q},p)}+C_5\Vert {e^{\alpha(T\wedge\tau)/2}} \delta \xi\Vert_{\mathbb{L}^{p}(\widetilde{Q})}+{C}_6\Vert {e^{\alpha\cdot/2}} \delta S\Vert_{\mathbb{D}_{T\wedge\tau}(\widetilde{Q},p)}\nonumber\\
&+{C}_7\sqrt{\Vert {e^{\alpha\cdot/2}} \delta S\Vert_{\mathbb{D}_{T\wedge\tau}(\widetilde{Q},p)}\Vert \rm{Var}_T(e^{\alpha\cdot/2}\is\delta K^{\mathbb G}) \Vert_{\mathbb{L}^{p}(\widetilde{Q})}},
\end{align}
where ${C}_i$, $i=1,...,7$ are given by  (\ref{Constants4estimates}). 
%\begin{align*}
%&\overline{C}_1:={{\sqrt{C}}\over{3}}-\left({{\kappa(1+C_{DB})}\over{\epsilon_1}}+1\right){{C_{DB}C_{Lip}}\over{\sqrt{\alpha}}},\nonumber\\
%&\overline{C}_2:={1\over{3\sqrt{2}}}-\epsilon_1-\left({{\kappa(1+C_{DB})}\over{\epsilon_1}}+1\right) {{C_{DB}C_{Lip}}\over{\sqrt{\alpha}}},\quad \overline{C}_3:={1\over{3}}-\epsilon_1,\quad \overline{C}_4:=\sqrt{\epsilon}+\left({{\kappa(1+C_{DB})}\over{\epsilon_1}}+1\right){{C_{DB}}\over{\sqrt{\alpha}}},\nonumber\\
%& \overline{C}_5:=1+\left({{\kappa(1+C_{DB})}\over{\epsilon_1}}+1\right)C_{DB},\quad \overline{C}_6:=\left({{\kappa(1+C_{DB})}\over{\epsilon_1}}+1\right)C_{DB},\quad \overline{C}_7:=\sqrt{2}.
%\end{align*}
Then here we take $\epsilon=2/{\alpha}$,  $\epsilon_1=(3-\sqrt{8})/9\sqrt{2}$ and $\alpha>\alpha_1(p)$,  and remark that $0<{C}_2\leq\min({C}_1,{C}_3)$. Furthermore, in virtue of Theorem \ref{WhyNot} with $\sigma=0$, we get 
$$\Vert \rm{Var}_T(e^{\alpha\cdot/2}\is\delta K^{\mathbb G}) \Vert_{\mathbb{L}^{p}(\widetilde{Q})}\leq \Vert(e^{\alpha\cdot/2}\is K^{\mathbb G,1})_T\Vert_{\mathbb{L}^{p}(\widetilde{Q})}+\Vert({e}^{\alpha\cdot/2}\is{K}^{\mathbb G,2})_T\Vert_{\mathbb{L}^{p}(\widetilde{Q})}\leq \widehat{C}\sum_{i=1}^2\Delta(f^{(i)}, S^{(i)}, \xi^{(i)}).$$
Therefore, by plugging this inequality in (\ref{BeforeLast}), the inequality (\ref{MainDeltaInequality}) follows immediately with
$$
\widehat{C}_1={{ {C}_4}\over{ {C}_2}},\quad  \widehat{C}_2={{ {C}_5}\over{{C}_2}},\quad \widehat{C}_3={{ {C}_6}\over{ {C}_2}},\quad  \widehat{C}_4={{ {C}_7\sqrt{\widehat{C}}}\over{ {C}_2}}.
$$
It is also clear that $\widehat{C}_1$ goes to zero when $\alpha$ goes to infinity. This ends the proof of the theorem. 
\end{proof}
%%%%%%%%%%%%%%%%%%%%%%%%%%%%%%%%%%%%%%%%%%%%%%%%%%%%%%%%%%%%%%%%%%%%%%%%%
 %%%%%%%%%%%%%%%%%%%%%%%%%%%%%%%%%%%%%%%%%%%%%%%%%%%%%%%%%%%%%%%%%%%%%%%%%
%%%%%%%%%%%%%%%%%%%%%%%%%%%%%%%%%%%%%%%%%%%%%%%%%%%%%%%%%%%%%%%
\subsection{Existence, uniqueness and relationship to $\mathbb F$-RBSDEs}\label{GeneralRBSDEfromG2F}
%%%%%%%%%%%%%%%%%%%%%%%%%%%%%%%%%%%%%
In this subsection, we elaborate our results on the existence and uniqueness of the solution to (\ref{nonLinear}), and describe the form of its $\mathbb F$-RBSDE counter part. To this end, we assume that there exists $\alpha>\max(\alpha_0(p),\alpha_1(p))$ such that 
\begin{eqnarray}\label{MainAssumption4NonlinearBounded}
&&E\left[{\widetilde{\cal E}}_T{\cal K}_T(f,S,h)+\int_0^T {\cal K}_s(f,S,h) dV^{\mathbb F}_s\right]<+\infty,\end{eqnarray}
where 
\begin{eqnarray}\label{CalKprocess}
{\cal K}_t(f,S,h):=\vert{h}_t\vert^p+\left(\int_0^{t}\vert f(s,0,0)\vert^2 ds\right)^{p/2}+\sup_{0\leq u\leq{t}}(S_u^+)^p,\quad{t}\geq 0.\end{eqnarray}

%%%%%%%%%%%%%%%%%%%%%%%%%%%%%%%%%%%%%%%%%%%%%%%%%%%%%%%%%%%%%%%%%%%%%%%%
%%%%%%%%%%%%%%%%%%%%%%%%%%%%%%%%%%%%%%%%%%%%%%%%%%%%%%%%%%%%%%%%%%%%%
One of the main obstacles, herein, lies in guessing the form of the $\mathbb F$-RBSDE that corresponds to (\ref{nonLinear}).  To overcome this challenge, we appeal to the linear case and the known method of approximating the solution to the general RBSDE (\ref{nonLinear}) by the sequence of solutions to linear RBSDEs --as it is adopted in \cite{Bouchard} and the references therein--. This is the aim of the following remark. 
 \begin{remark} Following the footsteps of \cite{Bouchard} and the main stream of BSDE literature, we define the sequence of linear RBSDEs under $\mathbb G$, whose solutions approximate the solution to the general RBSDE (\ref{nonLinear}). Thus, we consider the sequence $\left(Y^{\mathbb{G},n},Z^{\mathbb{G},n},M^{\mathbb{G},n},K^{\mathbb{G},n}\right)$ defined recursively as follows.
 \begin{eqnarray*}
 && (Y^{\mathbb{G},0},Z^{\mathbb{G},0},M^{\mathbb{G},0},K^{\mathbb{G},0}):=(0,0,0,0),\\
 &&\mbox{for any}\ n\geq 1,\quad  \left(Y^{\mathbb{G},n},Z^{\mathbb{G},n},M^{\mathbb{G},n},K^{\mathbb{G},n}\right)\ \mbox{is the unique solution to}:\\
 &&\begin{cases}
 Y_{t}=\xi+\displaystyle\int_{t\wedge\tau}^{T\wedge\tau}f(s,Y_{s}^{\mathbb{G},n-1}, Z_{s}^{\mathbb{G},n-1})ds+\int_{t\wedge\tau}^{T\wedge\tau}dM_s+\int_{t\wedge\tau}^{T\wedge\tau}dK_{s}-\int_{t\wedge\tau}^{T\wedge\tau}Z_{s}dW_{s},\\
 Y\geq S\quad \mbox{on}\ \Rbrack0,\tau\Lbrack,\quad\displaystyle\int_{0}^{T\wedge\tau}(Y_{t-}-S_{t-})dK_{t}=0.\end{cases}
 \end{eqnarray*}
 Thus, from this recursive sequence of solutions, and thanks to the linear part fully analyzed in Sections \ref{LinearboundedSection} and \ref{LinearUnboundedSection}, we obtain a sequence of RBSDEs under $\mathbb F$ and their solutions. This can be achieved by determining $\left(Y^{\mathbb{F},n},Z^{\mathbb{F},n},K^{\mathbb{F},n}\right)$ associated to $\left(Y^{\mathbb{G},n},Z^{\mathbb{G},n},M^{\mathbb{G},n},K^{\mathbb{G},n}\right)$ for each $n\geq 0$ as follows.
 \begin{enumerate}
 \item As ($Y^{\mathbb{G},0},Z^{\mathbb{G},0},M^{\mathbb{G},0},K^{\mathbb{G},0}$):=($0,0,0,0$), then we get ($Y^{\mathbb{F},0},Z^{\mathbb{F},0},K^{\mathbb{F},0}$):=($0,0,0$).
 \item For $n=1$,  $\left(Y^{\mathbb{G},1},Z^{\mathbb{G},1},M^{\mathbb{G},1},K^{\mathbb{G},1}\right)$ is the solution to 
 \begin{eqnarray*}
Y_{t}=\xi+\int_{t\wedge\tau}^{T\wedge\tau}f(s,0,0)ds+\int_{t\wedge\tau}^{T\wedge\tau}dM_{s}+\int_{t\wedge\tau}^{T\wedge\tau}dK_{s}-\int_{t\wedge\tau}^{T\wedge\tau}Z_{s}dW_{s}.
\end{eqnarray*}
Here, the generator/driver is constant in ($Y,Z, M, K$), and hence in virtue of Theorem \ref{abcde} there exists a unique $(Y^{\mathbb{F},1},Z^{\mathbb{F},1},K^{\mathbb{F},1})$ solution to the RBSDE (\ref{RBSDEF}) with generator/driver $f^{\mathbb{F},1}(s):={\widetilde{\cal E}}_{s}f(s,0,0)$ and
 \begin{eqnarray}\label{EqualityYG2YF1}
   Y^{\mathbb{G},1}= \frac{Y^{\mathbb{F},1}}{{\widetilde{\cal E}}}I_{\Lbrack0,\tau\Lbrack}+\xi1_{\Lbrack\tau,+\infty\Lbrack},\ 
  Z^{\mathbb{G},1}=\frac{Z^{\mathbb{F},1}}{{\widetilde{\cal E}}_{-}},\ 
   K^{\mathbb{G},1}=\frac{1}{{\widetilde{\cal E}}_{-}}\is K ^{\mathbb{F},1},\ 
      M^{\mathbb{G},1}=\left(h-\frac{Y^{\mathbb{F},1}}{{\widetilde{\cal E}}}\right)\is N^{\mathbb{G}}.
       \end{eqnarray}
\item For $n=2$, $\left(Y^{\mathbb{G},2},Z^{\mathbb{G},2},M^{\mathbb{G},2},K^{\mathbb{G},2}\right)$ is the solution to 
 \begin{align*}
Y_{t}&=\xi+\int_{t\wedge\tau}^{T\wedge\tau}f(s,Y_{s}^{\mathbb{G},1}, Z_{s}^{\mathbb{G},1})ds+\int_{t\wedge\tau}^{T\wedge\tau}dM_{s}+\int_{t\wedge\tau}^{T\wedge\tau}dK_{s}-\int_{t\wedge\tau}^{T\wedge\tau}Z_{s}dW_{s}.
\end{align*}
Thus, by plugging (\ref{EqualityYG2YF1}) in this equation, we obtain
 \begin{align*}
&Y_{t}=\xi+\int_{t\wedge\tau}^{T\wedge\tau}f(s,\frac{Y_{s}^{\mathbb{F},1}}{{\widetilde{\cal E}}_{s}}, \frac{Z_{s}^{\mathbb{F},1}}{{\widetilde{\cal E}}_{s-}})ds+\int_{t\wedge\tau}^{T\wedge\tau}dM_{s}+\int_{t\wedge\tau}^{T\wedge\tau}dK_{s}-\int_{t\wedge\tau}^{T\wedge\tau}Z_{s}dW_{s}.
\end{align*}
The generator here does not depend on $(Y,Z, M, K)$. Hence, again, Theorem \ref{abcde} yields the existence of a unique $(Y^{\mathbb{F},2},Z^{\mathbb{F},2},K^{\mathbb{F},2})$ solution to the RBSDE (\ref{RBSDEF}) under $\mathbb F$ with generator/driver
 $
 f^{\mathbb{F},2}(s):={\widetilde{\cal E}}_{s}f\left(s,{Y_{s}^{\mathbb{F},1}/{\widetilde{\cal E}}_{s}}, {Z_{s}^{\mathbb{F},1}/{\widetilde{\cal E}}_{s-}}\right),$
 and 
 \begin{eqnarray*}
   Y_{t}^{\mathbb{G},2}= \frac{Y_{t}^{\mathbb{F},2}}{{\widetilde{\cal E}}_{t}}1_{\{t\ <\tau\}}+\xi1_{\{t\geq\tau\}},\ 
  Z^{\mathbb{G},2}=\frac{Z^{\mathbb{F},2}}{{\widetilde{\cal E}}_{-}},\quad 
   K^{\mathbb{G},2}=\frac{1}{{\widetilde{\cal E}}_{-}}\is K ^{\mathbb{F},2},\ \mbox{and}\ 
      M^{\mathbb{G},2}=\left(h-\frac{Y^{\mathbb{F},2}}{{\widetilde{\cal E}}}\right)\is N^{\mathbb{G}}.
       \end{eqnarray*}
\item By iterating this procedure, we get the sequence $\left(Y^{\mathbb{F},n},Z^{\mathbb{F},n},K^{\mathbb{F},n}\right)$ defined recursively as follows.
 \begin{eqnarray*}
 && (Y^{\mathbb{F},0},Z^{\mathbb{F},0},K^{\mathbb{F},0}):=(0,0,0,0),\\
 &&
Y^{\mathbb{F},n}_{t}= \displaystyle\xi^{\mathbb{F}}+\int_{t}^{T}f^{\mathbb{F}}(s,Y^{{\mathbb{F}},n-1}_{s},Z^{{\mathbb{F}},n-1}_{s})ds+\int_{t}^{T}h_{s}dV^{\mathbb{F}}_{s}+K^{\mathbb{F},n}_{T}-K^{\mathbb{F},n}_{t}-\int_{t}^{T}Z^{\mathbb{F},n}_{s}dW_{s},\\
&&Y^{\mathbb{F},n}_{t}\geq S_{t}^{\mathbb{F}}1_{\{t\ <T\}}+\xi^{\mathbb{F}}1_{\{t\ =T\}},\quad\displaystyle\int_{0}^{T}(Y^{\mathbb{F},n}_{t-}-S_{t-}^{\mathbb{F}})dK^{\mathbb{F},n}_{t}=0,\end{eqnarray*}
 \end{enumerate}
where $f^{\mathbb{F}}(s,y,z):={\widetilde{\cal E}_s}f\left(s,y({\widetilde{\cal E}_s})^{-1},z({\widetilde{\cal E}_s})^{-1}\right)$. Thus, thanks to the convergence of $\left(Y^{\mathbb{G},n},Z^{\mathbb{G},n},M^{\mathbb{G},n},K^{\mathbb{G},n}\right)$ and the relationship (\ref{secondrelation}), we deduce that $\left(Y^{\mathbb{F},n},Z^{\mathbb{F},n},K^{\mathbb{F},n}\right)$ should also converge to $ \left(Y^{\mathbb{F}},Z^{\mathbb{F}},K^{\mathbb{F}}\right)$, and this triplet satisfies \begin{eqnarray*}\label{RBSDEFsequence}
\begin{cases}
Y_{t}= \displaystyle\xi^{\mathbb{F}}+\int_{t}^{T}f^{\mathbb F}(s,Y_s, Z_{s})ds+\int_{t}^{T}h_{s}dV^{\mathbb{F}}_{s}+K_{T}-K_{t}-\int_{t}^{T}Z_{s}dW_{s},\\
Y_{t}\geq S_{t}^{\mathbb{F}}1_{\{t\ <T\}}+\xi^{\mathbb{F}}1_{\{t\ =T\}},\quad\displaystyle\int_{0}^{T}(Y_{t-}-S_{t-}^{\mathbb{F}})dK_{t}=0.\end{cases}\end{eqnarray*}
 This gives us the RBSDE under $\mathbb F$ that we are looking for, and this also shows the importance of analyzing the linear case separately besides its own importance. 
\end{remark}
In the following, we elaborate our main result on how to connect RBSDE in $\mathbb G$ with those in $\mathbb F$.
%%%%%%%%%%%%%%%%%%%%%%%%%%%%%%%%%%%%%%%%%%%%%%%%%%%%%%%%%%%%%%%%%%%%%
%%%%%%%%%%%%%%%%%%%%%%%%%%%%%%%%%%%%%%%%%%%%%%%%%%%%%%%%%%%%%%%%%%%
%%%%%%%%%%%%%%%%%%%%%%%%%%%%%%%%%%%%%%%%%%%%%%%%%%%%%%%%%%%%%%%%%
%%%%%%%%%%%%%%%%%%%%%%%%%%%%%%%%%%%%%%%%%%%%%%%%%%%%%%%%%%%%%%%%%%%%%%%%%%\newpage%%%%%%%%%%%%%%%%%%%%%%%%%%%%%%%%%%%%%%%%%%%%%%%%%%%%%%%%%%%%%%%%%%%%%%%%%%%%%%%%%%%%%%%\end{theorem}%%%%

\begin{theorem}\label{alkd} Suppose $G>0$ and both (\ref{LipschitzAssumption})and (\ref{MainAssumption4NonlinearBounded}) hold. Then the following assertions hold.\\
{\rm{(a)}} The following RBSDE under $\mathbb F$, associated to the triplet $(S^{\mathbb{F}}, \xi^{\mathbb{F}},f^{\mathbb F})$,
 \begin{eqnarray}\label{RBSDEFGENERAL}
\begin{cases}
Y_{t}= \displaystyle\xi^{\mathbb{F}}+\int_{t}^{T}f^{\mathbb{F}}(s,Y_{s},Z_{s})ds+\int_{t}^{T}h_{s}dV^{\mathbb{F}}_{s}+K_{T}-K_{t}-\int_{t}^{T}Z_{s}dW_{s},\\
\\
Y_{t}\geq S_{t}^{\mathbb{F}},\quad t\in[0,T),\quad \displaystyle\int_{0}^{T}(Y_{t-}-S_{t-}^{\mathbb{F}})dK_{t}=0,
\end{cases}
\end{eqnarray} 
  has a unique $L^p(P,\mathbb F)$-solution that we denote by $(Y^{\mathbb{F}},Z^{{\mathbb{F}}},K^{\mathbb{F}})$, where
\begin{eqnarray}\label{Data4RBSDE(F)}
f^{\mathbb{F}}(s,y,z):={\widetilde{\cal E}}_{s}f\left(s,y{\widetilde{\cal E}}_{s}^{-1},z{\widetilde{\cal E}}_{s}^{-1}\right),\quad S^{\mathbb{F}}:={\widetilde{\cal E}}S,\quad \xi^{\mathbb{F}}:={\widetilde{\cal E}}_{T}h_{T},\quad\mbox{and}\quad {\widetilde{\cal E}}:={\cal E}(-{\widetilde{G}}^{-1}\is D^{o,\mathbb F}).
\end{eqnarray}
{\rm{(b)}} There exists a unique solution to (\ref{nonLinear}), denoted by ($Y^{\mathbb{G}},Z^{\mathbb{G}},M^{\mathbb{G}},K^{\mathbb{G}}$), and is given by  
\begin{eqnarray}
   Y^{\mathbb{G}}= \frac{Y^{\mathbb{F}}}{{\widetilde{\cal E}}}I_{\Lbrack0, \tau\Lbrack}+\xi I_{\Lbrack\tau,+\infty\Lbrack},\quad
  Z^{\mathbb{G}}=\frac{Z^{\mathbb{F}}}{{\widetilde{\cal E}}},\quad K^{\mathbb{G}}=\frac{1}{{\widetilde{\cal E}_{-}}}\is (K ^{\mathbb{F}})^{\tau}\quad\mbox{and}\quad 
      M^{\mathbb{G}}=\left(h-\frac{Y^{\mathbb{F}}}{{\widetilde{\cal E}}}\right)\is N^{\mathbb{G}}.\label{secondrelationn}
       \end{eqnarray}
     \end{theorem}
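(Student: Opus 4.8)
The plan is to prove assertions (a) and (b) together through the coupled Picard scheme sketched in the preceding remark, reducing each step to the linear theory of Theorem~\ref{abcde} and forcing convergence with the stability estimate of Theorem~\ref{uniquenessNonlinear}. I define the $\mathbb G$-iterates $(Y^{\mathbb G,n},Z^{\mathbb G,n},M^{\mathbb G,n},K^{\mathbb G,n})$ recursively, starting from $(0,0,0,0)$ and, for $n\ge 1$, as the solution of the \emph{linear} RBSDE (\ref{RBSDEG}) with frozen driver $g^n(s):=f(s,Y^{\mathbb G,n-1}_s,Z^{\mathbb G,n-1}_s)$, the same barrier $S$, and the same terminal value $\xi$. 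The first point to settle is that the recursion is legitimate, i.e.\ that $(g^n,S,\xi)$ meets the integrability requirement (\ref{MainAssumption}) needed to invoke Theorem~\ref{abcde}. Here the discounted $P$-condition (\ref{MainAssumption4NonlinearBounded}) is decisive: by Lemma~\ref{ExpecationQtilde2P} it is equivalent to $E^{\widetilde Q}[{\cal K}_{T\wedge\tau}(f,S,h)]<+\infty$, which supplies the $\widetilde Q$-integrability of $|\xi|^p=|h_{T\wedge\tau}|^p$, of $\big(\int_0^{T\wedge\tau}|f(s,0,0)|^2ds\big)^{p/2}$ and of $\sup_{0\le u\le T\wedge\tau}(S^+_u)^p$; combining this with the Lipschitz bound $|g^n|\le |f(\cdot,0,0)|+C_{Lip}(|Y^{\mathbb G,n-1}|+|Z^{\mathbb G,n-1}|)$, Cauchy--Schwarz in $s$, and the a priori estimate of Theorem~\ref{WhyNot} applied to the previous iterate, an induction shows (\ref{MainAssumption}) holds at every step. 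Thus Theorem~\ref{abcde} produces, for each $n$, both the $\mathbb G$-iterate and its $\mathbb F$-counterpart $(Y^{\mathbb F,n},Z^{\mathbb F,n},K^{\mathbb F,n})$, the unique $L^p(P,\mathbb F)$-solution of the linear $\mathbb F$-RBSDE with driver $\widetilde{\cal E}g^n$, linked by the relations (\ref{secondrelation}).

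For the contraction I regard $Y^{\mathbb G,n}$ and $Y^{\mathbb G,n+1}$ as solutions of (\ref{nonLinear}) for the two $(y,z)$-independent drivers $g^n$ and $g^{n+1}$ and identical $(S,\xi)$, and apply Theorem~\ref{uniquenessNonlinear}. Since $\delta S=0$, $\delta\xi=0$, and the relevant $\delta f_s=g^{n+1}(s)-g^n(s)=f(s,Y^{\mathbb G,n}_s,Z^{\mathbb G,n}_s)-f(s,Y^{\mathbb G,n-1}_s,Z^{\mathbb G,n-1}_s)$ obeys $|\delta f_s|\le C_{Lip}(|\delta_nY^{\mathbb G}_s|+|\delta_nZ^{\mathbb G}_s|)$ by (\ref{LipschitzAssumption}), estimate (\ref{MainDeltaInequality}) yields
\begin{align*}
\|e^{\alpha\cdot/2}\delta_{n+1}Y^{\mathbb G}\|_{\mathbb S_{T\wedge\tau}(\widetilde Q,p)}+\|e^{\alpha\cdot/2}\delta_{n+1}Z^{\mathbb G}\|_{\mathbb S_{T\wedge\tau}(\widetilde Q,p)}\le \widehat C_1\,C_{Lip}\Big(\|e^{\alpha\cdot/2}\delta_nY^{\mathbb G}\|_{\mathbb S_{T\wedge\tau}(\widetilde Q,p)}+\|e^{\alpha\cdot/2}\delta_nZ^{\mathbb G}\|_{\mathbb S_{T\wedge\tau}(\widetilde Q,p)}\Big),
\end{align*}
where $\delta_nX:=X^{n}-X^{n-1}$. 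As $\widehat C_1\to0$ when $\alpha\to\infty$, I fix $\alpha>\max(\alpha_0(p),\alpha_1(p))$ large enough that $\widehat C_1C_{Lip}<1$; the iteration is then a contraction in the weighted $\mathbb S$-norm of the pair $(Y,Z)$, so the sequence is Cauchy, and the remaining terms of (\ref{MainDeltaInequality}) make $M^{\mathbb G,n}$ and $K^{\mathbb G,n}$ Cauchy in their norms as well. Call the limit $(Y^{\mathbb G},Z^{\mathbb G},M^{\mathbb G},K^{\mathbb G})$.

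It remains to identify the limits and carry (\ref{secondrelationn}) across. Passing to the limit in the recursion shows $(Y^{\mathbb G},Z^{\mathbb G},M^{\mathbb G},K^{\mathbb G})$ solves (\ref{nonLinear}), its $L^p(\widetilde Q,\mathbb G)$-membership coming from Theorem~\ref{WhyNot} with $\sigma=0$, and its uniqueness being immediate from Theorem~\ref{uniquenessNonlinear} with $\delta f=\delta S=\delta\xi=0$. In parallel, the step-$n$ links force $(Y^{\mathbb F,n},Z^{\mathbb F,n},K^{\mathbb F,n})$ to converge --- the passage between the $\widetilde Q$-norms and the $P$-norms being controlled by Lemmas~\ref{technicallemma1} and \ref{ExpecationQtilde2P} --- to a triplet $(Y^{\mathbb F},Z^{\mathbb F},K^{\mathbb F})$; because the frozen drivers $\widetilde{\cal E}g^n$ converge to $f^{\mathbb F}(\cdot,Y^{\mathbb F},Z^{\mathbb F})$, this triplet solves (\ref{RBSDEFGENERAL}), and its uniqueness follows from the correspondence (b) combined with the $\mathbb G$-uniqueness already obtained. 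The limiting identities (\ref{secondrelationn}) are then read off by letting $n\to\infty$ in the finite-$n$ relations; the direct check that the right-hand sides of (\ref{secondrelationn}) do solve (\ref{nonLinear}) reproduces the It\^o computation of the proof of Theorem~\ref{abcde}(b) verbatim, the one new feature being that the choice $f^{\mathbb F}(s,y,z)=\widetilde{\cal E}_sf(s,y\widetilde{\cal E}_s^{-1},z\widetilde{\cal E}_s^{-1})$ in (\ref{Data4RBSDE(F)}) is precisely what collapses $\widetilde{\cal E}_{s-}^{-1}f^{\mathbb F}(s,Y^{\mathbb F}_s,Z^{\mathbb F}_s)\,ds$ to $f(s,Y^{\mathbb G}_s,Z^{\mathbb G}_s)\,d(s\wedge\tau)$. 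I expect the main obstacle to be exactly this integrability and norm bookkeeping across the change of measure: ensuring every iterate satisfies (\ref{MainAssumption}) and that the coupled $\mathbb F$- and $\mathbb G$-sequences converge in mutually compatible norms, which is where the discount factor $\widetilde{\cal E}$ and the translation Lemma~\ref{ExpecationQtilde2P} do the real work.
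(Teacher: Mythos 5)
Your Picard scheme for the $\mathbb G$-equation is sound and is essentially the route the paper itself sketches in the remark preceding the theorem: freezing the driver, solving the resulting linear RBSDE via Theorem \ref{abcde} at each step, and contracting with Theorem \ref{uniquenessNonlinear} (exploiting $\widehat{C}_1\to 0$ as $\alpha\to\infty$) does yield existence and uniqueness for (\ref{nonLinear}), and your induction showing each frozen driver satisfies (\ref{MainAssumption}) is the right bookkeeping. The paper, however, proceeds in the opposite order: it first proves assertion (a) \emph{independently of the $\mathbb G$-side}, by absorbing $\int h_s\,dV^{\mathbb F}_s$ into a shifted triplet $(\widetilde f^{\mathbb F},\widetilde S^{\mathbb F},\widetilde\xi^{\mathbb F})$, checking that (\ref{MainAssumption4NonlinearBounded}) supplies the required $L^p(P)$-integrability of the shifted data, and invoking the known $L^p$-theory of RBSDEs under the Brownian filtration (\cite[Theorem 3.1]{Bouchard}); it then gets (b) by verifying that the quadruplet (\ref{secondrelationn}) built from the $\mathbb F$-solution solves (\ref{nonLinear}) (the It\^o computation you describe), with uniqueness from Theorem \ref{uniquenessNonlinear}.

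The genuine gap is in your derivation of assertion (a). You propose to obtain $(Y^{\mathbb F},Z^{\mathbb F},K^{\mathbb F})$ as the limit of the $\mathbb F$-iterates, with the convergence ``controlled by Lemmas \ref{technicallemma1} and \ref{ExpecationQtilde2P}.'' Those lemmas only transfer $\widetilde Q$-norms of $\mathbb G$-processes into $P$-norms of \emph{discounted} processes on the stochastic interval $\Lbrack 0,T\wedge\tau\Rbrack$, whereas an $L^p(P,\mathbb F)$-solution must be controlled in $\mathbb{D}_{T}(P,p)\otimes\mathbb{S}_{T}(P,p)$ on the whole deterministic interval $[0,T]$. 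From $\delta_nY^{\mathbb F}I_{\Lbrack0,\tau\Lbrack}=\widetilde{\cal E}\,\delta_nY^{\mathbb G}I_{\Lbrack0,\tau\Lbrack}$ one obtains, after projecting onto $\mathbb F$, only a bound on $E\bigl[G_T\sup_{s\le T}\vert\delta_nY^{\mathbb F}_s\vert^p\bigr]$ plus terms weighted by $dD^{o,\mathbb F}$; since $G_T$ is merely positive and not bounded away from zero, this does not force $E\bigl[\sup_{s\le T}\vert\delta_nY^{\mathbb F}_s\vert^p\bigr]\to0$, so the Cauchy property of the $\mathbb F$-iterates is not established. (The analogous objection to deducing $\mathbb F$-uniqueness from $\mathbb G$-uniqueness can be repaired: $Y^1I_{\Lbrack0,\tau\Lbrack}=Y^2I_{\Lbrack0,\tau\Lbrack}$ plus $G>0$ gives $Y^1G=Y^2G$ by optional projection, hence $Y^1=Y^2$; but the quantitative convergence cannot.) To close the gap you must run the contraction directly at the $\mathbb F$-level --- noting that $f^{\mathbb F}(s,y,z)={\widetilde{\cal E}}_sf(s,y{\widetilde{\cal E}}_s^{-1},z{\widetilde{\cal E}}_s^{-1})$ inherits the Lipschitz constant $C_{Lip}$ and using an $L^p(P,\mathbb F)$ stability estimate for RBSDEs under $\mathbb F$, which is precisely what the citation of \cite{Bouchard} (or a bounded-horizon analogue of Lemma \ref{Inequality4BSDEunderF}) provides --- rather than attempting to pull the $\mathbb F$-convergence back from the $\mathbb G$-side.
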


\begin{proof} This is divided into two steps, where we prove assertions (a) and (b) respectively.\\
%%%%%The uniqueness is immediate consequence of theorem \ref{uniquenessNonlinear}.
%%%%%Need to prove existence:\\
{\bf Step 1.} On the one hand, put 
\begin{eqnarray*}
\widetilde{f}^{\mathbb F}(t,y,z):=f^{\mathbb{F}}(t,y-(h\is{V}^{\mathbb{F}})_t,z),\quad \widetilde{S}^{\mathbb F}:=S^{\mathbb F}+h\is{V}^{\mathbb{F}},\quad\mbox{and}\quad \widetilde{\xi}^{\mathbb F}:={\xi}^{\mathbb F}+(h\is{V}^{\mathbb{F}})_T,
\end{eqnarray*}
and remark that $(\overline{Y}, \overline{Z}, \overline{K})$ is a solution to (\ref{RBSDEFGENERAL}) if and only if $(Y', Z', K'):=(\overline{Y}+h\is{V}^{\mathbb{F}}, \overline{Z}, \overline{K})$ is a solution to the following RBSDE
\begin{eqnarray}\label{RBSDEftilde}
\begin{cases}
Y_{t}= \displaystyle\widetilde{\xi}^{\mathbb{F}}+\int_{t}^{T}\widetilde{f}^{\mathbb{F}}(s,Y_{s},Z_s)ds+K_{T}-K_{t}-\int_{t}^{T}Z_{s}dW_{s},\\
Y_{t}\geq \widetilde{S}_{t}^{\mathbb{F}},\quad t\in[0,T),\quad \displaystyle\int_{0}^{T}(Y_{t-}-\widetilde{S}_{t-}^{\mathbb{F}})dK_{t}=0.
\end{cases}\end{eqnarray}
On the other hand, thanks to (\ref{MainAssumption4NonlinearBounded}), we derive 
\begin{align*}
&\Vert\widetilde{\xi}^{\mathbb F}\Vert_{L^p(P)}\leq \Vert{\xi}^{\mathbb F}\Vert_{L^p(P)}+\Vert(\vert{h}\vert\is{V}^{\mathbb{F}})_T\Vert_{L^p(P)}\leq  \Vert{\xi}^{\mathbb F}\Vert_{L^p(P)}+E\left[(\vert{h}\vert^p\is{V}^{\mathbb{F}})_T\right]^{1/p}<+\infty,\\
&\Vert \widetilde{f}^{\mathbb F}(\cdot,0,0)\Vert_{\mathbb{S}_{T}(P,p)}\leq \Vert {f}^{\mathbb F}(\cdot,0,0)\Vert_{\mathbb{S}_{T}(P,p)}+C_{Lip}\Vert(\vert{h}\vert\is{V}^{\mathbb{F}})_T\Vert_{L^p(P)}<+\infty, \\
&\Vert (\widetilde{S}^{\mathbb F})^+\Vert_{\mathbb{D}_{T}(P,p)}\leq \Vert ({S}^{\mathbb F})^+\Vert_{\mathbb{D}_{T}(P,p)}+\Vert(\vert{h}\vert\is{V}^{\mathbb{F}})_T\Vert_{L^p(P)}<+\infty.
\end{align*}
 Therefore,  by combining these inequalities  and \cite[Theorem 3.1]{Bouchard}, we conclude that (\ref{RBSDEftilde}) has a unique solution.This ends the first part. \\
{\bf Step 2.} Here we prove assertion (b). To this end, we remark that due to Theorem \ref{uniquenessNonlinear} the RBSDE (\ref{nonLinear}) has at most one solution. Thus, the proof of assertion (b) will follows immediately as soon as we prove that the quadruplet $(\overline{Y},\overline{Z}, \overline{K}, \overline{M})$, give by 
\begin{eqnarray*}
\overline{Y}:= \frac{Y^{\mathbb{F}}}{{\widetilde{\cal E}}}I_{\Lbrack0, \tau\Lbrack}+\xi I_{\Lbrack\tau,+\infty\Lbrack},\quad
  \overline{Z}:=\frac{Z^{\mathbb{F}}}{{\widetilde{\cal E}}},\quad \overline{K}:=\frac{1}{{\widetilde{\cal E}_{-}}}\is (K ^{\mathbb{F}})^{\tau}\quad\mbox{and}\quad 
      \overline{M}:=\left(h-\frac{Y^{\mathbb{F}}}{{\widetilde{\cal E}}}\right)\is N^{\mathbb{G}},
\end{eqnarray*}
is  in fact a solution to (\ref{nonLinear}). The proof of this latter fact mimics exactly Step 2 in the proof of Theorem \ref{Relationship4InfiniteBSDE}, and will be omitted.   This ends the proof the theorem.
\end{proof}
\section{Stopped general RBSDE: The case of unbounded  horizon}
%%%%%%%%%%%%%%%%%%%%%%%%%%%%%%%%%%%%%%%%
In this section, we study the following RBSDE, 
 %%%%Throughout this chapter let $Q$ be the probability defined by (\ref{QQQ}).
%%% Let us recall our RBSDE. The solution of our RBSDE, in the filtration $\mathbb{G}$, is a triple $(Y^{\mathbb{G}},Z^{\mathbb{G}},U^{\mathbb{G}})\in \mathbb{D}^{p}_{\tau}(P)\times\mathbb{S}^{p}_{\tau}(P)\times\mathbb{U}^{p}_{\tau}(P)$  satisfying:\\
\begin{eqnarray}\label{nonLinearINFINITE}
\begin{cases}
dY_{t}=-f(t,Y_{t},Z_{t})d(t\wedge\tau)-d(K_{t\wedge\tau}+M_{t\wedge\tau})+Z_{t}dW_{t}^{\tau},\\
Y_{\tau}=\xi,\quad Y_{t}\geq S_{t},\quad 0\leq t< \tau,\quad{E}\left[\displaystyle\int_{0}^{\tau}(Y_{t-}-S_{t-})dK_{t}\right]=0,
\end{cases}
\end{eqnarray}
where $(\xi, S, f)$ is such that $S$ is an $\mathbb F$-adapted and RCLL process, $f(t,y,z)$ is a $\mbox{Prog}(\mathbb F)\times {\cal B}(\mathbb R)\times {\cal B}(\mathbb R)$-measurable functional satisfying (\ref{LipschitzAssumption}) and $\xi\in L^2({\cal G}_{T\wedge\tau})$ such that there exists an $\mathbb F$-optional process $h$ such that $\xi=h_{\tau}$. This section has three subsections. The first subsection derives estimates and stability inequalities that controls the solutions under the probability $P$ instead of $\widetilde Q$.  The second subsection introduces the RBSDE under $\mathbb F$ and discusses the existence and uniqueness of its solution, while the third subsection elaborate our principal results that solves (\ref{nonLinearINFINITE}) and discusses its properties. 
%%%%%%%%%%%%%%%%%%%%%%%%%%%%%%%%%%%%%%%%%%%%%%%%%%%%%%%%%%%%%%%%%%%%%%%%%%%
%%%%%%%%%%%%%%%%%%%%%%%%%%%%%%%%%%%%%%%%%%%%%%%%%%%%%%%%%%%%%%%%%%%%%%%%
\subsection{Estimate under $P$ for the solution of (\ref{nonLinear})}
This subsection extends Theorem \ref{estimates} and \ref{estimates1} to the case of general driver/generator $f$. These theorems, that give estimates for the solutions under $P$ instead, are based essentially on Theorems \ref{WhyNot} and \ref{uniquenessNonlinear} respectively, and represent an important step towards solving (\ref{nonLinearINFINITE}).
%%%%%%%%%%%%%%%%%%%%%%%%%%%%%%%%%%%%%%%%%%%%%%%%%%%%%%%%%%%%%%%%%%%%%%%%
%%%%%%%%%%%%%%%% $f$%%%%%%%%%%%%%%%%%%%%%%%%%%%%%%%%
\begin{theorem}\label{estimates4GeneralUnbounded} If ($Y^{\mathbb{G}},Z^{\mathbb{G}},M^{\mathbb{G}},K^{\mathbb{G}}$)  is a  solution to the RBSDE (\ref{nonLinear}), associated to $\left(f, S, \xi\right)$, then for $p>1$ and $\alpha$ large there exists a constant $C({\alpha},p)>0$ that depends on $\alpha$ and $p$ only such that 

%\Vert {e^{\alpha(\tau\wedge\cdot)/2}}\delta Y^{\mathbb G} \Vert_{\mathbb{D}_{T}(\widetilde{Q},\mathbb{G})}+\Vert{e^{\alpha(\tau\wedge\cdot)/2}} \delta Y^{\mathbb{G}}\Vert_{\mathbb{S}_{\tau}(\widetilde{Q},p)}+\Vert{e^{\alpha(\tau\wedge\cdot)/2}} \delta Z^{\mathbb{G}}\Vert_{\mathbb{S}^{p}_{\tau}(\widetilde{Q})}+\Vert{e^{\alpha(\tau\wedge\cdot)/2}} \is \delta {M^{\mathbb{G}}}\Vert_{{\cal M}^{p}(\widetilde{Q})}

\begin{eqnarray*}
&&\Vert {e^{\alpha\cdot/2}}(\widetilde{\cal E})^{1/p}Y^{\mathbb G} \Vert_{\mathbb{D}_{T\wedge\tau}(P,p)}+\Vert{e^{\alpha(\tau\wedge\cdot)/2}} (\widetilde{\cal E})^{1/p}Y^{\mathbb{G}}\Vert_{\mathbb{S}_{T\wedge\tau}(P,p)}+\Vert{e^{\alpha(\tau\wedge\cdot)/2}} (\widetilde{\cal E}_{-})^{1/p}Z^{\mathbb{G}}\Vert_{\mathbb{S}_{T\wedge\tau}(P,p)}\\
&&+\Vert{e^{\alpha\cdot/2}} I_{\Rbrack0,T\wedge\tau\Rbrack}(\widetilde{\cal E}_{-})^{1/p}\is{M^{\mathbb{G}}}\Vert_{{\cal M}^{p}(P,\mathbb{G})}+\Vert \int_{0}^{T\wedge\tau}e^{\frac{\alpha}{2} s}(\widetilde{\cal E}_{s-})^{1/p}d K_{s}^{{\mathbb{G}}}\Vert_{L^p(P)}\\
%%&&E\left[\sup_{0\leq s\leq\tau }e^{p\alpha(s\wedge T)/2}\widetilde{\cal E}_{s}\vert{Y}^{{\mathbb{G}}}_{s}\vert^p+\left(\int_{0}^{\tau}e^{\alpha s}(\widetilde{\cal E}_{s})^{2/p}\vert Z^{{\mathbb{G}}}_{s}\vert^{2}ds\right)^{p/2}+\left(\int_{0}^{\tau}e^{\alpha s}(\widetilde{\cal E}_{s})^{2/p}\vert Y^{{\mathbb{G}}}_{s}\vert^{2}ds\right)^{p/2}\right]\\
%%&&+E\left[\left(\int_{0}^{\tau}e^{\frac{\alpha}{2} s}(\widetilde{\cal E}_{s-})^{1/p}d K_{s}^{{\mathbb{G}}}\right)^p+\left(\int_{0}^{\tau}e^{\alpha s}(\widetilde{\cal E}_{s-})^{2/p}d[ M^{\mathbb{G}}, M^{\mathbb{G}}]_s\right)^{p/2}\right]\\
&&\leq C({\alpha},p)\left\{\Vert {e^{\alpha(T\wedge\tau)/2}}\xi\Vert_{\mathbb{L}^{p}(\widetilde{Q})}+\Vert{e^{\alpha\cdot/2}} {f}(\cdot,0,0)\Vert_{\mathbb{S}_{T\wedge\tau}(\widetilde{Q},p)}+\Vert\sup_{0\leq t\leq\cdot}e^{\alpha t/p}S^{+}_{t}\Vert_{\mathbb{S}_{T\wedge\tau}(\widetilde{Q},p)}\right\},
%%%E^{\widetilde{Q}}\left[e^{p\alpha (T\wedge\tau)/2}\vert\xi\vert^p+\left(\int_{0}^{T\wedge\tau}e^{\alpha s}\vert{f}(s,0,0)\vert^2ds\right)^{p/2}+\sup_{0\leq t\leq T\wedge\tau}e^{\alpha s}(S^{+}_{t})^p\right]
\end{eqnarray*}
where $\widetilde{\cal E}$ is defined in (\ref{ProcessVFandXiF}) and that we recall herein
\begin{eqnarray}\label{Xitilde}
\widetilde{\cal E}_{t}:={\cal E}_{t}(-{\widetilde{G}}^{-1}\is D^{o,\mathbb{F}}).\end{eqnarray}\end{theorem}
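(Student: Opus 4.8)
The plan is to transport the weighted $\widetilde Q$-estimate of Theorem \ref{WhyNot}, taken at $\sigma=0$, into a $P$-estimate carrying the discount factor $\widetilde{\cal E}$, by applying the four parts of Lemma \ref{technicallemma1} one term at a time. First I would record, via the uniqueness and representation of Theorem \ref{alkd}, that the (unique) solution satisfies $M^{\mathbb G}=(h-Y^{\mathbb F}/\widetilde{\cal E})\is N^{\mathbb G}$, whence $[M^{\mathbb G},M^{\mathbb G}]=H\is[N^{\mathbb G},N^{\mathbb G}]$ with $H:=(h-Y^{\mathbb F}/\widetilde{\cal E})^2$ a nonnegative $\mathbb F$-optional process obeying $H^{p/2}\leq 2^{p-1}(\vert h\vert^p+\vert Y^{\mathbb G}\vert^p)$ on $\Rbrack0,\tau\Lbrack$ (since $Y^{\mathbb G}=Y^{\mathbb F}/\widetilde{\cal E}$ there). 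The key structural observation is that the five norms on the left split into one running-supremum norm, two $\mathbb S$-norms (for $Y$ and $Z$), one variation-type norm (for $K^{\mathbb G}$) and one bracket norm (for $M^{\mathbb G}$), and each is exactly the quantity that Lemma \ref{technicallemma1} converts from a $P$-norm-with-$\widetilde{\cal E}$ into a plain $\widetilde Q$-norm, the deterministic weight $e^{\alpha\cdot/2}$ being harmlessly absorbed into the process to which the lemma is applied.

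For the $Y$-terms I would apply Lemma \ref{technicallemma1}(a) to the $\mathbb G$-semimartingale $e^{\alpha\cdot/2}Y^{\mathbb G}$ to bound $\Vert e^{\alpha\cdot/2}(\widetilde{\cal E})^{1/p}Y^{\mathbb G}\Vert_{\mathbb{D}_{T\wedge\tau}(P,p)}^p$ by $G_0^{-1}\Vert e^{\alpha\cdot/2}Y^{\mathbb G}\Vert_{\mathbb{D}_{T\wedge\tau}(\widetilde Q,p)}^p$, and Lemma \ref{technicallemma1}(b) with $a=2/p$ to the continuous integrator $K_t:=\int_0^{t\wedge\tau}e^{\alpha s}\vert Y^{\mathbb G}_s\vert^2\,ds$ (whose jumps vanish, so the jump-sum term drops) to control the $\mathbb S$-norm of $Y$; here I use $\widetilde{\cal E}_s=\widetilde{\cal E}_{s-}$ for $ds$-a.e.\ $s$ to pass between $\widetilde{\cal E}$ and $\widetilde{\cal E}_-$. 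The $Z$-term is identical, applying Lemma \ref{technicallemma1}(b) with $a=2/p$ to $\int_0^{t\wedge\tau}e^{\alpha s}\vert Z^{\mathbb G}_s\vert^2\,ds$. For $K^{\mathbb G}$ I would apply Lemma \ref{technicallemma1}(b) with $a=1/p$ to the nondecreasing process $\int_0^{t\wedge\tau}e^{\alpha s/2}\,dK^{\mathbb G}_s$ and absorb the jump-sum term using $\sum_{0<s\leq T\wedge\tau}\widetilde G_s(\Delta K_s)^p\leq (K_{T\wedge\tau})^p$, which holds since $\widetilde G\leq 1$ and the jumps are nonnegative.

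The $M^{\mathbb G}$-term is the delicate one and the main obstacle. Applying Lemma \ref{technicallemma1}(d) to the nonnegative $\mathbb F$-optional process $e^{\alpha\cdot}H$ bounds the bracket norm by $\kappa G_0^{-1}$ times $E^{\widetilde Q}\big[(e^{\alpha\cdot}H\is[N^{\mathbb G},N^{\mathbb G}])_{T\wedge\tau}^{p/2}\big]=\Vert e^{\alpha\cdot/2}\is M^{\mathbb G}\Vert_{{\cal M}^p_{T\wedge\tau}(\widetilde Q)}^p$ \emph{plus} the boundary term $2E^{\widetilde Q}\big[((e^{\alpha\cdot}H)^{p/2}I_{\Rbrack0,\tau\Lbrack}\is D^{o,\mathbb F})_T\big]$, which is not one of the solution norms and must be re-expressed. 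This I would handle exactly as in the proof of Theorem \ref{estimates}: the pointwise bound on $H^{p/2}$ reduces it to the $\vert h\vert^p$ and $\vert Y^{\mathbb G}\vert^p$ pieces, the first being turned into $\Vert e^{\alpha(T\wedge\tau)/2}\xi\Vert_{L^p(\widetilde Q)}^p$ via $h_\tau=\xi$ and the dual optional projection of $D$ (together with $\widetilde G\leq1$, and using that $N^{\mathbb G}$ remains a $(\widetilde Q,\mathbb G)$-martingale as in Theorem \ref{SnellEvelopG2F}), and the second dominated by $\Vert e^{\alpha\cdot/2}Y^{\mathbb G}\Vert_{\mathbb{D}_{T\wedge\tau}(\widetilde Q,p)}^p$ through the domination inequality of Lemma \ref{Lemma4.11}(b) (equivalently Theorem \ref{DellacherieAndMeyer}). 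At this stage every right-hand quantity is a weighted $\widetilde Q$-norm of $(Y^{\mathbb G},Z^{\mathbb G},K^{\mathbb G},\is M^{\mathbb G})$, so a single application of Theorem \ref{WhyNot} with $\sigma=0$ bounds them all by the data norms $\Vert e^{\alpha(T\wedge\tau)/2}\xi\Vert_{L^p(\widetilde Q)}$, $\Vert e^{\alpha\cdot/2}f(\cdot,0,0)\Vert_{\mathbb{S}_{T\wedge\tau}(\widetilde Q,p)}$ and a supremum norm of $S^+$; collecting the constants for $\alpha$ large (and $\alpha'$ below $\alpha/2$, after reconciling the $K$-weight $e^{\alpha'(\tau\wedge\cdot)}$ with the target $e^{\alpha\cdot/2}$ by enlarging $\alpha$ slightly and using the equivalence of the two $S^+$-weightings) yields the claimed estimate. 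The only genuinely non-mechanical points are this re-expression of the boundary term and the bookkeeping of the exponential weights.
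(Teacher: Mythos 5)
Your proposal is correct and follows essentially the same route as the paper's proof: it converts each of the five left-hand norms from $P$-with-$\widetilde{\cal E}$ to plain $\widetilde Q$ via the corresponding part of Lemma \ref{technicallemma1} (parts (a), (b) with $a=2/p$, (b) with $a=1/p$, and (d)), handles the boundary term $(H^{p/2}I_{\Rbrack0,\tau\Lbrack}\is D^{o,\mathbb F})_T$ exactly as the paper does via $H^{p/2}\leq 2^{p-1}(\vert h\vert^p+\vert Y^{\mathbb G}\vert^p)$ and $h_{\tau}I_{\{\tau\leq T\}}=\xi I_{\{\tau\leq T\}}$, and concludes with Theorem \ref{WhyNot} at $\sigma=0$. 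Your explicit attention to the $\alpha'$ versus $\alpha/2$ weight reconciliation for the $K^{\mathbb G}$-term is a point the paper passes over silently, but it does not change the argument.
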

%%%%%%%%%%%%%%%%%%%%%%%%%%%%%%%%%%%%%%%%%%%%%%%%%%%%%%%%%%%%%%%%%%%%%%%%
%%%%%%%%%%%%%%%%%%%%%%%%%%%%%%%%%%%%%%%%%%%%%%%%%%%%%%%%%%%%%%%%%%%%%%%%
\begin{proof} The proof relies essentially on Lemma \ref{technicallemma1} and Theorem \ref{WhyNot}.\\
In fact, a direct application of Lemma \ref{technicallemma1}-(a)  to $Y:=e^{\alpha(\cdot\wedge T\wedge\tau)/p}Y^{\mathbb{G}}$ yields
\begin{eqnarray}\label{Control4YGgeneralCase}
E\left[\sup_{0\leq s\leq{T}\wedge\tau }e^{p\alpha{s}/2}\widetilde{\cal E}_{s}\vert{Y}^{{\mathbb{G}}}_{s}\vert^p\right]\leq G_0^{-1} E^{\widetilde{Q}}\left[\sup_{0\leq s\leq{T}\wedge\tau }e^{p\alpha{s}/2}\vert{Y}^{{\mathbb{G}}}_{s}\vert^p\right].
\end{eqnarray}
By applying Lemma \ref{technicallemma1}-(b) to both cases when $K=\int_{0}^{\cdot}e^{\alpha s}\vert Z^{{\mathbb{G}}}_{s}\vert^{2}ds$ and when $K=\int_{0}^{\cdot}e^{\alpha s}\vert Y^{{\mathbb{G}}}_{s}\vert^{2}ds$ afterwards  with $a=2/p$, we get 
\begin{eqnarray}\label{Control4ZGgeneralCase}
\begin{cases}
E\left[\left(\int_{0}^{T\wedge\tau}e^{\alpha s}(\widetilde{\cal E}_{s})^{2/p}\vert Z^{{\mathbb{G}}}_{s}\vert^{2}ds\right)^{p/2}\right]\leq {{\kappa}\over{G_0}} E^{\widetilde{Q}}\left[\left(\int_{0}^{T\wedge\tau}e^{\alpha s}\vert Z^{{\mathbb{G}}}_{s}\vert^{2}ds\right)^{p/2}\right],\\
\\
E\left[\left(\int_{0}^{T\wedge\tau}e^{\alpha s}(\widetilde{\cal E}_{s})^{2/p}\vert Z^{{\mathbb{G}}}_{s}\vert^{2}ds\right)^{p/2}\right]\leq {{\kappa}\over{G_0}} E^{\widetilde{Q}}\left[\left(\int_{0}^{T\wedge\tau}e^{\alpha s}\vert Y^{{\mathbb{G}}}_{s}\vert^{2}ds\right)^{p/2}\right].
\end{cases}
\end{eqnarray}
Similarly, we apply  Lemma \ref{technicallemma1}-(b) to $K=e^{\alpha\cdot/2}\is {K}^{\mathbb{G}}$ with $a=1/p$, we get 
\begin{align}\label{Control4KGgeneralCase}
E\left[\left(\int_{0}^{T\wedge\tau}e^{\alpha s/2}(\widetilde{\cal E}_{s})^{1/p}\vert Z^{{\mathbb{G}}}_{s}\vert^{2}d{K}^{\mathbb{G}}_s\right)^{p}\right]&\leq {{\kappa}\over{G_0}} E^{\widetilde{Q}}\left[\left(\int_{0}^{T\wedge\tau}e^{\alpha s/2}d{K}^{\mathbb{G}}_s\right)^{p}+\sum_{0<s\leq{T}\wedge\tau}\widetilde{G}_s(e^{\alpha s/2}\Delta{K}^{\mathbb{G}}_s)^p\right]\nonumber\\
&\leq {{2\kappa}\over{G_0}} E^{\widetilde{Q}}\left[\left(\int_{0}^{T\wedge\tau}e^{\alpha s/2}d{K}^{\mathbb{G}}_s\right)^{p}\right].
\end{align}
The last inequality follows from the easy facts that $\widetilde{G}\leq 1$ and $\sum_{0<s\leq{T}}(\Delta V_s)^p\leq V_T^p$ for any nondecreasing process $V$ with $V_0=0$ and any $p\geq 1$.\\
 The rest of the proof will address the term that involves the $\mathbb G$-martingale $M^{\mathbb G}$. Thus, thanks to Theorem \ref{alkd}, we know that $[M^{\mathbb G}, M^{\mathbb G}]=H'\is [N^{\mathbb G},N^{\mathbb G}]$ where $H':=(h-Y^{\mathbb F}/{\widetilde{\cal E}})^2$, which is $\mathbb F$-optional. Thus, an application of  Lemma \ref{technicallemma1}-(d) to $H:=e^{\alpha\cdot}(h-Y^{\mathbb F}/{\widetilde{\cal E}})^2$ that is $\mathbb F$-optional, we get 
\begin{align}\label{Control4NGgeneralCase}
E\left[\left((\widetilde{\cal E})^{2/p}H\is[ N^{\mathbb{G}}, N^{\mathbb{G}}]_{T\wedge\tau}\right)^{p/2}\right]&\leq {{\kappa}\over{G_0}} E^{\widetilde{Q}}\left[\left(H\is [ N^{\mathbb{G}}, N^{\mathbb{G}}]_T\right)^{p/2}+2(H^{p/2}I_{\Rbrack0,\tau\Lbrack}\is D^{o,\mathbb F})_T\right]\nonumber\\
&={{\kappa}\over{G_0}} E^{\widetilde{Q}}\left[\left(e^{\alpha\cdot} \is [M^{\mathbb{G}}, M^{\mathbb{G}}]_T\right)^{p/2}+2(H^{p/2}I_{\Rbrack0,\tau\Lbrack}\is D^{o,\mathbb F})_T\right].\end{align}
Thus, we need to control the second term in the right-hand-side of this inequality. To this end, we remark that $(H^{p/2}I_{\Rbrack0,\tau\Lbrack}\is D^{o,\mathbb F})\leq 2^{p-1}(\vert{h}e^{\alpha\cdot/2}\vert^p+\vert{Y}^{\mathbb G}e^{\alpha\cdot/2}\vert^pI_{\Rbrack0,\tau\Lbrack}\is D^{o,\mathbb F})$. Thus, by using this, we derive 
\begin{align}\label{Control4MG1}
2E^{\widetilde{Q}}\left[(H^{p/2}I_{\Rbrack0,\tau\Lbrack}\is D^{o,\mathbb F})_T\right]\leq 2^p{G_0}E^{\widetilde{Q}}\left[e^{p\alpha\tau/2}\vert{h}_{\tau}\vert^p I_{\{\tau\leq{T}\}}\right]+2^pE^{\widetilde{Q}}\left[\sup_{0\leq{t}\leq\tau\wedge{T}}e^{p\alpha{s}/2}\vert{Y}^{\mathbb G}_t\vert^p\right].
\end{align}
 Therefore, by combining this inequality with ${h}_{\tau}I_{\{\tau\leq{T}\}}=\xi I_{\{\tau\leq{T}\}}$, (\ref{Control4NGgeneralCase}), (\ref{Control4KGgeneralCase}), (\ref{Control4ZGgeneralCase}), (\ref{Control4YGgeneralCase}) and Theorem \ref{WhyNot} with $\sigma=0$, the proof of the theorem follows immediately.
 
  \end{proof}
\begin{theorem}\label{estimatesdelta} If ($Y^{\mathbb{G},i},Z^{\mathbb{G},i},K^{\mathbb{G},i}, M^{\mathbb{G},i}$)  is a  solution to the RBSDE (\ref{nonLinear}) that correspond to  $(f^{(i)}, S^{(i)}, \xi^{(i)})$, $i=1,2$ respectively, then for any $p>1$ and  $\alpha>\max(\alpha_0(p),\alpha_1(p))$, there exist positive $C_i$, $i=1,2,3,$ that depend on $(\alpha,p)$ only such that $\lim_{\alpha\to\infty}C_1=0$ and 
\begin{align}\label{Estimate4P12}
&\Vert {e^{\alpha\cdot/2}}(\widetilde{\cal E})^{1/p}\delta{Y}^{\mathbb G} \Vert_{\mathbb{D}_{T\wedge\tau}(P,p)}+\Vert{e^{\alpha\cdot/2}} (\widetilde{\cal E})^{1/p}\delta{Y}^{\mathbb{G}}\Vert_{\mathbb{S}_{T\wedge\tau}(P,p)}\nonumber\\
&+\Vert{e^{\alpha\cdot/2}} (\widetilde{\cal E}_{-})^{1/p}\delta{Z}^{\mathbb{G}}\Vert_{\mathbb{S}_{T\wedge\tau}(P,p)}+\Vert{e^{\alpha\cdot/2}} I_{\Rbrack0,T\wedge\tau\Rbrack}(\widetilde{\cal E}_{-})^{1/p}\is\delta{M}^{\mathbb{G}}\Vert_{{\cal M}^{p}(P,\mathbb{G})}\nonumber\\
%%&E\left[\sup_{0\leq s\leq T\wedge\tau }e^{p\alpha s/2}\widetilde{\cal E}_{s}\vert\delta{Y}^{{\mathbb{G}}}_{s}\vert^p+\left(\int_{0}^{T\wedge\tau}e^{\alpha s}(\widetilde{\cal E}_{s-})^{2/p}\vert \delta Z^{{\mathbb{G}}}_{s}\vert^{2}ds\right)^{p/2}+\left(\int_{0}^{T\wedge\tau}e^{\alpha s}(\widetilde{\cal E}_{s-})^{2/p}d[ \delta M^{\mathbb{G}}, \delta M^{\mathbb{G}}]_s\right)^{p/2}\right]\nonumber\\
& \leq C_1\Vert{e^{\alpha\cdot/2}} \delta f\Vert_{\mathbb{S}_{T\wedge\tau}(\widetilde{Q},p)}+ C_2\Vert {e^{\alpha(T\wedge\tau)/p}} \delta \xi\Vert_{\mathbb{L}^p(\widetilde{Q})}+C_3\sqrt{\Vert {e^{\alpha\cdot/2}} \delta S\Vert_{\mathbb{D}_{T\wedge\tau}(\widetilde{Q},p)}\sum_{i=1}^2 \Delta(\xi^{(i)}, f^{(i)}, S^{(i)})}.
\end{align}
Here $\Delta(\xi^{(i)}, f^{(i)}, S^{(i)})$ is 
\begin{eqnarray}\label{Deltaxi(i)}
\Delta(\xi^{(i)}, f^{(i)}, S^{(i)}):=\Vert {e^{\alpha(T\wedge\tau)/p}}\xi^{(i)}\Vert_{\mathbb{L}^p(\widetilde{Q})}+\Vert{e^{\alpha\cdot/2}}{f}^{(i)}(s,0,0)\Vert_{\mathbb{S}_{T\wedge\tau}(\widetilde{Q},p)}+\Vert\sup_{0\leq{t}\leq\cdot}e^{\alpha t/p}(S^{(i)}_{t})^{+}\Vert_{\mathbb{S}_{T\wedge\tau}(\widetilde{Q},p)}
%\\
%%& E^{\widetilde{Q}}\left[e^{{{p\alpha}\over{2}} (T\wedge\tau)}\vert\xi^{(i)}\vert^p+\left(\int_{0}^{T\wedge\tau}e^{\alpha s}\vert{f}^{(i)}(s,0,0)\vert^2ds\right)^{p/2}+\sup_{0\leq t\leq T\wedge\tau}e^{\alpha s}((S^{(i)}_{t})^{+})^p\right],
\end{eqnarray}
and $(\delta Y^{\mathbb{G}}, \delta Z^{\mathbb{G}}, \delta M^{\mathbb{G}}, \delta K^{\mathbb{G}})$ and $(\delta f,\delta S, \delta\xi)$ are given by 
\begin{align}\label{DeltaProcesses4GeneralInfinity}
\begin{cases}
\delta Y^{\mathbb{G}}:=Y^{\mathbb{G},1}-Y^{\mathbb{G},2},\ \delta Z^{\mathbb{G}}:=Z^{\mathbb{G},1}-Z^{\mathbb{G},2}, \delta M^{\mathbb{G}}:=M^{\mathbb{G},1}-M^{\mathbb{G},2}, \delta K^{\mathbb{G}}:=K^{\mathbb{G},1}-K^{\mathbb{G},2}, \\
\delta S:=S^{(1)}-S^{(2)},\quad \delta \xi:=\xi^{(1)}-\xi^{(2)},\quad \delta f_t:= f^{(1)}(t,Y^{\mathbb{G},1}_t,Z^{\mathbb{G},1}_t)- f^{(2)}(t,Y^{\mathbb{G},1}_t,Z^{\mathbb{G},1}_t).
\end{cases}\end{align}
\end{theorem}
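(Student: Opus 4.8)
The plan is to derive the desired $P$-estimates by ``discounting'' the $\widetilde Q$-estimate of Theorem \ref{uniquenessNonlinear} through the four inequalities of Lemma \ref{technicallemma1}, exactly mirroring the way Theorem \ref{estimates4GeneralUnbounded} was deduced from Theorem \ref{WhyNot}. Throughout I would work with the weighted processes $e^{\alpha\cdot/2}\delta Y^{\mathbb G}$, $e^{\alpha\cdot/2}\delta Z^{\mathbb G}$ and $e^{\alpha(\tau\wedge\cdot)/2}\is\delta M^{\mathbb G}$, so that the four terms on the left-hand side of (\ref{Estimate4P12}) are precisely the $P$-norms of the $\widetilde{\cal E}^{1/p}$-weighted versions of these, which Lemma \ref{technicallemma1} is designed to convert into $\widetilde Q$-norms.

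First I would handle the sup-norm term by applying Lemma \ref{technicallemma1}-(a) to $Y:=e^{\alpha\cdot/2}\delta Y^{\mathbb G}$, obtaining
\[
E\Bigl[\sup_{0\le s\le T\wedge\tau}e^{p\alpha s/2}\widetilde{\cal E}_s|\delta Y^{\mathbb G}_s|^p\Bigr]\le G_0^{-1}E^{\widetilde Q}\Bigl[\sup_{0\le s\le T\wedge\tau}e^{p\alpha s/2}|\delta Y^{\mathbb G}_s|^p\Bigr],
\]
which converts $\Vert e^{\alpha\cdot/2}\widetilde{\cal E}^{1/p}\delta Y^{\mathbb G}\Vert_{\mathbb D_{T\wedge\tau}(P,p)}$ into the corresponding $\widetilde Q$-quantity. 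Next, applying Lemma \ref{technicallemma1}-(b) with $a=2/p$ to the nondecreasing processes $\int_0^\cdot e^{\alpha s}|\delta Z^{\mathbb G}_s|^2ds$ and $\int_0^\cdot e^{\alpha s}|\delta Y^{\mathbb G}_s|^2ds$ converts the two $\mathbb S_{T\wedge\tau}(P,p)$-norms into $\mathbb S_{T\wedge\tau}(\widetilde Q,p)$-norms, up to the $\alpha$-independent factor $\kappa(2/p)G_0^{-1}$, the extra jump sums produced by part (b) being harmless.

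The main work is the martingale term. By Theorem \ref{alkd}, each $M^{\mathbb G,i}=(h^{(i)}-Y^{\mathbb F,i}/\widetilde{\cal E})\is N^{\mathbb G}$, hence $\delta M^{\mathbb G}=(\delta h-\delta(Y^{\mathbb F}/\widetilde{\cal E}))\is N^{\mathbb G}$ with an $\mathbb F$-optional integrand, so $[\delta M^{\mathbb G},\delta M^{\mathbb G}]=H'\is[N^{\mathbb G},N^{\mathbb G}]$ with $H':=(\delta h-\delta(Y^{\mathbb F}/\widetilde{\cal E}))^2$ again $\mathbb F$-optional. I would then apply Lemma \ref{technicallemma1}-(d) to $H:=e^{\alpha\cdot}H'$; this is the crucial point where $\mathbb F$-optionality buys the cheaper residual $2(H^{p/2}I_{\Rbrack0,\tau\Lbrack}\is D^{o,\mathbb F})_T$ instead of the predictable-variation term of part (c). The boundary term is reabsorbed exactly as in (\ref{Control4MG1}): using $H^{p/2}\le 2^{p-1}(e^{p\alpha\cdot/2}|\delta h|^p+e^{p\alpha\cdot/2}|\delta Y^{\mathbb F}/\widetilde{\cal E}|^p)$, the identity $\delta Y^{\mathbb F}/\widetilde{\cal E}=\delta Y^{\mathbb G}$ on $\Lbrack0,\tau\Lbrack$, and $\delta h_\tau I_{\{\tau\le T\}}=\delta\xi I_{\{\tau\le T\}}$, one bounds $2E^{\widetilde Q}[(H^{p/2}I_{\Rbrack0,\tau\Lbrack}\is D^{o,\mathbb F})_T]$ by a constant multiple of $E^{\widetilde Q}[e^{p\alpha\tau/2}|\delta\xi|^pI_{\{\tau\le T\}}]+E^{\widetilde Q}[\sup_{0\le t\le T\wedge\tau}e^{p\alpha t/2}|\delta Y^{\mathbb G}_t|^p]$, both of which are already under control.

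Finally, I would collect the four converted terms and feed into their right-hand sides the $\widetilde Q$-stability estimate (\ref{MainDeltaInequality}) of Theorem \ref{uniquenessNonlinear}, together with the sup-control of $\delta Y^{\mathbb G}$ obtained above, yielding (\ref{Estimate4P12}) with constants $C_1,C_2,C_3$ that inherit their dependence on $(\alpha,p)$ and the property $\lim_{\alpha\to\infty}C_1=0$ directly from $\widehat C_1$, since the conversion constants coming from Lemma \ref{technicallemma1} are $\alpha$-independent. The only genuine obstacle is the martingale step: one must verify that the integrand of $\delta M^{\mathbb G}$ is $\mathbb F$-optional (so that the sharper part (d) applies), and ensure that the residual term $(H^{p/2}I_{\Rbrack0,\tau\Lbrack}\is D^{o,\mathbb F})$ is absorbed into the already-estimated $\sup$-norm of $\delta Y^{\mathbb G}$ rather than back into the martingale norm; everything else is routine bookkeeping parallel to the proofs of Theorems \ref{estimates1} and \ref{estimates4GeneralUnbounded}.
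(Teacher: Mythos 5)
Your proposal is correct and follows essentially the same route as the paper: Lemma \ref{technicallemma1}-(a) for the sup-norm, part (b) with $a=2/p$ for the two $\mathbb{S}$-norms, part (d) applied to the $\mathbb F$-optional integrand $e^{\alpha\cdot}(\delta h-\delta Y^{\mathbb F}/\widetilde{\cal E})^2$ coming from Theorem \ref{alkd} for the martingale term, with the residual $(H^{p/2}I_{\Rbrack0,\tau\Lbrack}\is D^{o,\mathbb F})_T$ absorbed into $|\delta\xi|^p$ and the sup-norm of $\delta Y^{\mathbb G}$ exactly as in (\ref{Control4MG1}), and finally Theorem \ref{uniquenessNonlinear} to close. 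The observations about $\mathbb F$-optionality being what makes part (d) applicable and about the $\alpha$-independence of the conversion constants are precisely the points the paper relies on.
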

%%%%%%%%%%%%%%%%%%%%%%%%\begin{proof}\end{proof}
%%%%%%%%%%%%%%%%%%%%%%%%%%%%%%%%%%%%%%%%%%%%%%%%%%%%%%%%%%%%%%%%%%%%%%%%%%%%%
%%%%%%%%%%%%%%%%%%%%%%%%%%%%%%%%%%%%%%%%%%%%%%%%%%%%%%%%%%%%%%%%%%%%%%%%
\begin{proof} By applying  Lemma \ref{technicallemma1}-(a) to $Y_s=e^{\alpha s/2}\delta Y^{{\mathbb{G}}}_{s}$ and $a=p$, we obtain
\begin{eqnarray}\label{Control4YGinfiniteCaseDifference}
E\left[\sup_{0\leq s\leq{T}\wedge\tau }e^{p\alpha{s}/2}\widetilde{\cal E}_{s}\vert\delta{Y}^{{\mathbb{G}}}_{s}\vert^p\right]\leq G_0^{-1} E^{\widetilde{Q}}\left[\sup_{0\leq s\leq{T}\wedge\tau }e^{p\alpha{s}/2}\vert\delta{Y}^{{\mathbb{G}}}_{s}\vert^p\right].
\end{eqnarray}
By applying  Lemma \ref{technicallemma1}-(b) to  both cases when $K=\int_{0}^{\cdot}e^{\alpha s}\vert \delta{Z}^{{\mathbb{G}}}_{s}\vert^{2}ds$ and when  $K=\int_{0}^{\cdot}e^{\alpha s}\vert \delta{Y}^{{\mathbb{G}}}_{s}\vert^{2}ds$ afterwards with $a=2/p$, we get 
\begin{eqnarray}\label{Control4ZGinfiniteCaseDifference}
\begin{cases}
E\left[\left(\int_{0}^{T\wedge\tau}e^{\alpha s}(\widetilde{\cal E}_{s})^{2/p}\vert \delta{Z}^{{\mathbb{G}}}_{s}\vert^{2}ds\right)^{p/2}\right]\leq {{\kappa}\over{G_0}} E^{\widetilde{Q}}\left[\left(\int_{0}^{T\wedge\tau}e^{\alpha s}\vert \delta{Z}^{{\mathbb{G}}}_{s}\vert^{2}ds\right)^{p/2}\right],\\
\\
E\left[\left(\int_{0}^{T\wedge\tau}e^{\alpha s}(\widetilde{\cal E}_{s})^{2/p}\vert \delta{Y}^{{\mathbb{G}}}_{s}\vert^{2}ds\right)^{p/2}\right]\leq {{\kappa}\over{G_0}} E^{\widetilde{Q}}\left[\left(\int_{0}^{T\wedge\tau}e^{\alpha s}\vert \delta{Y}^{{\mathbb{G}}}_{s}\vert^{2}ds\right)^{p/2}\right].
\end{cases}
\end{eqnarray} 
Thanks to Theorem \ref{alkd}, we know that $[\delta{M}^{\mathbb G}, \delta{M}^{\mathbb G}]=H'\is [N^{\mathbb G},N^{\mathbb G}]$ where $H':=(\delta{h}-\delta{Y}^{\mathbb F}/{\widetilde{\cal E}})^2$, which is $\mathbb F$-optional. Thus, an application of  Lemma \ref{technicallemma1}-(d) to $H_s:=e^{\alpha s}(\delta{h}-\delta{Y}^{\mathbb F}/{\widetilde{\cal E}})^2$ that is $\mathbb F$-optional, and similar argument as in (\ref{Control4MG1}), we get 
\begin{align}\label{Control4MGinfiniteCaseDifference}
&E\left[\left((\widetilde{\cal E})^{2/p}H\is[ N^{\mathbb{G}}, N^{\mathbb{G}}]_{T}\right)^{p/2}\right]\nonumber\\
&\leq {{\kappa}\over{G_0}} E^{\widetilde{Q}}\left[\left(H\is [ N^{\mathbb{G}}, N^{\mathbb{G}}]_T\right)^{p/2}+2(H^{p/2}I_{\Rbrack0,\tau\Lbrack}\is D^{o,\mathbb F})_T\right]\nonumber\\
&={{\kappa}\over{G_0}} E^{\widetilde{Q}}\left[\left(e^{\alpha\cdot} \is [\delta{M}^{\mathbb{G}}, \delta{M}^{\mathbb{G}}]_T\right)^{p/2}+2(H^{p/2}I_{\Rbrack0,\tau\Lbrack}\is D^{o,\mathbb F})_T\right]\nonumber\\
&\leq  {{\kappa}\over{G_0}} E^{\widetilde{Q}}\left[\left(e^{\alpha\cdot} \is [\delta{M}^{\mathbb{G}}, \delta{M}^{\mathbb{G}}]_T\right)^{p/2}\right]+ {{2^p\kappa}\over{G_0}} \left\{E^{\widetilde{Q}}\left[\vert\delta{h}_{\tau}\vert^p I_{\{\tau\leq{T}\}}\right]+E^{\widetilde{Q}}\left[\sup_{0\leq{t}\leq\tau\wedge{T}}e^{p\alpha{s}/2}\vert\delta{Y}^{\mathbb G}_t\vert^p\right]\right\}\end{align} 
Hence, by combining (\ref{Control4YGinfiniteCaseDifference}), (\ref{Control4ZGinfiniteCaseDifference}), (\ref{Control4MGinfiniteCaseDifference}) and Theorem  \ref{uniquenessNonlinear}, the proof of the theorem follows. \end{proof}
%%%%%%%%%%%%%%%%%%%%%%%%%%%%%%%%%%%%%%%%%%%%%%%%%%%%%%%%%
%%%%%%%%%%%%%%%%%%%%%%%%%%%%%%%%%%%%%%%%%%%%%%%%%%%%%%%%%%%%%%%%%%%%%%%%%%%%%
%%%%%%%%%%%%%%%%%%%%%%%%%%%%%%%%%%%%%%%%%%%%%%%%%%%%%%%%%%%%%%%%%%%%%%%
%%%%%%%%%%%%%%%%%%%%%%%%%%%%%%%%%%%%%%%%%%%%%%%%%%%%%%%%%%%%%%%%%%%%%%
\subsection{Existence, uniqueness, and estimates}
This subsection elaborates our first main result of this section that proves the existence  and uniqueness of the solution to (\ref{nonLinearINFINITE}), and estimates it. 
%%%%%%%%%%%%%%%%%%%%%%%%%%%%%%%%%%%%%%%%%%%%%%%%%%%%%%%%%%%%%%%%%%%%%%%%
\begin{theorem}\label{alkdINFINITE} Let $p\in (1,+\infty)$ and suppose $G>0$ and there exists $\alpha>\max(\alpha_0(p),\alpha_1(p))$ such that 
\begin{eqnarray}\label{MainAssumption4InfiniteHorizonNonlinear}
E\left[\int_0^{\infty}\left\{e^{{p\alpha}t/2}\vert{h}_t\vert^p+(F^{(\alpha)}_t)^{p}+\sup_{0\leq {u}\leq t}e^{{\alpha}u}(S_u^+)^p\right\}dV^{\mathbb F}_t\right]<+\infty,\ F^{(\alpha)}_t:=\sqrt{\int_0^te^{{\alpha}s}\vert f(s,0,0)\vert ^2 ds}.
\end{eqnarray}
Then the following assertions hold.\\
{\rm{(a)}} There exists a unique solution  ($Y^{\mathbb{G}},Z^{\mathbb{G}},M^{\mathbb{G}},K^{\mathbb{G}}$) to the RBSDE (\ref{nonLinearINFINITE}).  \\
{\rm{(b)}}  There exists $C(\alpha,p)>0$ that depends on $\alpha$ and $p$ only such that     
   \begin{align*}
&E\left[\sup_{0\leq s\leq\tau }e^{{p\alpha}s/2}\widetilde{\cal E}_{s}\vert{Y}^{{\mathbb{G}}}_{s}\vert^p+\left(\int_{0}^{\tau}e^{\alpha s}(\widetilde{\cal E}_{s})^{2/p}\vert Z^{{\mathbb{G}}}_{s}\vert^{2}ds\right)^{p/2}+\left(\int_{0}^{\tau}e^{\alpha s}(\widetilde{\cal E}_{s})^{2/p}\vert Y^{{\mathbb{G}}}_{s}\vert^{2}ds\right)^{p/2}\right]\\
&+E\left[\left(\int_{0}^{\tau}e^{{\alpha'}s}(\widetilde{\cal E}_{s-})^{1/p}d K_{s}^{{\mathbb{G}}}\right)^p+\left(\int_{0}^{\tau}e^{\alpha s}(\widetilde{\cal E}_{s-})^{2/p}d[ M^{\mathbb{G}}, M^{\mathbb{G}}]_s\right)^{p/2}\right]\\
&\leq C(\alpha,p)E\left[\int_0^{\infty} \left\{e^{{p\alpha}t/2}\vert{h}_t\vert^p +(F^{(\alpha)}_t)^{p}+\sup_{0\leq u\leq t}e^{\alpha s}(S^{+}_{u})^p\right\}dV^{\mathbb F}_t\right].
\end{align*}
  {\rm{(c)}} Let $(f, h^{(i)}, S^{(i)})$, $i=1,2$, be two triplets satisfying (\ref{MainAssumption4InfiniteHorizonNonlinear}), and $(Y^{\mathbb{G},i},Z^{\mathbb{G},i},K^{\mathbb{G},i},M^{\mathbb{G},i})$  be the solutions to their corresponding RBSDE (\ref{nonLinearINFINITE}). There exist $C_1$ and $C_2$ that depend on $\alpha$ and $p$ only such that   
   \begin{align*}
&E\left[\sup_{0\leq s\leq\tau }e^{{p\alpha}s/2}\widetilde{\cal E}_{s}\vert{\delta}Y^{{\mathbb{G}}}_{s}\vert^p+\left(\int_{0}^{\tau}e^{\alpha s}(\widetilde{\cal E}_{s-})^{2/p}\vert \delta{Z}^{{\mathbb{G}}}_{s}\vert^{2}ds\right)^{p/2}+\left(\int_{0}^{\tau}e^{\alpha s}(\widetilde{\cal E}_{s-})^{2/p}d[ {\delta}M^{\mathbb{G}},{\delta} M^{\mathbb{G}}]_s\right)^{p/2}\right]\\
&\leq C_1 E\left[\int_0^{\infty} \left\{e^{{\alpha}t}\vert{\delta}h_t\vert^p +\sup_{0\leq{u}\leq{t}}e^{\alpha{u}}\vert\delta{S}_u\vert^p\right\}dV^{\mathbb F}_t\right]\nonumber\\
&+C_2\sqrt{E\left[\int_0^{\infty}\sup_{0\leq{u}\leq{t}}e^{\alpha{u}}\vert\delta{S}_u\vert^pdV^{\mathbb F}_t\right]}\sqrt{\sum_{i=1}^2E\left[\int_0^{\infty} \left\{e^{{p\alpha}t/2}\vert{h}^{(i)}_t\vert^p +(F^{(\alpha)}_t)^p+\sup_{0\leq{u}\leq{t}}e^{\alpha{u}}\vert({S}_u^{(i)})^+\vert^p\right\}dV^{\mathbb F}_t\right]}.
\end{align*}
Here $({\delta}Y^{{\mathbb{G}}},{\delta}Z^{{\mathbb{G}}}, {\delta}K^{{\mathbb{G}}},{\delta}M^{{\mathbb{G}}})$  and $({\delta}f,{\delta}S,{\delta}h)$ are given by  (\ref{DeltaProcesses4GeneralInfinity}).
     \end{theorem}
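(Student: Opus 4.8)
The plan is to reproduce, in the nonlinear setting, the architecture of the proof of Theorem \ref{EstimateInfinite}, simply swapping the linear ingredients for their nonlinear analogues. The three building blocks are: the finite-horizon existence and uniqueness statement Theorem \ref{alkd}, the a priori estimate under $P$ given by Theorem \ref{estimates4GeneralUnbounded}, and the difference estimate under $P$ given by Theorem \ref{estimatesdelta}. Combined with Lemma \ref{ExpecationQtilde2P}, which simultaneously lets one send $T\to\infty$ inside $\widetilde Q$-expectations and replace $\widetilde Q$ by the measure $P\otimes V^{\mathbb F}$, these furnish everything needed. As before I would split the argument into two parts: first prove (a), (b) and (c) under an auxiliary boundedness requirement (the exponentially weighted analogue of (\ref{BoundednessAssumption}), formulated with $f(\cdot,0,0)$, $h$ and $S^+$), and then remove that requirement by a stopping-time localization and dominated convergence, exactly as in Part 2 of the proof of Theorem \ref{EstimateInfinite}.

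For Part 1, working under the boundedness requirement, I would introduce the truncated data $f^{(n)}(t,y,z):=f(t,y,z)I_{\{t\leq n\}}$, $h^{(n)}:=hI_{\Lbrack0,n\Rbrack}$, $S^{(n)}_t:=S_{n\wedge t}$ and $\xi^{(n)}:=h_{\tau}I_{\{\tau\leq n\}}$, mirroring (\ref{Sequence4[0,n]}). Each $f^{(n)}$ is Lipschitz with the same constant $C_{Lip}$, so Theorem \ref{alkd} yields a unique finite-horizon solution $(Y^{\mathbb G,n},Z^{\mathbb G,n},M^{\mathbb G,n},K^{\mathbb G,n})$ on every horizon $T\geq n$. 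Applying Theorem \ref{estimates4GeneralUnbounded} to each $n$ and Theorem \ref{estimatesdelta} to the pair $(n,n+m)$ produces uniform bounds and a control of the successive differences whose right-hand sides I would let $n,m\to\infty$. Lemma \ref{ExpecationQtilde2P} converts the $\widetilde Q$-expectations of $e^{p\alpha\cdot/2}\vert h\vert^p$, $(F^{(\alpha)})^p$ and $\sup e^{\alpha\cdot}(S^+)^p$ into the $P\otimes V^{\mathbb F}$-integrals of (\ref{MainAssumption4InfiniteHorizonNonlinear}), and shows that the tail contributions over $(n,n+m]$ vanish. This makes the sequence Cauchy in the discounted, exponentially weighted norms; its limit is then checked directly to solve (\ref{nonLinearINFINITE}). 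Uniqueness is a consequence of assertion (c), and estimate (b) follows from Fatou together with the computed limits; assertion (c) itself is obtained by running the same approximation for two triplets and passing to the limit in the difference estimate.

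For Part 2, I would drop the boundedness requirement using the stopping-time truncation of the data employed in Part 2 of the proof of Theorem \ref{EstimateInfinite} (the construction (\ref{Consutrction4DataSequence})), noting that each truncated triplet satisfies the boundedness requirement and hence is solvable by Part 1. The assumption (\ref{MainAssumption4InfiniteHorizonNonlinear}) then guarantees, via the dominated convergence theorem, that the corresponding right-hand sides are Cauchy in $n$, so the solutions converge in norm and almost surely along a subsequence to a solution of (\ref{nonLinearINFINITE}); Fatou upgrades this to (b), and repeating the construction for two triplets gives (c) in full generality.

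The genuinely new difficulty, relative to the linear Section \ref{LinearUnboundedSection}, is that the nonlinearity enters the difference estimate through the solution. In Theorem \ref{estimatesdelta} the perturbation $\delta f_t=f^{(n)}(t,Y^{\mathbb G,n}_t,Z^{\mathbb G,n}_t)-f^{(n+m)}(t,Y^{\mathbb G,n}_t,Z^{\mathbb G,n}_t)$ equals $-f(t,Y^{\mathbb G,n}_t,Z^{\mathbb G,n}_t)I_{\{n<t\leq n+m\}}$, so its weighted $\mathbb S$-norm is no longer a pure data quantity but involves the solution itself on the tail $(n,n+m]$. The Lipschitz bound $\vert f(t,y,z)\vert\leq \vert f(t,0,0)\vert+C_{Lip}(\vert y\vert+\vert z\vert)$ reduces it to $\vert f(\cdot,0,0)\vert$ plus $\vert Y^{\mathbb G,n}\vert+\vert Z^{\mathbb G,n}\vert$ on $(n,n+m]$; the first piece is pure data and vanishes by (\ref{MainAssumption4InfiniteHorizonNonlinear}), while the second must be controlled by applying Theorem \ref{WhyNot} to the $n$-th solution with the $\mathbb F$-stopping time $\sigma=n$, which localizes the solution norm on $(n,\cdot]$ to the $n$-th data on $[n,\cdot]$ (a vanishing tail, since the $n$-th driver is null there). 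Making this tail argument close, and keeping the exponential weights $e^{\alpha\cdot/2}$ consistent with the discount factor ${\widetilde{\cal E}}$ throughout so that (\ref{MainAssumption4InfiniteHorizonNonlinear}) is \emph{exactly} the finiteness needed at each limit in Lemma \ref{ExpecationQtilde2P}, is the main obstacle; verifying the Skorokhod condition in its integrated limiting form $E\left[\int_0^{\tau}(Y_{t-}-S_{t-})dK_t\right]=0$ is the only other point requiring care.
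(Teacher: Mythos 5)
Your plan coincides with the paper's proof in all essentials: the same two-part structure (first under an exponentially weighted boundedness assumption with the truncations $fI_{\Lbrack0,n\Rbrack}$, $S_{n\wedge\cdot}$, $h_{n\wedge\cdot}$, then removal via the stopping-time localization of (\ref{Consutrction4DataSequence})), the same three building blocks (Theorems \ref{alkd}, \ref{estimates4GeneralUnbounded}, \ref{estimatesdelta}) combined with Lemma \ref{ExpecationQtilde2P} to pass to the limit, and, crucially, the same treatment of the solution-dependent tail of $\delta f$ via the Lipschitz bound together with Theorem \ref{WhyNot} applied at $\sigma=n$. This is exactly the argument the paper gives, so the proposal is correct and essentially identical in approach.
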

\begin{proof} On the one hand, in virtue of assertion (c), it is clear that (\ref{nonLinearINFINITE}) has at most one solution. Thus, the rest of this proof focuses on proving the existence of the solution and assertion (b) and (c). To this end, we divid the rest of the prof into two parts.\\
{\bf Part 1.} In this part, we comnsider $(f,S,h)$  and suppose that there exists a constant $C$ such that
\begin{eqnarray}\label{BoundednessAssumpionInfinite}
\max\left(e^{p\alpha\cdot/2}\vert{h}\vert, (F^{(\alpha)})^p, \sup_{0\leq{t}\leq\cdot}e^{p\alpha\cdot}(S^+_t)^p\right)\leq C{\cal E}(G_{-}^{-1}\is{m}). 
\end{eqnarray}
The rest of this part is divided into three steps.\\
{\bf Step 1.} To the triplet $(f,S,h)$ satisfying (\ref{BoundednessAssumpionInfinite}), we associate the sequence $(\overline{f}^{(n)}, \overline{S}^{(n)},\overline{h}^{(n)})$  given by 
\begin{eqnarray}\label{overlineFn}
\overline{f}^{(n)}:=fI_{\Lbrack0,n\Rbrack},\quad  \overline{S}^{(n)}_t:=S_{n\wedge{t}},\quad \overline{h}^{(n)}_t:=h_{n\wedge{t}},\quad \overline{\xi}^{(n)}:=h_{n\wedge\tau},\quad n\geq 0.
\end{eqnarray}
Then thanks to Theorem \ref{alkd}, we deduce that for each triplet $(\overline{f}^{(n)}, \overline{S}^{(n)},\overline{\xi}^{(n)})$, the RBSDE  (\ref{nonLinearINFINITE}) has a unique solution $(\overline{Y}^{(n)}, \overline{Z}^{(n)},\overline{M}^{(n)},\overline{K}^{(n)} )$. Then by applying Theorem \ref{estimatesdelta} to the difference of solutions $(\delta{Y}, \delta{Z},\delta{M},\delta{K} ):=(\overline{Y}^{(n+m)}, \overline{Z}^{(n+m)},\overline{M}^{(n+m)},\overline{K}^{(n+m)} )-(\overline{Y}^{(n)}, \overline{Z}^{(n)},\overline{M}^{(n)},\overline{K}^{(n)} )$, and the horizon $T=n+m$, we get 
\begin{align}\label{Estimate4GenreralPoof}
&\Vert{e}^{\alpha\cdot/2}{\widetilde{\cal E}}^{1/p}\delta{Y}\Vert_{\mathbb{D}_{T}(\widetilde{P},p)}+\Vert{e}^{\alpha\cdot/2}(\widetilde{\cal E}_{-})^{1/p}\vert \delta{Z}\Vert_{\mathbb{S}^p_{T}(\widetilde{P},p)}+\Vert{e}^{\alpha\cdot/2}(\widetilde{\cal E}_{-})^{1/p}\is \delta{M}\Vert_{\mathbb{M}_{T}^p(\widetilde{P})}\nonumber\\
& \leq C_1\Vert{e^{\alpha(\tau\wedge\cdot)/p}} \delta f\Vert_{\mathbb{S}^p_{T}(\widetilde{Q})}+ C_2\Vert {e^{\alpha(T\wedge\tau)/p}} \delta \xi\Vert_{\mathbb{L}^p(\widetilde{Q})}+C_3\Vert {e^{\alpha(\tau\wedge\cdot)/2}} \delta S\Vert_{\mathbb{D}_{T}(\widetilde{Q})}^{1/2}\sup_{k\geq n}\sqrt{ \Delta(\xi^{(k)}, f^{(k)}, S^{(k)})}.
\end{align}
Here $\Delta(\xi^{(k)}, f^{(k)}, S^{(k)})$ is given by 
\begin{eqnarray}\label{Deltaxi(i)4proof}
\Delta(\xi^{(k)}, f^{(k)}, S^{(k)}):=\Vert {e^{\alpha(T\wedge\tau)/p}} \overline{\xi}^{(k)}\Vert_{\mathbb{L}^p(\widetilde{Q})}+\Vert{e^{\alpha(\tau\wedge\cdot)/p}}  \overline{f}^{(k)}(\cdot,0,0)\Vert_{\mathbb{S}_{T}(\widetilde{Q},p)} +\Vert {e^{\alpha(\tau\wedge\cdot)/2}} ( \overline{S}^{(k)})^{+}\Vert_{\mathbb{D}_{T}(\widetilde{Q},p)},
\end{eqnarray}
{\bf Step 2.} It is clear that, due to the assumption \ref{BoundednessAssumpionInfinite} and  in virtue of Lemma \ref{ExpecationQtilde2P}, we have 
\begin{eqnarray}\label{limites}
\begin{cases}
\displaystyle\lim_{n\to\infty}\sup_{m\geq 0}\Vert {e^{\alpha(\tau\wedge\cdot)/2}} \delta S\Vert_{\mathbb{D}_{T}(\widetilde{Q},p)}\leq 2\lim_{n\to\infty}\sup_{m} \Vert {e^{\alpha(\tau\wedge\cdot)/2}} SI_{\Lbrack{n},n+m\Rbrack}\Vert_{\mathbb{D}_{T}(\widetilde{Q},p)}=0\\
\displaystyle\lim_{n\to\infty}\sup_{m\geq 0}\Vert {e^{\alpha(T\wedge\tau)/p}} \delta \xi\Vert_{\mathbb{L}^p(\widetilde{Q})}=\lim_{n\to\infty}\sup_{m\geq 0}\Vert {e^{\alpha(T\wedge\tau)/p}}(h_{(n+m)\wedge\tau}-h_{n\wedge\tau})\Vert_{\mathbb{L}^p(\widetilde{Q})}=0,\\
\displaystyle\lim_{n\to\infty}\sup_{m\geq 0}\Vert {e^{\alpha(\tau\wedge\cdot)/2}} \delta f(\cdot,0,0)\Vert_{\mathbb{D}_{T}(\widetilde{Q},p)}=0,\\
\sup_{k\geq 0}\sqrt{ \Delta(\xi^{(k)}, f^{(k)}, S^{(k)})}<+\infty,
\end{cases}
\end{eqnarray}
 and 
\begin{eqnarray}\label{limitesBIS}
\begin{cases}
\displaystyle\lim_{n\to\infty}\Vert {e^{\alpha(n\wedge\tau)/p}} \overline{\xi}^{(n)}\Vert_{\mathbb{L}^p(\widetilde{Q})}^p=E\left[\int_0^{\infty} e^{\alpha{s}}\vert{h}_s\vert^p dV^{\mathbb F}_s \right]\\
\displaystyle\lim_{n\to\infty}\Vert {e^{\alpha(\tau\wedge\cdot)/p}}(\overline{S}^{(n)})^+\Vert_{\mathbb{D}_{T}(\widetilde{Q},p)}^p=E\left[\int_0^{\infty} \sup_{0\leq{s}\leq{t}}e^{\alpha{s}}(S^+_s)^p dV^{\mathbb F}_s \right]\\
\displaystyle\lim_{n\to\infty}\Vert{e^{\alpha(\tau\wedge\cdot)/p}}  \overline{f}^{(n)}(\cdot,0,0)\Vert_{\mathbb{S}_{T}(\widetilde{Q},p)}^p=E\left[\int_0^{\infty}(F^{(\alpha)}_t)^p dV^{\mathbb F}_t \right].
\end{cases}
\end{eqnarray}
Now we deal with the first term in the right-hand-side of the above inequality. To this end, on the one hand, we remark that 
\begin{eqnarray}\label{Control4deltaF}
&&\vert \delta f_t\vert=\vert (\overline{f}^{(n+m)}-\overline{f}^{(n)})(t,\overline{Y}^{(n+m)}_t, \overline{Z}^{(n+m)}_t)\vert=\vert {f}(t,\overline{Y}^{(n+m)}_t, \overline{Z}^{(n+m)}_t)\vert{I}_{\{n<t\leq n+m\}}\nonumber\\
&&\leq \vert{f}(t,0,0)\vert{I}_{\{n<t\leq n+m\}}+C_{lip}(\vert\overline{Y}^{(n+m)}_t\vert+\overline{Z}^{(n+m)}_t\vert)I_{\{n<t\leq{n+m}\}}.
\end{eqnarray}
On the other hand, thanks to Theorem \ref{WhyNot}, applied to $(\overline{Y}^{(n+m)}, \overline{Z}^{(n+m)},\overline{M}^{(n+m)},\overline{K}^{(n+m)} )$  and $\sigma=n$, we deduce that 
\begin{eqnarray*}
&&\Vert{e}^{\alpha(\tau\wedge\cdot)/2}\overline{Z}^{(n+m)}I_{\Rbrack{n},n+m\Rbrack}\Vert_{\mathbb{S}_{T}(\widetilde{Q},p)}+\Vert{e}^{\alpha\cdot/2} \overline{Y}^{(n+m)}I_{\Rbrack{n},n+m\Rbrack}\Vert_{\mathbb{S}_{T}(\widetilde{Q},p)}\\
&&\leq \widehat{C}\left\{  \Vert{e^{\alpha(T\wedge\tau)/2}}\overline{\xi}^{(n+m)}I_{\{\tau>n\}}\Vert_{L^p(\widetilde{Q})}+\Vert{e^{\alpha(\tau\wedge\cdot)}}S^{+}I_{\Rbrack{n},n+m\Rbrack}\Vert_{\mathbb{D}_T(\widetilde{Q}, p)}+\Vert e^{ {\alpha}(\tau\wedge\cdot)/2}f(\cdot,0,0)I_{\Rbrack{n},n+m\Rbrack}\Vert_{\mathbb{S}_T(\widetilde{Q}, p)}\right\}
\end{eqnarray*}
Therefore, by combining this inequality with (\ref{limites}) and (\ref{Control4deltaF}), we deduce that 
\begin{eqnarray*}
\lim_{n\to\infty}\sup_{m\geq 0} \Vert{e^{\alpha(\tau\wedge\cdot)/p}} \delta f\Vert_{\mathbb{S}_{(n+m)\wedge\tau}(\widetilde{Q},p)}=0.
\end{eqnarray*}
Therefore, a combination of this equality with (\ref{Estimate4GenreralPoof}) and (\ref{limites}), we conclude that the sequence $(\overline{Y}^{(n)}, \overline{Z}^{(n)},\overline{M}^{(n)},\overline{K}^{(n)} )$ is a Cauchy sequence in norm, and hence it converges in norm and almost surely for a subsequence, and its limite is a solution to (\ref{nonLinearINFINITE}). This proves assertion (a) of the theorem provided that assertion (c) is true. Furthermore, by applying Theorem \ref{estimates4GeneralUnbounded} to $(\overline{Y}^{(n)}, \overline{Z}^{(n)},\overline{M}^{(n)},\overline{K}^{(n)} )$, we get 
\begin{align*}
& \Vert{e}^{\alpha\cdot/2}{\widetilde{\cal E}}^{1/p}\overline{Y}^{(n)}\Vert_{\mathbb{D}({P},p)}+ \Vert{e}^{\alpha\cdot/2}(\widetilde{\cal E}_{-})^{1/p}\vert \overline{Z}^{(n)}\Vert_{\mathbb{S}({P},p)}+\Vert{e}^{\alpha\cdot/2}(\widetilde{\cal E}_{-})^{1/p}\is\overline{M}^{(n)}\Vert_{{\cal{M}}^p(P,\mathbb{G})}+\Vert{e}^{{\alpha}\cdot/2}(\widetilde{\cal E}_{-})^{1/p}\is\overline{K}^{(n)}_{T\wedge\tau}\Vert_{L^p(P)}\\
&\leq C({\alpha},p)  \left\{\Vert{e^{\alpha\cdot/p}}\overline{f}^{(n)}(\cdot,0,0)\Vert_{\mathbb{S}_{T\wedge\tau}(\widetilde{Q},p)}+\Vert {e^{\alpha(T\wedge\tau)/p}} \overline{\xi}^{(n)}\Vert_{\mathbb{L}^p(\widetilde{Q})}+\Vert {e^{\alpha\cdot/p}}(\overline{S}^{(n)})^{+}\Vert_{\mathbb{D}_{T\wedge\tau}(\widetilde{Q},p)}\right\}.
\end{align*}
Hence, by using (the convergence in norm of the sequence $(\overline{Y}^{(n)}, \overline{Z}^{(n)},\overline{M}^{(n)},\overline{K}^{(n)} )$ or the convergence almost surely and Fatou for the left-hand-side term of the above inequality and using (\ref{limites}) for its right-hand-side term, the proof of assertion (b) follows immediately.\\
{\bf Step 3.} Here we prove assertion (c) under the assumption (\ref{BoundednessAssumpionInfinite}). Consider two triplets $(f, S^{(i)}, h^{(i)})$, $i=1,2$ satisfying (\ref{BoundednessAssumpionInfinite}). Then for each triplet we associate to it a sequence $(\overline{f}^{(n)}, \overline{S}^{(n,i)}, \overline{h}^{(n,i)})$ defined via (\ref{overlineFn}). Thus, there exists two sequences  $(\overline{Y}^{(n,i)}, \overline{Z}^{(n,i)},\overline{M}^{(n,i)},\overline{K}^{(n,i)} )$, $i=1,2$, that converge in norm and almost surely for  subsequences to $(\overline{Y}^{\mathbb G,i}, \overline{Z}^{\mathbb G, i},\overline{M}^{\mathbb G, i},\overline{K}^{\mathbb G, i} )$ which is solution to (\ref{nonLinearINFINITE}) associated to  $(f, S^{(i)}, h^{(i)})$.\ Then by applying Theorem \ref{estimatesdelta} to the difference of solutions 
$$(\delta{Y}, \delta{Z},\delta{M},\delta{K} ):=(\overline{Y}^{(n,1)}, \overline{Z}^{(n,1)},\overline{M}^{(n,1)},\overline{K}^{(n,1)} )-(\overline{Y}^{(n,2)}, \overline{Z}^{(n,2)},\overline{M}^{(n,2)},\overline{K}^{(n,2)} ),$$ and the horizon $T=n$, we get 
\begin{align}\label{Estimate4GenreralPoofBis}
&\Vert{e}^{\alpha\cdot/2}{\widetilde{\cal E}}^{1/p}\delta{Y}\Vert_{\mathbb{D}_{T}(\widetilde{P},p)}+\Vert{e}^{\alpha\cdot/2}(\widetilde{\cal E}_{-})^{1/p}\vert \delta{Z}\Vert_{\mathbb{S}^p_{T}(\widetilde{P},p)}+\Vert{e}^{\alpha\cdot/2}(\widetilde{\cal E}_{-})^{1/p}\is \delta{M}\Vert_{{\cal{M}}^p(P,\mathbb{G})}\nonumber\\
& \leq C_2\Vert {e^{\alpha(T\wedge\tau)/p}} \delta \xi\Vert_{\mathbb{L}^p(\widetilde{Q})}+C_3\Vert {e^{\alpha(\tau\wedge\cdot)/2}} \delta S\Vert_{\mathbb{D}_{T}(\widetilde{Q})}^{1/2}\sup_{k\geq n}\sqrt{ \Delta(\xi^{(k)}, f^{(k)}, S^{(k)})}.
\end{align}
Here $\Delta(\xi^{(k)}, f^{(k)}, S^{(k)})$ is given by (\ref{Deltaxi(i)4proof}). Thus, by using (\ref{limitesBIS}) for each term in the right-hand-side term of the above inequality and Fatou and almost convergence for its left-hand-side term, we conclude that assertion (c) holds, and this ends the first part.\\
{\bf Part 2.}  Here we drop the assumption (\ref{BoundednessAssumpionInfinite}). Let $(f,S,h)$ be a triplet and consider 
\begin{eqnarray*}
T_n:=\inf\left\{t\geq 0\ :\ e^{\alpha{t}}\max(F^{(\alpha)}_t, \sup_{0\leq{u}\leq t}\vert S_u\vert^p)>n {\cal E}(G_{-}^{-1}\is m)\right\}.
\end{eqnarray*}
It is clear that $T_n$ is an $\mathbb F$-stopping time that converges to infinity almost surely. Then we associate  a sequence, to the triplet $(f,S,h)$, denoted by $(f^{(n)},S^{(n)},h^{(n)})$, given by 
\begin{eqnarray}\label{SequenceTn}
f^{(n)}:=fI_{\Lbrack0,T_n\Lbrack},\quad S^{(n)}:=SI_{\Lbrack0,T_n\Lbrack},\quad h^{(n)}_t:=h_tI_{\{e^{\alpha{t}}\vert{h}_t\vert^p\leq n{\cal E}_t(G_{-}^{-1}\is m)\}}I_{\Lbrack0,T_n\Lbrack}(t).
\end{eqnarray}
{\bf Step 1.} Here we prove that assertion (c) holds. In fact, we consider two triplets $(f,S_i,h_i)$, $i=1,2$, and for each tripler we associate a sequence $(f^{(n)},S^{(n,i)},h^{(n,i)})$, via (\ref{SequenceTn}), for each $i=1,2$. Therefore, for each $i=1,2$ and any $n\geq 1$,  the triplet  $(f^{(n)},S^{(n,i)},h^{(n,i)})$ fulfills (\ref{BoundednessAssumpionInfinite}), and hence due to Part 1, there exists a unique solution $(\overline{Y}^{(n,i)}, \overline{Z}^{(n,i)},\overline{M}^{(n,i)},\overline{K}^{(n,i)} )$ that converges in norma and almost surely for a subsequence to $({Y}^{\mathbb G,i}, Z^{\mathbb G,i},M^{\mathbb G,i},K^{\mathbb G,i})$. Furthermore,  we apply assertion (c)  to the difference of solutions
$$
\left({\delta}Y^{\mathbb{G},n}, \delta{Z}^{\mathbb{G},n},{\delta}M^{\mathbb{G},n} \right)=(\overline{Y}^{(n,1)}, \overline{Z}^{(n,1)},\overline{M}^{(n,1)},\overline{K}^{(n,1)} )-(\overline{Y}^{(n,2)}, \overline{Z}^{(n,2)},\overline{M}^{(n,2)},\overline{K}^{(n,2)} ),$$
 and get 
   \begin{align*}
&E\left[\sup_{0\leq s\leq\tau }e^{{p\alpha}s/2}\widetilde{\cal E}_{s}\vert{\delta}Y^{{\mathbb{G},n}}_{s}\vert^p+\left(\int_{0}^{\tau}e^{\alpha s}(\widetilde{\cal E}_{s-})^{2/p}\vert \delta{Z}^{{\mathbb{G},n}}_{s}\vert^{2}ds\right)^{p/2}+\left(\int_{0}^{\tau}e^{\alpha s}(\widetilde{\cal E}_{s-})^{2/p}d[ {\delta}M^{\mathbb{G},n},{\delta} M^{\mathbb{G},n}]_s\right)^{p/2}\right]\\
&\leq C_1 E\left[\int_0^{T_n} \left\{e^{{\alpha}t}\vert{\delta}h_t\vert^p +\sup_{0\leq{u}\leq{t}}e^{\alpha{u}}\vert\delta{S}_u\vert^p\right\}dV^{\mathbb F}_t\right]\nonumber\\
&+C_2\sqrt{E\left[\int_0^{T_n}\sup_{0\leq{u}\leq{t}}e^{\alpha{u}}\vert\delta{S}_u\vert^pdV^{\mathbb F}_t\right]}\sqrt{\sum_{i=1}^2E\left[\int_0^{T_n} \left\{e^{{p\alpha}t/2}\vert{h}_i(t)\vert^p +(F^{(\alpha)}_t)^p+\sup_{0\leq{u}\leq{t}}e^{\alpha{u}}\vert({S}_i)(u))^+\vert^p\right\}dV^{\mathbb F}_t\right]}.
\end{align*}
Therefore, by taking the limite on both sides, we deduce that assertion (c) holds. \\
%%%%%%%%%%%%%%%%%%%%%%%%%%%%%%%%%%%%%%%%%%%%%%%%%%%%%%%%%%%%%%%%%%%%%%%%%%%%%
{\bf Step 2.}  This step proves assertion (a) and (b) of the theorem.\\
Furthermore, for any $n\geq 0$, we associate the double-sequence as follows 
\begin{eqnarray*}
\overline{f}^{(n,k)}:=f^{(n)}I_{\Lbrack 0,k\Rbrack},\overline{S}^{(n,k)}_t:=S^{(n)}_{t\wedge{k}},\quad\overline{h}^{(n,k)}:=h^{(n)} I_{\Lbrack 0,k\Rbrack},\quad \overline{\xi}^{(n,k)}:=\overline{h}^{(n,k)}_{\tau}.
\end{eqnarray*}
Thus, on the one hand, Theorem \ref{alkd} yields the existence of $(\overline{Y}^{(n,k)}, \overline{Z}^{(n,k)},\overline{M}^{(n,k)},\overline{K}^{(n,k)} )$  solution to (\ref{nonLinearINFINITE}) associated to $(\overline{f}^{(n,k)},\overline{S}^{(n,k)},\overline{h}^{(n,k)})$. On the other hand, part 1 of this proof implies the existence of $(\overline{Y}^{(n)}, \overline{Z}^{(n)},\overline{M}^{(n)},\overline{K}^{(n)} )$  solution to (\ref{nonLinearINFINITE}) associated to $({f}^{(n)},{S}^{(n)},{h}^{(n)})$, and which is the limite of  $(\overline{Y}^{(n,k)}, \overline{Z}^{(n,k)},\overline{M}^{(n,k)},\overline{K}^{(n,k)} )$ when $k$ goes infinity in norm and almost surely for subsequence. Now, we consider nonengative integers $n,m$ and $k$ and we follow Step 2 of part 1, and apply Theorem \ref{estimatesdelta} to the solutions difference
\begin{eqnarray*}
\left(\delta Y^{(k)}, \delta Z^{(k)},\delta M^{(k)},\delta K^{(k)}\right):=(\overline{Y}^{(n+m,k)}, \overline{Z}^{(n+m,k)},\overline{M}^{(n+m,k)},\overline{K}^{(n+m,k)} )-(\overline{Y}^{(n,k)}, \overline{Z}^{(n,k)},\overline{M}^{(n,k)},\overline{K}^{(n,k)} ),
\end{eqnarray*}
and get 
\begin{align}\label{Estimate4GeneralPoofBis}
&\Vert{e}^{\alpha\cdot/2}{\widetilde{\cal E}}^{1/p}{\delta}Y^{(k)}\Vert_{\mathbb{D}_{T}(\widetilde{P},p)}+\Vert{e}^{\alpha\cdot/2}(\widetilde{\cal E}_{-})^{1/p} \delta{Z}^{(k)}\Vert_{\mathbb{S}^p_{T}(\widetilde{P},p)}+\Vert{e}^{\alpha\cdot/2}(\widetilde{\cal E}_{-})^{1/p}\is{\delta}M^{(k)}\Vert_{{\cal{M}}^p(P,\mathbb{G})}\nonumber\\
& \leq C_1\Vert{e^{\alpha(\tau\wedge\cdot)/p}} \delta f^{(k)}\Vert_{\mathbb{S}^p_{T}(\widetilde{Q})}+ C_2\Vert {e^{\alpha(T\wedge\tau)/p}} \delta \xi^{(k)}\Vert_{\mathbb{L}^p(\widetilde{Q})}+C_3\Vert {e^{\alpha(\tau\wedge\cdot)/2}} \delta S^{(k)}\Vert_{\mathbb{D}_{T}(\widetilde{Q})}^{1/2}\sup_{l\geq n}\sqrt{ \Delta(l,k)}.
\end{align}
Here $\Delta(l,k)$ is given by 
\begin{eqnarray}\label{Deltaxi(i)4proofBis}
\Delta(l,k):=\Vert {e^{\alpha(T\wedge\tau)/p}} \overline{\xi}^{(l,k)}\Vert_{\mathbb{L}^p(\widetilde{Q})}+\Vert{e^{\alpha(\tau\wedge\cdot)/p}}  \overline{f}^{(l,k)}(\cdot,0,0)\Vert_{\mathbb{S}_{T}(\widetilde{Q},p)} +\Vert {e^{\alpha(\tau\wedge\cdot)/2}} ( \overline{S}^{(l,k)})^{+}\Vert_{\mathbb{D}_{T}(\widetilde{Q},p)},
\end{eqnarray}
Here, we have 
\begin{eqnarray}\label{Control4deltaFBis}
&&\vert \delta f_t^{(k)}\vert:=\vert (\overline{f}^{(n+m,k)}-\overline{f}^{(n,k)})(t,\overline{Y}^{(n+m,k)}_t, \overline{Z}^{(n+m,k)}_t)\vert=\vert {f}(t,\overline{Y}^{(n+m,k)}_t, \overline{Z}^{(n+m,k)}_t)\vert{I}_{\{T_n<t\leq T_{n+m}\}}I_{\{t\leq k\}}\nonumber\\
&&\leq \vert{f}(t,0,0)\vert{I}_{\{T_n<t\leq T_{n+m}\}}I_{\{t\leq k\}}+C_{lip}(\vert\overline{Y}^{(n+m,k)}_t\vert+\overline{Z}^{(n+m,k)}_t\vert){I}_{\{T_n<t\leq T_{n+m}\}}I_{\{t\leq k\}}
\end{eqnarray}
On the other hand, thanks to Theorem \ref{WhyNot} applied to $(\overline{Y}^{(n+m,k)}, \overline{Z}^{(n+m,k)},\overline{M}^{(n+m,k)},\overline{K}^{(n+m,k)} )$  and $\sigma=T_n$, we deduce that 
\begin{eqnarray*}
&&\Vert{e}^{\alpha\cdot/2}\overline{Z}^{(n+m)}I_{\Rbrack{T_n},T_{n+m}\Rbrack}\Vert_{\mathbb{S}_{k\wedge\tau}(\widetilde{Q},p)}+\Vert{e}^{\alpha\cdot/2} \overline{Y}^{(n+m,k)}I_{\Rbrack{T_n},T_{n+m}\Rbrack}\Vert_{\mathbb{S}_{k\wedge\tau}(\widetilde{Q},p)}\\
&&\leq \widehat{C}\left\{  \Vert{e^{\alpha(T\wedge\tau)/2}}\overline{\xi}^{(n+m,k)}I_{\{\tau>T_n\}}\Vert_{L^p(\widetilde{Q})}+\Vert{e^{\alpha(\tau\wedge\cdot)}}S^{+}I_{\Rbrack{T_n},T_{n+m}\Rbrack}\Vert_{\mathbb{D}{k\wedge\tau}(\widetilde{Q}, p)}\right\}\\
&&+ \widehat{C}\Vert e^{ {\alpha}(\tau\wedge\cdot)/2}f(\cdot,0,0)I_{\Rbrack{T_n},T_{n+m}\Rbrack}\Vert_{\mathbb{S}_{k\wedge\tau}(\widetilde{Q}, p)}.
\end{eqnarray*}
Therefore, by combining this inequality with (\ref{Control4deltaFBis}) and Lemma \ref{ExpecationQtilde2P}, we deduce the existence of $\widehat{C}_i$, $i=1,2,3,$ that depend on $p$ and $\alpha$ only such that
\begin{align*}
&\displaystyle\lim\inf_{k\to\infty}\Vert{e^{\alpha(\tau\wedge\cdot)/p}} \delta f^{(k)}\Vert_{\mathbb{S}_{k\wedge\tau}(\widetilde{Q},p)}\\
&\leq \widehat{C}_1\lim\inf_{k\to\infty}  \Vert{e^{\alpha(\tau\wedge\cdot)/2}}{f}(\cdot,0,0){I}_{\Rbrack{T}_n,T_{n+m}\Rbrack}\Vert_{\mathbb{S}_{k\wedge\tau}(\widetilde{Q},p)}+\displaystyle\widehat{C}_2\lim\inf_{k\to\infty}\Vert{e^{\alpha(k\wedge\tau)/2}}\overline{\xi}^{(n+m,k)}I_{\{\tau>T_n\}}\Vert_{L^p(\widetilde{Q})}\\
&\displaystyle+\widehat{C}_3\lim\inf_{k\to\infty}\Vert{e^{\alpha(\tau\wedge\cdot)}}S^{+}I_{\Rbrack{T_n},T_{n+m}\Rbrack}\Vert_{\mathbb{D}_{k\wedge\tau}(\widetilde{Q}, p)}\\
%+\widehat{C}_3\lim\inf_{k\to\infty}\Vert e^{ {\alpha}(\tau\wedge\cdot)/2}f(\cdot,0,0)I_{\Rbrack{T_n},T_{n+m}\Rbrack}\Vert_{\mathbb{S}_k(\widetilde{Q}, p)}\\
&\leq \max(\widehat{C}_1,\widehat{C}_2,\widehat{C}_3) E\left[\int_{T_n}^{T_{n+m}}\left\{(F^{(\alpha)}_t)^p+\sup_{0\leq{u}\leq{t}}e^{\alpha{t}}(S_u^+)^p+e^{\alpha{t}}\vert{h_t}\vert^p\right\}dV^{\mathbb F}_t\right].
\end{align*}
As a result, in virtue of (\ref{MainAssumption4InfiniteHorizonNonlinear}), the above inequality implies that 
\begin{eqnarray}\label{LimitZeroGeneral}
\lim_{n\to\infty}\sup_{m\geq 1}\lim\inf_{k\to\infty}\Vert{e^{\alpha(\tau\wedge\cdot)/p}} \delta f^{(k)}\Vert_{\mathbb{S}_{k\wedge\tau}(\widetilde{Q},p)}=0.
\end{eqnarray}
Then by taking the limit when  $k$ goes to infinity in (\ref{Estimate4GeneralPoofBis}) and using Lemma \ref{ExpecationQtilde2P} again, we get 
\begin{align*}
&\Vert{e}^{\alpha\cdot/2}{\widetilde{\cal E}}^{1/p}(\overline{Y}^{(n+m)}-\overline{Y}^{(n)})\Vert_{\mathbb{D}(P,p)}+\Vert{e}^{\alpha\cdot/2}(\widetilde{\cal E}_{-})^{1/p}(\overline{Z}^{(n+m)}-\overline{Z}^{(n)})\Vert_{\mathbb{S}^p(P,p)}\\
&+\Vert{e}^{\alpha\cdot/2}(\widetilde{\cal E}_{-})^{1/p}\is(\overline{M}^{(n+m)}-\overline{M}^{(n)})\Vert_{{\cal{M}}^p(P,\mathbb{G})}\nonumber\\
& \leq C_1\lim\inf_{k\to\infty}\Vert{e^{\alpha(\tau\wedge\cdot)/p}} \delta f^{(k)}\Vert_{\mathbb{S}_{k\wedge\tau}(\widetilde{Q},p)}+ C_2E\left[\int_{T_n}^{T_{n+m}}e^{\alpha{t}}\vert{h_t}\vert^pdV^{\mathbb F}_t\right]\\
&+C_3\sqrt{E\left[\int_{T_n}^{T_{n+m}}\sup_{0\leq{u}\leq{t}}e^{\alpha{t}}\vert{S}_t\vert^pdV^{\mathbb F}_t\right]}\sqrt{E\left[\int_0^{\infty}\left\{(F^{(\alpha)}_t)^p+\sup_{0\leq{u}\leq{t}}e^{\alpha{t}}(S_t^+)^p+e^{\alpha{t}}\vert{h_t}\vert^p\right\}dV^{\mathbb F}_t\right]}.\end{align*}
Therefore, by combining this with (\ref{LimitZero}), we conclude that $(\overline{Y}^{(n)},\overline{Z}^{(n)},\overline{M}^{(n)})$ is a Cauchy sequence, and hence it converges to $(Y^{\mathbb G}, Z^{\mathbb G},M^{\mathbb G})$ in norm and almost surely for a subsequence. Then the convergence of $\overline{K}^{(n)}$ to $K^{\mathbb G}$ follows immediately, and hence  $(Y^{\mathbb G}, Z^{\mathbb G},M^{\mathbb G},K^{\mathbb G})$ is a solution (\ref{nonLinearINFINITE}). This proves assertion (a). Assertion (b) follows from from part 1 of this proof,  that guarantees that assertion (b) holds for each $(\overline{Y}^{(n)},\overline{Z}^{(n)},\overline{M}^{(n)},\overline{K}^{(n)})$ ($n\geq 0$),  and from taking the limit afterwards in the obtained inequality. This ends the proof of theorem.\end{proof}
%%%%%%%%%%%%%%%%%%%%%%%%%%%%%%%%%%%%%%%%%%%%%%%%%%%%%%%%%%%%%%%%%%%%%%%%%%%%%%%%%%%%
%%%%%%%%%%%%%%%%%%%%%%%%%%%%%%%%%%%%%%%%%%%%%%%%%%%%%%%%%%%%%%%%%%%%%%%%%%%%%
%%%%%%%%%%%%%%%%%%%%%%%%%%%%%%%%%%%%%%%%%%%%%%%%%%%%%%%%%%%%%%%%%%%%%%%%%%%%%%%%%
\subsection{An RBSDE under $\mathbb F$ with infinite horizon and its relationship to (\ref{nonLinearINFINITE}) }
In this subsection, we derive our second main result of this section that  addresses the RBSDE under $\mathbb F$ given below, and connects to (\ref{nonLinearINFINITE}).
 \begin{eqnarray}\label{RBSDEFGENERALInfinite}
\begin{cases}
Y_{t}=\displaystyle\int_{t}^{\infty}f^{\mathbb{F}}(s,Y_s,Z_s)ds+\int_{t}^{\infty}h_{s}dV^{\mathbb{F}}_{s}+K^{\mathbb{F}}_{\infty}-K_t-\int_{t}^{\infty}Z_{s}dW_{s},\\
Y^{\mathbb{F}}_{t}\geq S_{t}^{\mathbb{F}},\quad t\geq 0,\quad \displaystyle{E}\left[\int_{0}^{\infty}(Y_{t-}-S_{t-}^{\mathbb{F}})dK^{\mathbb{F}}_{t}\right]=0.
\end{cases}
\end{eqnarray} 
Here  $(f^{\mathbb{F}},S^{\mathbb{F}}, {\widetilde{\cal E}})$ denote the functionals defined via (\ref{Data4RBSDE(F)}). First of all, remark that a solution to the above RBSDE is any triplet $(Y, Z, K)$ such that $ \lim_{t\to\infty}Y_t$ exists almost surely and is null, and 
\begin{eqnarray*}
\begin{cases}
dY_{t}=f^{\mathbb{F}}(t,Y_t,Z_t)dt-h_tdV^{\mathbb{F}}_t-dK_t+Z_tdW_t,\\
Y^{\mathbb{F}}_{t}\geq S_{t}^{\mathbb{F}},\quad t\geq 0,\quad \displaystyle{E}\left[\int_{0}^{\infty}(Y_{t-}-S_{t-}^{\mathbb{F}})dK^{\mathbb{F}}_{t}\right]=0.
\end{cases}
\end{eqnarray*} 
This RBSDE generalizes Hamad\`ene et al . \cite{Hamadane1} in many aspects. First of all, our obstacle process $S^{\mathbb{F}}$ is arbitrary RCLL and might not be continuous at all. Furthermore, we do not exige that the part $(Y,K)$ of the solution to be continuous. Besides these, our RBSDE  has an additional term, $\int_0^{\cdot}h_{s}dV^{\mathbb{F}}_{s}$ that might not be absolutely continuous with respect to the Lebesgue measure.
%%%%%%%%%%%%%%%%%%%%%%%%%%%%%%%%%%%%%%%%%%%%%%%%%%%%%
%%%%%%%%%%%%%%%%%%%%%%%%%%%%%%%%%%%%%%%%%%%%%%%%%%%%%%%%%%%%%%%%%%%%%%%
\begin{theorem}\label{RBSDEinfite4F} Let $p\in (1,+\infty)$, $(h,S)$ be a pair of $\mathbb F$-optional processes, $f$ is a functional satisfying (\ref{LipschitzAssumption}), and  $(f^{\mathbb{F}},S^{\mathbb{F}}, {\widetilde{\cal E}})$ is given by (\ref{Data4RBSDE(F)}). Suppose $G>0$  and there exists $\alpha>\max(\alpha_0(p),\alpha_1(p))$  
\begin{eqnarray}\label{additiona4GenertalINfinity}
\mbox{(\ref{MainAssumption4InfiniteHorizonNonlinear}) holds and}\quad E\left[\left(\widetilde{\cal E}_{\infty}F^{(\alpha)}_{\infty}\right)^p\right]<+\infty.
\end{eqnarray}
Then the RBSDE (\ref{RBSDEFGENERALInfinite}) has a unique $L^p(P,\mathbb F)$-solution $(Y^{\mathbb{F}}, Z^{\mathbb{F}},K^{\mathbb F})$.
\end{theorem}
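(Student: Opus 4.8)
The plan is to solve (\ref{RBSDEFGENERALInfinite}) directly on the $\mathbb F$-side by a bounded-horizon approximation, in the same spirit in which the companion $\mathbb G$-RBSDE (\ref{nonLinearINFINITE}) was treated in Theorem \ref{alkdINFINITE}. Before anything else I would record that, under (\ref{MainAssumption4InfiniteHorizonNonlinear}) and (\ref{additiona4GenertalINfinity}), the driver datum $f^{\mathbb F}(\cdot,0,0)={\widetilde{\cal E}}f(\cdot,0,0)$, the forcing term $h\is V^{\mathbb F}$ and the barrier $S^{\mathbb F}={\widetilde{\cal E}}S$ all live in the appropriate weighted $L^p$-spaces. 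This rests on the integration-by-parts identity from the opening of the proof of Theorem \ref{Relationship4InfiniteBSDE} (now applied to the weighted process $F^{(\alpha)}$ rather than to $F$, using $dV^{\mathbb F}=-d{\widetilde{\cal E}}$) together with the Jensen-type bound $\bigl(\int_0^{\infty}\vert h_s\vert\,dV^{\mathbb F}_s\bigr)^p\le (V^{\mathbb F}_{\infty})^{p-1}\int_0^{\infty}\vert h_s\vert^p\,dV^{\mathbb F}_s\le \int_0^{\infty}\vert h_s\vert^p\,dV^{\mathbb F}_s$ since $V^{\mathbb F}_{\infty}\le 1$. It is precisely here that the extra hypothesis $E\bigl[({\widetilde{\cal E}}_{\infty}F^{(\alpha)}_{\infty})^p\bigr]<\infty$ of (\ref{additiona4GenertalINfinity}) is consumed, in line with the Remark following Theorem \ref{Relationship4InfiniteBSDE} that the $\mathbb F$-RBSDE genuinely needs stronger integrability than its $\mathbb G$-counterpart.

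Next I would remove the finite-variation forcing term $h\is V^{\mathbb F}$ by the change of variable $Y':=Y+h\is V^{\mathbb F}$, exactly as in Step~1 of the proof of Theorem \ref{alkd}. This turns (\ref{RBSDEFGENERALInfinite}) into an equivalent \emph{standard} reflected equation with driver $\widetilde f^{\mathbb F}(t,y,z):=f^{\mathbb F}(t,y-(h\is V^{\mathbb F})_t,z)$ and barrier $\widetilde S^{\mathbb F}:=S^{\mathbb F}+h\is V^{\mathbb F}$, with no $dV^{\mathbb F}$-term. The integrability established above, together with the Lipschitz property (\ref{LipschitzAssumption}), shows that $\widetilde f^{\mathbb F}(\cdot,0,0)$ and $(\widetilde S^{\mathbb F})^+$ still satisfy the required weighted $L^p$-bounds, so it suffices to produce a unique $L^p(P,\mathbb F)$-solution of this standard infinite-horizon Lipschitz RBSDE driven by $W$.

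For the standard equation I would approximate by bounded horizon: for each $n$ I truncate the data to $[0,n]$ and solve the bounded-horizon $\mathbb F$-RBSDE by Theorem \ref{alkd}(a) (equivalently \cite[Theorem 3.1]{Bouchard}), obtaining $(Y^{\mathbb F,n},Z^{\mathbb F,n},K^{\mathbb F,n})$. Since we now work directly under $P$ with the Brownian martingale part $\int Z\,dW$, the genuine Burkholder--Davis--Gundy and Doob inequalities are available, and applying It\^o to $e^{\alpha t}\vert Y'\vert^2$ and absorbing $C_{Lip}$ by taking $\alpha>\max(\alpha_0(p),\alpha_1(p))$ yields $\mathbb F$-versions of the a priori and difference estimates of Theorems \ref{WhyNot} and \ref{uniquenessNonlinear} (the exponential weight $e^{\alpha t}$ is what controls $C_{Lip}$ over the unbounded interval). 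These give uniform-in-$n$ bounds and the Cauchy property of $(Y^{\mathbb F,n},Z^{\mathbb F,n},K^{\mathbb F,n})$ in the weighted norms; letting $n\to\infty$ and using Fatou together with the stability of the barrier and Skorokhod conditions produces a limit triplet solving the standard equation with $\lim_{t\to\infty}Y'_t=0$, and uniqueness is immediate from the difference estimate. Undoing the substitution returns the sought $L^p(P,\mathbb F)$-solution $(Y^{\mathbb F},Z^{\mathbb F},K^{\mathbb F})$.

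The delicate step will be the infinite-horizon passage to the limit: one must control the tails $\int_{T_n}^{\infty}(\cdots)\,dV^{\mathbb F}$ uniformly and verify the Cauchy estimates with a supremum over the second index $m$, exactly as in Part~2 of the proofs of Theorems \ref{EstimateInfinite} and \ref{alkdINFINITE}. This is where (\ref{MainAssumption4InfiniteHorizonNonlinear}) and (\ref{additiona4GenertalINfinity}) feed dominated convergence, and where Lemma \ref{ExpecationQtilde2P} is used to phrase all bounds in a fixed, horizon-free $dV^{\mathbb F}$-norm so that the limit is meaningful; managing this uniform tail control is the technical heart of the argument.
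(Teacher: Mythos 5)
Your proposal is correct in its overall architecture and matches the paper's proof in its first two stages: the verification that $f^{\mathbb F}(\cdot,0,0)$, $h\is V^{\mathbb F}$ and $S^{\mathbb F}$ lie in the right weighted $L^p$-spaces (including the correct identification of where $E[({\widetilde{\cal E}}_{\infty}F^{(\alpha)}_{\infty})^p]<\infty$ is consumed), and the substitution $Y'=Y+\int_0^{\cdot}h_s\,dV^{\mathbb F}_s$ that converts (\ref{RBSDEFGENERALInfinite}) into a standard infinite-horizon Lipschitz RBSDE with driver $\widetilde f^{\mathbb F}$ and barrier $\widetilde S^{\mathbb F}$ — this is exactly the paper's reduction. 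Where you genuinely diverge is in how that standard equation is solved. You propose horizon truncation: solve the \emph{nonlinear} equation on $[0,n]$ via Theorem \ref{alkd}(a), establish $\mathbb F$-analogues of the a priori and difference estimates of Theorems \ref{WhyNot} and \ref{uniquenessNonlinear}, and prove the Cauchy property in $n$ by controlling the tails $\int_{T_n}^{\infty}(\cdots)\,dV^{\mathbb F}$ — mirroring Part 2 of the proofs of Theorems \ref{EstimateInfinite} and \ref{alkdINFINITE}. The paper instead runs a Picard iteration directly on the infinite horizon: each iterate $(Y^{(n)},Z^{(n)},K^{(n)})$ solves a \emph{linear} infinite-horizon RBSDE (whose well-posedness is already secured by Theorem \ref{Relationship4InfiniteBSDE}), and Lemma \ref{Inequality4BSDEunderF}, which provides constants with $C_1C_{Lip}<1$ for $\alpha>\max(\alpha_0(p),\alpha_1(p))$, yields geometric contraction $\Vert(Y^{(n+m)}-Y^{(n)},Z^{(n+m)}-Z^{(n)})\Vert\le (C_1C_{Lip})^n\sup_i\Vert(Y^{(i)},Z^{(i)},K^{(i)})\Vert$, hence a Cauchy sequence without any tail analysis. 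The trade-off: the paper's route is shorter and cleaner once the linear infinite-horizon result and Lemma \ref{Inequality4BSDEunderF} are in hand, since the contraction does all the work; your route avoids iterating on the nonlinearity but pushes the difficulty into the uniform tail control over the second index $m$, which you correctly flag as the technical heart and which would require a genuine $\mathbb F$-analogue of Theorem \ref{WhyNot} localized after a stopping time $\sigma=T_n$. Both arguments close; yours is essentially the strategy the authors reserved for the $\mathbb G$-equation, transplanted to $\mathbb F$.
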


The proof of this theorem is based on the following lemma
\begin{lemma}\label{Inequality4BSDEunderF} For $\alpha>\max(\alpha_0(P),\alpha_1(p))$, there exist $C_i$, $i=1,2,3,4$ that depend on $\alpha$ and $p$ only such that $C_1C_{lip}<1$ and the following assertions hold.\\
{\rm{(a)}} If $(Y^{i}, Z^i, K^i)$ is a solution to the RBSDE (\ref{RBSDEsaid}) associated to $(f^i,S^i, \xi^i)$, $i=1,2$, then 
\begin{eqnarray}
\Vert\delta{Y}\Vert_{\mathbb{D}(P,p)}+\Vert\delta{Z}\Vert_{\mathbb{S}(P,p)}\leq C_1\Vert\delta{f}\Vert_{\mathbb{S}(P,p)}+C_2\Vert\delta\xi\Vert_{L^p(P)}+C_3\sqrt{\Vert\delta{S}\Vert_{\mathbb{S}(P,p)}}\sqrt{\Vert\mbox{Var}_T(\delta{K})\Vert_{L^p(P)}}.\end{eqnarray}
{\rm{(b)}} If $(Y, Z, K)$ is a solution to the RBSDE (\ref{RBSDEsaid}), then 
\begin{eqnarray}
\Vert{Y}\Vert_{\mathbb{D}(P,p)}+\Vert{Z}\Vert_{\mathbb{S}(P,p)}+\Vert{Y}\Vert_{\mathbb{S}(P,p)}+\Vert{K}_T\Vert_{L^p(P)}\leq C_4\left\{\Vert{f}(\cdot, 0,0)\Vert_{\mathbb{S}(P,p)}+\Vert\xi\Vert_{L^p(P)}+\Vert{S^+}\Vert_{\mathbb{S}(P,p)}\right\}.\end{eqnarray}
\end{lemma}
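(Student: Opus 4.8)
The plan is to prove both estimates by the exponential-weight energy method already deployed under $\mathbb G$ in Theorems \ref{WhyNot} and \ref{uniquenessNonlinear}, transported to the simpler Brownian stochastic basis $(\Omega,\mathcal F,\mathbb F,P)$. The crucial simplification here is that $\mathbb F$ is generated by $W$, so by martingale representation no orthogonal martingale part survives and the only martingale entering It\^o's formula for $e^{\alpha t}|Y_t|^2$ is the stochastic integral $\int e^{\alpha s}Y_{s-}Z_s\,dW_s$; this is controlled by the sharp maximal inequality of Lemma \ref{Lemma4.8FromChoulliThesis} exactly as in the $\mathbb G$-case. Throughout, the weight $e^{\alpha\cdot}$ with $\alpha>\max(\alpha_0(p),\alpha_1(p))$ is what produces a strictly positive coercive coefficient $C:=\alpha-2C_{Lip}-2C_{Lip}^2-\epsilon^{-1}$ in front of the $\int e^{\alpha s}|Y_s|^2\,ds$ term, and it is precisely the vanishing factor $C_{DB}C_{Lip}/\sqrt{\alpha}\to 0$ that will force the contraction constant to satisfy $C_1C_{Lip}<1$.

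For assertion (b) I would first obtain a pointwise domination of $|Y_t|$ by a conditional expectation of the data. Using the Snell-envelope representation of the reflected solution (the $\mathbb F$-analogue of (\ref{YGessSup}) in Lemma \ref{Estimation4DeltaY}), together with $\xi\ge-\xi^-$, $Y\ge S$ and the Lipschitz bound $|f(s,Y_s,Z_s)|\le|f(s,0,0)|+C_{Lip}(|Y_s|+|Z_s|)$, one bounds $|Y_t|$ by an $\mathbb F$-conditional expectation of $\int|f(\cdot,0,0)|\,ds+\sup S^+ +|\xi|$ plus Lipschitz corrections; Doob's inequality then yields the $\mathbb D(P,p)$-bound on $Y$. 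Next, It\^o applied to $e^{\alpha t}|Y_t|^2$, combined with the Skorokhod condition to replace $\int e^{\alpha s}Y_{s-}\,dK_s$ by $\int e^{\alpha s}S_{s-}^+\,dK_s$ and with Lemma \ref{Lemma4.8FromChoulliThesis} for the Brownian integral, controls $\|Z\|_{\mathbb S(P,p)}$ and $\|Y\|_{\mathbb S(P,p)}$. Finally $K$ is recovered from the equation itself, $K_T=Y_0-Y_T+\int_0^T f\,ds-\int_0^T Z\,dW$, and bounded via the BDG inequality by the quantities already controlled, which gives $C_4$.

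For assertion (a) I would apply It\^o to $e^{\alpha t}|\delta Y_t|^2$, insert the Lipschitz splitting $|\Delta f_t|\le|\delta f_t|+C_{Lip}(|\delta Y_t|+|\delta Z_t|)$ and use Young's inequality to absorb the $\delta Z$ and $\delta Y$ contributions into the coercive term. The reflection contribution is handled exactly as in (\ref{equa400}): by the two Skorokhod conditions, $e^{\alpha\cdot}\delta Y_{-}\is\delta K\le e^{\alpha\cdot}|\delta S_{-}|\is\mathrm{Var}(\delta K)$, which produces the mixed term $\sqrt{\|\delta S\|_{\mathbb S(P,p)}}\sqrt{\|\mathrm{Var}_T(\delta K)\|_{L^p(P)}}$ after Cauchy--Schwarz. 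The remaining Brownian integral is bounded through Lemma \ref{Lemma4.8FromChoulliThesis} and Doob, and collecting terms (choosing $\epsilon,\epsilon_1$ as in the proof of Theorem \ref{uniquenessNonlinear} and $\alpha$ large) gives the stated inequality with $C_1C_{Lip}<1$.

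The main obstacle is the infinite horizon. Unlike the $\mathbb G$-setting there is no $T\wedge\tau$ to stop at, so I must ensure that the terminal contributions $e^{\alpha T}|Y_T|^2$ vanish as $T\to\infty$ and that $\lim_{t\to\infty}Y_t=0$; this is precisely where the extra integrability hypotheses (\ref{additiona4GenertalINfinity}) and (\ref{MainAssumption4InfiniteHorizonNonlinear}) enter, through a localization and limiting argument on $[0,T]$. A second subtlety, shared with Theorem \ref{uniquenessNonlinear}, is that the mixed reflection term in (a) features $\mathrm{Var}_T(\delta K)$, which is not among the quantities being estimated; this is closed off by the elementary bound $\mathrm{Var}_T(\delta K)\le K^1_T+K^2_T$ together with assertion (b) applied to each solution to control $\|K^i_T\|_{L^p(P)}$, exactly as Theorem \ref{WhyNot} is used inside the proof of Theorem \ref{uniquenessNonlinear}.
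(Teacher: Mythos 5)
Your proposal is correct and follows exactly the route the paper intends: the paper omits the proof of Lemma \ref{Inequality4BSDEunderF}, stating only that it mimics Theorems \ref{WhyNot} and \ref{uniquenessNonlinear}, and your transport of the exponential-weight energy method to the Brownian basis under $P$ -- including the disappearance of the orthogonal martingale part, the closure of the $\mbox{Var}_T(\delta K)$ term via part (b), and the localization needed for the unbounded horizon -- is precisely that argument.
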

The proof of this lemma mimics those of Theorems \ref{WhyNot} and \ref{uniquenessNonlinear}, and will be omitted here.

\begin{proof}[Proof of Theorem \ref{RBSDEinfite4F}] Remark that due to the assumption (\ref{MainAssumption4InfiniteHorizonNonlinear}), the nondecreasing process $U:=\int_0^{\cdot}{h_s}{d}V^{\mathbb F}_s$ has a limit at infinity. Put
\begin{eqnarray*}
\widetilde{f}^{\mathbb{F}}(s,y,z)=f^{\mathbb{F}}(s,y-U_s,z),\quad \widetilde{S}^{\mathbb{F}}:=S^{\mathbb{F}}+U,\quad\mbox{and}\quad \widehat{\xi}:=U_{\infty}=\int_0^{\infty}h_sdV^{\mathbb F}_s.
\end{eqnarray*}
Then $(\overline{Y}, \overline{Z},\overline{K})$ is a solution to (\ref{RBSDEFGENERALInfinite}) if and only if  $(Y',Z',K'):=(\overline{Y}+U, \overline{Z},\overline{K})$ is a solution to 
 \begin{eqnarray}\label{RBSDEsaid}
\begin{cases}
Y_{t}= \widehat{\xi}+\displaystyle\int_{t}^{\infty}\widetilde{f}^{\mathbb{F}}(s,Y_s,Z_s)ds+K^{\mathbb{F}}_{\infty}-K_t-\int_{t}^{\infty}Z_{s}dW_{s},\\
\\
Y^{\mathbb{F}}_{t}\geq \widetilde{S}_{t}^{\mathbb{F}},\quad t\geq 0,\quad \displaystyle{E}\left[\int_{0}^{\infty}(Y_{t-}-S_{t-}^{\mathbb{F}})dK^{\mathbb{F}}_{t}\right]=0,
\end{cases}
\end{eqnarray}

Now, we define the sequence $(Y^{(n)}, Z^{(n)}, K^{(n)})$ as follows: 
$(Y^{(0)}, Z^{(0)}, K^{(0)}):=(0, 0, 0)$, and $(Y^{(n)}, Z^{(n)}, K^{(n)})$ is the unique solution to
\begin{eqnarray}\label{linear4n}
Y^{(n)}_t=\xi+\int_t^{\infty}{\widetilde{ f}}^{\mathbb F}(s, Y^{(n-1)}_s, Z^{(n-1)}_s)ds +\int_t^{\infty} Z^{(n)}_s dW_s+K^{(n)}_{\infty}-K^{(n)}_t.
\end{eqnarray} 
The existence and uniqueness of this solution is guaranteed by Theorem \ref{Relationship4InfiniteBSDE}. Thus, by applying Lemma \ref{Inequality4BSDEunderF}  to  $(Y^{(i)}, Z^{(i)}, K^{(i)})$ and  $(\delta{Y},\delta{Z},\delta{K}):=\left(Y^{(n+m)}-Y^{(n)}, Z^{(n+m)}-Z^{(n)}, K^{(n+m)}-K^{(n)}\right)$, we deduce that 
\begin{eqnarray*}
&&\sup_{i\geq 0}\Vert(Y^{(i)}, Z^{(i)}, K^{(i)})\Vert:=\sup_{i\geq 0} \left\{\Vert{Y^{(i)}}\Vert_{\mathbb{D}(P,p)}+\Vert{Z^{(i)}}\Vert_{\mathbb{S}(P,p)}+\Vert{Y}\Vert_{\mathbb{S}(P,p)}+\Vert{K^{(i)}}_T\Vert_{L^p(P)}\right\}<+\infty\quad\mbox{and}\\
&&\Vert\delta{Y}\Vert_{\mathbb{D}(P,p)}+\Vert\delta{Z}\Vert_{\mathbb{S}(P,p)}=:\Vert(\delta{Y},\delta{ Z})\Vert\leq C_1\Vert\delta{f}\Vert\leq C_1C_{lip}\Vert(Y^{(n+m-1)}-Y^{(n-1)}, Z^{(n+m-1)}- Z^{(n-1)})\Vert.
\end{eqnarray*} 
Thus, by iterating this inequality, we get 
\begin{eqnarray*}
\Vert(Y^{(n+m)}-Y^{(n)}, Z^{(n+m)}- Z^{(n)})\Vert\leq (C_1C_{lip})^n \Vert(Y^{(m)}, Z^{(m)})\Vert\leq (C_1C_{lip})^n \sup_{i\geq 0}\Vert(Y^{(i)}, Z^{(i)}, K^{(i)})\Vert.\end{eqnarray*}
This proves that the sequence $(Y^{(n)}, Z^{(n)})$ is a Cauchy, and hence it convergences in norm and almost surely for a subsequence to $(Y, Z)$. Then the convergence of $ K^{(i)})$ to some $K$ follows immediately from the RBSDE (\ref{RBSDEsaid}), and therefore the triplet $(Y, Z, K)$ is a solution to (\ref{RBSDEsaid}). The uniqueness of the solution to (\ref{RBSDEsaid}) is a direct consequence of Lemma \ref{Inequality4BSDEunderF}-(a). This ends the proof of the theorem.
\end{proof}
%%%%%%%%%%%%%%%%%%%%%%%%%%%%%%%%%%%%%%%%%%%%%%%%%%%%%%%%%%%%%%%%%%%%%%%%%%%%
Below, we establish the relationship between the solution of (\ref{nonLinearINFINITE})   and that of (\ref{RBSDEFGENERALInfinite}). 
 %%%%%%%%%%%%%%%%%%%%%%%%%%%%%%%%%%%%%%%%%%%%%%%%%%%%%%
\begin{theorem}\label{Relatiuonship4GeneralINifity}
Suppose that the assumptions of Theorem \ref{RBSDEinfite4F} hold. Then both RBSDEs (\ref{RBSDEFGENERALInfinite})  and  (\ref{nonLinearINFINITE}) have unique solutions $(Y^{\mathbb{F}}, Z^{\mathbb{F}},K^{\mathbb F})$ and $(Y^{\mathbb{G}}, Z^{\mathbb{G}},K^{\mathbb G},M^{\mathbb G})$ respectively, and they satisfy  
\begin{eqnarray}
   Y^{\mathbb{G}}= \frac{Y^{\mathbb{F}}}{{\widetilde{\cal E}}}I_{\Lbrack0,\tau\Lbrack}+\xi I_{\Lbrack\tau,+\infty\Lbrack},\ 
  Z^{\mathbb{G}}=\frac{Z^{\mathbb{F}}}{{\widetilde{\cal E}}}I_{\Rbrack0,\tau\Rbrack},\  K^{\mathbb{G}}=\frac{1}{{\widetilde{\cal E}_{-}}}\is (K ^{\mathbb{F}})^{\tau}\quad\mbox{and}\quad 
      M^{\mathbb{G}}=\left(h-\frac{Y^{\mathbb{F}}}{{\widetilde{\cal E}}}\right)\is N^{\mathbb{G}}.\label{secondrelation4Generalinfinity}
       \end{eqnarray}
\end{theorem}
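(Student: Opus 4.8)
The plan is to prove Theorem~\ref{Relatiuonship4GeneralINifity} by combining the existence/uniqueness results already established for each filtration separately with a direct verification that the candidate quadruplet in (\ref{secondrelation4Generalinfinity}) solves the $\mathbb{G}$-RBSDE. First I would invoke Theorem~\ref{RBSDEinfite4F} under the standing assumptions (\ref{additiona4GenertalINfinity}), which guarantees that (\ref{RBSDEFGENERALInfinite}) has a unique $L^p(P,\mathbb{F})$-solution $(Y^{\mathbb{F}},Z^{\mathbb{F}},K^{\mathbb{F}})$. Simultaneously, Theorem~\ref{alkdINFINITE} guarantees that (\ref{nonLinearINFINITE}) has a unique solution $(Y^{\mathbb{G}},Z^{\mathbb{G}},K^{\mathbb{G}},M^{\mathbb{G}})$, so the only remaining task is to identify the latter with the explicit transform of the former. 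This reduces the theorem to a \emph{verification}: I must show that the quadruplet $(\widehat{Y},\widehat{Z},\widehat{K},\widehat{M})$ built from $(Y^{\mathbb{F}},Z^{\mathbb{F}},K^{\mathbb{F}})$ via (\ref{secondrelation4Generalinfinity}) is itself a solution of (\ref{nonLinearINFINITE}); uniqueness from Theorem~\ref{alkdINFINITE} then forces equality.

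The verification step would mimic \emph{exactly} Step~2 in the proof of Theorem~\ref{Relationship4InfiniteBSDE} (and equivalently Step~2 of Theorem~\ref{alkd}). Concretely, I would set $\widehat{\Gamma}:=Y^{\mathbb{F}}/{\widetilde{\cal E}}=Y^{\mathbb{F}}{\cal E}(G^{-1}\is D^{o,\mathbb{F}})$ and first record the decomposition $\widehat{Y}=\widehat{\Gamma}^{\tau}+(h-\widehat{\Gamma})\is D$, exactly as in (\ref{YGGammainfinite}). Then, applying It\^o's formula to $\widehat{\Gamma}$ together with the identities ${\widetilde{\cal E}}^{-1}={\cal E}(G^{-1}\is D^{o,\mathbb{F}})$, ${\widetilde{\cal E}}={\widetilde{\cal E}}_{-}G/{\widetilde{G}}$, and $dV^{\mathbb{F}}={\widetilde{\cal E}}_{-}{\widetilde{G}}^{-1}dD^{o,\mathbb{F}}$, and substituting the dynamics of $Y^{\mathbb{F}}$ supplied by the first line of (\ref{RBSDEFGENERALInfinite}), the crucial point is that the $dD^{o,\mathbb{F}}$ contributions reorganize into a $dN^{\mathbb{G}}$ term (the compensated jump of $D$) while the drift collapses to $-f(t)\,d(t\wedge\tau)$. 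This yields $d\widehat{Y}=-f(t)\,d(t\wedge\tau)-d\widehat{K}+d\widehat{M}+\widehat{Z}\,dW^{\tau}$ with $\widehat{Y}_{\tau}=\xi$, matching the first equation of (\ref{nonLinearINFINITE}). Here one must be careful that in the \emph{general} (nonlinear) case the driver depends on $(Y,Z)$, so $f^{\mathbb{F}}(s,Y^{\mathbb{F}}_s,Z^{\mathbb{F}}_s)={\widetilde{\cal E}}_s f(s,\widehat{Y}_s,\widehat{Z}_s)$ by the definition (\ref{Data4RBSDE(F)}) of $f^{\mathbb{F}}$, which is precisely what makes the drift reduce to $f$ evaluated along the $\mathbb{G}$-solution rather than along fixed arguments.

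The barrier and Skorokhod conditions are handled as before: $Y^{\mathbb{F}}_t\geq S^{\mathbb{F}}_t={\widetilde{\cal E}}_t S_t$ together with ${\widetilde{\cal E}}>0$ gives $\widehat{Y}_t\,I_{\{t<\tau\}}\geq S_t\,I_{\{t<\tau\}}$, i.e.\ $\widehat{Y}\geq S$ on $\Lbrack0,\tau\Lbrack$; and the flat-off condition follows from
\begin{eqnarray*}
0\leq\int_0^{\tau}(\widehat{Y}_{t-}-S_{t-})d\widehat{K}_t=\int_0^{\tau}(Y^{\mathbb{F}}_{t-}-S^{\mathbb{F}}_{t-}){\widetilde{\cal E}}_{t-}^{-2}dK^{\mathbb{F}}_t\leq\int_0^{\infty}(Y^{\mathbb{F}}_{t-}-S^{\mathbb{F}}_{t-}){\widetilde{\cal E}}_{t-}^{-2}dK^{\mathbb{F}}_t=0,
\end{eqnarray*}
using $\widehat{K}={\widetilde{\cal E}}_{-}^{-1}\is(K^{\mathbb{F}})^{\tau}$, the nonnegativity of $Y^{\mathbb{F}}_{-}-S^{\mathbb{F}}_{-}$, and the Skorokhod condition of the $\mathbb{F}$-solution. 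Since this argument is essentially identical to the one already written out in Theorem~\ref{Relationship4InfiniteBSDE}, I would simply state that the proof follows its footsteps and is therefore omitted.

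I expect the main obstacle to be not the algebra of the It\^o computation but the \emph{integrability bookkeeping} needed to certify that the transform genuinely lands in the solution space for the general nonlinear driver with possibly unbounded horizon. In the bounded-horizon linear case (Theorem~\ref{abcde}) and the unbounded linear case (Theorem~\ref{Relationship4InfiniteBSDE}) the required moment bounds are transparent, but here one must ensure that the nonlinear $\mathbb{F}$-solution furnished by Theorem~\ref{RBSDEinfite4F} produces, via the discount factor ${\widetilde{\cal E}}$ and the measure-change machinery of Lemma~\ref{ExpecationQtilde2P} and Lemma~\ref{technicallemma1}, a $\mathbb{G}$-quadruplet whose norms are finite under $P$. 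This is exactly where the extra hypothesis $E[({\widetilde{\cal E}}_{\infty}F^{(\alpha)}_{\infty})^p]<+\infty$ in (\ref{additiona4GenertalINfinity}) is consumed; its role parallels that of (\ref{MainAssumption4InfiniteHorizonBIS}) in the linear theorem, controlling the behaviour of the driver integral as the horizon tends to infinity. Once uniqueness on both sides is in hand, however, the identification is forced and no estimate beyond those already proved in Theorems~\ref{alkdINFINITE} and~\ref{RBSDEinfite4F} is actually needed.
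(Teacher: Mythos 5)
Your proposal is correct and follows essentially the same route as the paper: existence and uniqueness on both sides are taken from Theorems \ref{RBSDEinfite4F} and \ref{alkdINFINITE}, and the identification (\ref{secondrelation4Generalinfinity}) is reduced to verifying that the transformed quadruplet solves (\ref{nonLinearINFINITE}) by repeating Step 2 of the proof of Theorem \ref{Relationship4InfiniteBSDE}, after which uniqueness forces equality. Your explicit remark that $f^{\mathbb{F}}(s,Y^{\mathbb{F}}_s,Z^{\mathbb{F}}_s)={\widetilde{\cal E}}_s f(s,\widehat{Y}_s,\widehat{Z}_s)$, which is what makes the nonlinear drift collapse correctly, is exactly the point implicitly relied upon in the paper's (much terser) argument.
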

\begin{proof}  Thanks to Theorems \ref{RBSDEinfite4F}  and \ref{alkdINFINITE}, it is clear that both RBSDEs (\ref{RBSDEFGENERALInfinite})  and  (\ref{nonLinearINFINITE}) have unique solutions. This proves the first claim of the theorem, while the proof of (\ref{secondrelation4Generalinfinity}) follows immediately as soon as we prove that $(\overline{Y}, \overline{Z},\overline{M},\overline{K})$ given by 
\begin{eqnarray*}
 \overline{Y}:= \frac{Y^{\mathbb{F}}}{{\widetilde{\cal E}}}I_{\Lbrack0,\tau\Lbrack}+\xi I_{\Lbrack\tau,+\infty\Lbrack},\ 
  \overline{Z}:=\frac{Z^{\mathbb{F}}}{{\widetilde{\cal E}}}I_{\Rbrack0,\tau\Rbrack},\  \overline{K}:=\frac{1}{{\widetilde{\cal E}_{-}}}\is (K ^{\mathbb{F}})^{\tau}\quad\mbox{and}\quad 
      \overline{M}:=\left(h-\frac{Y^{\mathbb{F}}}{{\widetilde{\cal E}}}\right)\is N^{\mathbb{G}}.\end{eqnarray*}
is a solution to (\ref{nonLinearINFINITE}). This latter fact can be proved by following exactly the footsteps of Step 2 in the proof of Theorem \ref{Relationship4InfiniteBSDE}. This ends the proof of theorem.
\end{proof}
%%%%%%%%%%%%%%%%%%%%%%%%%%%%%%%%%%%%%%%%%%%%%%%%%%%%%%%%%%%%%%%%%%%%%%%%%
%%%%%%%%%%%%%%%%%%%%%%%%%%%%%%%%%%%%%%%%%%%%%%%%%%%%%%%%%%%%%%%%%%%%%%%%%%%%%%%%%%%
We end this section with elaborating the BSDE version of this general case with unbounded horizon.
%%%%%%%%%%%%%%%%%%%%%%%%%%%%%%%%%%%%%%%%%%%%%%%%%%%%%%%%%%%%%%%%%%%%%%%%%
%%%%%%%%%%%%%%%%%%%%%%%%%%%%%%%%%%%%%%%%%%%%%%%%%%%%%%%%%%%%%%%%%%%%%%%%%%%%%%%%%%%
\begin{theorem}\label{BSDEinfini} Let $p\in (1,+\infty)$, $h$ an $\mathbb F$-optional process and $f(t,y,z)$ be a functional satisfying (\ref{LipschitzAssumption}). Suppose that $G>0$ and there exists $\alpha>\max(\alpha_0(p),\alpha_1(p))$ such that 
\begin{eqnarray}\label{MainAssumption4BSDEinfini}
E\left[\left(\widetilde{\cal E}_{\infty}F^{(\alpha)}_{\infty}\right)^p+\int_0^{\infty}\left\{e^{{p\alpha}t/2}\vert{h}_t\vert^p+(F^{(\alpha)}_t)^p\right\}dV^{\mathbb F}_t\right]<+\infty.
\end{eqnarray}
Then the following assertions hold.\\
{\rm{(a)}} There exists a unique solution  ($Y^{\mathbb{G}},Z^{\mathbb{G}},M^{\mathbb{G}},K^{\mathbb{G}}$) to the following BSDE
\begin{eqnarray}\label{BSDEinifiteDynamics}
dY_{t}=-f(t,Y_{t},Z_{t})d(t\wedge\tau)-dM_{t\wedge\tau}+Z_{t}dW_{t\wedge\tau},\quad{Y}_{\tau}=\xi=h_{\tau}.
\end{eqnarray}
{\rm{(b)}} For $(f^{\mathbb{F}},S^{\mathbb{F}}, {\widetilde{\cal E}})$ are defined via (\ref{Data4RBSDE(F)}), the following BSDE under $\mathbb F$ \begin{eqnarray}\label{BSDEFInfinite}
Y_{t}=\int_{t}^{\infty}f^{\mathbb{F}}(s,Y_s,Z_s)ds+\int_{t}^{\infty}h_{s}dV^{\mathbb{F}}_{s}-\int_{t}^{\infty}Z_{s}dW_{s},\end{eqnarray} 
has a unique solution that we denote by $(Y^{\mathbb{F}}, Z^{\mathbb{F}})$.\\
{\rm{(c)}} The two solutions $(Y^{\mathbb{G}},Z^{\mathbb{G}},M^{\mathbb{G}})$ and ($Y^{\mathbb{F}},Z^{\mathbb{F}}$) satisfy 
\begin{eqnarray}
   Y^{\mathbb{G}}= \frac{Y^{\mathbb{F}}}{{\widetilde{\cal E}}}I_{\Lbrack0,\tau\Lbrack}+\xi I_{\Lbrack\tau,+\infty\Lbrack},\ 
  Z^{\mathbb{G}}=\frac{Z^{\mathbb{F}}}{{\widetilde{\cal E}}}I_{\Rbrack0,\tau\Rbrack},\quad\mbox{and}\quad 
      M^{\mathbb{G}}=\left(h-\frac{Y^{\mathbb{F}}}{{\widetilde{\cal E}}}\right)\is N^{\mathbb{G}}.\label{secondrelationn}
       \end{eqnarray}
  {\rm{(d)}} There exists $C(\alpha,p)>0$ that depends on $\alpha$ and $p$ only such that     
   \begin{align*}
&E\left[\sup_{0\leq s\leq\tau }e^{{p\alpha}s/2}\widetilde{\cal E}_{s}\vert{Y}^{{\mathbb{G}}}_{s}\vert^p+\left(\int_{0}^{\tau}e^{\alpha s}(\widetilde{\cal E}_{s})^{2/p}\vert Z^{{\mathbb{G}}}_{s}\vert^{2}ds\right)^{p/2}+\left(\int_{0}^{\tau}e^{\alpha s}(\widetilde{\cal E}_{s-})^{2/p}d[ M^{\mathbb{G}}, M^{\mathbb{G}}]_s\right)^{p/2}\right]\\
&\leq C(\alpha,p) E\left[\int_0^{\infty} \left\{e^{{p\alpha}t/2}\vert{h}\vert^p_t +(F^{(\alpha)}_t)^p\right\}dV^{\mathbb F}_s\right].
\end{align*}
  {\rm{(e)}} Let $(f^{(i)}, h^{(i)})$, $i=1,2$, be two pairs satisfying (\ref{MainAssumption4BSDEinfini}), and $(Y^{\mathbb{G},i},Z^{\mathbb{G},i},M^{\mathbb{G},i})$  be the solutions to their corresponding BSDE (\ref{BSDEinifiteDynamics}). There exist $C(\alpha, p)$ that depends on $\alpha$ and $p$ only such that     
   \begin{align*}
&E\left[\sup_{0\leq s\leq\tau }e^{{p\alpha}s/2}\widetilde{\cal E}_{s}\vert{\delta}Y^{{\mathbb{G}}}_{s}\vert^p+\left(\int_{0}^{\tau}e^{\alpha s}(\widetilde{\cal E}_{s})^{2/p}\vert \delta{Z}^{{\mathbb{G}}}_{s}\vert^{2}ds\right)^{p/2}+\left(\int_{0}^{\tau}e^{\alpha s}(\widetilde{\cal E}_{s-})^{2/p}d[ {\delta}M^{\mathbb{G}},{\delta} M^{\mathbb{G}}]_s\right)^{p/2}\right]\\
&\leq C(\alpha, p) E\left[\int_0^{\infty} \left\{e^{{p\alpha}t/2}\vert{\delta}h_t\vert^p +(\delta{F}^{(\alpha)}_t)^p+\sup_{0\leq{u}\leq{t}}e^{\alpha{u}}\vert\delta{S}_u\vert^p\right\}dV^{\mathbb F}_t\right].
\end{align*}
     \end{theorem}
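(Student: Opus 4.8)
The plan is to obtain Theorem~\ref{BSDEinfini} as a direct specialization of the general RBSDE results established earlier, by exploiting the standard observation (already invoked in the proof of Theorem~\ref{LinearBSDEcase}) that a BSDE is precisely an RBSDE with barrier $S\equiv-\infty$ and with null nondecreasing component $K\equiv 0$ in the solution quadruplet. Concretely, I would feed the pair $(f,h)$ into the machinery by treating it as a degenerate triplet $(f,S,h)$ where the barrier is pushed to $-\infty$, so that the reflection constraint $Y\geq S$ and the Skorokhod condition $E[\int_0^{\tau}(Y_{t-}-S_{t-})dK_t]=0$ become vacuous and force $K^{\mathbb G}\equiv 0$ (respectively $K^{\mathbb F}\equiv 0$). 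Under this identification, the hypothesis \eqref{MainAssumption4BSDEinfini} is exactly the specialization of \eqref{MainAssumption4InfiniteHorizonNonlinear} together with the extra integrability \eqref{additiona4GenertalINfinity} once all terms involving $\sup_{0\leq u\leq t}(S_u^+)^p$ are dropped.

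The key steps, in order, would be as follows. First, I would verify assertion~(b): the BSDE \eqref{BSDEFInfinite} is the $S\equiv-\infty$ case of the $\mathbb F$-RBSDE \eqref{RBSDEFGENERALInfinite}, whose unique $L^p(P,\mathbb F)$-solvability is granted by Theorem~\ref{RBSDEinfite4F} under precisely the hypotheses \eqref{MainAssumption4InfiniteHorizonNonlinear} and \eqref{additiona4GenertalINfinity}, which \eqref{MainAssumption4BSDEinfini} supplies. Since there is no obstacle, the reflecting process $K^{\mathbb F}$ must vanish, yielding the pair $(Y^{\mathbb F},Z^{\mathbb F})$. Second, for assertion~(a), I would apply Theorem~\ref{alkdINFINITE}-(a) to the degenerate triplet to obtain existence and uniqueness of $(Y^{\mathbb G},Z^{\mathbb G},M^{\mathbb G},K^{\mathbb G})$; again the Skorokhod condition forces $K^{\mathbb G}\equiv 0$, leaving the genuine BSDE solution $(Y^{\mathbb G},Z^{\mathbb G},M^{\mathbb G})$. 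Third, assertion~(c) follows from the relationship \eqref{secondrelation4Generalinfinity} of Theorem~\ref{Relatiuonship4GeneralINifity}, simply deleting the $K^{\mathbb G}$ and $K^{\mathbb F}$ components (both null). Fourth and fifth, the estimates in (d) and (e) are read off directly from Theorem~\ref{alkdINFINITE}-(b) and (c) respectively, after striking out every summand carrying $\sup_{0\leq u\leq t}(S_u^+)^p$ or $\sup_{0\leq u\leq t}|\delta S_u|^p$, since these equal $-\infty$-barrier contributions that are absent here.

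The only genuinely delicate point—and thus the main obstacle—is the passage to $S\equiv-\infty$ inside results whose norms explicitly contain $\sup(S^+)^p$; one cannot naively substitute $-\infty$. I would instead argue by an approximation $S^{(N)}:=-N$ (constant barriers), for which $(S^{(N)})^+\equiv 0$ so the barrier terms already drop out of the right-hand sides of Theorems~\ref{alkdINFINITE} and \ref{estimatesdelta}, while the left-hand side estimates are uniform in $N$. Since the driver $f$ and terminal data $h$ do not depend on $N$, the uniqueness estimate of Theorem~\ref{estimatesdelta}, applied to two barriers $S^{(N)}$ and $S^{(N')}$ with $\delta S=S^{(N)}-S^{(N')}$, shows $\delta Y^{\mathbb G},\delta Z^{\mathbb G},\delta M^{\mathbb G}$ are controlled by $\|\delta S\|$-type terms that one must check tend to zero; for $N,N'\to\infty$ the solutions stay below any finite level so the reflection never activates and the sequence stabilizes, identifying the common limit as the BSDE solution with $K^{\mathbb G}\equiv 0$. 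Verifying that $K^{\mathbb G,(N)}\to 0$ and that the limiting estimates collapse to the stated forms in (d) and (e) is the step requiring care; everything else is a transcription of the already-proved RBSDE theorems, which is exactly why the proof given merely records the three bullet observations and omits the routine details.
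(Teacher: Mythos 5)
Your proposal takes essentially the same route as the paper: the paper's proof simply observes that a BSDE is an RBSDE with $S\equiv-\infty$ and $K\equiv 0$, notes that (\ref{MainAssumption4InfiniteHorizonNonlinear}) then reduces to (\ref{MainAssumption4BSDEinfini}), and concludes by combining Theorem \ref{RBSDEinfite4F} and Theorem \ref{alkdINFINITE} (with the relationship in (c) coming from Theorem \ref{Relatiuonship4GeneralINifity}). Your additional care about how to interpret $S\equiv-\infty$ inside norms containing $\sup(S^+)^p$ goes beyond what the paper records, but it is consistent with, and if anything a more scrupulous version of, the paper's argument.
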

\begin{proof} Similarly, as in the proof of Theorem \ref{LinearBSDEcase}, we notice that in general a BSDE is a particular case of an RBSDE. In fact a BSDE is an RBSDE with $S\equiv -\infty$, and  this yields to having the predictable part with finite variation, in its solution, $K\equiv 0$.  Thus, by taking these into account, we remark that when $(S, K)\equiv (-\infty, 0)$, then (\ref{MainAssumption4InfiniteHorizonNonlinear}) reduces to (\ref{MainAssumption4BSDEinfini}), and the theorem follows from combining Theorem \ref{RBSDEinfite4F}  and Theorem \ref{alkdINFINITE} .\end{proof}
%%%%%%%%%%%%%%%%%%%%%%%%%%%%%%%%%%%%%%%%%%%%%%%%%%%%%%%%%%%%%%%%%%%%%%%%%%%%%%%%%%%%
%%%%%%%%%%%%%%%%%%%%%%%%%%%%%%%%%%%%%%%%%%%%%%%%%%%%%%%%%%%%%%%%%%%%%%%%%%%%%%%%%
\begin{appendices}
\section{Some martingale inequalities}
We start this section by recalling an important theorem form martingale inequalities that goes back to Dellacherie and Meyer, see \cite[Th\'eor\`eme 99, Chapter VI]{DELLACHERIE}. 
\begin{theorem}\label{DellacherieAndMeyer}
Consider a complete filtered probability space $\left(\Omega, {\cal F}, \mathbb H=({\cal H}_t)_{0\leq t\leq T}, P\right)$. Let $A$ be predictable (optional) increasing process whose potential (left potential) Z is bounded above by a cadlag martingale $M_{t}=E[M_{\infty}|\mathcal{H}_{t}]$. Then 
\begin{align}
\Vert{ A}_{\infty}\Vert_{\Phi}\leq p_{\Phi}\Vert{M}_{\infty}\Vert_{\Phi},
\end{align}
where $p_{\Phi}$ is the constant associated with $\Phi$ and $\Phi$ is increasing  convex function defined as the following;
\begin{equation*}
\Phi(t):=\int_{0}^{t}\phi(s)ds,\quad p_{\Phi}:=\sup_{t}{{t\phi(t)}\over{\Phi(t)}}.
\end{equation*}
for some right continuous increasing function $\phi$ which is positive on $\mathbb{R}^{+}$.
\end{theorem}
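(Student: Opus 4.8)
The plan is to obtain the inequality from two ingredients: a \emph{domination inequality} $E[\Phi(A_\infty)]\le E[M_\infty\,\phi(A_\infty)]$, which is the analytic heart of the matter, followed by a duality/Young argument that converts this into the Orlicz-norm estimate with the sharp constant $p_\Phi$. Throughout I would normalize $A_0=0$, which is harmless.

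First I would fix the convex-analytic bookkeeping. Writing $\phi=\Phi'$ and letting $\Phi^{*}$ denote the complementary Young function, I record the Young identity $\Phi^{*}(\phi(t))=t\phi(t)-\Phi(t)$. The definition $p_\Phi=\sup_t t\phi(t)/\Phi(t)$ is equivalent to $t\phi(t)\le p_\Phi\Phi(t)$ for all $t$, whence
\[
\Phi^{*}(\phi(t))\le (p_\Phi-1)\,\Phi(t),
\]
and this single inequality is the place where the constant $p_\Phi$ enters. I would also record the pointwise convexity bound $\Phi(A_\infty)\le\int_{(0,\infty]}\phi(A_s)\,dA_s$, valid for any increasing RCLL $A$ because $\Phi(A_s)-\Phi(A_{s-})\le\phi(A_s)\Delta A_s$ at jumps and $d\Phi(A^{c})=\phi(A)\,dA^{c}$ on the continuous part.

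The core step is the domination inequality. Starting from the convexity bound and applying Stieltjes integration by parts to the product $\phi(A)\,A$, I would rewrite
\[
\int_{(0,\infty]}\phi(A_s)\,dA_s=\phi(0)A_\infty+\int_{(0,\infty]}(A_\infty-A_{s-})\,d\phi(A_s).
\]
Since the measure $d\phi(A_s)$ is carried by the predictable (resp. optional) graph of $A$, taking expectations and invoking the projection theorem lets me replace the integrand $A_\infty-A_{s-}$ by its predictable projection, which is precisely the (left) potential $Z$ of $A$; the identity used is $E[\int H\,d\phi(A)]=E[\int{}^{p}H\,d\phi(A)]$. Using the hypothesis $Z\le M$ and then the martingale property to pull $M$ through, namely $E[\int M_{s-}\,d\phi(A_s)]=E[\int M_\infty\,d\phi(A_s)]=E[M_\infty(\phi(A_\infty)-\phi(0))]$, together with the potential bound at time $0$ for the term $\phi(0)A_\infty$, I arrive at $E[\Phi(A_\infty)]\le E[M_\infty\,\phi(A_\infty)]$. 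The optional/right-potential case is identical after replacing left limits by right limits. Finally, by generalized Hölder in the Orlicz duality $(\Phi,\Phi^{*})$ one has $E[M_\infty\phi(A_\infty)]\le \|M_\infty\|_{\Phi}\,\|\phi(A_\infty)\|_{\Phi^{*}}$, while the bound $\Phi^{*}(\phi(t))\le(p_\Phi-1)\Phi(t)$ controls $\|\phi(A_\infty)\|_{\Phi^{*}}$ in terms of $\|A_\infty\|_{\Phi}$; after a homogeneity normalization (so that $E[\Phi(A_\infty/\|A_\infty\|_\Phi)]\le1$) and combining with the domination inequality, the factors rearrange to leave exactly $\|A_\infty\|_{\Phi}\le p_\Phi\|M_\infty\|_{\Phi}$.

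The hard part will be the domination step. The delicate points are justifying the replacement of $A_\infty-A_{s-}$ by the potential $Z$ under the integral sign — i.e. the projection interchange and the optional sampling implicit in it, carried out uniformly across the predictable and optional cases — and carefully accounting for the jumps of $A$ in the integration by parts and in the convexity bound. By contrast, the concluding passage to norms is routine once the constant-tracking convention for $\|\cdot\|_{\Phi}$ is fixed, since all the quantitative information is already encoded in the relation $t\phi(t)\le p_\Phi\Phi(t)$.
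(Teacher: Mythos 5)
You should first be aware that the paper contains no proof of this statement at all: it is recalled verbatim as a classical result, cited to Dellacherie and Meyer (Th\'eor\`eme 99, Chapter VI of \emph{Probabilities and Potential B}), so there is no in-paper argument to compare against. What you have written is, in outline, precisely the classical Garsia--Neveu/Dellacherie--Meyer proof: the domination inequality $E[\Phi(A_\infty)]\le E[M_\infty\,\phi(A_\infty)]$ obtained from the convexity bound $\Phi(A_\infty)\le\int_{(0,\infty]}\phi(A_s)\,dA_s$, integration by parts to produce the integrand $A_\infty-A_{s-}$ against the increasing process $\phi(A)$, projection theorems plus the hypothesis $Z\le M$ to pull $M_\infty$ out, and then the convex-duality step driven by $t\phi(t)\le p_\Phi\Phi(t)$, i.e. $\Phi^{*}(\phi(t))\le(p_\Phi-1)\Phi(t)$. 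The architecture is sound and the two places you flag as delicate are indeed the right ones.

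Two points deserve tightening. First, your identification of the projected integrand with $Z$ is exact only in the optional case, where the optional projection of $A_\infty-A_{\cdot-}$ is the left potential $E[A_\infty\,\vert\,\mathcal{H}_s]-A_{s-}=Z_s\le M_s$, and one concludes with $E[\int M_s\,d\phi(A_s)]=E[M_\infty(\phi(A_\infty)-\phi(0))]$ since ${}^{o}(M_\infty)=M$. In the predictable case the relevant object is the \emph{predictable} projection, which at a predictable time $T$ equals $E[A_\infty\,\vert\,\mathcal{H}_{T-}]-A_{T-}$; this is \emph{not} the potential $Z_T=E[A_\infty-A_T\,\vert\,\mathcal{H}_T]$ (they differ by the jump $\Delta A_T$), so the hypothesis $Z\le M$ must be transported by taking left limits along an announcing sequence $T_n\uparrow T$ in $Z_{T_n}\le M_{T_n}$ (using $E[A_\infty]\le E[M_0]<\infty$ for the passage to the limit) to obtain $E[A_\infty\,\vert\,\mathcal{H}_{T-}]-A_{T-}\le M_{T-}$, and then the predictable section theorem to upgrade this to an inequality up to evanescence; only then does your use of ${}^{p}(M_\infty)=M_-$ close the loop. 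Second, in the final passage to norms, generalized H\"older with two Luxemburg norms costs a factor $2$ in general, so your route does not deliver the constant $p_\Phi$ without choosing the dual (Orlicz/Amemiya) norm on one side; the cleaner classical ending applies Young's inequality pointwise, $M_\infty\phi(A_\infty)\le\beta^{-1}\Phi(\beta M_\infty)+\beta^{-1}\Phi^{*}(\phi(A_\infty))$ with $\beta=p_\Phi$, which together with $\Phi^{*}(\phi(t))\le(p_\Phi-1)\Phi(t)$ yields $E[\Phi(A_\infty)]\le E[\Phi(p_\Phi M_\infty)]$ after subtracting -- a subtraction that requires $E[\Phi(A_\infty)]<\infty$, so you should first truncate (replace $A$ by $A\wedge n$ or stop suitably) and pass to the limit; the Luxemburg-norm inequality $\Vert A_\infty\Vert_\Phi\le p_\Phi\Vert M_\infty\Vert_\Phi$ then follows by applying the scaled version to $A/(p_\Phi c)$, $M/(p_\Phi c)$ with $c=\Vert M_\infty\Vert_\Phi$. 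With these repairs (and the observation $M\ge Z\ge 0$, which licenses the sign manipulations), your proposal is a correct reconstruction of the cited proof.
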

The following lemma, that plays crucial role in our estimations, ia interesting in itself and  generalizes \cite[Lemma 4.8]{Choulli4} . 
\begin{lemma}\label{Lemma4.8FromChoulliThesis}
If $r^{-1}=a^{-1}+b^{-1},$ where $ a>1$ and $b>1$, then there exists a positive constant $\kappa=\kappa(a,b)$ depending only on $a$ and $b$  such that the following assertion holds.\\
For any triplet $(H, X, M)$ such that $H$ is predictable, $X$ is RCLL and adapted process, $M$ is a martingale,  and $\vert{H}\vert \leq \vert{X_{-}}\vert$, the following inequality holds. 
\begin{equation*}
\Vert \sup_{0\leq{t}\leq{T}}\vert(H\is M)_t\vert\Vert_r\leq \kappa\Vert\sup_{0\leq{t}\leq{T}}\vert{X}_t\vert\Vert_a\Vert[M]_T^{\frac{1}{2}}\Vert_b.
\end{equation*}
%%{\rm{(b)}} If $X$ is a RCLL semimartingale such that $\sup_{0\leq{t}\leq\cdot}\vert{X}_t\vert \in {\cal A}^+_{loc}(\mathbb H)$ and $M$ is a martingale, then  
%%\begin{equation*}
%%%\Vert\sup_{0\leq{t}\leq{T}}\vert(^{p,\mathbb H}(\Delta{X})\is M)_t\vert\Vert_r\leq 2CC_{DB}\Vert\sup_{0\leq{t}\leq T}\vert{X}_t\vert\Vert_a\Vert[M]_T^{\frac{1}{2}}\Vert_b.
%%%\end{equation*}
\end{lemma}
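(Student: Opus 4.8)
The plan is to reduce the stated inequality to a combination of the Burkholder--Davis--Gundy (BDG) inequality and the generalized H\"older inequality, exploiting the pointwise control $\vert H\vert\le\vert X_-\vert$. First I would record the identity $[H\is M]=H^2\is[M]$ for the quadratic variation of the stochastic integral $H\is M$ (which is a well-defined local martingale since $H$ is predictable), so that the left-hand side is governed by $(H^2\is[M])_T^{1/2}$.

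Next comes the pathwise domination step, which is the only place where the hypothesis $\vert H\vert\le\vert X_-\vert$ enters. Since $\vert H_s\vert\le\vert X_{s-}\vert\le\sup_{0\le u\le T}\vert X_u\vert$ for every $s\le T$, I would bound $(H^2\is[M])_T=\int_0^T H_s^2\,d[M]_s\le \bigl(\sup_{0\le u\le T}\vert X_u\vert\bigr)^2[M]_T$, whence $(H^2\is[M])_T^{1/2}\le \bigl(\sup_{0\le u\le T}\vert X_u\vert\bigr)[M]_T^{1/2}$.

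Then I would apply the BDG inequality to the local martingale $H\is M$ with exponent $r$, yielding $\Vert\sup_{t\le T}\vert(H\is M)_t\vert\Vert_r\le C_r\Vert[H\is M]_T^{1/2}\Vert_r$ for a constant $C_r$ depending on $r$ only. Combining this with the domination above gives $\Vert\sup_{t\le T}\vert(H\is M)_t\vert\Vert_r\le C_r\bigl\Vert\bigl(\sup_{u\le T}\vert X_u\vert\bigr)[M]_T^{1/2}\bigr\Vert_r$. Finally, since $1=r/a+r/b$, the exponents $a/r$ and $b/r$ are conjugate, and the generalized H\"older inequality gives $\bigl\Vert\bigl(\sup_{u\le T}\vert X_u\vert\bigr)[M]_T^{1/2}\bigr\Vert_r\le\bigl\Vert\sup_{u\le T}\vert X_u\vert\bigr\Vert_a\,\Vert[M]_T^{1/2}\Vert_b$. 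Setting $\kappa:=C_r$, which depends on $r=ab/(a+b)$ and hence only on $(a,b)$, completes the argument.

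The main point to be careful about is that $r=ab/(a+b)$ may be strictly less than $1$ (for instance when $a$ and $b$ are both close to $1$), so I cannot invoke the classical $L^p$, $p\ge1$, form of BDG; instead I would use the version valid for all exponents $p>0$ for (local) martingales, whose constants still depend on the exponent alone. I would also emphasize that this form of BDG holds for an arbitrary filtration $\mathbb H$, which is precisely the filtration-robustness the paper relies on. If either factor on the right-hand side is infinite there is nothing to prove; otherwise a standard localization along a reducing sequence of stopping times for $H\is M$, together with Fatou's lemma and monotone convergence, passes the finite estimates to the limit and secures the integrability implicit in the statement.
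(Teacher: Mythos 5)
Your reduction steps are fine: the identity $[H\is M]=H^{2}\is[M]$, the pathwise bound $(H^{2}\is[M])_T^{1/2}\leq\bigl(\sup_{u\leq T}\vert X_u\vert\bigr)[M]_T^{1/2}$ coming from $\vert H\vert\leq\vert X_{-}\vert$, and the generalized H\"older inequality with conjugate exponents $a/r$ and $b/r$ are all correct. The gap is the Burkholder--Davis--Gundy step. Since $r=ab/(a+b)$ can lie in $(0,1)$ (and does in the paper's own applications, where the lemma is invoked with $a=b=p$ for $p\in(1,2)$, giving $r=p/2<1$), you need BDG at an exponent below $1$. The ``version valid for all exponents $p>0$'' that you appeal to exists only for \emph{continuous} local martingales. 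For general c\`adl\`ag local martingales the inequality $\Vert M^{*}\Vert_p\leq C_p\Vert[M]_T^{1/2}\Vert_p$ fails for $0<p<1$: take $M_t=N_t-\lambda t$ with $N$ Poisson on $[0,T]$ and $\mu=\lambda T\to 0$; then $M^{*}_T\geq\mu$ on $\{N_T=0\}$, so $E[(M^{*}_T)^p]\gtrsim\mu^{p}$, while $E[[M]_T^{p/2}]=E[N_T^{p/2}]\asymp\mu$, and $\mu^{p}/\mu\to\infty$. This matters here because the martingales to which the lemma is applied are genuinely discontinuous (they contain the purely discontinuous component $M^{\mathbb G}=(h-Y^{\mathbb F}/\widetilde{\cal E})\is N^{\mathbb G}$), so you cannot restrict to the continuous case. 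Your argument is complete only in the regime $r\geq 1$, which is not the regime the lemma is needed for.

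The two-exponent inequality $\Vert(X_{-}\is M)^{*}\Vert_r\leq\kappa\Vert X^{*}\Vert_a\Vert[M]_T^{1/2}\Vert_b$ is strictly stronger than ``BDG at exponent $r$ followed by H\"older'' and requires a different proof (a Davis-type decomposition / Garsia--Neveu argument); it is exactly the content of \cite[Lemma 4.8]{Choulli4}, which the paper takes as its starting point. The paper's own proof then handles the general hypothesis $\vert H\vert\leq\vert X_{-}\vert$ by a factorization you did not need but which is the actual novelty here: since one may assume $\vert X_{-}\vert>0$, the process $H/X_{-}$ is predictable and bounded by $1$, so writing $H\is M=X_{-}\is\overline{M}$ with $\overline{M}:=(H/X_{-})\is M$ and noting $[\overline{M}]\leq[M]$ reduces everything to the cited case. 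If you want to keep your architecture, you must either restrict to $r\geq 1$ or replace the BDG-plus-H\"older step by a proof (or citation) of the two-exponent inequality itself.
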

\begin{proof} When $H=X_{-}$, the assertion can be found in \cite[Lemma 4.8]{Choulli4}. To prove the general case, we remark that, there is no loss of generality in assuming $\vert X_{-}\vert>0$, and hence the process $H/X_{-}$ is a well defined process that is predictable and is bounded by one. Therefore, we put 
\begin{eqnarray*}
\overline{M}:={{H}\over{X_{-}}}\is M,\end{eqnarray*}
and remark that $[\overline{M},\overline{M}]=(H/X_{-})^2\is [M, M]\leq [M, M]$. Thus, we derive 
\begin{align*}
\Vert\sup_{0\leq{t}\leq{T}}\vert({H}\is M)_t\vert\Vert_r&=\Vert\sup_{0\leq{t}\leq{T}}\vert({X_{-}}\is \overline{M})_t\vert\Vert_r\leq \kappa \Vert\sup_{0\leq{t}\leq{T}}\vert{X}_t\vert\Vert_a\Vert[\overline{M}]_T^{\frac{1}{2}}\Vert_b\\
&\leq \kappa\Vert\sup_{0\leq{t}\leq{T}}\vert{X}_t\vert\Vert_a\Vert[M, M]_T^{\frac{1}{2}}\Vert_b.\end{align*}
This ends the proof of the lemma.
\end{proof}
%%%%%%%%%%%%%%%%%%%%%%%%%%%%%%%%%%%%%%%%%%%%%%%%%%%%%%%%
%%%%%%%%%%%%%%%%%%%%%%%%%%%%%%%%%%%%%%%%%%%%%%%%%%%%%%%%%%%%%%%
%%%%%%%%%%%%%%%%%%%%%%%%%%%%%%%%%%%%%%%%%%%%%%%%%%%%%%%%%%%%%%%
\section{Proof of Lemmas \ref{stoppingTimeLemma}, \ref{Solution2SnellEnvelop}, \ref{L/EpsilonTilde}, \ref{Lemma4.11}, \ref{ExpecationQtilde2P} and \ref{technicallemma1} %and \ref{Inequality4BSDEunderF}{Solution2SnellEnvelop}
}\label{Appendix4Proofs}
%%%%%%%%%%%%%%%%%%%%%%%%%%%%%%%%%%%%%%%%%%%%%%%
\begin{proof}[Proof of Lemma \ref{stoppingTimeLemma}]
Thanks to \cite{DellacherieMeyer92}, for our $\mathbb{G}$-stopping time $\sigma^{\mathbb{G}}$, there exists an  $\mathbb{F}$-stopping time $\sigma$ such that 
\begin{equation*}
\sigma^{\mathbb{G}}=\sigma^{\mathbb{G}}\wedge\tau=\sigma\wedge\tau.
\end{equation*}

 Put 
 \begin{equation}\label{sigmaFdefinition}
 \sigma^{\mathbb{F}}:=\min\left(\max(\sigma,\sigma_{1}),\sigma_{2}\right),
  \end{equation}
  and on the one hand remark that $ \sigma^{\mathbb{F}}$ is an $\mathbb{F}$- stopping time satisfying the first condition in (\ref{sigmaF}). On the other hand, it is clear that 
  \begin{eqnarray*}
  \min(\tau, \max(\sigma, \sigma_1))=(\tau\wedge\sigma_1)I_{\{\sigma_1>\sigma\}}+(\tau\wedge\sigma)I_{\{\sigma_1\leq\sigma\}}=\max(\tau\wedge\tau, \sigma_1\wedge\tau).\end{eqnarray*}
  Thus, by using this equality, we derive
  \begin{eqnarray*}
   \sigma^{\mathbb{F}}\wedge\tau&&= \tau\wedge\sigma_2\wedge\max(\sigma,\sigma_1)=(\tau\wedge\sigma_2)\wedge(\tau\wedge\max(\sigma,\sigma_1))\\
   &&=(\tau\wedge\sigma_2)\wedge\max(\tau\wedge\tau, \sigma_1\wedge\tau)=\sigma\wedge\tau=\sigma^{\mathbb G}.
   \end{eqnarray*}
   This ends the proof of the lemma. \end{proof}
%%%%%%%%%%%%%%%%%%%%%%%%%%%%%%%%%%%%%%%%%%%%%%%%%%%%%%%%%%%%%%%%%%%%%%%
\begin{proof}[Proof of Lemma \ref{Solution2SnellEnvelop}]
Let $\nu\in\mathcal{J}_{t\wedge\tau}^{T\wedge\tau}(\mathbb{G})$. By using (\ref{RBSDEG}) and by taking the conditional expectation under $\widetilde{Q}$ afterwards, we get
\begin{align}
Y_{t\wedge\tau}&=E^{\widetilde{Q}}\left[\int_{t\wedge\tau}^{\nu\wedge\tau}f(s)ds+Y_{\nu\wedge\tau}+K_{\nu\wedge\tau}-K_{t\wedge\tau}\ \Big|\ \mathcal{G}_{t}\right]\nonumber\\
%&=E^{\widetilde{Q}}\left[\int_{t\wedge\tau}^{\nu\wedge\tau}f(s)ds+Y_{\nu\wedge\tau}1_{\{\nu\ <\tau\wedge T\}}+Y_{\nu\wedge\tau}1_{\{\nu =\tau\wedge T\}}+K_{\nu\wedge\tau}-K_{t\wedge\tau}\ \Big|\ \mathcal{G}_{t}\right]\nonumber\\
&\geq E^{\widetilde{Q}}\left[\int_{t\wedge\tau}^{\nu\wedge\tau}f(s)ds+S_{\nu\wedge\tau}1_{\{\nu\ <\tau\wedge T\}}+\xi1_{\{\nu\ =\tau\wedge T\}}\ \Big|\ \mathcal{G}_{t}\right]\nonumber\\
&\geq\mbox{ess}\sup_ {\theta\in \mathcal{J}_{t\wedge\tau,T\wedge\tau}(\mathbb{G})}E^{\widetilde{Q}}\left[\int_{t\wedge\tau}^{\theta}f(s)ds+S_{\theta}1_{\{\theta<\tau\wedge T\}}+\xi{I}_{\{\theta=\tau\wedge T\}}\ \Big|\ \mathcal{G}_{t}\right]\label{sameUntilHere}
   \end{align}
   To prove the reverse inequality, we consider  the following sequence of stopping times 
   \begin{eqnarray*}
 %  \nu^{*}&&=\inf\{t\wedge\tau\leq u\leq T\wedge\tau \quad  Y_{u}=S_{u}\}\wedge (T\wedge\tau),\quad \inf(\emptyset) =\infty,\\
   \theta_{n}:=\inf\left\{t\wedge\tau\leq u\leq T\wedge\tau;\quad Y_{u}<S_{u}+\frac{1}{n}\right\}\wedge (T\wedge\tau),\quad n\geq 1.
 \end{eqnarray*}
 Then it is clear that $\theta_{n}\in\mathcal{J}_{t\wedge\tau}^{T\wedge\tau}(\mathbb{G})$, and 
  \begin{eqnarray*}
 Y-S\geq\frac{1}{n}\quad  \mbox{on }\quad  \Lbrack t\wedge\tau ,\theta_{n}\Lbrack,\quad\mbox{and}\quad 
Y_{-}-S_{-}\geq\frac{1}{n}\quad  \mbox{on }\quad \Rbrack t\wedge\tau ,\theta_{n}\Rbrack. 
   \end{eqnarray*}
As a result, we get $I_{ \Rbrack t\wedge\tau ,\theta_{n}\Rbrack}\is K\equiv 0$, and hence using (\ref{RBSDEG}) again we deduce that
      \begin{align*}
   Y_{t\wedge\tau}&=Y_{\theta_{n}}+\int_{t\wedge\tau}^{\theta_{n}}f(s)ds+\int_{t\wedge\tau}^{\theta_{n}}d(K+M)_{t\wedge\tau}-\int_{t\wedge\tau}^{\theta_{n}}Z_{s}dW_{t}^{\tau}\\
   &=Y_{\theta_{n}}+\int_{t\wedge\tau}^{\theta_{n}}f(s)ds+\int_{t\wedge\tau}^{\theta_{n}}dM_{t\wedge\tau}-\int_{t\wedge\tau}^{\theta_{n}}Z_{s}dW_{t}^{\tau}.
   \end{align*}
  By taking conditional expectation under $\widetilde{Q}$, we get   $Y_{t\wedge\tau}=E^{\widetilde{Q}}[Y_{\theta_{n}}+\int_{t\wedge\tau}^{\theta_{n}}f(s)ds|\mathcal{G}_{t}]$ that implies 
     \begin{align*}
  &  \mbox{ess}\sup_{\theta\in \mathcal{J}_{t\wedge\tau,T\wedge\tau}(\mathbb{G})}E^{\widetilde{Q}}\left[\int_{t\wedge\tau}^{\theta}f(s)ds+S_{\theta}1_{\{\theta\ <\tau\wedge T\}}+\xi{I}_{\{\theta=\tau\wedge T\}}\ \Big|\ \mathcal{G}_{t}\right]\\
     &\geq E^{\widetilde{Q}}\left[\int_{t\wedge\tau}^{\theta_{n}}f(s)ds+S_{\theta_{n}}1_{\{\theta_{n}<\tau\wedge T\}}+\xi1_{\{\theta_{n}=\tau\wedge T\}}\ \Big|\ \mathcal{G}_{t}\right]       =    Y_{t\wedge\tau}+E^{\widetilde{Q}}\left[(S_{\theta_{n}}-Y_{\theta_{n}})1_{\{\theta_{n}<\tau\wedge T\}} \Big|\ \mathcal{G}_{t}\right]\\
   &    \geq Y_{t\wedge\tau}-\frac{1}{n}\widetilde{Q}(\theta_{n}<\tau\wedge T|\mathcal{G}_{t}).
    \end{align*} 
    Thus, by letting $n$ to go to infinity and due $\widetilde{Q}(\theta_{n}<\tau\wedge T\big| {\cal G}_{t})\leq 1$, we get
    \begin{equation*}
     \underset{\nu\in \mathcal{J}_{t,T}(\mathbb{G})}{\mbox{esssup}}\hspace{2mm}E^{\widetilde{Q}}\left[\int_{t\wedge\tau}^{\nu\wedge\tau}f(s)ds+S_{\nu\wedge\tau}1_{\{\nu\ <\tau\wedge T\}}+\xi1_{\{\nu=\tau\wedge T\}}\ \Big|\ \mathcal{G}_{t}\right]\geq  Y_{t\wedge\tau}.
    \end{equation*}
By combining this inequality with (\ref{sameUntilHere}), we get (\ref{RBSDE2Snell}), and the proof of the lemma completed.\end{proof}
%%%%%%%%%%%%%%%%%%%%%%%%%%%%%%%%%%%%%%%%%%%%%%%%%%%%%%%%%%%%%%%%%%%
%%%%%%%%%%%%%%%%%%%%%%%%%%%%%%%%%%%%%%%%%%%%%%%%%%%%%%%%%%%%%%%%%%%%%
\begin{proof}[Proof of Lemma \ref{L/EpsilonTilde}] 
 Let $L$ be an $\mathbb F$-semimartingale. Then we derive
 \begin{align*}
& {{L}\over{\widetilde{\cal E}}}I_{\Lbrack0,\tau\Lbrack}= {{L^{\tau}}\over{\widetilde{\cal E}^{\tau}}}- {{L}\over{\widetilde{\cal E}}}\is D=L\is {1\over{\widetilde{\cal E}^{\tau}}}+{1\over{\widetilde{\cal E}_{-}}}\is L^{\tau}- {{L}\over{\widetilde{\cal E}}}\is D= {{L}\over{G\widetilde{\cal E}_{-}}}I_{\Rbrack0,\tau\Rbrack}\is{D}^{o,\mathbb F} +{1\over{\widetilde{\cal E}_{-}}}\is L^{\tau}- {{L}\over{\widetilde{\cal E}}}\is D\\
&=  {{L}\over{\widetilde{G}\widetilde{\cal E}}}I_{\Rbrack0,\tau\Rbrack}\is{D}^{o,\mathbb F} +{1\over{\widetilde{\cal E}_{-}}}\is L^{\tau}- {{L}\over{\widetilde{\cal E}}}\is D=- {{L}\over{\widetilde{\cal E}}}\is{N}^{\mathbb G}+{1\over{\widetilde{\cal E}_{-}}}\is L^{\tau}.
\end{align*}
 The fourth equality follows from the fact that $\widetilde{\cal E}=\widetilde{\cal E}_{-}G/\widetilde{G}$. This ends the proof of the lemma.
\end{proof}
%%%%%%%%%%%%%%%%%%%%%%%%%%%%%%%%%%%%%%%%%%%%%%%%%%%%%%%%%%%%%%%%%%%
%%%%%%%%%%%%%%%%%%%%%%%%%%%%%%%%%%%%%%%%%%%%%%%%%%%%%%%%%%%%%%%%%%%

\begin{proof}[Proof of Lemma \ref{Lemma4.11}] Recall that $\Delta m={\widetilde G}-G_{-}\leq 1$, and $m$ is a BMO $(\mathbb F, P)$-martingale. Furthermore, we have 
       \begin{align*}
       &E^{\widetilde Q}\left[[m,m]_{T\wedge\tau}-[m,m]_{t\wedge\tau}\big|{\cal G}_t\right]= E\left[\int_{t\wedge\tau}^{T\wedge\tau}{\cal E}_s(G_{-}^{-1}\is m)^{-1}d[m,m]_s\big|{\cal G}_t\right]{\cal E}_{t\wedge\tau}(G_{-}^{-1}\is m)\\
       &=E\left[\int_{t\wedge\tau}^{T\wedge\tau}{\cal E}_s(G_{-}^{-1}\is m)^{-1}d[m,m]_s\big|{\cal F}_t\right]{{{\cal E}_t(G_{-}^{-1}\is m)}\over{G_t}}I_{\{\tau>t\}}\\
       &=E\left[\int_{t}^T{\widetilde{\cal E}}_sd[m,m]_s\big|{\cal F}_t\right]{1\over{{\widetilde{\cal E}}_t}}I_{\{\tau>t\}}\leq \Vert m\Vert_{BMO(P)}.
          \end{align*}
Hence, assertion (a) follows  from this latter inequality. Thanks to Lemma \ref{G-projection},  on $(\tau>s)$ we derive 
 \begin{align*}
     E^{\widetilde{Q}}\left[D^{o,\mathbb{F}}_{T} -D^{o,\mathbb{F}}_{s-}\big|{\cal G}_{s}\right]&=\Delta D^{o,\mathbb{F}}_{s}+ E\left[\int_{s\wedge\tau}^{T\wedge\tau} {1\over{{\cal E}_u(G^{-1}_{-}\is m)}} d D^{o,\mathbb{F}}_u \big|{\cal G}_{s}\right]{\cal E}_{s\wedge\tau}(G^{-1}\is m)\\
     &=E\left[\int_{s\wedge\tau}^{T\wedge\tau} {1\over{{\cal E}_u(G^{-1}_{-}\is m)}} d D^{o,\mathbb{F}}_u \big|{\cal F}_{s}\right]{{{\cal E}_{s\wedge\tau}(G^{-1}\is m)}\over{G_s}}+\Delta D^{o,\mathbb{F}}_{s}\\
     &=E\left[\int_{s}^{T} {\cal E}_{u-}(-{\widetilde G}^{-1}_{-}\is D^{o,\mathbb F}) d D^{o,\mathbb{F}}_u \big|{\cal F}_{s}\right]{1\over{{\cal E}_{s}(-{\widetilde G}^{-1}_{-}\is D^{o,\mathbb F})}}+\Delta D^{o,\mathbb{F}}_{s}\\
     &\leq 2\Delta D^{o,\mathbb{F}}_{s} I_{\{t<\tau\}}.
     \end{align*}
     This proves assertion (b). The remaining part of this proof addresses assertion (c). Remark that $1-(1-x)^a\leq\max(a,1) x$ for any $0\leq{x}\leq 1$. Thus, in virtue of (\ref{Vepsilon}), we get 
     \begin{eqnarray*}
    \Delta\widetilde{V}^{(a)}= 1-\left(1-{{\Delta D^{o,\mathbb F}}\over{\widetilde G}}\right)^a\leq \max(1,a){{\Delta D^{o,\mathbb F}}\over{\widetilde G}}.
     \end{eqnarray*}
     Hence, by putting      \begin{eqnarray*}
     W:= {{\max(1,a)}\over{\widetilde G}}\is  D^{o,\mathbb F}- \widetilde{V}^{(a)},\end{eqnarray*}
     we deduce that both 
     \begin{eqnarray*}
I_{\{\Delta D^{o,\mathbb F}\not=0\}} \is W=\sum\left\{ \max(1,a){{\Delta D^{o,\mathbb F}}\over{\widetilde G}}- 1+\left(1-{{\Delta D^{o,\mathbb F}}\over{\widetilde G}}\right)^a\right\}
 \end{eqnarray*}
 and $ I_{\{\Delta D^{o,\mathbb F}=0\}} \is W= {{(1-a)^+}\over{\widetilde G}}I_{\{\Delta D^{o,\mathbb F}=0\}}\is{D}^{o,\mathbb F}$ are nondecreasing processes. By combining this with
   \begin{eqnarray*}
   W= I_{\{\Delta D^{o,\mathbb F}=0\}} \is W+ I_{\{\Delta D^{o,\mathbb F}\not=0\}} \is W
        \end{eqnarray*}
     we deduce that assertion (c) holds. This ends the proof of the lemma.  \end{proof}
    
 %%%%%%%%%%%%%%%%%%%%%%%%%%%%%%%%%%%%%%%%%%%%%%%%%%%%%%%%%%%%%%%%%%%%%%%%%%%%%%%%%%%%%    
%%%%%%%%%%%%%%%%%%%%%%%%%%%%%%%%%%%%%%%%%%%%%%%%%%%%%%%%%%%%%%%%%%%%%%%%%%%%%%%%%%%%%%
 \begin{proof}[Proof of Lemma \ref{ExpecationQtilde2P}] Remark that, for any process $H$, we have 
     $$H_{T\wedge\tau}=H_{\tau}I_{\{0<\tau\leq T\}}+H_T I_{\{\tau>T\}}+H_0I_{\{\tau=0\}}.$$
     Thus, by applying this to the process $X/{\cal E}(G_{-}^{-1}\is m)$, we derive
     \begin{eqnarray*}
     E^{\widetilde Q}[X_{T\wedge\tau}]&&=E\left[{{X_{T\wedge\tau}}\over{{\cal E}_{T\wedge\tau}(G_{-}^{-1}\is m)}}\right]=E\left[{{X_{\tau}}\over{{\cal E}_{\tau}(G_{-}^{-1}\is m)}}I_{\{0<\tau\leq T\}}+{{X_T}\over{{\cal E}_T(G_{-}^{-1}\is m)}}I_{\{\tau> T\}} +X_0I_{\{\tau=0\}}\right]\\
     &&=E\left[\int_0^T {{X_s}\over{{\cal E}_s(G_{-}^{-1}\is m)}}dD_s^{o,\mathbb F}+{{X_T}\over{{\cal E}_T(G_{-}^{-1}\is m)}}G_T+X_0(1-G_0)\right]\\
      &&= E\left[G_0^2\int_0^T X_sdV_s^{\mathbb F}+G_0X_T{\widetilde {\cal E}}_T+X_0(1-G_0)\right].
     \end{eqnarray*}
     Thus, due to $X_0=0$, (\ref{XunderQtilde}) follows immediately from the latter equality. To prove assertion (b), we take the limit on both sides of  (\ref{XunderQtilde})  and we use the fact that $G_{\infty-}=\lim_{t\longrightarrow+\infty}G_t=0$ $P$-a.s. and this ends the proof of the lemma. \end{proof}
     %%%%%%%%%%%%%%%%%%%%%%%%%%%%%%%%%%%%%%%%%%%%%%%%%%%%%%%%%%%%%%%%%%%%%%%%%%%%%%%%%%%%}
     %%%%%%%%%%%%%%%%%%%%%%%%%%%%%%%%%%%%%%%%%%%%%%%%%%%%%%%%%%%%%%%%%%%%%%%%%%%%%
      
     \begin{proof}[Proof of Lemma \ref{technicallemma1}]This proof has four parts where we prove the four assertions respectively. \\
    {\bf Part 1.} Let $a\in (0,+\infty)$ and $Y$ be a RCLL $\mathbb G$-semimartingale, and put $Y^*_t:=\sup_{0\leq s\leq t}\vert Y_s\vert$. Then, on the one hand,  we remark that 
\begin{eqnarray}\label{remark1}
\sup_{0\leq t\leq{T}\wedge\tau} {\widetilde{\cal E}}_t\vert Y_t\vert^a\leq \sup_{0\leq t\leq{T}\wedge\tau} {\widetilde{\cal E}}_t (Y_t^*)^a.
\end{eqnarray}
On the other hand, thanks to It\^o, we derive 
\begin{eqnarray}\label{remark2}
{\widetilde{\cal E}}(Y^*)^a=(Y_0^*)^a+{\widetilde{\cal E}}\is (Y^*)^a+(Y^*_{-})^a\is {\widetilde{\cal E}}\leq (Y_0^*)^a+{\widetilde{\cal E}}\is (Y^*)^a.
\end{eqnarray}
Thus, by combining (\ref{remark1}) and (\ref{remark2}) with ${\widetilde{\cal E}}=G/\left(G_0{\cal E}(G_{-}^{-1}\is m)\right)$, we get 
\begin{eqnarray*}
E\left[\sup_{0\leq t\leq{T}\wedge\tau} {\widetilde{\cal E}}_t\vert Y_t\vert^a\right]&&\leq{E}\left[(Y_0^*)^a+\int_0^{T\wedge\tau} {\widetilde{\cal E}_s} d(Y^*_s)^a\right]\nonumber \\
&&=E[(Y_0^*)^a]+{1\over{G_0}}E^{\widetilde{Q}}\left[\int_0^{T\wedge\tau} G_s{d}(Y^*_s)^a\right]\leq G_0^{-1}E^{\widetilde{Q}}\left[(Y^*_{T\wedge\tau})^a\right].
\end{eqnarray*}
This proves assertion (a). \\
{\bf Part 2.} Let $a\in (0,+\infty)$ and  $K$ be a RCLL nondecreasing and $\mathbb G$-optional process with $K_0=0$. Then, we remark that 
\begin{eqnarray}\label{equa300}
\widetilde{\cal E}_{-}^a \is K=K\widetilde{\cal E}^a-K\is \widetilde{\cal E}^a=K\widetilde{\cal E}^a+K{\widetilde{\cal E}_{-}^a}\is \widetilde{V}^{(a)}=K\widetilde{\cal E}^a+K_{-}{\widetilde{\cal E}_{-}^a}\is \widetilde{V}^{(a)}+\Delta{K}{\widetilde{\cal E}_{-}^a}\is \widetilde{V}^{(a)},\end{eqnarray}
where $\widetilde {V}^{(a)}$ is defined in (\ref{Vepsilon}). As a result, by combining the above equality,  the fact that $(\sum_{i=1}^n x_i)^{1/a}\leq n^{1/a}\sum _{i=1}^n x_i^{1/a}$ for any sequence of nonnegative numbers and Lemma \ref{Lemma4.11}, we derive  
\begin{eqnarray*}
&&E\left[(\widetilde{\cal E}_{-}^a \is K_{T\wedge\tau})^{1/a}\right]\nonumber\\
&&\leq 3^{1/a}E\left[(K_{T\wedge\tau})^{1/a}\widetilde{\cal E}_{T\wedge\tau}+(K_{-}{\widetilde{\cal E}_{-}^a}\is \widetilde{V}^{(a)}_{T\wedge\tau})^{1/a}+(\Delta{K}{\widetilde{\cal E}_{-}^a}\is \widetilde{V}^{(a)}_{T\wedge\tau})^{1/a}\right]\nonumber\\
&&\leq 3^{1/a}E^{\widetilde{Q}}\left[(K_{T\wedge\tau})^{1/a}{{G_{T\wedge\tau}}\over{G_0}}\right]+4\times3^{1/a}E\left[\sup_{0\leq{t}\leq{T\wedge\tau}}K_{t}^{1/a}{\widetilde{\cal E}_t}\right]+3^{1/a}E\left[(\Delta{K}{\widetilde{\cal E}_{-}^a}\is \widetilde{V}^{(a)}_{T\wedge\tau})^{1/a}\right].\end{eqnarray*}
Then, due to  $K^{1/a}\widetilde{\cal E}\leq \widetilde{\cal E}\is{K}^{1/a}$ and ${\widetilde{\cal E}}=G/\left(G_0{\cal E}(G_{-}^{-1}\is m)\right)$, the above inequality leads to 
\begin{eqnarray}\label{equa299}
&&E\left[(\widetilde{\cal E}_{-}^a \is K_{T\wedge\tau})^{1/a}\right]\nonumber\\
 &&\leq  {{3^{1/a}}\over{G_0}}E^{\widetilde{Q}}\left[(K_{T\wedge\tau})^{1/a}\right]+4\times3^{1/a}E\left[\int_0^{T\wedge\tau}{\widetilde{\cal E}_t} dK_{t}^{1/a}\right]+3^{1/a}E\left[(\Delta{K}{\widetilde{\cal E}_{-}^a}\is \widetilde{V}^{(a)}_{T\wedge\tau})^{1/a}\right]\nonumber\\
&&\leq  5{{3^{1/a}}\over{G_0}}E^{\widetilde{Q}}\left[(K_{T\wedge\tau})^{1/a}\right]+3^{1/a}E\left[(\Delta{K}{\widetilde{\cal E}_{-}^a}\is \widetilde{V}^{(a)}_{T\wedge\tau})^{1/a}\right].\end{eqnarray}
Thus, it remain to deal with the last term in the right-hand-side term of the above inequality. To this end, we distinguish the cases whether $a\geq 1$ or $a<1$. \\
The case when $a\geq 1$, or equivalently $1/a\leq 1$. Then we use the fact that $(\sum x_i)^{1/a}\leq \sum x_i^{1/a}$ for any sequence of nonnegative numbers, and get 
\begin{eqnarray*}
E\left[(\Delta{K}{\widetilde{\cal E}_{-}^a}\is \widetilde{V}^{(a)}_{T\wedge\tau})^{1/a}\right]&&= E\left[\left(\sum_{0\leq t\leq _{T\wedge\tau}} \Delta{K}_t\widetilde{\cal E}_{t-}^a\Delta\widetilde{V}^{(a)}_t\right)^{1/a}\right]\leq E\left[\sum_{0\leq t\leq _{T\wedge\tau}} (\Delta{K}_t)^{1/a}\widetilde{\cal E}_{t-}(\Delta\widetilde{V}^{(a)}_t)^{1/a}\right]\\
&&\leq a^{1/a} E\left[\sum_{0\leq t\leq _{T\wedge\tau}} (\Delta{K}_t)^{1/a}\widetilde{\cal E}_{t-}\right]= a^{1/a} E\left[\sum_{0\leq t\leq _{T\wedge\tau}} (\Delta{K}_t)^{1/a}{{\widetilde{G}_t}\over{G_t}}\widetilde{\cal E}_{t}\right] \\
&&={{a^{1/a} }\over{G_0}}{E}^{\widetilde{Q}}\left[\sum_{0\leq t\leq _{T\wedge\tau}}\widetilde{G}_t(\Delta{K}_t)^{1/a}\right]  .
\end{eqnarray*}
The last equality follows from $\widetilde{\cal E}/G=G_0^{-1}/{\cal E}(G_{-}^{-1}\is m)$. Thus, by combining this latter inequality with (\ref{equa299}), assertion (b) follows immediately for this case of $a\geq 1$. \\
For the case of $a\in (0,1)$, or equivalently $1/a>1$, we use Lemma \ref{Lemma4.11} and derive 

\begin{eqnarray*}%%\label{equa301}
&&E\left[(\Delta{K}{\widetilde{\cal E}_{-}^a}\is \widetilde{V}^{(a)})_{T\wedge\tau}-(\Delta{K}{\widetilde{\cal E}_{-}^a}\is \widetilde{V}^{(a)})_{{t\wedge\tau}-}\bigg|\ \mathcal{G}_{t}\right]\\
&&=E\left[\int_{t\wedge\tau}^{T\wedge\tau}\Delta{K_s}{\widetilde{\cal E}_{s-}^a}d\widetilde{V}^{(a)}_s + (\Delta{K_{t\wedge\tau}}{\widetilde{\cal E}_{{t\wedge\tau}-}^a}\Delta \widetilde{V}^{(a)})_{t\wedge\tau}\bigg|\ \mathcal{G}_{t}\right]\\
&&\leq E\left[\int_{t\wedge\tau}^{T\wedge\tau}\sup_{0\leq u\leq s}\Delta{K_u}{\widetilde{\cal E}_{u-}^a}d\widetilde{V}^{(a)}_s + \sup_{0\leq u\leq{t\wedge\tau}} \Delta{K_u}{\widetilde{\cal E}_{u-}^a}\bigg|\ \mathcal{G}_{t}\right]\\
&&=E\left[\int_{t\wedge\tau}^{T\wedge\tau}E[\widetilde{V}^{(a)}_{T\wedge\tau}-\widetilde{V}^{(a)}_{s-}\big|{\cal G}_{s}]d\sup_{0\leq u\leq s}\Delta{K_u}{\widetilde{\cal E}_{u-}^a} + \sup_{0\leq u\leq{t\wedge\tau}} \Delta{K_u}{\widetilde{\cal E}_{u-}^a}\bigg|\ \mathcal{G}_{t}\right]\\
&&\leq E\left[ \sup_{0\leq u\leq{T\wedge\tau}} \Delta{K_u}{\widetilde{\cal E}_{u-}^a}\bigg|\ \mathcal{G}_{t}\right].\end{eqnarray*}
Therefore, a direct application of Theorem \ref{DellacherieAndMeyer}, we obtain
\begin{eqnarray*}
E\left[(\Delta{K}{\widetilde{\cal E}_{-}^a}\is \widetilde{V}^{(a)}_{T\wedge\tau})^{1/a}\right]&&\leq a^{-1/a} E\left[ \sup_{0\leq u\leq{T\wedge\tau}} \Delta{K_u}^{1/a}{\widetilde{\cal E}_{u-}}\right]\leq a^{-1/a} E\left[ \sum_{0\leq u\leq{T\wedge\tau}} \Delta{K_u}^{1/a}{\widetilde{\cal E}_{u-}}\right]\\
&&=  a^{-1/a}G_0^{-1} E^{\widetilde{Q}}\left[ \sum_{0\leq u\leq{T\wedge\tau}} \widetilde{G}_t\Delta{K_u}^{1/a}\right].
\end{eqnarray*}
Hence, by combining this inequality with (\ref{equa299}), assertion (b) follows immediately in this case of $a\in (0,1)$, and the proof of assertion (b) is complete.\\
%%%%%%%%%%%%%%%%%%%%%%%%%%%%%%%%%%%%%%%%%%%%%%%%%%%%%%
{\bf Part 3.}  Here we prove assertion (c). To this end, we consider $p>1$,  a $\mathbb G$-optional process $H$, and we apply assertion (b) to the process $K=H\is [N^{\mathbb G}, N^{\mathbb G}]$ and $a=2/p$, and get 
\begin{eqnarray*}
    E\left[({\widetilde{\cal E}}_{-}^{2/p}H\is [N^{\mathbb G},N^{\mathbb G}])_{T\wedge\tau} ^{p/2}\right]\leq C(a)G_0^{-1}  E^{\widetilde{Q}}\left[(H\is [N^{\mathbb G},N^{\mathbb G}]_{T\wedge\tau})^{p/2}+ \sum_{0\leq t\leq {T\wedge\tau}}{\widetilde{G}_t}H^{p/2}_t\vert\Delta{N}^{\mathbb G}\vert^p\right].
     \end{eqnarray*}
     Therefore, assertion (c) follows from combining this inequality with $\sum_{0\leq{t}\leq\cdot}{\widetilde{G}_t}H^{p/2}_t\vert\Delta{N}^{\mathbb G}_t\vert= {\widetilde{G}}H^{p/2}\is \mbox{Var}(N^{\mathbb G})$ and $\vert\Delta{N}^{\mathbb G}\vert^{p-1}\leq 1$.\\
     %%%%%%%%%%%%%%%%%%%%%%%%%%%%%%%%%%%%%%%%%%%%
 {\bf Part 4.}  Consider  consider $p>1$ and a nonnegative and $\mathbb H$-optional process $H$. Thus, by applying assertion (c), we obtain the inequality (\ref{Equality4MG}). Hence, to get (\ref{Equality4MGOptionalF}), we remark that $\mbox{Var}(N^{\mathbb G})=(G/\widetilde{G})\is {D}+ {\widetilde{G}}^{-1}I_{\Rbrack0,\tau\Lbrack}\is D^{o,\mathbb F}$, and due to the $\mathbb F$-optinality of $H$ we have 
    \begin{eqnarray*} 
 E^{\widetilde{Q}}\left[{\widetilde{G}_t}H^{p/2}_t\mbox{Var}(N^{\mathbb G})_T\right]=2E\left[\int_0^T {{H^{p/2}_t}\over{{\cal E}_t(G_{-}^{-1}\is{m})}}I_{\Rbrack0,\tau\Lbrack}(t)d D^{o,\mathbb F}_t\right]=2E^{\widetilde{Q}}\left[(H^{p/2}I_{\Rbrack0,\tau\Lbrack}\is {D}^{o,\mathbb F})_T\right].
      \end{eqnarray*}
    Therefore, by combining this with (\ref{Equality4MG}), assertion (d) follows immediately. This ends the proof of the lemma.\end{proof} 
%%%%%%%%%%%%%%%%%%%%%%%%%%%%%%%%%%%%%%%%%%%%%%%%%%%%%%%%%%%%%%%%%%%%%%%%%%%%%%%%%%%%%%
%%%%%%%%%%%%%%%%%%%%%%%%%%%%%%%%%%%%%%%%%%%%%%%%%%%%%%%%%%%%%%%%%%%%%%%%%%%%%%%%%%%%%%%
%%%%%%%%%%%%%%%%%%%%%%%%%%%%%%%%%%%%%%%%%%%%%%%%%%%%%%%%%%%%%%%%%%%%%%%%%%%%%%%%%%%%%%%
%%%%%%%%%%%%%%%%%%%\begin{proof}[Proof of Lemma \ref{Inequality4BSDEunderF}]The proof of this lemma mimicks the proof of Theorems \ref{} and \ref{}...\end{proof}
%%%%%%%%%%%%%%%%%%%%%%%%%%%%%%%%%%%%%%%%%%%%%%%%%%%%%%%%%%%%%%%%%%%%%%%%%%%
%%%%%%%%%%%%%%%%%%%%%%%%%%%%%%%%%%%%%%%%%%%%%%%%%%%%%%%%%%%%%%%%%%%%%%%%%%%%%%%%%%%
\end{appendices}
%%%%%%%%%%%%%%%%%%%%%%%%%%%%%%%%%%%%%%%%%%%%%%%%%%%%%%
%%%%%%%\section{Localisation  under $\mathbb G$ versus that of $\mathbb F$}
%%\end{appendix}
%%%%%%%%%%%%%%%%%%%%%%%%%%%%%%%%%%%%%%%%%%%%%%%%%%%%%%%%%%%%%%%%%%%%%%
%%%%%%%%%%%%%%%%%%%%%%%%%%%%%%%%%%%%%%%%%%%%%%%%%%%%%%%%%%%%%%%%%%%%%%%%
%%%%%%%%%%%%%%%%%%%%%%%%%%%%%%%%%%%%%%%%%%%%%%%%%%%%%%%%%%%%%%%%%%%%%%%

\bibliographystyle{amsplain}
\bibliography{DefaultBib}

%\end{document}
\end{document}